%% file: arxiv_paper.tex
\documentclass{article}
\usepackage[T1]{fontenc} % Ensures proper character encoding
\usepackage[utf8]{inputenc}
\usepackage[polish, english]{babel}
\newcommand{\red}[1]{{\color{red}{#1}}} 

\usepackage{tikz}
\usepackage{xcolor}

\definecolor{myblue}{RGB}{30, 160, 210}
\definecolor{mygreen}{RGB}{80, 180, 80}
\definecolor{mypink}{RGB}{230, 100, 130}

\usepackage{amsmath}
\usepackage{amssymb}
\usepackage{amsfonts}
\usepackage{amsthm}
\usepackage{bbm}
\usepackage[mathscr]{eucal}
\usepackage{a4wide}
\usepackage{authblk}
\usepackage{float}
\usepackage{comment}
\usepackage{enumitem}

\usetikzlibrary{positioning, decorations.markings}

\usepackage{setspace}
\usepackage[normalem]{ulem}

\usepackage{mathtools}
\mathtoolsset{showonlyrefs}

\usepackage{hyperref}
\hypersetup{
	colorlinks = true,
	urlcolor = {magenta},
	citecolor = {red},
	linkcolor = {blue}
}
\usepackage{doi}
\usepackage[square,numbers]{natbib}

\usepackage{todonotes}

\usepackage{graphicx}
\usepackage{subcaption}
\usepackage{soul}

\newcommand{\RNum}[1]{\uppercase\expandafter{\romannumeral #1\relax}}

\newcommand{\pr}{\mathbb{P}}								%	short hand for blackboard P

\newcommand{\Prob}[1]{\pr\left(#1\right)}					%	Standard probability command, 
\newcommand{\CProb}[2]{\pr\left(#1 \;\middle|\; #2\right)}	%	Conditional probability command, 
\newcommand{\E}{\mathbb{E}}								%	short hand for blackboard E
\newcommand{\V}{\mathbb{V}}								%	short hand for blackboard V

\newcommand{\Exp}[1]{\E\left[#1\right]}					%	Standard expectation command, argument
\newcommand{\Var}[1]{\V\left[#1\right]}					%	Standard variance command, argument
\newcommand{\CExp}[2]{\E\left[#1 \;\middle|\; #2\right]}	%	Conditional expectation, second 
\newcommand{\1}{\mathbbm{1}}								%	Indication shorthand command
\newcommand{\ind}[1]{\1_{\{#1\}}}	%	Command for indicator where argument is a condition
\newcommand{\indE}[1]{\1_{#1}}					%	Command for indicator where the argument is an event 

\newcommand\numberthis{\addtocounter{equation}{1}\tag{\theequation}}

\allowdisplaybreaks

\newtheorem{theorem}{Theorem}[section]
\newtheorem{lemma}[theorem]{Lemma}

\newtheorem{proposition}[theorem]{Proposition}
\newtheorem{corollary}[theorem]{Corollary}

\theoremstyle{definition}
\newtheorem{definition}[theorem]{Definition}

\newtheorem{remark}[theorem]{Remark}

\numberwithin{equation}{section}

\newcommand{\rF}{\mathrm F}

\newcommand{\bC}{\mathbf C}
\newcommand{\bt}{\mathbf t}

\newcommand{\sT}{\mathscr T}

\newcommand{\cE}{\mathcal E}
\newcommand{\cG}{\mathcal G}

\newcommand{\sM}{\mathscr M}
\newcommand{\cP}{\mathcal P}

\newcommand{\cT}{\mathcal T}

\newcommand{\rU}{\mathrm U}
\newcommand{\cY}{\mathcal Y}

\newcommand{\T}{\mathbb T}

\newcommand{\N}{\mathbb N}
\newcommand{\R}{\mathbb R}
\newcommand{\G}{\mathbb G}

\newcommand{\kM}{\mathcal{M}}

\newcommand{\kH}{\mathcal{H}}

\def\l{\ell}

\newcommand{\bT}{\boldsymbol{T}}
\newcommand{\bA}{\boldsymbol{A}}

\newcommand*{\be}{\begin{equation}}
	\newcommand*{\ee}{\end{equation}}
\newcommand*{\ba}{\begin{aligned}}
	\newcommand*{\ea}{\end{aligned}}

\newcommand{\eps}{\varepsilon}

\newcommand{\cF}{\mathcal F}

\newcommand{\floor}[1]{\lfloor #1\rfloor}

\newcommand{\invisible}[1]{}

\AtBeginDocument{%
  \DeclareTextCommandDefault{\l}{\Lslash}%
  \DeclareTextCommandDefault{\L}{\Lslash}%
}

\title{Finding Adam in noisy trees}

\author[1]{Luc Devroye}
\affil[1]{3480 Rue University, Montréal, QC H3A 2A7, Canada; School of Computer Science, McGill University. }
\author[2]{Gábor Lugosi}
\affil[2]{ICREA, Pg. Lluis Companys 23, 
08010 Barcelona, Spain; 
Department of Economics and Business, Universitat Pompeu Fabra;
Barcelona School of Economics.}
\author[3]{Neeladri Maitra}
\affil[3]{214 Harker Hall, 1305 W Green St, Urbana, IL 61801, USA; Department of Mathematics, University of Illinois Urbana-Champaign.}

\date{}

\begin{document}

	\maketitle

\begin{abstract}
		We consider the problem of finding the root vertex of a random uniform attachment tree, when the union of the unlabeled tree and an Erd\H{o}s-Rényi random graph $\G(n,p)$ is observed. We prove that, as long as $p=o(\log n /n)$, for any $\varepsilon>0$, one can construct a confidence set of vertices of size $K(\varepsilon)$ that depends only on $\varepsilon$ and not on $n$, such that it contains the root with probability at least $1-\varepsilon$. This affirms a conjecture of \citet{CrXu21}. Our approach ranks vertices by their Jordan centrality in the largest component of the subgraph spanned by high-degree vertices. We show that the same approach works in other noise models as well.
	\end{abstract}

%\tableofcontents

%\noindent  \bigskip\\
%{\bf Keywords:}  First-passage percolation, long-range first-passage percolation, competition, competing first-passage percolation, continuous-time branching processes.
%        \\\\
%{\bf MSC Subject Classifications:} Primary: 60K35, Secondary: 60C05.

\section{Introduction}

An active area in combinatorial statistics is \emph{network archaeology}, concerned with large networks that evolve in time. Upon observing the current configuration of the network, one wishes to infer its past properties. For example, one may wish to study the origin of a rumor spreading in social networks, the spread of computer viruses in computer networks, or the spread of a disease upon observing the network of currently infected individuals. 

Large networks that change dynamically over time are often modelled using simple random dynamics, which, despite being relatively simple, can provide a quite accurate macroscopic description of real networks.
The existing literature on network archaeology---see 
\citet{Hai70},
\citet{SZ11}, \citet{ShZa16}, \citet{findingadam}, \citet{LuPe19},
\citet{ReDe19}, \citet{AdDeLuVe21},
\citet{CuDuKoMa15},  \citet{BuMoRa15}, \citet{BuElMoRa17}, \citet{JoLo16}, \citet{JoLo17a},  \citet{KhLo16},
\citet{BaBh20}, \citet{BaHu23}, \citet{CrXu21}, \citet{CoCuLaLaRi24}, \citet{AdFoKhLaTe24}, \citet{AdBrBrBrLu25}, \citet{BrGiLuSu25},
\citet{CoDeLu26},
\citet{BaBrJo26}--–, mostly focuses on the simplest possible kind of networks, that is, trees. In various models of growing random trees, it is well understood to what extent one can identify the tree's origin (i.e., the root) by observing a large unlabeled tree. These models include uniform attachment, linear preferential attachment, and diffusion over regular trees. Perhaps surprisingly, in all these models, the size of the tree does not play a role. In other words, there exist root-finding algorithms that can select a small number of nodes such that the root vertex is among them with high probability, regardless of how large the tree is.

A largely unexplored topic in network archaeology is robustness.
Mis-specified models, noisy observations, and censored data all pose highly nontrivial
challenges both from methodological and mathematical points of view. 
In a canonical model proposed by \citet{CrXu21a}, one aims at
finding the root in random recursive trees based on a noisy attachment model
in which the tree is observed with random edges added to it. More precisely, the statistician
observes the union of a random recursive tree and an Erd\H{o}s-Rényi random graph.
In preferential attachment trees, where the root vertex typically has a very high degree,
root-finding methods that are based on vertex degrees have near-optimal inference in the noiseless case
(\citet{BaHu23}, \citet{CoCuLaLaRi24}). Hence, these methods are highly noise-tolerant, since
sparse Erd\H{o}s-Rényi random graphs do not have high-degree vertices.
On the other hand, the case of the uniform attachment tree is significantly more challenging, since the root is not among the vertices with the highest degree (\citet{devroye1995strong}, \citet{addario2018high}, \citet{eslavadegree}).
Even though \citet{CrXu21a} determine the optimal root-vertex estimator, it is not known if bounded-size confidence sets exist for the root vertex, even if
the noise is a sparse Erd\H{o}s-Rényi random graph with constant average degree. 
\citet{CrXu21a} prove that if the noisy edge probability is less than $\log n/n$, then there exists a confidence set of size at most $n^\gamma$ for some $\gamma<0.8$. At the same time, Crane and Xu conjecture that if the noisy edge probability is $o(\log n/n)$, then the optimal confidence set has size $O_p(1)$.
The main result of this paper is precisely a proof of this conjecture. 
In particular, for every $\varepsilon \in (0,1)$, we construct a confidence set $K$ of size $|K|=O(\log(1/\varepsilon)/\varepsilon)$ that contains the root vertex with probability at least $1-\varepsilon$, independently of the size of the observed graph. The problem of root finding with Erd\H{o}s-Rényi noise is illustrated by Figure \ref{fig:main}.

\begin{figure}[htbp]
    \centering
    \begin{subfigure}[b]{0.6\textwidth}
        \centering
        \includegraphics[width=\textwidth]{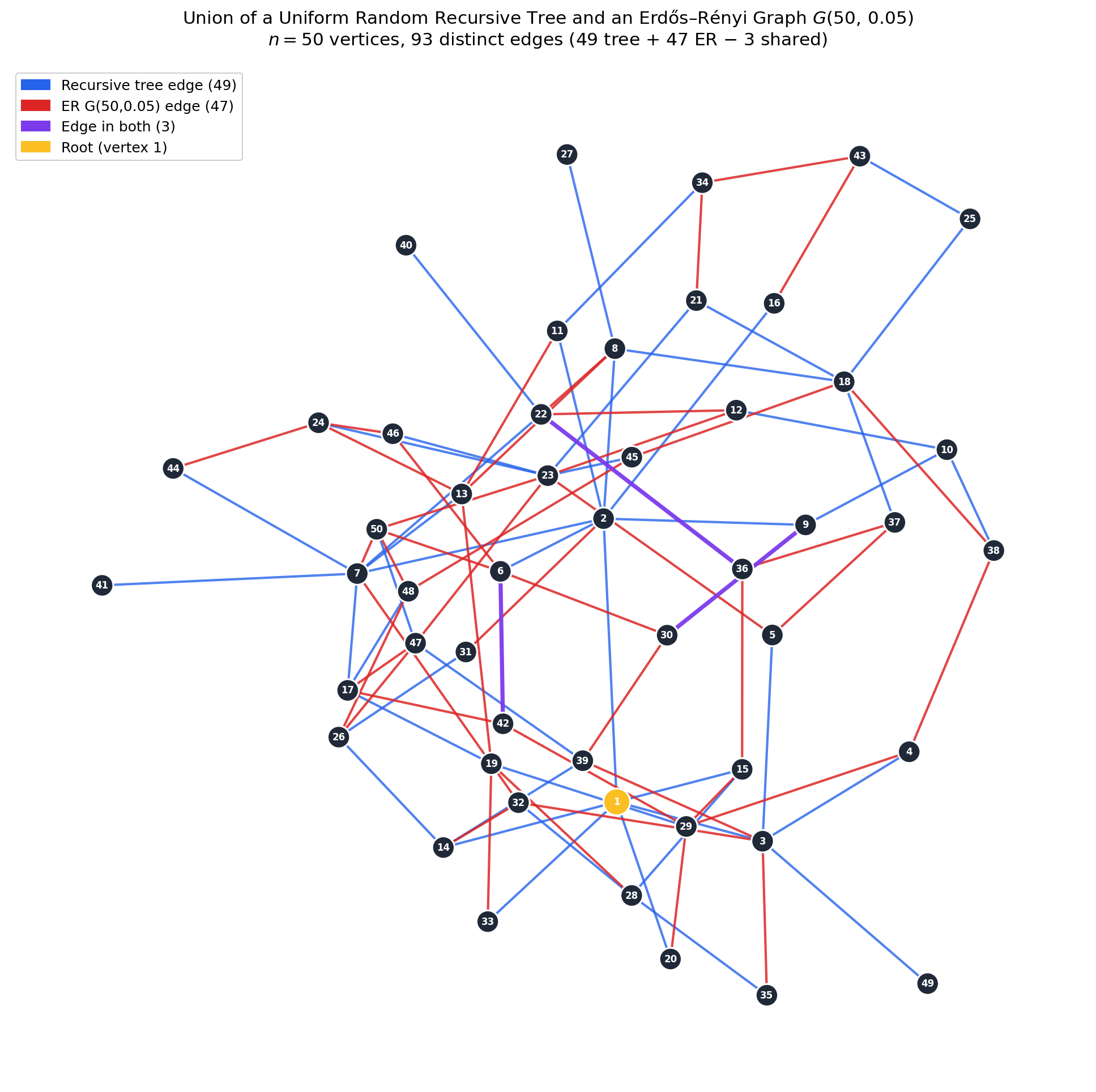}
        \caption{Union of a {\sc urrt} and $\G(50,0.05)$. The root vertex is colored yellow.}
        \label{fig:first}
    \end{subfigure}
    \hfill % Adds horizontal space between the figures
    \begin{subfigure}[b]{0.6\textwidth}
        \centering
        \includegraphics[width=\textwidth]{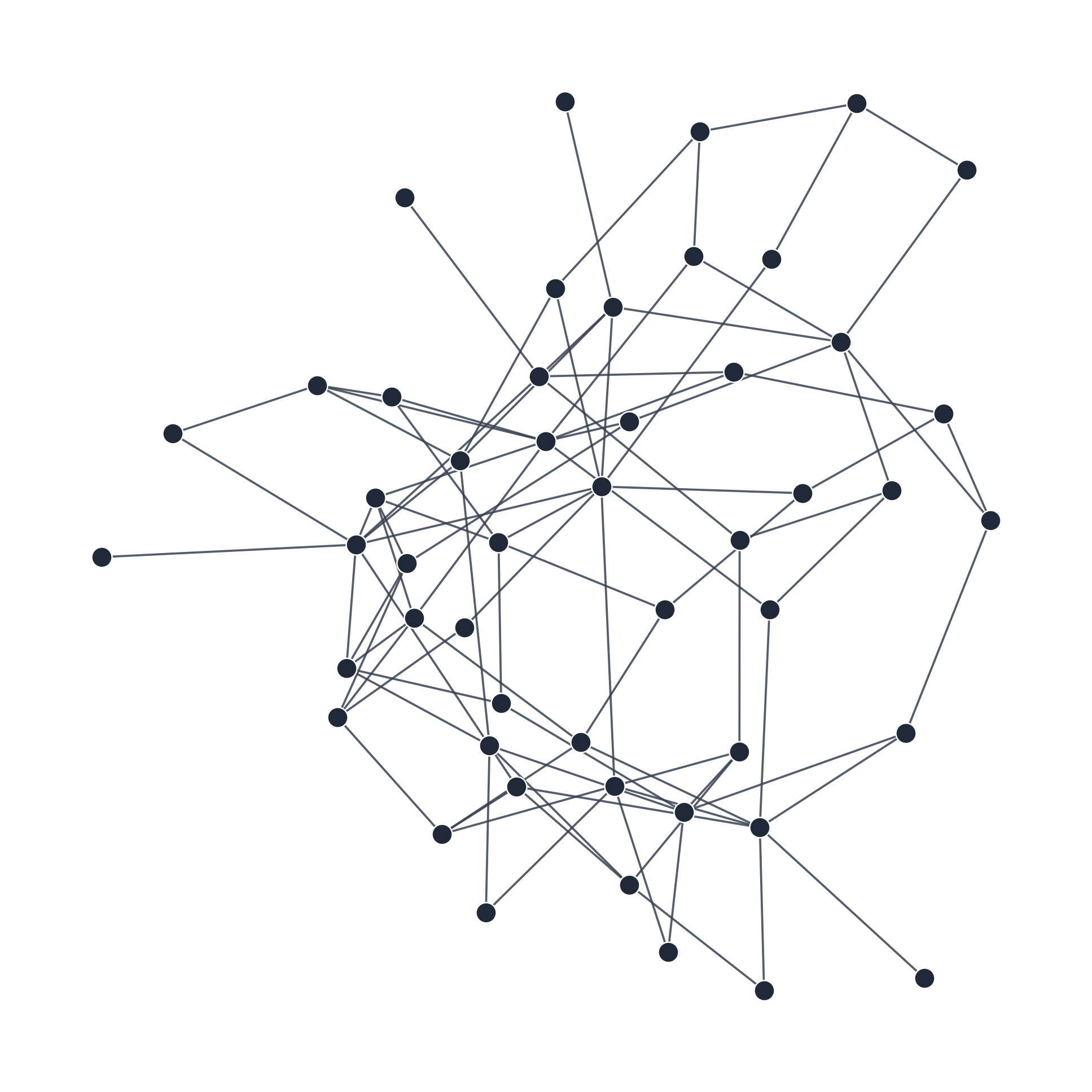}
        \caption{The observed graph.}
        \label{fig:second}
    \end{subfigure}
    \caption{The problem of root finding with Erd\H{o}s-Rényi noise. Upon observing the graph at the bottom, the statistician is asked to find a small set of vertices that contains vertex $1$. 
    }
    \label{fig:main}
\end{figure}

Our construction is based on ``filtering out'' the noise edges belonging to the Erd\H{o}s-Rényi graph by considering the subgraph spanned by vertices whose degree exceeds a carefully chosen threshold. We show that, with high probability, the largest component of the resulting graph is a tree that contains the root vertex. By listing the $|K|$ most central vertices of this tree  (according to their \emph{Jordan centrality}), we obtain the desired confidence set. We also show that our framework can handle noise models different from Erd\H{o}s-Rényi graphs. As an example, we consider \emph{random matching} noise, when the uniform attachment tree is observed with a random perfect matching added to the vertices; see Figure \ref{fig:main2} for an illustration.

\begin{figure}[htbp]
    \centering
    \begin{subfigure}[b]{0.6\textwidth}
        \centering
        \includegraphics[width=\textwidth]{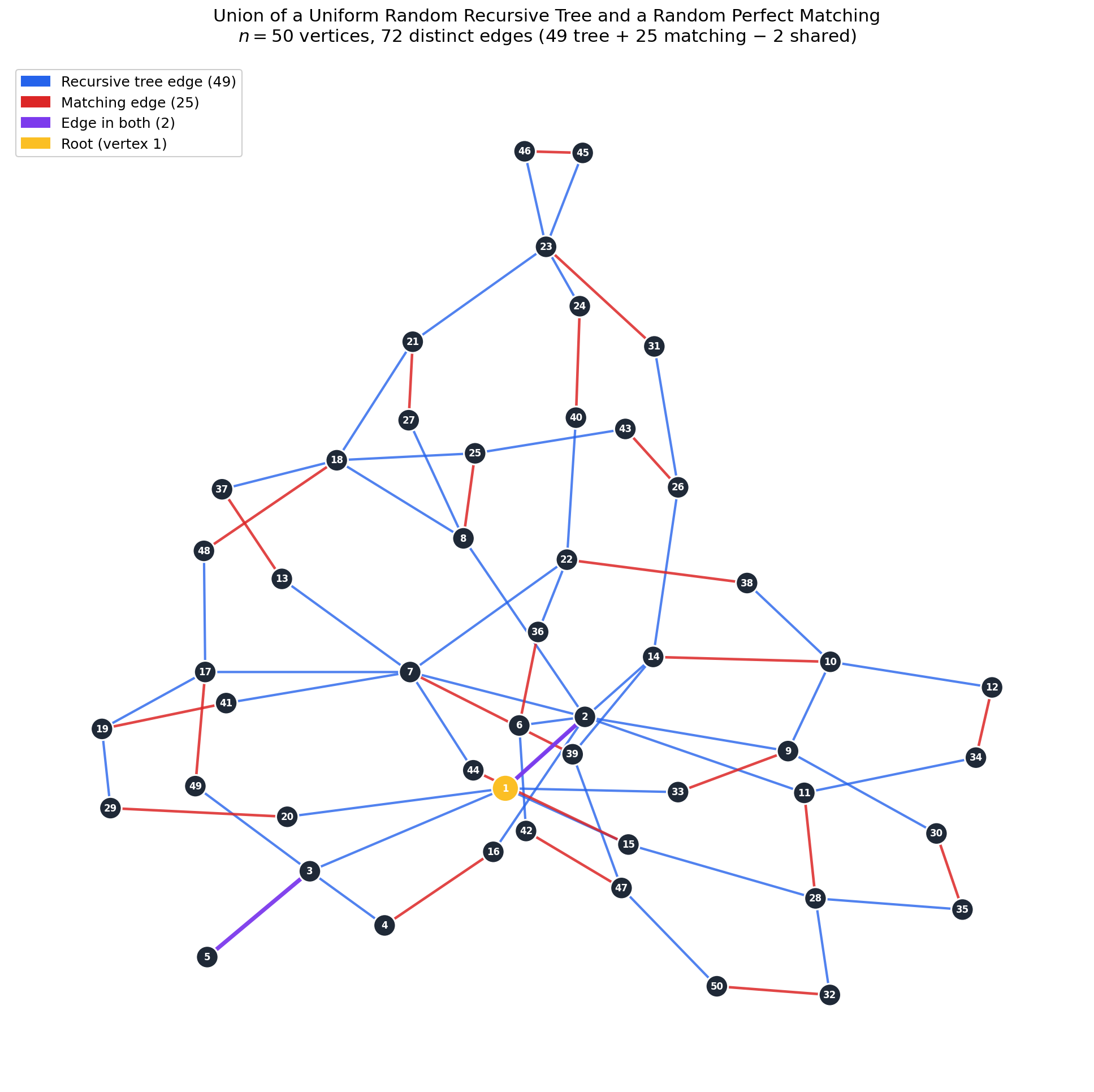}
        \caption{Union of a {\sc urrt} and a random perfect matching on $n=50$ vertices.}
%        \label{fig:first}
    \end{subfigure}
    \hfill % Adds horizontal space between the figures
    \begin{subfigure}[b]{0.6\textwidth}
        \centering
        \includegraphics[width=\textwidth]{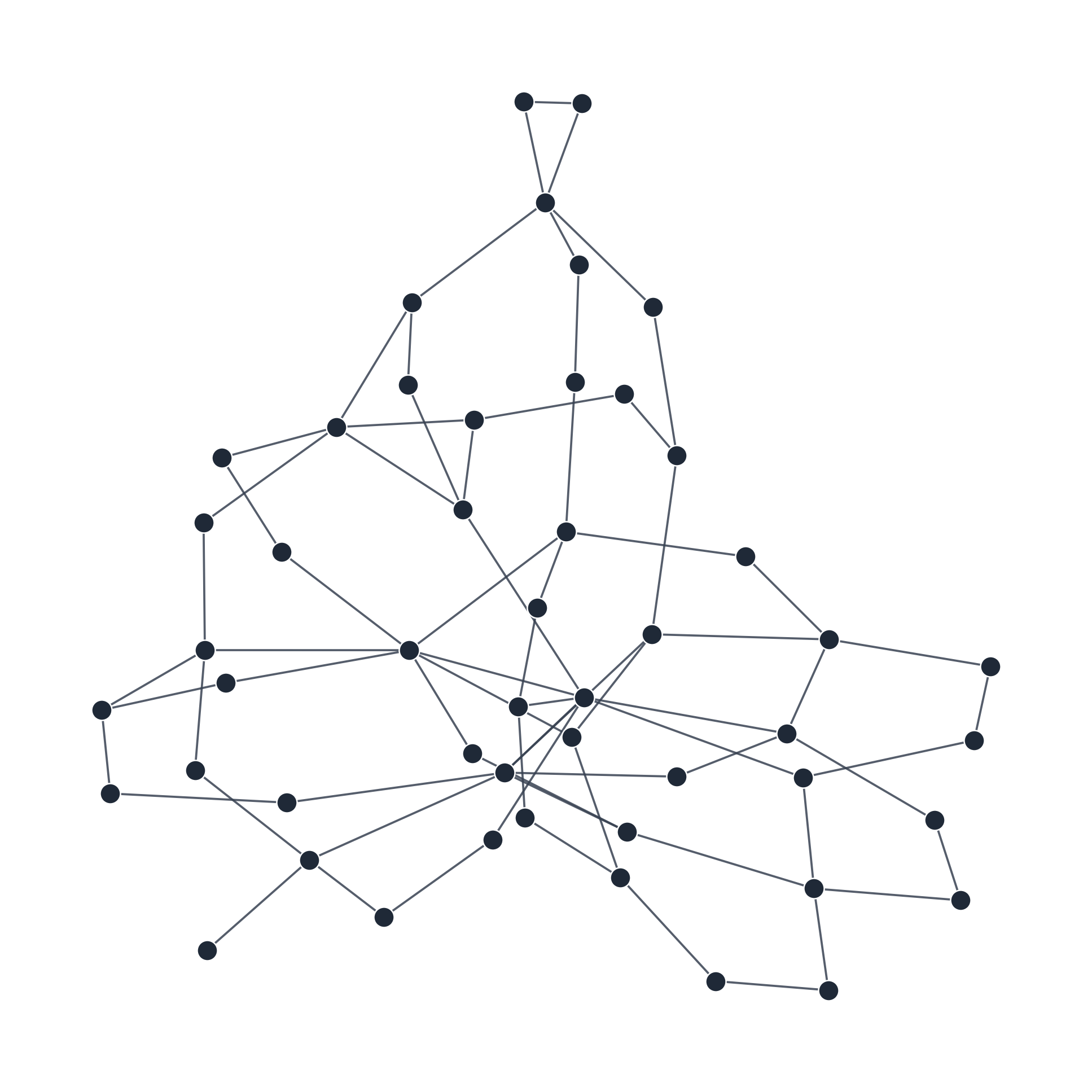}
        \caption{The observed graph.}
%        \label{fig:second}
    \end{subfigure}
    \caption{The problem of root finding with an added perfect matching.}
    \label{fig:main2}
\end{figure}

\subsection{Model definition and main results}
In this section, we state the key definitions and our main results. 

\begin{definition}
    A uniform random recursive tree (\textsc{urrt}) on the vertex set $[n]$ is a connected acyclic graph constructed recursively as follows.
    \begin{itemize}
        \item [(i)] Start with the graph $\cT_1$, where $\cT_1$ is the singleton graph on $\{1\}$ with an isolated vertex $1$.
        \item [(ii)] For each $i \geq 1$, given $\cT_{i}$, to construct $\cT_{i+1}$, introduce the vertex $i+1$, which randomly samples a parent from $\cT_i$ and connects to it via an edge.
    \end{itemize}
    In particular, for each $i\geq 1$, $\cT_i$ is a tree on the vertex set $[i]$.
\end{definition}
Our main interest in this paper is to infer the root vertex $1$.
%which, following \cite{findingadam}, we call the \emph{Adam} of the network. 
In particular, we consider the problem of finding the root when the unlabeled tree $\cT_n$ is observed in the presence of additional random edges. In order to address this problem, 
we first consider suitably pruned sub-forests $\cF_n$ of $\cT_n$, and assume that the combinatorial structure of the forest $\cF_n$ is observed \emph{without the vertex labels}. The first question we address is the following: Observing the pruned sub-forest, given an $\eps>0$, can one construct a confidence set $K$ whose size does not depend on $n$ but only on $\eps$, such that the root vertex lies in $K$ with probability at least $1-\eps$?

The study of the case $\cF_n=\cT_n$ was initiated by Bubeck, Devroye, and Lugosi \cite{findingadam}, who proved that the answer is yes and gave explicit upper bounds on the size of the confidence set $K(\eps)$. 
Addario-Berry, Fontaine, Khanfir, Langevin, and Têtu \cite{AdFoKhLaTe24} show that the  size of $K(\epsilon)$ can be as small as $e^{C\sqrt{\log(1/\epsilon)}}$ for a constant $C>0$.

%In this paper, we consider particular cases of the more general version of the problem with $\cF_n$, and in particular, apply our results in finding the root where the \textsc{urrt} we observe is \textit{polluted with noise}, that is, the tree $\cT_n$ is observed in the presence of extra random edges.

Let us now define the precise types of forests $\cF_n$ we consider. Throughout the paper, for any graph $G$ and a vertex $v$ in it, $d_G(v)$ denotes the degree of $v$ in $G$.

%\begin{definition}[$\alpha$-forests]\label{def:alpha_high_deg_for}
     %The sub-forest of $\cT_n$ spanned by vertices $v \in [n]$ satisfying $d_{\cT_n}(v)>(1-\alpha)\log n$, is called the `$\alpha$-forest' of $\cT_n$, and is denoted by $\cF_n(\alpha)$.
%\end{definition}

%\red{[ N: better to work with the next def. ]}

\begin{definition}[$\alpha$-forests]\label{def:alpha_high_deg_for}
     Let $\alpha\in (0,1)$. 
Define the  \emph{$\alpha$-forest} $\cF_n(\alpha)$ obtained by removing every edge from $\cT_n$ which has at least one end vertex $u$ with $d_{\cT_n}(u)\leq (1-\alpha)\log n$. 
\end{definition}

\begin{remark}\label{rem:site_perco_forest}
    Note that if we let $\cF_n^{(\rm V)}(\alpha)$ to be the subforest of $\cT_n$ spanned by all vertices $v$ with $d_{\cT_n}(v)>(1-\alpha)\log n$, then we obtain $\cF_n(\alpha)$ from $\cF_n^{(\rm V)}(\alpha)$ by including all the vertices that are not in it as isolated vertices.  
    %In other words, in Definition \ref{def:alpha_high_deg_for}, we are percolating by edges, while to create $\cF_n^{(\rm V)}(\alpha)$, we are percolating by vertices. 
    
    In particular, note that a vertex subset of size at least two forms a connected component in $\cF_n(\alpha)$ if and only if it does so in $\cF_n^{(\rm V)}(\alpha)$. 
    %Sometimes it is convenient to think of components of size at least two in $\cF_n(\alpha)$ as those in $\cF_n^{(V)}(\alpha)$.
\end{remark}

%$\cF_n(\alpha)$ can also be thought of as a forest of vertices with high \emph{degree centrality}. 
Our first theorem states that root finding is possible in the forest $\cF_n(\alpha)$.

\begin{theorem}\label{thm:forest}
Fix $\varepsilon>0$. For any $\alpha\in (0,1)$, observing only the combinatorial structure of the graph $\cF_n(\alpha)$ without the vertex labels, one can construct a confidence set $K=K(\varepsilon)$ whose size does not depend on $n$, and such that $\liminf_{n \to \infty}\Prob{1 \in K(\varepsilon)} \geq 1-\varepsilon$.
\end{theorem}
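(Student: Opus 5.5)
Here is the plan: first show that the component of $\cF_n(\alpha)$ containing the root is, with high probability, a large tree that is itself a random recursive tree in disguise. Then apply Jordan-centrality root finding to it, as in \cite{findingadam}.

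\textbf{Step 1 (the early vertices survive).} Fix a large integer $m=m(\varepsilon)$. Vertex $j\le m$ has degree in $\cT_n$ equal to $1+\sum_{i>j}\mathrm{Ber}(1/(i-1))$, which concentrates around $\log n$. Hence $d_{\cT_n}(j)>(1-\alpha)\log n$ for all $j\le m$, with probability $1-o(1)$ by a Chernoff bound. So the subtree $\cT_m$ is contained in $\cF_n(\alpha)$, and the root lies in the component $C_n$ of $\cF_n(\alpha)$ containing $\{1,\dots,m\}$.

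\textbf{Step 2 (structure of $C_n$).} I would use the standard decomposition of $\cT_n$ at time $m$. Removing the edges of $\cT_m$ splits $\cT_n$ into $m$ subtrees $S_1,\dots,S_m$ rooted at $1,\dots,m$. Their sizes, normalised by $n$, converge to a Dirichlet$(1,\dots,1)$ (uniform-spacings) vector, and conditionally on the sizes each $S_j$ is a \textsc{urrt}. The key claim is that each $C_n\cap S_j$ still contains a positive fraction of the vertices of $S_j$. Equivalently, the high-degree vertex set, restricted to a subtree of size $\Theta(n)$, still has a connected piece of linear size containing the subtree root. This should follow because a vertex $v$ with $v\le n^{\alpha/2}$ has expected degree at least $(1-\alpha/2)\log n$, so it survives with high probability; more strongly, the whole early tree $\cT_{n^{\delta}}$ survives for small $\delta<\alpha$. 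A union bound works because the degree lower tail is $n^{-c}$ with $c$ exceeding $\delta$ when $\delta$ is small relative to $\alpha$. Thus $C_n\supseteq\cT_{N}$ with $N=n^{\delta}\to\infty$.

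\textbf{Step 3 (centrality argument).} Let $K$ be the $k$ vertices of the largest component of the observed forest with smallest Jordan centrality $\psi(v)=\max$ size of a component of $C\setminus v$. I would compare $\psi$ on $C$ with the analysis of \cite{findingadam} applied to the \textsc{urrt} $\cT_N\subseteq C$. The estimate needed is that, for any vertex $v$ of $C$ outside $K$'s competition, removing it leaves a component of relative mass at least that left by removing the root. Here the relative masses are governed by how $C$ distributes over the subtrees $S_j$ of $\cT_N$. Step 2 makes this tractable: the mass of $C$ hanging below each vertex of $\cT_N$ is, up to $1+o(1)$ factors, proportional to the \textsc{urrt} subtree mass, because the degrees of descendants are exchangeable within the subtree, so the law of large numbers applies to the surviving fraction. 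The estimates of \cite{findingadam} then go through: the probability that the root is not among the $k$ most central is bounded by $\sum_{i>k}$ Beta-type tail terms, which is at most $\varepsilon$ for $k=k(\varepsilon)$. It must also be checked that $C_n$ is the largest component of $\cF_n(\alpha)$. This follows because every other component lies in $\cT_n\setminus\cT_N$ and hangs below a single late vertex, and such subtrees have size $O(n/N)$ in aggregate per branch.

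\textbf{Main obstacle.} The hard step is the self-similarity claim in Step 3: conditionally on the shape of $\cT_N$, the number of surviving $C$-vertices below each early vertex must be asymptotically proportional to the subtree size. Degrees are not independent across vertices, and survival depends on $\log n$ rather than on the subtree size. I would first try a second-moment computation: fix a subtree, count the vertices $u$ that are connected to its root through high-degree vertices, and show that the count is concentrated.
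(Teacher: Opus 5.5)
Your skeleton matches the paper's: the early tree survives, you identify the root component, and you compare Jordan centralities with the Dirichlet proportions of the early tree. But the structural claim in Step~2 is false, and both Step~3 and your largest-component argument rest on it. $C_n\cap S_j$ cannot contain a positive fraction of $S_j$. Every vertex of a non-trivial component of $\cF_n(\alpha)$ has degree $>(1-\alpha)\log n$, and there are only $n^{1-(1-\alpha)\log 2+o(1)}=o(n)$ such vertices. In fact $|C_n|=n^{\alpha+o(1)}$, and the paper needs both bounds. The upper bound comes from dominating pruned subtrees by Bienaym\'e--Galton--Watson trees with offspring $X_v\ind{X_v>(1-\alpha)\log n}$. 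The lower bound comes from the `Bad' vertices and a negative-association lemma. Survival of $\cT_{n^\delta}$ gives neither bound.

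With the correct size, your comparison ``other components hang below some $v>N=n^\delta$, hence have size $O(n/N)$'' no longer separates anything. Since $\delta<\gamma(\alpha)<\alpha/2$, one has $n^{1-\delta}\gg n^{\alpha}$ for every $\alpha\le 2/3$. The missing idea is self-similarity. Conditionally on its size $T\approx n^{1-u}$, the subtree of $v=n^{u}$ is a \textsc{urrt} whose threshold $(1-\alpha)\log n$ equals $(1-\alpha_*)\log T$ with $\alpha_*=(\alpha-u)/(1-u)$. So the component of $v$ has size about $T^{\alpha_*}=n^{\alpha-u}$. Making this uniform over all $v\ge n^{\gamma-\eps}$ requires high-moment tail bounds for the root component, not first moments.

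The proportionality you assert in Step~3 is true, but it is the heart of the proof, and your justification does not work. Degrees of descendants are not exchangeable, since they depend strongly on labels. What is exchangeable is the vector of hanging pieces $Y(i)=|\psi^{(1-\alpha)\log n-1}_{\rm off}(\cT_n(i;E_N),i)|$, $i\le N$. This holds because the P\'olya-urn vector $(|\cT_n(i;E_N)|)_{i\le N}$ is exchangeable and, given the sizes, the subtrees are independent \textsc{urrt}s. Exchangeability yields negative correlation conditionally on $Z=\sum_{i\le N}Y(i)$. A conditional Chebyshev bound for the mass below each of the $M$ branches then reduces to three inputs:
\begin{itemize}
\item a single-piece estimate $\E[Y(i)^2]\le n^{2(\alpha-\delta)+o(1)}$, again from the recursive \textsc{urrt} structure and high-moment bounds;
\item the lower bound $Z\ge n^{\alpha-o(1)}$;
\item $\Theta(N)$ pieces per branch.
\end{itemize}
The direct second-moment count of $C_n$-vertices in a branch that you propose would have to control the strong correlations created by shared ancestral paths: a vertex belongs to $C_n$ only if every ancestor up to the branch root has high degree. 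The exchangeability reduction is precisely what avoids this.
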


\begin{remark}[Size of $K$]\label{rem:size_K}
%\textcolor{blue}{for some $\delta>0$,} letting $\varepsilon=\frac{4\delta}{1-\delta}$ above, 
Our proofs show that can take $K(\eps)$ to have size at most $C\log(1/\epsilon)/\eps$ for a constant $C>0$. Indeed, our analysis shows that the bound for the confidence set obtained by taking the $K(\epsilon)$ most central vertices 
(according to Jordan centrality) in a \textsc{urrt} derived by \citet{findingadam} is inherited to the current setting. Note that a more careful analysis of Jordan centrality in a \textsc{urrt} yields the improved bound $C/\epsilon$, see \cite[Theorem 9]{CoDeLu26}. We believe that a similar improvement is possible, but we do not pursue this direction, in order to keep the arguments manageable. Similarly, we believe that by ordering vertices according to \emph{rumor centrality} instead of Jordan centrality may give confidence sets of size $e^{C\sqrt{\log(1/\epsilon)}}$ as in \cite{AdFoKhLaTe24}, but we leave this for future research.
\end{remark}

%\red{ [ do we need $\alpha<1-(\ln 4)^{-1}$ for this result? Isn't it needed for the next theorem for the `denoising' argument? The last theorem should be true for any $\alpha \in (0,1)$, right? ] }

This result opens doors towards root finding from noisy observations of the \textsc{urrt} $\cT_n$. To state these results, we begin with a definition.

\begin{definition}[Noisy recursive trees]\label{def:noisy_rec_trees}
For any graph $G=(V,E)$ on the vertex set $V=[n]$, define the graph $\cT_n(G)$ as the union of $\cT_n$ and $G$. That is, $\cT_n(G)$ has vertex set $[n]$, with an edge between two vertices present if and only if the corresponding edge is either present in $\cT_n$ or in $G$. 
%We call the graph $\cT_n(G)$ a `$G$-noisy recursive tree'.
\end{definition}

%We aim to observe the graph $\cT_n(G)$ without its vertex labels and try to find vertex $1$. 
We think of the edges coming from $G$ as \emph{noise}, hiding information about the structure of $\cT_n$ from the statistician. The canonical example, considered by Crane and Xu \cite{CrXu21}, is when $G$ is an
 Erd\H{o}s-R\'{e}nyi random graph.

\begin{theorem}\label{thm:ER_noise}
Fix $\varepsilon>0$. Consider $\cT_n(G)$, where $G=\G(n,\lambda/n)$ is an Erd\H{o}s-R\'{e}nyi random graph independent of $\cT_n$, with edge connection probability $\lambda/n$, where $\lambda=\lambda_n$ may depend on $n$. If $\lambda_n=o(\log n)$, then upon observing the graph $\cT_n(G)$ without its vertex labels, one can construct a confidence set $K=K(\varepsilon)$ of vertices, whose size does not depend on $n$, such that $\liminf_{n \to \infty}\Prob{1 \in K}\geq 1-\varepsilon$.
\end{theorem}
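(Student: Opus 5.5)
We reduce Theorem~\ref{thm:ER_noise} to Theorem~\ref{thm:forest} by a denoising step. Fix $\alpha\in(0,1)$ small; we use this freedom below. From the observed graph $H=\cT_n(G)$ we form the subgraph $H_\alpha$. It keeps only the edges of $H$ whose two endpoints both have $d_H$-degree larger than $(1-\alpha)\log n$. We then output the $K(\varepsilon)$ most Jordan-central vertices of the largest component of $H_\alpha$. The goal is to show that with probability $1-o(1)$ this largest component coincides, as an unlabeled tree, with the largest component of $\cF_n(\alpha')$ for suitable $\alpha'$, and that it contains the root. The bound $\liminf\Prob{1\in K}\ge 1-\varepsilon$ then follows directly from Theorem~\ref{thm:forest}, or more precisely from its proof, which ranks vertices of that component by Jordan centrality.

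\textbf{Step 1: degrees barely move.} Since $\lambda_n=o(\log n)$, each vertex receives $\mathrm{Bin}(n-1,\lambda_n/n)$ noise edges. A Chernoff bound gives that this is at most $\delta\log n$ with probability $1-n^{-c(\delta)}$. The exponent $c(\delta)$ grows like $\delta\log(\delta\log n/\lambda_n)\to\infty$, so a union bound over all vertices gives $\max_v d_G(v)\le\delta\log n$ for every fixed $\delta>0$, with high probability. Hence every vertex whose $H$-degree exceeds $(1-\alpha)\log n$ has tree degree above $(1-\alpha-\delta)\log n$, and conversely. So the vertex set of $H_\alpha$ is sandwiched between those of the vertex-forests $\cF^{(\rm V)}_n(\alpha)$ and $\cF^{(\rm V)}_n(\alpha+\delta)$ of Remark~\ref{rem:site_perco_forest}.

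\textbf{Step 2: the noise edges among high-degree vertices are harmless.} The number of vertices with $d_{\cT_n}(v)>(1-\alpha-\delta)\log n$ is about $n^{1-(1-\alpha-\delta)\log 2+o(1)}$ (standard \textsc{urrt} degree tail, $\Prob{d\ge k}\approx 2^{-k}$), i.e.\ $n^{\beta}$ with $\beta<1$ when $\alpha$ is small. Because $G$ is independent of $\cT_n$, the expected number of $G$-edges among these vertices is about $n^{2\beta}\lambda_n/n$. This need not be $o(1)$, so we cannot simply say there are no noise edges. Instead we show that no noise edge touches the giant component of the high-degree forest, and that noise edges cannot merge small components into something larger than the giant. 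For the first claim we need the giant to have size $n^{\gamma}$ with $n^{\gamma+\beta}\lambda_n/n=o(1)$. For the second we need the number of noise edges among high-degree vertices to be small enough that joined clusters stay of size $o(\text{giant})$. A cleaner route is to use the structure of the proof of Theorem~\ref{thm:forest}: the giant component there presumably consists of the early vertices (ancestors of large subtrees), and its size and the sizes of the other components are controlled there.

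\textbf{Step 3: sandwiching.} Finally we must handle the sandwich between $\alpha$ and $\alpha+\delta$. The largest component of $H_\alpha$, minus noise edges, lies between the giant components of $\cF_n(\alpha)$ and $\cF_n(\alpha+\delta)$. We then argue that Jordan centrality ranking near the root is stable under this perturbation. The simplest way is to note that the argument for Theorem~\ref{thm:forest} presumably only uses that the component is a subtree of $\cT_n$ containing the root, whose relevant subtree sizes are comparable to those in $\cT_n$. Such estimates hold uniformly over vertex sets trapped in the sandwich.

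\textbf{Main obstacle.} I expect Step~2 to be the main obstacle: excluding noise edges that attach to the giant component or create a rival larger component. This needs quantitative size bounds for components of the high-degree forest, with the exponents balanced by choosing $\alpha$ small, together with careful use of the independence of $G$ from $\cT_n$ conditional on degrees.
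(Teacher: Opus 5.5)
\textbf{The gap is in Step 3, not Step 2.}

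Step 2 is in fact immediate. With your own count $\beta=1-(1-\alpha-\delta)\log 2$, choosing $\alpha+\delta<1-\frac{1}{2\log 2}$ gives $\beta<1/2$. So the expected number of $G$-edges among the candidate high-degree vertices is $n^{2\beta-1+o(1)}\lambda_n=o(1)$, and w.h.p.\ the filtered graph contains no noise edges at all (Proposition~\ref{prop:no_edge_high_deg}). No analysis of merging components is needed.

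The noise still decides \emph{which} vertices survive: $v$ is kept iff $d_{\cT_n}(v)+Z_v>(1-\alpha)\log n$, where $Z_v$ counts its new $G$-neighbours. This is a vertex-dependent random shift. So, for example with constant $\lambda$, the root component is not the root component of $\cF_n(\alpha')$ for any fixed $\alpha'$, even approximately. Theorem~\ref{thm:forest} therefore cannot be used as a black box; contrast Theorem~\ref{thm:matching_noise}, where the shift is exactly $1$.

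Your fallback, that the Jordan ranking is stable under the sandwich $\bC_\alpha(1)\subseteq\bC_{H,\alpha}(1)\subseteq\bC_{\alpha+\delta}(1)$, does not work. To place the root among the $M$ most central vertices you must show $|\bC_{H,\alpha}(1)(1;E_M)|<\sum_{j=2}^M|\bC_{H,\alpha}(1)(j;E_M)|$. The sandwich only traps each branch between quantities whose ratio is of order $n^{\delta}$. Even the smallest admissible $\delta_n\approx\max_v d_G(v)/\log n$ leaves $n^{\delta_n}\to\infty$ for constant $\lambda$.

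Nor does the proof of Theorem~\ref{thm:root_finding_c1} use only `a subtree of $\cT_n$ containing the root with comparable subtree sizes'. The giant has size about $n^{\alpha}\ll n$. Its branch sizes are compared with those of $\cT_{\lfloor n^{\gamma'}\rfloor}$ rescaled by $Z/\lfloor n^{\gamma'}\rfloor$. That comparison rests on the exchangeability of the pieces $\psi^{(1-\alpha)\log n-1}_{\rm off}(\cT_n(i;E_r),i)$ grafted onto the first $\lfloor n^{\gamma'}\rfloor$ vertices (Proposition~\ref{prop:exch_subtree_funcs}). Exchangeability gives negative correlation and hence a conditional Chebyshev bound.

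\textbf{The missing idea is to re-establish this exchangeability for the noisy thresholding.} The paper does it in these steps:
\begin{enumerate}
\item The $G$-neighbourhoods of the vertices of $D_{\cT_n}(\alpha+\delta)$ are w.h.p.\ i.i.d.\ binomial subsets (Lemma~\ref{lem:ER_nbhds_large}).
\item Hence the filtered graph w.h.p.\ equals the forest $\kH^{(3)}_n(\alpha,\delta,\eps)$ of Definition~\ref{def:h3}. There, given $\cT_n$, the shifts $Z_v\sim\mathrm{Bin}(n-d_{\cT_n}(v),\lambda/n)$ are conditionally independent (Proposition~\ref{prop:coup}).
\item The piece $t_n(i)$ grafted at each $i\le\lfloor n^{\gamma-\eps}\rfloor$ is then a conditionally independent sample from the fixed kernel $\mu^{(1-\alpha)\log n-1}_{n,\lambda/n}$ applied to $(\cT_n(i;E_{\lfloor n^{\gamma-\eps}\rfloor}),i)$ (Proposition~\ref{prop:sampling_noisy_subtrees}).
\item So the pieces are exchangeable by Proposition~\ref{prop:sample_exch}, and the concentration and Dirichlet arguments of Theorem~\ref{thm:root_finding_c1} go through.
\end{enumerate}

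The sandwich enters only where an $n^{O(\delta)}$ loss is affordable. It bounds $\Exp{|t_n(u)|^2}$ by the noiseless second moment at level $\alpha+\delta$ (Lemma~\ref{lem:sec_mom_Yv}), where the Chebyshev estimate has a spare factor $n^{-\gamma}$. It also bounds the total size $Z$ from below. Without such an argument your Step 3 is an unproved assertion, and it is exactly the heart of the proof.
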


The condition $\lambda_n=o(\log n)$ is sufficient for root finding, but need not be necessary. We leave the question of the exact location for the transition from the possible to the impossible regime as an interesting problem for future research.

%\begin{problem}\label{prob:er_denser}
%Determine the exact threshold $\lambda^*_n$ such that root finding is possible from the noisy recursive tree $\cT_n(G)$ with $G=\G(n,\lambda_n)$ in the sense of Theorem \ref{thm:ER_noise}, if and only if $\lambda_n\leq  \lambda^*_n$.
%\end{problem}

\begin{comment}
Our techniques do not generalize when $\lambda_n=\Omega(\log n)$; see the discussion in Section \ref{sec:discussion}. They do however generalize straightforwardly to \emph{inhomogeneous random graphs}, where the presence of an edge $e$ is determined by a probability $p_e$, as long as
\begin{align*}
    \max_{e\in E(K_n)}p_e=o\left(\frac{\log n}{n} \right).
\end{align*}
In the display above we write $E(K_n)=\{\{i,j\}:i,j\in [n]\}$ to be the set of edges of the complete graph $K_n$.
\end{comment}

To illustrate the generality of our techniques, we show that root finding is also possible when $G$ is a \emph{random perfect matching}: 

\begin{theorem}\label{thm:matching_noise}
Fix $\varepsilon>0$, and assume that $n$ is even. Let $G$ be a uniformly sampled perfect matching on the complete graph on $[n]$, independent of $\cT_n$. Then, observing the unlabeled graph $\cT_n(G)$, it is possible to construct a confidence set $K(\varepsilon)$, such that $\Prob{1\in K(\varepsilon)}\geq 1-\varepsilon$.
\end{theorem}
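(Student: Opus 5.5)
We reduce Theorem \ref{thm:matching_noise} to Theorem \ref{thm:forest}, following the same ``denoising'' strategy as for Erd\H{o}s-R\'enyi noise. In $\cT_n(G)$ with $G$ a perfect matching, every vertex $v$ satisfies $d_{\cT_n}(v)\le d_{\cT_n(G)}(v)\le d_{\cT_n}(v)+1$. Fix $\alpha\in(0,1)$. The statistician keeps only the vertices whose observed degree exceeds $(1-\alpha)\log n+1$. This vertex set $H$ lies between the high-degree set at level $(1-\alpha)\log n+1$ and the one at level $(1-\alpha)\log n$ in $\cT_n$. Up to replacing $\alpha$ by a slightly smaller $\alpha'$ and sandwiching, $H$ is exactly the vertex set of $\cF^{(\rm V)}_n(\alpha)$ with the threshold shifted by one. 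Rather than assume this shift is harmless, I would define the forest directly with threshold $(1-\alpha)\log n+1$ applied to observed degrees, and check that the proof of Theorem \ref{thm:forest} is insensitive to additive $O(1)$ changes in the threshold. Since that proof only uses first- and second-moment degree counts at scale $\log n$, this should be routine.

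The main step is to show that, with high probability, the subgraph of $\cT_n(G)$ induced by $H$ coincides with the subgraph of $\cT_n$ induced by $H$. In other words, no matching edge joins two vertices of $H$. Condition on $\cT_n$. The number of vertices of $\cT_n$ with degree at least $(1-\alpha)\log n$ is about $n\,2^{-(1-\alpha)\log_2 e\cdot\log n}$, since degrees are roughly Geometric$(1/2)$. This equals $n^{1-(1-\alpha)\log 2}$ up to lower-order factors; call it $N$. A uniform perfect matching places an edge between two given vertices with probability $1/(n-1)$. The expected number of matching edges inside $H$ is therefore at most $N^2/(n-1)\approx n^{1-2(1-\alpha)\log 2}$. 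This tends to $0$ provided $2(1-\alpha)\log 2>1$, i.e.\ $\alpha<1-(2\log 2)^{-1}=1-(\log 4)^{-1}$. So we fix such an $\alpha$. By Markov's inequality, with high probability the induced graph on $H$ is exactly the forest $\cF^{(\rm V)}_n$ (with the shifted threshold). Independence of $G$ and $\cT_n$ is used here. The degree-count concentration, giving $|H|\le n^{1-(1-\alpha)\log 2+o(1)}$ w.h.p., follows from the known distribution of degrees in a \textsc{urrt}, for example via expectation and Markov.

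On this good event, the statistician observes the unlabeled forest of Theorem \ref{thm:forest} (with the isolated vertices being irrelevant, by Remark \ref{rem:site_perco_forest}). Applying the confidence set from Theorem \ref{thm:forest}, i.e.\ the Jordan-central vertices of the largest component, with $\varepsilon/2$ gives $\Prob{1\notin K}\le\varepsilon/2+o(1)$. To obtain the non-asymptotic statement for all even $n$, we choose $n_0$ so that the error is below $\varepsilon$ for $n\ge n_0$. For $n<n_0$ we let $K$ be all $n_0$ vertices, which is still a bound independent of $n$.

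I expect the main obstacle to be the robustness of Theorem \ref{thm:forest} to the $+1$ degree perturbation. A vertex with tree degree exactly at the threshold may or may not be retained, depending on whether its matching partner pushes its degree over. If the $+1$ shift turns out not to be harmless, the fallback is to use the observed threshold $(1-\alpha)\log n+1$ and show that the resulting random set lies between two deterministic-threshold forests. The largest components of these two forests contain the root and essentially agree near the top of the tree. This follows because the root region consists of vertices with degree $\approx\log n\gg(1-\alpha)\log n+1$.
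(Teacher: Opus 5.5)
Your main step, that no matching edge joins two vertices of $D:=\{v:d_{\cT_n}(v)>(1-\alpha)\log n\}$ when $\alpha<1-1/(2\log 2)$, is exactly the paper's Proposition~\ref{prop:matching_props}(i). The gap is in how you handle the threshold. You treat the matching as an arbitrary $0/1$ perturbation of the degrees and push the consequences into an unproved robustness claim, which you yourself flag as the main obstacle.

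The missing observation is that a \emph{perfect} matching covers every vertex. So $d_{\cT_n(G)}(v)=d_{\cT_n}(v)+1$ for every $v$ except those whose partner is already a tree neighbour. Given $\cT_n$, the partner of $v$ is uniform on $[n]\setminus\{v\}$. Hence the expected number of $v\in D$ matched to a tree neighbour is $\sum_{v\in D}d_{\cT_n}(v)/(n-1)$. With high probability this is at most $n^{-(1-\alpha)\log 2+o(1)}\to 0$, because $|D|\le n^{1-(1-\alpha)\log 2+o(1)}$ and the maximal degree is $O(\log n)$. Vertices outside $D$ have observed degree at most $(1-\alpha)\log n+1$. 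So with high probability
\[
\{v:d_{\cT_n(G)}(v)>(1-\alpha)\log n+1\}=D
\]
holds \emph{exactly}. Together with (i), the statistician's filtered graph then coincides with $\cF_n(\alpha)$ itself. Theorem~\ref{thm:forest} applies verbatim, with no shifted threshold and no robustness check. This is Proposition~\ref{prop:matching_props}(ii) and the paper's argument.

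Without this observation, neither of your remedies closes the proof.

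\textbf{Robustness of Theorem~\ref{thm:forest}.} Checking that its proof survives an additive $O(1)$ change of threshold only handles \emph{deterministic} thresholds applied to $\cT_n$. In your description the retained set is a matching-dependent set squeezed between two such forests. Nor is that check a matter of degree moment counts: the proof uses the recursive structure and the exchangeability of Proposition~\ref{prop:exch_subtree_funcs}.

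\textbf{Sandwiching.} This does not transfer the Jordan-centrality conclusion. Theorem~\ref{thm:root_finding_c1} needs exchangeability and conditional concentration (Lemma~\ref{lem:neg_cor}, Proposition~\ref{prop:conc_cond_mean}) for the branch sizes of the component actually observed. Let $A\subseteq B\subseteq C$ be root components and $C^{X}_{j;M}:=|X(j;E_M)|$. You only get $C^{A}_{j;M}\le C^{B}_{j;M}\le C^{C}_{j;M}$. Turning this into $\sum_{j\ge 2}C^{B}_{j;M}>C^{B}_{1;M}$ requires an extra estimate such as $|C|/|A|=O_p(1)$. The paper's size bounds are accurate only up to factors $n^{o(1)}$, so they do not provide this, and ``the components essentially agree near the top'' does not supply it either. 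Otherwise you would need an exchangeability argument for the random perturbation, as in the Erd\H{o}s--R\'enyi case (Proposition~\ref{prop:sample_exch}). The exact $+1$ observation makes all of this unnecessary.
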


\begin{remark}
The sizes of $K=K(\eps)$ in both Theorems \ref{thm:matching_noise} and \ref{thm:ER_noise} can be taken to be at most $C\log(1/\eps)/\eps$, as in Remark \ref{rem:size_K}. This is because, as our proof shows, the problem with noise can be broken down into an instance of Theorem \ref{thm:forest}, so that, with high probability, the same set $K$ can be found despite the presence of the noise.
\end{remark}

The techniques that we develop to prove Theorems \ref{thm:forest}, \ref{thm:ER_noise}, and \ref{thm:matching_noise} work under some reasonably general conditions on the noise graph $G$. In Section \ref{Sec:disc} we discuss the applicability of our techniques. 

The outline of the proof of Theorem \ref{thm:forest} is as follows. If all the degrees in the \textsc{urrt} $\cT_n$ were equal to their expected values, then the forest $\cF_n(\alpha)$ would consist of the vertices $\{1,2,\dots,n^\alpha\}$ 
%(as can be proved, e.g., using Lemma \ref{lem:degUandLtail} below). 
In this idealized scenario, $\cF_n(\alpha)$ is simply a \textsc{urrt} on $n^\alpha$ vertices, and standard techniques deliver the confidence set $K$, e.g., by ranking vertices by their Jordan centrality \cite{findingadam}. However, random fluctuations of the vertex degrees 
significantly change the structure of $\cF_n(\alpha)$.  Nevertheless, we show that $\cF_n(\alpha)$ contains an initial chunk of the \textsc{urrt}, up to the first $n^\gamma$ vertices for some $\gamma>0$. Additionally, we show that the connected component containing this chunk (and therefore the root) is the largest component in $\cF_n(\alpha)$. The main remaining challenge is to show that the largest component `behaves' like a \textsc{urrt} (of its own size) in a certain sense, so that ranking vertices by their Jordan centrality can be applied to it to obtain the confidence set $K$. This behavior is established using exchangeability arguments. The proof of Theorem \ref{thm:ER_noise} follows a similar structure, although establishing that the largest component is a  \textsc {urrt}-like object is more challenging.

%\red{[ N: continue from here ]} 

%\red{[ G: Sorry, Neel, I just uncommented some old text. I will add some stuff here. ]} \textcolor{blue}{[ N: ahh okay, no worries. How long till we're done? Are we almost there? :-) ]}

\paragraph{Notation.} For a finite set $A$, $\mathrm{U}(A)$ denotes the law of a uniformly distributed random variable on $A$. For $p\in [0,1]$, by $\mathrm{Ber}(p)$ denotes the law of a Bernoulli random variable with success parameter $p$. For $n\geq 1$ and $p \in [0,1]$, $\mathrm{Bin}(n,p)$ denotes a binomial random variable with $n$ independent trials each with success probability $p$. For any positive integer $N$, we write $[N]$ to denote the set $\{1,2,\dots,N\}$. We use $\preceq$ and $\succeq$ to denote stochastic ordering between random variables.

\paragraph{Organization of the rest of the paper.} We begin with some preliminary results on \textsc{urrt}s in Section \ref{sec:prelims}. In Section \ref{sec:root_find_highdeg}, we prove Theorem \ref{thm:forest}. In Section \ref{sec:root_find_ER}, we show Theorem \ref{thm:ER_noise}. In Section \ref{Sec:matching}, we establish Theorem \ref{thm:matching_noise}. Finally, we summarize our technique and informally discuss an example in Section \ref{Sec:disc}.

\section{Preliminary results}\label{sec:prelims}
In this section, we gather some technical tools. 
%Theorems \ref{thm:forest} and \ref{thm:ER_noise}. 

\subsection{Degrees and their deviations in uniform random recursive trees}
For any $\alpha \in (0,1)$, define the function
\begin{align*}
    f_{\alpha}:[0,1]\to \R,\;\;f_\alpha(x):=x-\alpha+(1-\alpha)\log\left( \frac{1-x}{1-\alpha}\right). \numberthis \label{eq:def_f_alpha}
\end{align*}
The function $f_\alpha$ (see Figure \ref{gfunction}) appears as a \emph{rate} function for large deviation events regarding degrees in the \textsc{urrt} $\cT_n$, as shown below.

For any $v\in [n]$, define
\begin{align*}
    x(v):=\frac{\log v}{\log n}. \numberthis \label{def:x(v)}
\end{align*}
Thus, note that $v=n^{x(v)}$ for any $v \in [n]$.
\begin{definition}[Degrees and offspring]\label{def:deg_off}
    For any $v\in [n]$, by $d_{\cT_n}(v)$ we denote the \emph{degree}, that is, the number of neighbors of $v$ in $\cT_n$. We denote by $c_{\cT_n}(v)$ the number of \emph{offspring} of $v$, i.e., the number of neighbors of $v$ with label larger than $v$ in $\cT_n$. 
\end{definition}

\begin{lemma}\label{lem:degUandLtail}
    For any $\alpha \in (0,1)$ and $v \in [n]$, we have
    \begin{align*}
&        \Prob{c_{\cT_n}(v)>(1-\alpha)\log n}\leq n^{f_\alpha(x(v))},\;\;\text{if}\;\;x(v)>\alpha~;\\& \Prob{c_{\cT_n}(v)<(1-\alpha)\log n}\leq n^{f_\alpha(x(v))},\;\;\text{if}\;\;x(v)<\alpha~.
    \end{align*}
    Furthermore, the above bounds hold when we replace the quantity $c_{\cT_n}(v)$ by $d_{\cT_n}(v)$.
\end{lemma}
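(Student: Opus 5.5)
The plan is to write the offspring count as a sum of independent Bernoullis and apply Chernoff bounds. In the \textsc{urrt}, vertex $j>v$ chooses its parent uniformly from $[j-1]$, independently of the other vertices. Hence
\begin{align*}
c_{\cT_n}(v)=\sum_{j=v+1}^{n} B_j,\qquad B_j\sim \mathrm{Ber}(1/(j-1))\ \text{independent},
\end{align*}
with mean
\begin{align*}
\mu=\sum_{j=v}^{n-1}\frac1j=\log(n/v)+O(1/v)=(1-x(v))\log n+O(1/v).
\end{align*}
For $x(v)>\alpha$ we have $\mu<(1-\alpha)\log n$, so the first event is an upper deviation. For $x(v)<\alpha$ the second event is a lower deviation.

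I would use the exponential moment bound $\E e^{tc}\le \exp\big((e^t-1)\mu\big)$, valid for all real $t$ because $1+p(e^t-1)\le e^{p(e^t-1)}$. For the upper tail at level $a=(1-\alpha)\log n$ I take $e^t=a/\mu'$, where $\mu'\ge \mu$ is a convenient upper bound for the mean; monotonicity of the Poisson-type bound in the mean makes this legitimate. This gives
\begin{align*}
\Prob{c\ge a}\le \exp\big(a-\mu'-a\log(a/\mu')\big).
\end{align*}
Substituting $\mu'=(1-x)\log n$ turns the exponent into
\begin{align*}
\log n\,\Big[(\alpha-x)\cdot(-1)\cdot(-1) \ldots\Big]=\log n\,\Big[x-\alpha+(1-\alpha)\log\tfrac{1-x}{1-\alpha}\Big]=f_\alpha(x)\log n,
\end{align*}
which is exactly $n^{f_\alpha(x(v))}$. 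The lower tail is handled the same way, with $t<0$ and a lower bound $\mu'\le\mu$.

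The main obstacle is the $O(1/v)$ error in $\mu$, since the claimed bound has no slack. For the upper tail I need $\mu\le (1-x)\log n$, i.e. $H_{n-1}-H_{v-1}\le \log(n/v)$. This holds because $\sum_{j=v}^{n-1}1/j\le\int_{v-1}^{n-1}dt/t$ does not quite suffice, but $\sum_{j=v}^{n-1} 1/j\le \log\frac{n-1}{v-1}$ does, and that is not at most $\log(n/v)$. So the comparison needs care, or a slightly different choice of $t$. For the lower tail the inequality goes the right way: $\sum_{j=v}^{n-1}1/j\ge\int_v^n dt/t=\log(n/v)$, so $\mu\ge(1-x)\log n$ and the argument is exact.

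For the upper tail I would try two fixes. The first is a sharper comparison: use the exact product $\E e^{tc}=\prod_{j=v}^{n-1}\big(1+(e^t-1)/j\big)$. For $e^t\ge1$ this is at most $\prod_{j=v}^{n-1}\frac{j+e^t}{j}$, which is about $(n/v)^{e^t}$ times a gamma-ratio factor $\Gamma(v)\Gamma(n+s)/(\Gamma(n)\Gamma(v+s))$, and that factor is at most $(n/v)^{s}$ by log-convexity of $\Gamma$. The second, if the first fails, is to absorb the slack: with $x>\alpha$ and $a$ strictly above the mean, the $O(1/v)$ term can be compensated, or the degenerate small-$v$ cases handled directly. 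Since $x(v)>\alpha$ forces $v\ge n^\alpha$, the error is negligible, though a clean statement needs it handled exactly. I expect the gamma-function route to close the gap.

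Finally, for $d_{\cT_n}(v)$: we have $d=c+\ind{v\neq1}$. The lower tail follows immediately from $d\ge c$. For the upper tail, the extra parent edge shifts the threshold by one. The plan is to absorb this by adding the parent Bernoulli (probability one) into the moment generating function bound, which costs a factor $e^t=a/\mu'=O(1)$. To keep the exact bound, I would instead redo the computation noting that $x>\alpha$ leaves room. Failing that, I would state the $d$-version with the threshold $(1-\alpha)\log n-1$, whose exponent differs by $o(1)$, and argue that this suffices.
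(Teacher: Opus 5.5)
\textbf{Upper tail of $c_{\cT_n}(v)$.} Your method is the paper's: a Chernoff bound with $1+y\le e^y$ and $e^t=(1-\alpha)/(1-x(v))$. Your lower tail is complete, and it is more accurate than the paper's text. With the correct parameters $1/(j-1)$, the mean $\sum_{j=v}^{n-1}1/j$ is at least $(1-x(v))\log n$, which is the direction that step needs. You are also right that the upper tail does not close exactly. The paper gets the exact exponent only because it writes $c_{\cT_n}(v)=\sum_{i=v+1}^{n}\mathrm{Ber}(1/i)$, which is stochastically smaller than the true $\sum_{j=v}^{n-1}\mathrm{Ber}(1/j)$.

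The gap is that your repair fails. With $s=e^t-1$, the moment generating function is exactly $\prod_{j=v}^{n-1}(1+s/j)=\Gamma(v)\Gamma(n+s)/(\Gamma(n)\Gamma(v+s))$. Compare $\frac{d}{dy}\log\frac{\Gamma(y+s)}{\Gamma(y)}=\sum_{k\ge0}\frac{s}{(y+k)(y+k+s)}$ with $\frac{s}{y}=\sum_{k\ge0}\frac{s}{(y+k)(y+k+1)}$. This shows that $y\mapsto\log\Gamma(y+s)-\log\Gamma(y)-s\log y$ is increasing for $0<s<1$ and nonincreasing for $s\ge1$. So the gamma ratio is at most $(n/v)^s$ only when $s\ge 1$, i.e.\ $x(v)\ge(1+\alpha)/2$, and there your route does give the exact bound. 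For $\alpha<x(v)<(1+\alpha)/2$ the ratio is strictly larger (e.g.\ $v=2$, $n=3$, $s=1/2$ gives $5/4>\sqrt{3/2}$). Hence the bound at your $t$ strictly exceeds $n^{f_\alpha(x(v))}$, and log-convexity (Gautschi's inequality) yields only $(1+1/v)^{1-s}\,n^{f_\alpha(x(v))}$. Removing that residual factor $1+O(n^{-\alpha})$ would require something outside this computation, such as integrality of $c_{\cT_n}(v)$ or a tail estimate with a $1/\sqrt{\log n}$ prefactor. ``Absorb the slack'' does not supply it.

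\textbf{Upper tail of $d_{\cT_n}(v)$.} The lower tail for $d_{\cT_n}(v)$ follows from $d_{\cT_n}(v)\ge c_{\cT_n}(v)$. In the upper tail you cannot ``keep the exact bound'', because it is false. Let $a=(1-\alpha)\log n$ and $v=\lfloor n/2\rfloor$. Then $n^{f_\alpha(x(v))}=\frac{v}{n}e^{a}\bigl(\log(n/v)/a\bigr)^{a}\approx\frac12 e^{a}(\log 2/a)^{a}$. Meanwhile $c_{\cT_n}(v)$ is a sum of independent Bernoullis with total mean $\log 2+O(1/n)$ and $\sum_j p_j^2=O(1/n)$, so $\Prob{c_{\cT_n}(v)=m}=(1+o(1))\frac12(\log 2)^m/m!$ uniformly in $m=O(\log n)$. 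Since $\Prob{d_{\cT_n}(v)>a}=\Prob{c_{\cT_n}(v)\ge\lfloor a\rfloor}$, Stirling's formula gives
$\Prob{d_{\cT_n}(v)>a}/n^{f_\alpha(x(v))}\ge(1+o(1))(a/\log 2)^{\theta}/\sqrt{2\pi a}$, where $\theta$ is the fractional part of $a$. This tends to infinity along the infinitely many $n$ with $\theta\ge 3/4$. Numerically, $\alpha=1/2$ and $\log n=41.8$ already give a ratio of about $1.9$.

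So your fallback (shift the threshold by one, equivalently pay an extra factor $e^t=(1-\alpha)/(1-x(v))$) is the correct statement, not a weakening of a provable one. The paper's remark $c_{\cT_n}(i)+1=d_{\cT_n}(i)$ likewise only justifies the lower tail. Both losses are harmless where the lemma is used. The upper tail enters only through the expected number of high-degree vertices in the proof of Proposition~\ref{prop:no_edge_high_deg}, and that argument tolerates factors $n^{o(1)}$.
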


%\textcolor{red}{LUC: you must choose! sometimes you use $v$ as in $v \in \cT_n$ and at other times as in $v > \alpha$ or $n^v$.  I like $v$ as a vertex, but then, all instances of the other $v$ must change.  To add the confusion, you use $c_{\cT_n}(v)$ and $c_{\cT_n}(n^v)$ in the lines below.  So please decide on the notation that avoids confusion.} \textcolor{blue}{N: I have changed the exponents of $n$ to $x$ (so that we always integrate over $x$ on $(0,1)$), and referred to $v$ as a vertex throughout. Thus $v=n^x$ (dropping ceiling/floor) is a vertex of the URRT, where $x \in [0,1]$.}
%\textcolor{red}{LUC: Yes, that is good. Unfortunately, $x$ is not unique when $v$ is given as $\frac{\log (v-1)} {\log (n)} \le x \le \frac{\log (v)} {\log (n)}$.}  
\begin{proof}
Observe that for any vertex $v \in \cT_n$, its number of offspring $c_{\cT_n}(v)$ is a sum $\sum_{i=v+1}^n X_i$ of independent random variables, where each $X_i$ is Bernoulli with success probability $1/i$. 
Let $v\in [n]$ be fixed such that $x(v)>\alpha$. By a Chernoff bound, for any $t>0$, we have
\begin{align*}
    \Prob{c_{\cT_n}(v)>(1-\alpha)\log n}\leq \frac{\prod_{i=n^{x(v)}+1}^n\left((e^t-1)/i+1 \right)}{\exp\left(t(1-\alpha)\log n \right)}~,
\end{align*}
%\textcolor{red}{LUC: the argument $n^x$ is not integer.}\textcolor{blue}{[N: addressed; on the right hand side I've put ceiling, for the left hand side $c_{\cT_n}(x)$ is the same as $c_{\cT_n}(\lceil n^x \rceil)$, as mentioned before the proof]}
which, using the inequality $1+y\leq e^y$ applied for $y=(e^t-1)/i$, the upper bound $\sum_{i=n^{x(v)}+1}^n\frac{1}{i}\leq (1-x(v))\log n$, and choosing $t=\log\left(\frac{1-\alpha}{1-x(v)} \right)>0$ yields that the last upper bound is at most $n^{f_\alpha(x(v))}$. 

Let $x<\alpha$. From Chernoff's lower tail bound,
\begin{align*}
    \Prob{c_{\cT_n}(v)<(1-\alpha)\log n}&=\Prob{-t\sum_{i=n^{x(v)}+1}^n X_i>-t(1-\alpha)\log n}\\&\leq \exp\left(t(1-\alpha)\log n \right)\prod_{i=n^{x(v)}+1}^n\left(1-\frac{1-e^{-t}}{i}\right).
\end{align*}
Apply the inequality $1+y<e^y$ with $y=-\frac{1-e^{-t}}{i}$, use the upper bound $\sum_{i=n^{x(v)}+1}^n\frac{1}{i}\leq (1-x(v))\log n$ as before and choose $t=\log \left(\frac{1-x(v)}{1-\alpha} \right)>0$ to obtain the desired bound of $n^{f_\alpha(x(v))}$.

The final assertion follows easily by noting that $c_{\cT_n}(1)=d_{\cT_n}(1)$, and $c_{\cT_n}(i)+1=d_{\cT_n}(i)$ whenever $i \neq 1$.
%, and that both $x(v)>\alpha$ and $x(v)<\alpha$ are \emph{open} conditions.
\end{proof}

Next, we record some properties of the rate function $f_\alpha$.

\begin{lemma}
    Fix any $\alpha \in (0,1)$, and recall the function $f_\alpha$ from \eqref{eq:def_f_alpha}. $f_\alpha$ defines a strictly concave function on $[0,1]$, with $\lim_{x \nearrow 1}f_\alpha(x)=-\infty$, $f_\alpha$ has a unique root at $x=\alpha$, and $f_\alpha<0$ for all $x \neq \alpha$. 
\end{lemma}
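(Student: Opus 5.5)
The plan is to prove each claim by elementary calculus on the explicit formula $f_\alpha(x)=x-\alpha+(1-\alpha)\log\frac{1-x}{1-\alpha}$, defined on $[0,1)$ and extended by $f_\alpha(1)=-\infty$ if we want it on the closed interval.

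\emph{Concavity.} For $x\in[0,1)$ we have
\begin{align*}
f_\alpha'(x)=1-\frac{1-\alpha}{1-x},\qquad f_\alpha''(x)=-\frac{1-\alpha}{(1-x)^2}<0,
\end{align*}
since $\alpha<1$. So $f_\alpha$ is strictly concave on $[0,1)$. Allowing the value $-\infty$ at $x=1$ does not affect concavity.

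\emph{Limit at $1$.} As $x\nearrow 1$, the term $x-\alpha$ stays bounded while $(1-\alpha)\log\frac{1-x}{1-\alpha}\to-\infty$, because $1-\alpha>0$. Hence $\lim_{x\nearrow1}f_\alpha(x)=-\infty$.

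\emph{Root and sign.} Direct substitution gives $f_\alpha(\alpha)=0+(1-\alpha)\log 1=0$. Also $f_\alpha'(x)=0$ exactly when $1-x=1-\alpha$, that is, $x=\alpha$. A strictly concave differentiable function attains its strict global maximum at its unique critical point, so $f_\alpha(x)<f_\alpha(\alpha)=0$ for every $x\neq\alpha$. This gives both the negativity away from $\alpha$ and the uniqueness of the root.

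Every step is a one-line computation; the only point needing any care is handling the endpoint $x=1$, where the logarithm is undefined. I would treat that by the convention $f_\alpha(1)=-\infty$, which is consistent with the limit statement.
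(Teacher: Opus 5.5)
Your proposal is correct and follows essentially the same route as the paper: verify $f_\alpha''<0$, compute the limit at $1$ directly, note $f_\alpha(\alpha)=0$, and combine strict concavity with the fact that $f_\alpha'$ vanishes only at $\alpha$ to get both uniqueness of the root and negativity elsewhere. Your explicit handling of the endpoint through the convention $f_\alpha(1)=-\infty$ is a small clarification that the paper leaves implicit.
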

\begin{proof}
    Strict concavity may be checked via a straightforward verification that $f''_\alpha(x)<0$.  $x\nearrow 1$ is an easy calculus exercise. Now, $\alpha$ is a root of $f_\alpha(x)$. Since the derivative $f'_\alpha=1-(1-\alpha)/(1-v)$ changes its sign about $\alpha$, this root must also be unique because of concavity.
\end{proof}
For any $\alpha\in [0,1)$ let $g_\alpha(x):=x+f_\alpha(x)$, and define 
\begin{align*}
    \gamma=\gamma(\alpha):=\inf\{x>0:g_\alpha(x)=0\}=\inf\{x>0:x+f_\alpha(x)=0\}~. \numberthis \label{eq:def_gamma}
\end{align*}

\begin{figure} [H]
\centering
\includegraphics[scale=0.86]{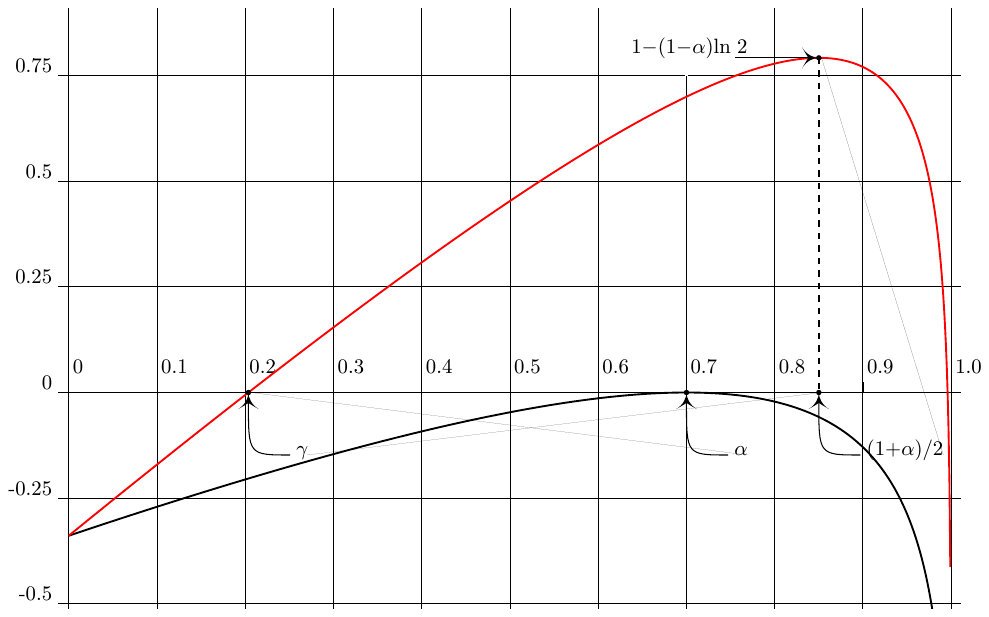}
\caption{The function $g_\alpha (x) = x + f_\alpha(x)$ versus $x \in [0,1)$ is shown in red, while $f_\alpha(x)$ is drawn in black. The parameter $\gamma=\gamma(\alpha)$ is the leftmost point $x$ with $g_\alpha (x) = 0$.}
\label{gfunction}
\end{figure}
%\textcolor{red}{LUC: I added a few bits to the figure.}
%\red{[ introduced a $\log n$ in the error, check and correct wherever the lemma below was used ]}

\begin{lemma}\label{lem:gamma_cont}
For all $\alpha \in (0,1)$,
$g_\alpha (x) = x + f_\alpha(x)$ is a continuous strictly concave function on $[0,1)$, attaining a unique maximum value of $1-(1-\alpha) \log 2$
at $(1+\alpha)/2$. 
The unique solution on $[0,(1+\alpha)/2]$  of $g_\alpha (x)=0$, denoted by $\gamma (\alpha)$, satisfies $\gamma (\alpha) < \alpha/2$.  Finally, $\alpha \mapsto \gamma(\alpha)$ defines a continuous function on $(0,1)$.
\end{lemma}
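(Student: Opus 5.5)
We compute directly with $g_\alpha(x)=2x-\alpha+(1-\alpha)\log\frac{1-x}{1-\alpha}$ on $[0,1)$. It is continuous there. Its derivatives are
\[
g_\alpha'(x)=2-\frac{1-\alpha}{1-x},\qquad g_\alpha''(x)=-\frac{1-\alpha}{(1-x)^2}<0,
\]
so $g_\alpha$ is strictly concave. The unique critical point solves $1-x=(1-\alpha)/2$, that is, $x=(1+\alpha)/2$. Substituting gives the maximum value
\[
1+\alpha-\alpha+(1-\alpha)\log\tfrac12=1-(1-\alpha)\log 2 .
\]

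For the root, note $g_\alpha(0)=-\alpha-(1-\alpha)\log(1-\alpha)$. Set $h(\alpha)=-\alpha-(1-\alpha)\log(1-\alpha)$. Then $h(0)=0$ and $h'(\alpha)=\log(1-\alpha)<0$, so $g_\alpha(0)<0$ for $\alpha\in(0,1)$. The maximum value is positive, since $(1-\alpha)\log 2<\log 2<1$. Because $g_\alpha$ is strictly increasing on $[0,(1+\alpha)/2]$, the intermediate value theorem gives exactly one zero there. This zero coincides with $\gamma(\alpha)$ as defined in \eqref{eq:def_gamma}: on $(0,\gamma)$ we have $g_\alpha<0$, so no smaller positive zero exists.

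To show $\gamma(\alpha)<\alpha/2$, by monotonicity on the increasing branch it suffices to check $g_\alpha(\alpha/2)>0$. We compute
\[
g_\alpha(\alpha/2)=(1-\alpha)\log\frac{1-\alpha/2}{1-\alpha}.
\]
This is positive because $1-\alpha/2>1-\alpha$. Note also $\alpha/2<(1+\alpha)/2$, so $\alpha/2$ indeed lies on the increasing branch.

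For continuity of $\alpha\mapsto\gamma(\alpha)$, the map $G(\alpha,x)=g_\alpha(x)$ is $C^1$ on $(0,1)\times[0,1)$. At $x=\gamma(\alpha)$ we have $\partial_x G=g_\alpha'(\gamma)>0$, because $\gamma<(1+\alpha)/2$ and $g_\alpha'$ is strictly decreasing with zero at $(1+\alpha)/2$. The implicit function theorem then gives a continuous (indeed $C^1$) local solution $x(\alpha)$ near each $\alpha_0$. By uniqueness of the zero in $[0,(1+\alpha)/2]$ this local solution equals $\gamma(\alpha)$; this uses that $x(\alpha)$ stays in $(0,(1+\alpha)/2)$ for $\alpha$ near $\alpha_0$, which holds by continuity.

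No step here is a real obstacle. The only care needed is identifying the infimum in \eqref{eq:def_gamma} with the zero on the increasing branch, which relies on the sign $g_\alpha(0)<0$.
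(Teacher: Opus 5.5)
Your proof is correct and follows essentially the same route as the paper: you compute $g_\alpha'$, locate the maximum $1-(1-\alpha)\log 2$ at $(1+\alpha)/2$, use strict monotonicity on $[0,(1+\alpha)/2]$ together with $g_\alpha(0)<0<g_\alpha((1+\alpha)/2)$, and evaluate $g_\alpha(\alpha/2)=(1-\alpha)\log\frac{1-\alpha/2}{1-\alpha}>0$. You also go beyond the paper in two places. Its proof never addresses the continuity of $\alpha\mapsto\gamma(\alpha)$, which your implicit-function-theorem argument supplies. It also writes $g_\alpha(0)$ with the wrong sign on the logarithm, whereas you give the correct value $-\alpha-(1-\alpha)\log(1-\alpha)$ and actually verify that it is negative via $h'(\alpha)=\log(1-\alpha)<0$.
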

\begin{proof}
As $f_\alpha$ is strictly concave with a unique maximum at $x = \alpha$, $x+f_\alpha (x)$ too is strictly concave.  
%By continuity of $f_\alpha$ on $[0,1)$, $g_\alpha$ too is continuous on that interval. 
The function $x+f_\alpha (x)$ 
has derivative
$2 - (1-\alpha)/(1-x)$, which is montonically decreasing, reaching the value $0$ at $x=(1+\alpha)/2$. 
Thus, $g_\alpha$ is strictly increasing on $[0, (1+\alpha)/2]$ from $f_\alpha (0) = -\alpha + (1-\alpha) \log (1-\alpha) < 0$ to $g_\alpha((1+\alpha)/2) = 1 − (1 − \alpha) \log 2 > 0$ on that interval, and thus, there is a unique point, $\gamma (\alpha)$, in $(0,\alpha)$ where $x+f_\alpha (x)=0$. 
Finally, $\gamma (\alpha) < \alpha/2$ since $\alpha/2 + f_\alpha (\alpha/2) = (1-\alpha) \log ((1-\alpha/2)/(1-\alpha)> 0$. 
\end{proof}

We conclude this section by showing that all sufficiently early vertices have large degrees in the \textsc{urrt} $\cT_n$.
\begin{lemma}\label{lem:chernoff_deg_tail}
    For any $\gamma'<\gamma(\alpha)$,
    %where $\gamma(\alpha)$ is defined as in \eqref{eq:def_gamma},
    \begin{align*}
    \Prob{\min_{v: v\leq n^{\gamma'}}  d_{\cT_n}(v)<(1-\alpha)\log n  }
    =O\left(n^{\gamma'+f_\alpha(\gamma')}  \right)=o(1)~.
    \end{align*}
    The same inequality holds when $d_{\cT_n}(v)$ is replaced by $c_{\cT_n}(v)$.
\end{lemma}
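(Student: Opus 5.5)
The plan is a union bound over $v\le n^{\gamma'}$ combined with the lower-tail estimate of Lemma \ref{lem:degUandLtail}. Note first that $\gamma'<\gamma(\alpha)<\alpha/2<\alpha$ by Lemma \ref{lem:gamma_cont}. Hence every $v\le n^{\gamma'}$ has $x(v)\le \gamma'<\alpha$, and the second bound of Lemma \ref{lem:degUandLtail} applies:
\begin{align*}
\Prob{c_{\cT_n}(v)<(1-\alpha)\log n}\le n^{f_\alpha(x(v))}.
\end{align*}
Since $f_\alpha$ is strictly concave with its maximum at $x=\alpha$, it is increasing on $[0,\alpha]$. Therefore $f_\alpha(x(v))\le f_\alpha(\gamma')$ for all $v\le n^{\gamma'}$.

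Summing over the at most $n^{\gamma'}$ such vertices gives
\begin{align*}
\Prob{\min_{v\le n^{\gamma'}} c_{\cT_n}(v)<(1-\alpha)\log n}\le n^{\gamma'}\cdot n^{f_\alpha(\gamma')}=n^{g_\alpha(\gamma')}.
\end{align*}
This is the claimed $O(n^{\gamma'+f_\alpha(\gamma')})$ bound. By Lemma \ref{lem:gamma_cont}, $g_\alpha$ is strictly increasing on $[0,(1+\alpha)/2]$ with its unique zero at $\gamma(\alpha)$, so $g_\alpha(\gamma')<0$. Thus $n^{g_\alpha(\gamma')}\to 0$, which gives the $o(1)$. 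If $\gamma'\le 0$ the statement is trivial, since at most the single vertex $1$ is involved and its lower-tail bound still holds.

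For the degree version, use $d_{\cT_n}(v)\ge c_{\cT_n}(v)$ for every $v$. The event $\{d_{\cT_n}(v)<(1-\alpha)\log n\}$ is then contained in the corresponding event for $c_{\cT_n}(v)$, so the same bound holds; the last sentence of Lemma \ref{lem:degUandLtail} gives it as well.

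There is no real obstacle. The one point needing care is the monotonicity of $f_\alpha$ on $[0,\alpha]$, which lets us bound each term uniformly by the worst case $x(v)=\gamma'$; it follows from concavity together with $f_\alpha'(x)=1-(1-\alpha)/(1-x)>0$ for $x<\alpha$.
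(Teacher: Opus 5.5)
Your proof is correct and follows the paper's argument. Both use a union bound over the at most $n^{\gamma'}$ vertices, the lower-tail estimate of Lemma \ref{lem:degUandLtail}, monotonicity of $f_\alpha$ on $[0,\alpha]$, and $g_\alpha(\gamma')<0$. The differences are cosmetic: the paper first reduces to the single vertex $\lfloor n^{\gamma'}\rfloor$ via the stochastic ordering $d_{\cT_n}(u)\succeq d_{\cT_n}(v)$ for $u<v$, where you bound each term directly, and you obtain the degree version from the offspring version via $d_{\cT_n}(v)\ge c_{\cT_n}(v)$ rather than calling it analogous.
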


\begin{proof}
Since $\gamma$ is the first zero of $g_\alpha$, and $g_\alpha(0)=f_\alpha(0)=-\alpha+(1-\alpha)\log(1/(1-\alpha))<0$ for any $\alpha \in (0,1)$, $g_\alpha(\gamma')<0$. Thus, $n^{g_\alpha(\gamma')}=o(1).$ Furthermore, we also have $\gamma(\alpha)<\alpha$ from 
Lemma \ref{lem:gamma_cont}.
%\eqref{eq:gamma_less_alpha}. 
By the union bound,
\begin{align*}
\Prob{\min_{v: v\leq n^{\gamma'}}  d_{\cT_n}(v)<(1-\alpha)\log n  }
 &\leq \sum_{i=1}^{\lfloor n^{\gamma'}\rfloor}\Prob{d_{\cT_n}(v)<(1-\alpha)\log n}\\&\leq n^{\gamma'}\Prob{d_{\cT_n}(\lfloor n^{\gamma'}\rfloor )<(1-\alpha)\log n},\end{align*}
where in the last inequality we use that for any $u,v\in [n]$ with $v>u$ one has $d_{\cT_n}(u)\succeq d_{\cT_n}(v)$. Recalling the definition of $x(v)$ from \eqref{def:x(v)}, observe that $x(\lfloor n^{\gamma'} \rfloor)\leq \gamma'$. Using  that $f_\alpha(\cdot)$ is increasing on $(0,\gamma(\alpha))$, by Lemma \ref{lem:degUandLtail}, the last display is
\begin{align*} O\left( n^{\gamma'+f_\alpha(\gamma')}\right)=O\left(n^{g_\alpha(\gamma')}\right)=o(1)~.
\end{align*} The argument with $d_{\cT_n}(v)$ replaced by $c_{\cT_n}(v)$ is analogous.
\end{proof}

\subsection{Subtrees in uniform random recursive trees} In this section, we establish some properties on the sizes of subtrees in \textsc{urrt}s. 
For any graph $G=(V(G), E(G))$, a subset of edges $S\subset E(G)$, and a vertex $v\in V(G)$, denote by $G(v; S)$ the connected component containing $v$ in the subgraph of $G$ constructed by removing all edges in $S$. For a subgraph $H$ of $G$, and $v \in V(G)$ we define
\begin{align*}
    H(v;S):=H(v;S\cap E(H))~.
\end{align*}
Note that $H(v; S)$ may be the empty graph, e.g., if $v \notin H$.
%\red{[ check if we actually need the def for $v\notin H$ ]}
Observe that if $\cT_n$ is a \textsc{urrt} on the vertex set $[n]$, and if we define
\begin{align*}
    E_v:=\{\{i,j\}\in E(\cT_n):i,j\in [v]\}~,
\end{align*}
then $\cT_n(v;E_v)$ is precisely the descendant subtree of $v$ in $\cT_n$, i.e., the subtree of $\cT_n$ spanned by all vertices with label larger than $v$ whose unique path to the root $1$ passes through $v$. To see this, note that in removing the edges of $E_v$, we remove the edge connecting $v$ to its parent, while the edges in the subtree of $v$ are untouched.  
  We think of the subtree $\cT_n(v; E_v)$ as rooted at $v$, and in particular, $\cT_n$ is rooted at vertex $1$.

\subsubsection{Coupling with uniforms}\label{sec:coup_with_unifs}
To analyze subtree sizes in \textsc{urrt}s, it is useful to couple the \textsc{urrt} with a sequence of $\mathrm{U}[0,1]$ random variables, which we define next. First, observe that the \textsc{urrt} $\cT_n$ is completely determined by a sequence of random variables $\cY_i\stackrel{d}{=} \rU[i-1]$, for $i=2,3,\dots,n$, by simply letting $p(i)=\cY_i$, for $i=2,3,\dots,n$, where recall $p(i)$ is the parent of $i$ in $\cT_n$.

We may couple the variables $\cY_i$ with a sequence of independent uniform random variables $U_1,\dots, U_{n-1}\stackrel{d}{=} \rU[0,1]$ on the interval $[0,1]$ as follows. First, let $E^{(1)}_1=[0,1]$, and for $i=2,3,\dots,n$ let $E^{(i)}_1,\dots,E^{(i)}_i$ denote the \emph{spacings} formed by the random variables $U_1,\dots,U_{i-1}$ on the interval $[0,1]$. Formally, let the order statistic of $U_1,\dots,U_{i-1}$ be $U^{(1)}_{i-1}<U^{(2)}_{i-1}<\dots<U^{(i-1)}_{i-1}$. Then with the convention $U^{(0)}_{i-1}=0$ and $U^{(i)}_{i-1}=1$, we define intervals
\begin{align*}
    E^{(i)}_j:=[U^{(j-1)}_{i-1},U^{(j)}_{i-1}]\textrm{ for all }j=1,2,\dots,i~. \numberthis \label{eq:def_spacings}
\end{align*}
We couple $\cY_2,\dots,\cY_n$ with $U_1,\dots,U_{n-1}$ as
\begin{align*}
    \cY_i\textrm{ is the random index }I_{i-1}\textrm{ such that }U_{i-1}\in E^{(i-1)}_{I_{i-1}}\numberthis \label{eq:coupling_YandU}, 
\end{align*}
for all $i=2,3,\dots,n$. It is straightforward to check that $I_{i-1}$ is uniformly distributed on $[i-1]$, and that $I_1,\dots, I_{n-1}$ are independent, so that this is a valid coupling.

This coupling lets us control subtree sizes in the \textsc{urrt} $\cT_n$. In particular, we have the following:
\begin{lemma}[Subtrees and uniforms]\label{lem:subtrees_unifs}
    Under the coupling \eqref{eq:coupling_YandU}, for any $i \in [n]$, $\cT_n(i;E_i)\setminus\{i\}=\{j\geq i+1:U_{j-1}\in E^{(i)}_i\}$. In particular, $|\cT_n(i;E_i)|\stackrel{d}{=}1+\mathrm{Bin}(n-i,|E^{(i)}_i|)$. 
\end{lemma}
%\begin{proof}
%    As the first assertion implies the second due to the independence of the variables $U_1,\dots, U_{n-1}$, we only prove the first. A path in $\cT_n$ with decreasing labels is called \emph{decreasing}. Thus, $\cT_n(i; E_i)\setminus\{i\}$ is precisely the collection of all $j\geq i+1$ that have a decreasing path to $i$.
    
 %   Now consider any $j\geq i+1$ such that $U_{j-1}\in E^{(i)}_i$. Observe that there is a unique decreasing path from $j$ to $i$ in $\cT_n$ given by $(j^{(0)}=j,j^{(1)},\dots,j^{(l)}=i)$, where $j^{(1)},\dots,j^{(l)}$ are the unique collection of indices satisfying the nested inclusions $E^{(j-1)}_{j^{(1)}}\subseteq E^{(j-2)}_{j^{(2)}}\subseteq \dots \subseteq E^{(j-l)}_{j^{(l)}}=E^{(i)}_i$. The same observation shows that any $j\geq i+1$ with $U_{j-1}\in E^{(i)}_{k}$ with $k\neq i$ cannot have a decreasing path to $i$ since $E^{(i)}_k \cap E^{(i)}_i=\varnothing$. Thus, the vertices $j\geq i+1$ that have a decreasing path to $i$ are precisely those indices $j\geq i+1$ that satisfy $U_{j-1} \in E^{(i)}_i$.
%\end{proof}

%\red{[ pic here? ]}

\subsubsection{Subtree sizes}
In this section, we use Lemma \ref{lem:subtrees_unifs} to prove bounds on the sizes of subtrees in $\cT_n$. We first recall some standard facts of uniform spacings; see, e.g., \citet{BiDe15}.
%\textcolor{red}{***G: This is all known stuff, so I removed the proof and added a reference.***}

\begin{lemma}[Properties of uniform spacings]\label{lem:properties_uspacings}
 Recall for each $i=2,\dots,n$ the variables $|E^{(i)}_j|=U^{(j-1)}_{i-1}-U^{(j)}_{i-1}$, where $U^{(1)}_{i-1}<\dots<U^{(i-1)}_{i-1}$ are the order statistics of $U_1,\dots,U_{i-1}$.
 \begin{itemize}
     \item [(i)] The variables $|E^{(i)}_1|,\dots |E^{(i)}_i|$ are identically distributed and satisfy $$\Prob{|E^{(i)}_1|>x}=(1-x)^{i-1},x\in [0,1].$$
     \item [(ii)] $|E^{(i)}_1|\stackrel{d}{=}1-U^{1/(i-1)},$ where $U$ is uniformly distributed on $[0,1]$.
     \item [(iii)]  $|E^{(1)}_1|=1$, and for $i\geq 2$, $|E^{(i)}_1|\prec E/(i-1)$, where $E\stackrel{d}{=} \mathrm{Exp}(1)$ is exponentially distributed.      
 \end{itemize}
\end{lemma}
%\begin{proof}
%    Note that (ii) follows from (i). For (iii), recall that $U\stackrel{d}{=}e^{-E}$, so that using (ii) $|E^{(i)}_1|\stackrel{d}{=}1-e^{-E/(i-1)}\leq E/(i-1)$ by the inequality $1+x\leq e^x$. The identical distribution claim of (i) is also straightforward by distributional symmetry. For the tail behaviour in (i), note that $|E^{(i)}_1|$ is distributed as the smallest of $(i-1)$ independent uniforms on $[0,1]$. 
%\end{proof}

Next, we use Lemma \ref{lem:properties_uspacings} to prove an upper bound for the $k$-th moment of the subtree size of a vertex in a \textsc{urrt}.

\begin{proposition}[Subtree moment upper bound]\label{prop:subtree_mom_UB}
    There are constants $C_1=C_1(k), C_2= C_2(k)$ such that for all $1\leq i \leq n$, $\Exp{|\cT_n(i;E_i)|^k}\leq C_1+C_2\left(\frac{n-i}{i} \right)^k$.
\end{proposition}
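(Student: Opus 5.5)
By Lemma \ref{lem:subtrees_unifs}, $|\cT_n(i;E_i)| \stackrel{d}{=} 1+B$. Here, conditionally on $S:=|E^{(i)}_i|$, the variable $B$ is $\mathrm{Bin}(m,S)$ with $m:=n-i$. The plan is to condition on $S$, bound the conditional $k$-th moment of $B$ by a polynomial in $mS$, and then integrate using the spacing bounds of Lemma \ref{lem:properties_uspacings}.

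First, since $(1+B)^k\le 2^{k-1}(1+B^k)$, it suffices to bound $\Exp{B^k}$. Conditionally on $S=s$, the standard binomial moment bound gives
\[
\Exp{B^k \mid S=s}\le C(k)\sum_{j=1}^k (ms)^j .
\]
This follows either by expanding $B^k$ in falling factorials, using $\Exp{(B)_j}=(m)_j s^j\le (ms)^j$ together with Stirling numbers, or by comparing with a Poisson variable. Hence
\[
\Exp{B^k}\le C(k)\sum_{j=1}^k m^j\,\Exp{S^j}.
\]

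Second, we bound $\Exp{S^j}$. For $i=1$ we have $S=1$, but then $|\cT_n(1;E_1)|=n$. The claimed bound $C_1+C_2(n-1)^k$ holds trivially, since $n^k\le 2^k(1+(n-1)^k)$. For $i\ge2$, Lemma \ref{lem:properties_uspacings}(iii) gives $S\preceq E/(i-1)$ with $E$ exponential, so
\[
\Exp{S^j}\le \frac{j!}{(i-1)^j}\le \frac{j!\,2^j}{i^j}.
\]
Therefore
\[
\Exp{B^k}\le C'(k)\sum_{j=1}^k\left(\frac{m}{i}\right)^j .
\]

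Finally, we use the elementary inequality $x^j\le 1+x^k$ for $x\ge0$ and $1\le j\le k$. This collapses the sum to $C''(k)\bigl(1+((n-i)/i)^k\bigr)$, which gives the proposition with suitable $C_1(k)$ and $C_2(k)$.

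No step here is a genuine obstacle. The only points requiring care are the conditional binomial moment bound (cleanest via factorial moments) and the separate treatment of $i=1$, where the exponential domination with $i-1=0$ is unavailable.
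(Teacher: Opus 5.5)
Your proof is correct and has the same skeleton as the paper's. Both condition on the spacing $S$, bound the binomial $k$-th moment by powers of its conditional mean $(n-i)S$, integrate using the exponential domination of Lemma~\ref{lem:properties_uspacings}(iii) with $1/(i-1)\le 2/i$, and collapse the powers; your $x^j\le 1+x^k$ replaces the paper's case split on $n-i$ versus $i-1$. The differences are that you get the conditional binomial moment bound from the elementary falling-factorial (Stirling number) expansion instead of the moment inequality of Corollary~\ref{cor:latala_mom_bound}, and you dominate $\Exp{S^j}$ directly rather than first passing to $\mathrm{Bin}(n-i,\min\{1,E/(i-1)\})$; for fixed $k$ this loses nothing and makes the argument self-contained.
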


To prove this proposition, first we recall the following moment bound from \citet{latala1997estimation}.
\begin{corollary}[Corollary 3 of \cite{latala1997estimation} applied with $X_i\stackrel{d}{=}\mathrm{Ber}(p)$]\label{cor:latala_mom_bound}
Let $X\stackrel{d}{=} \mathrm{Bin}(n,p)$. There is a universal constant $c$ such that for all $n,k\geq 1$ and $p\in [0,1]$
\begin{align*}
    \Exp{X^k}\leq \left(c\frac{k}{\ln k} \right)^k \cdot \max\{(np)^k,np\}.
\end{align*}
\end{corollary}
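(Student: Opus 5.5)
The statement just above is Corollary~\ref{cor:latala_mom_bound}, which the paper imports from \citet{latala1997estimation}. The plan is therefore to specialise Latała's general moment estimate, not to build a new argument, and then check that the specialisation has the stated form.

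\textbf{Step 1: recall Latała's bound.} For independent nonnegative $X_1,\dots,X_n$ and $k\ge 1$, Latała's Corollary~3 gives, up to universal constants,
\[
\Bigl\|\sum_i X_i\Bigr\|_k \asymp \frac{k}{\ln k}\,\max\Bigl\{\Bigl(\sum_i \Exp{X_i^k}\Bigr)^{1/k},\ \sum_i \Exp{X_i}\Bigr\}.
\]
Only the upper bound is needed here.

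\textbf{Step 2: specialise to Bernoulli summands.} Take $X_i\stackrel{d}{=}\mathrm{Ber}(p)$. Since $X_i^k=X_i$, we have
\[
\sum_i \Exp{X_i^k}=np \qquad\text{and}\qquad \sum_i \Exp{X_i}=np.
\]
Hence $\|X\|_k\le c\frac{k}{\ln k}\max\{(np)^{1/k},np\}$. Raising to the $k$-th power gives
\[
\Exp{X^k}\le \Bigl(c\frac{k}{\ln k}\Bigr)^k\max\{np,(np)^k\},
\]
which is exactly the claimed inequality.

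\textbf{Step 3: handle small $k$ and check constants.} For $k=1$ the factor $k/\ln k$ is undefined, and it is bounded away from zero only for $k\ge 2$. I would interpret $k/\ln k$ as bounded below by a constant for small $k$, and absorb this into $c$. For $k=1$ the identity $\Exp{X}=np$ holds trivially.

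\textbf{Main obstacle.} The only real care needed is to match Latała's normalization exactly and to confirm that the constant $c$ is universal, independent of $n$, $p$ and $k$. Latała's theorem provides precisely this, so the corollary follows directly.
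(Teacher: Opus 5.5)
Your proposal matches the paper, which offers no argument beyond invoking Corollary~3 of \citet{latala1997estimation} with Bernoulli summands; your Step~2 is exactly that specialisation, using $X_i^k=X_i$ to get $\sum_i \Exp{X_i^k}=\sum_i\Exp{X_i}=np$, and treating $k=1$ separately is appropriate. One small correction: the estimate in your Step~1 is not two-sided with the factor $k/\ln k$ (the trivial lower bound $\|\sum_i X_i\|_k \geq \max\{(\sum_i \Exp{X_i^k})^{1/k}, \sum_i \Exp{X_i}\}$ has no such factor, and $n=1$ shows it cannot be added), but you only use the upper bound, so the argument is unaffected.
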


\begin{proof}[Proof of Proposition \ref{prop:subtree_mom_UB}]
   The result is trivial for $i=1$, so we assume $i>1$. By Lemmas \ref{lem:subtrees_unifs} and \ref{lem:properties_uspacings}, 
\begin{align*}
|\cT_n(i;E_i)|\stackrel{d}{=}1+\mathrm{Bin}(n-i,|E^{(i)}_i|)\stackrel{d}{=}1+\mathrm{Bin}(n-i,|E^{(i)}_1|)\prec 1+\mathrm{Bin}(n-i,\min\{1, E/(i-1)\}),
\end{align*}
where $E$ is exponential,
    so that for $k \geq 1$,
\begin{align*}
\Exp{|\cT_n(i;E_i)|^k}\leq 2^{k-1}\left(1+\Exp{Y^k} \right),
        \numberthis \label{eq:k_mom_subtree_UB_1}
\end{align*}
where
$Y\stackrel{d}{=}\mathrm{Bin}(n-i,\min\{1,E/(i-1)\})$.
    By Lata{\l}a's inequality,
%and bounding $\min\{1,E/(i-1)\}$ by  $E/(i-1)$,
\begin{align*}
\Exp{Y^k}&\leq M(k)\left(\Exp{\left(\frac{(n-i)E}{i-1} \right)^k}+\Exp{\frac{(n-i)E}{i-1} } \right)\\
        &=M(k)\left(\Exp{\left(\frac{(n-i)E}{i-1} \right)^k}+\frac{n-i}{i-1}  \right),
    \end{align*}
    where $M(k)\geq 1$ is a constant depending only on $k$. Note that
    \begin{align*}
\Exp{\left(\frac{(n-i)E}{i-1} \right)^k}=k!\left(\frac{n-i}{i-1}\right)^k.
    \end{align*} 
Further, if $n-i>i-1$, we have $\frac{n-i}{i-1}\leq k!\left(\frac{n-i}{i-1} \right)^k$, and if $n-i\leq i-1$, we have 
    $$
    \frac{n-i}{i-1}+ k!\left(\frac{n-i}{i-1} \right)^k\leq 1+k!~,
    $$
    so that
    \begin{align*}
        \Exp{Y^k}\leq M(k)\cdot\max\left\{2k!\left(\frac{n-i}{i-1} \right)^k,1+k! \right\}.
    \end{align*}
    Thus from \eqref{eq:k_mom_subtree_UB_1}, taking $C_1(k)=2^{k-1}+2^{k-1}M(k)(1+k!)$ and $C_2(k)=2^{2k}M(k)k!$,
    \begin{align*}
        \Exp{|\cT_n(i;E_i)|^k}&\leq 2^{k-1}+2^{k-1}M(k)(1+k!)+2^{k}M(k)k!\left(\frac{n-i}{i-1}\right)^k\\
        &\leq C_1(k)+C_2(k)\left(\frac{n-i}{i} \right)^k~.
    \end{align*}
\end{proof}

Next, we need a uniform tail bound on the subtree size of any vertex in the \textsc{urrt}.

\begin{proposition}\label{prop:unif_subtree_tail}
     As $n\to \infty$, $\Prob{\sup_{i \in [n]}|\cT_n(i;E_i)|/(n/i)>6 \log n}=O\left(\frac{\sqrt{\log n}}{n^{0.9}} \right)$. %In particular, for any $\lambda\geq 6 \log n$, we have $\Prob{\sup_{v \in [n]}|\cT_n(v;[v])|/(n/v)>\lambda}=o(1)$.
\end{proposition}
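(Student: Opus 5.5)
We will use a union bound over $i\in[n]$, controlling each subtree via the uniform-spacings coupling of Lemma \ref{lem:subtrees_unifs}. By that lemma and Lemma \ref{lem:properties_uspacings}, $|\cT_n(i;E_i)|\stackrel{d}{=}1+\mathrm{Bin}(n-i,S_i)$, where $S_i:=|E^{(i)}_i|\stackrel{d}{=}|E^{(i)}_1|$ and $\Prob{S_i>x}=(1-x)^{i-1}\le e^{-x(i-1)}$. For a given $i$, set the threshold $t_i:=6(n/i)\log n$. The event $\{|\cT_n(i;E_i)|>t_i\}$ can occur either because the spacing is abnormally large or because the binomial overshoots its mean. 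We therefore split on the event $\{S_i\le a\log n/(i-1)\}$ for a constant $a$ (roughly $a\approx 2$–$3$).

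\textbf{Large spacing.} Here
\[
\Prob{S_i> a\log n/(i-1)}\le n^{-a}.
\]
This is summable over $i\le n$ once $a>1.9$: the total contribution is $n^{1-a}\le n^{-0.9}$. For small $i$ the threshold is trivial, since $t_i\ge n$ whenever $i\le 6\log n$; so only $i>6\log n$ matters, and for these $(i-1)\approx i$.

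\textbf{Small spacing.} Conditionally on $S_i\le a\log n/(i-1)$, the binomial is stochastically dominated by $\mathrm{Bin}(n,\,a\log n/(i-1))$. Its mean is $\mu_i\approx a(n/i)\log n$, while the threshold is $t_i-1\approx (6/a)\mu_i$. With $6/a\ge 2$, the multiplicative Chernoff bound $\Prob{X\ge (1+\delta)\mu}\le \exp(-\mu\,h(\delta))$ applies. Because $\mu_i\ge a\log n$ for all $i\le n$, this gives a bound $n^{-c}$ with $c=a\,h(6/a-1)$. Choosing, say, $a=2$ gives $6/a=3$ and $h(2)=3\log 3-2\approx1.30$, hence $c\approx 2.6>1.9$. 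The sum over $i$ is again $O(n^{-0.9})$, in fact much smaller.

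\textbf{Main obstacle.} The key step is balancing $a$: it must exceed $1.9$ for the spacing tail, yet stay small enough that the Chernoff exponent on the binomial part exceeds $1.9$. The value $a=2$ works with room to spare. The stated $\sqrt{\log n}$ factor suggests the authors used a cruder or more precise Gaussian-type estimate, for instance integrating the binomial tail against the exact spacing density near the boundary. Our bound $O(n^{-0.9})$ is at least as strong, so it implies the proposition. The remaining care is with small $i$, namely $i=1$ and $i\le 6\log n$, where the event is empty because $|\cT_n(i;E_i)|\le n\le t_i$. We should also check the replacement of $i-1$ by $i$ in $\mu_i$, which only costs a factor $1+o(1)$ for $i>6\log n$.
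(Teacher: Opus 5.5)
Your proof is correct, and it takes a more elementary route than the paper.

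\textbf{The paper's route.} The paper dominates the spacing by $E/(i-1)$ with $E\sim\mathrm{Exp}(1)$. It applies the Chernoff bound conditionally on $E$, with the parameter $t$ optimized separately for each value of $E$. It discards only the event that the conditional mean $E(n-i)/(i-1)$ already exceeds the threshold. It then integrates the optimized bound exactly against the exponential density, which produces $\Gamma(\lambda n/i)$ with $\lambda=6\log n$. The $\sqrt{\log n}$ factor and the exponent $0.9$ come from Stirling's formula together with a crude bound $C\sqrt{\lambda}\,n$ on the prefactor, not from a Gaussian-type estimate as you guessed.

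\textbf{Your route.} You truncate the spacing at the fixed level $2\log n/(i-1)$ and pay $n^{-2}$ per vertex via $(1-x)^{i-1}\le e^{-x(i-1)}$. On the complement you use monotonicity of the binomial in its success probability and the standard multiplicative Chernoff bound, with $\mu_i\ge 2\log n$ and $(t_i-1)/\mu_i\ge 3-O(1/\log n)$ for $i>6\log n$. You correctly observe that the event is empty for $i\le 6\log n$, since $t_i\ge n$ there. This avoids any special-function computation and yields the stronger bound $O(n^{-1})$.

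\textbf{What each buys.} The paper's mixture computation is sharp in principle. Optimizing the Chernoff parameter pointwise in $E$ and integrating is essentially a Laplace-method calculation. Before the final crude estimates it recovers the correct exponential order, roughly $n^{-6}$ per vertex when $1\ll i\ll n$. A single two-case split like yours can at best balance its two exponents, around $a\approx 2.2$, giving about $n^{-1.2}$ overall. For the proposition as stated this difference is irrelevant.

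\textbf{One point of phrasing.} In the small-spacing case, bound $\Prob{\{|\cT_n(i;E_i)|>t_i\}\cap\{S_i\le 2\log n/(i-1)\}}$ by conditioning on $S_i$, rather than speaking of conditioning on the event. The conclusion is the same.
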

\begin{proof}
By a union bound, for any $\lambda>1$,
\begin{align*}
\Prob{\sup_{i \in [n]}|\cT_n(i;E_i)|/(n/i)>\lambda}
&\leq \sum_{i=2}^{n-1}\Prob{|\cT_n(i;E_i)|>\frac{\lambda n}{i} }\\
&\leq \sum_{i=2}^{n-1}\Prob{Y_i>\frac{\lambda n}{i} -1},\numberthis \label{eq:subtreetail_UB}
\end{align*}
since the terms corresponding to $i=1$ and $i=n$ equal $0$, and where for each $i \geq 2$, $Y_i \stackrel{d}{=} \mathrm{Bin}(n-i,\min\{1,E/(i-1)\})$, and we use Lemmas \ref{lem:subtrees_unifs} and \ref{lem:properties_uspacings} for the last inequality. Now, for any $i\geq 2$, by a Chernoff bound, for any $t>0$,
\begin{align*}
&\Prob{Y_i>\frac{\lambda n}{i} -1}%\\
%&=\Exp{\CProb{Y_i>\frac{\lambda n}{i} -1}{E}}
\\&\leq \Exp{\exp\left(-t\left( \frac{\lambda n}{i} -1 \right) \right)\CExp{e^{tY_i}}{E}}\\
&=\Exp{\exp\left(-t\left( \frac{\lambda n}{i} -1 \right)+(n-i)\log\left(1+\min\left\{1,\frac{E}{i-1}\right\}(e^t-1)  \right)\right)}.    
\end{align*}
Let us now fix the choice $\lambda=\lambda_n=6\log n$, and note that
\begin{align*}
&\Exp{\exp\left(-t\left( \frac{\lambda n}{i} -1 \right)+(n-i)\log\left(1+\min\left\{1,\frac{E}{i-1}\right\}(e^t-1)  \right)\right)}\\
&\leq\Exp{\exp\left(-t\left( \frac{\lambda n}{i} -1 \right)+(n-i)\log\left(1+\min\left\{1,\frac{E}{i-1}\right\}(e^t-1)  \right)\right)
\ind{\frac{\lambda n}{i} -1 > \frac{E(n-i)}{i-1}}  }
\\
&\hspace{10 pt}+\Prob{E>\left( \frac{\lambda n}{i} -1 \right)\frac{i-1}{n-i}}\\
&\leq \Exp{\exp\left(-t\left( \frac{\lambda n}{i} -1 \right)+(n-i)\log\left(1+\frac{E(e^t-1)}{i-1} \right)\right)
\ind{\frac{\lambda n}{i} -1 > \frac{E(n-i)}{i-1}} }
+O\left(n^{-6} \right)\\
&\leq \Exp{\exp\left(-t\left( \frac{\lambda n}{i} -1 \right)+\frac{E(n-i)}{i-1}(e^t-1) \right)
\ind{\frac{\lambda n}{i} -1 > \frac{E(n-i)}{i-1}} }
+o\left(n^{-5} \right),
\end{align*}
    where for the last inequality above we use $1+x\leq e^x$, and note that the constant hidden by the $o(\cdot)$ term above can be taken to be independent of $i$. Thus, to conclude the proof, it suffices to show that 
    \begin{align*}
        \sum_{i=2}^{n-1}\Exp{\exp\left(-t\left( \frac{\lambda n}{i} -1 \right)+\frac{E(n-i)}{i-1}(e^t-1) \right)\ind{\frac{\lambda n}{i} -1 > \frac{E(n-i)}{i-1}} }
        =O\left(\frac{\sqrt{\log n}}{n} \right). \numberthis \label{eq:toshow_prop_unif_subtreetail}
    \end{align*}
  On the event $\frac{\lambda n}{i} -1 > \frac{E(n-i)}{i-1}$, $\log\left(\frac{\lambda n/i-1}{E(n-i)/(i-1)} \right)>0$, so letting $t=\log\left(\frac{\lambda n/i-1}{E(n-i)/(i-1)} \right)$, we have
\begin{align*}
&\Exp{\exp\left(-t\left( \frac{\lambda n}{i} -1 \right)+\frac{E(n-i)}{i-1}(e^t-1) \right)\ind{\frac{\lambda n}{i} -1 > \frac{E(n-i)}{i-1}}}\\
&\leq \Exp{\exp\left(\left( \frac{\lambda n}{i} -1 \right)-\frac{E(n-i)}{i-1}-\left( \frac{\lambda n}{i} -1 \right)\log\left(\frac{\frac{\lambda n}{i} -1}{E(n-i)/(i-1)} \right)\right)}\\
&\leq \exp\left(\left( \frac{\lambda n}{i} -1 \right)\left(1-\log\left(\frac{(\lambda n-i)(i-1)}{i(n-i)} \right) \right) \right)\Exp{e^{-\frac{n-i}{i-1}E+\left( \frac{\lambda n}{i} -1 \right)\log E}}.
  \end{align*}
Note that 
\begin{align*}
\Exp{e^{-\frac{n-i}{i-1}E+\left( \frac{\lambda n}{i} -1 \right)\log E}}
&=\int_{0}^{\infty}e^{-x}x^{\lambda n/i -1}e^{-x(n-i)/(i-1)}dx\\
&=\frac{\Gamma(\lambda n/i)}{(1+(n-i)/(i-1))^{\lambda n/i}}\\
&=\frac{\Gamma(\lambda n/i)}{((n-1)/(i-1))^{\lambda n/i}},
  \end{align*}
where
$\Gamma(z):=\int_{0}^\infty t^{z-1}e^{-t}dt$ for $z>0$. Thus, the left-hand side of \eqref{eq:toshow_prop_unif_subtreetail} is at most
\begin{align*}
\sum_{i=2}^{n-1} \exp\left(\frac{\lambda n}{i}\left(1-\log\left(\frac{(\lambda n-i)(i-1)}{i(n-i)} \right)-\log \left(\frac{n-1}{i-1} \right) \right)-1+\log\left(\frac{(\lambda n-i)(i-1)}{i(n-i)} \right) \right)\Gamma\left( \frac{\lambda n}{i} \right).
  \end{align*}
Using the standard upper bound $\Gamma(z)\leq \sqrt{2\pi/z}(z/e)^ze^{1/12z}$ valid whenever $z>0$, for $z=\lambda n/i >1$, the last sum is at most 
\begin{align*}
\sum_{i=2}^{n-1}\exp\left(\frac{\lambda n}{i}\left(\log\left(\frac{\lambda n}{i}\right)-\log\left(\frac{(\lambda n-i)(n-1)}{i(n-i)} \right)  \right) \right)\frac{\sqrt{2\pi}}{e^{11/12}}\frac{(\lambda n-i)(i-1)}{i(n-i)}\sqrt{\frac{i}{\lambda n}}.
\end{align*}
Since for all large $n$, $\frac{\sqrt{2\pi}}{e^{11/12}}\frac{(\lambda n-i)(i-1)}{i(n-i)}\sqrt{\frac{i}{\lambda n}}\leq C\sqrt{\lambda}n$ for some universal constant $C>0$, the last sum is at most
\begin{align*}
&C\sqrt{\lambda}n \sum_{i=2}^{n-1}\exp\left(-\frac{\lambda n}{i}\log\left(\frac{(\lambda n-i)(n-1)}{\lambda n(n-i)} \right) \right)\\&\leq C\sqrt{\lambda}n\sum_{i=2}^{n-1}\exp\left(-\frac{\lambda n}{i}\log\left(1+\frac{i}{n}\left(1-\frac{1}{\lambda}\right) \right)-\frac{\lambda n}{i}\log\left(\frac{n-1}{n} \right) \right)~, \numberthis \label{eq:sum_UB}
\end{align*}
where to obtain the inequality above, since $\lambda=6 \log n\to \infty$, we use $$\frac{(\lambda n-i)n}{\lambda n(n-i)}=\frac{1-i/\lambda n}{1-i/n}\geq 1+\frac{i}{n}\left(1-\frac{1}{\lambda}\right).$$ 
Finally, for $x\in [0,1)$ using the inequalities $\log(1+x)\geq x-x^2/2$ and $\log(1-x)\geq -x/(1-x)$, the right-hand side of \eqref{eq:sum_UB} is bounded from above by
\begin{align*}
C\sqrt{\lambda}n\sum_{i=2}^{n-1}\exp\left(-\lambda+1-\frac{\lambda i}{2n}\left(1-\frac{1}{\lambda}\right)^2+\frac{\lambda n}{i(n-1)} \right).
\end{align*}
  Recalling $\lambda=6 \log n$, we observe that for all large $n$, the term inside the exponential above is at most $-(2.9)\log n$ for any $i$, simply using $i\geq 2$. Consequently, the last display is $O\left(\frac{\sqrt{\log n}}{n^{0.9}}\right)$, finishing the proof.
\end{proof}

\subsection{Exchangeability of subtree functionals}

%The \textsc{urrt} enjoys a lot of distributional symmetries. We need a particularly important such property. Towards describing this, 
Consider $\cT^*$, the set of all isomorphism classes of rooted trees $(\mathbf{t},o)$,\footnote{In other words, we view a tuple of the form $(\mathbf{t},o)$ up to {isomorphism classes}, i.e.,  $(\mathbf{t}_1,o_1)$ and $(\mathbf{t}_2,o_2)$ are the \emph{same} if there is a bijection from the vertices of $\mathbf{t}_1$ to those of $\mathbf{t}_2$ with $o_1$ being mapped to $o_2$.} where $\mathbf{t}$ is a tree on a countable vertex set, and $o\in \mathbf{t}$ is a distinguished vertex that we call the \emph{root} of $\mathbf{t}$.

For any $r\geq 1$ and $i\in \{1,\dots,r\}$ consider the rooted tree $(\cT_n(i;E_r),i)$, where recall $E_r=\{\{u,v\}\in E(\cT_n):u,v\leq r\}\subseteq E(\cT_n)$. In words, $(\cT_n(i;E_r),i)$ is the descendant subtree of $i$ in $\cT_n$ after removing all edges between $1,\dots,r$, which we view as an element of $\cT^*$, rooted at $i$.

\begin{proposition}[Exchangeability of subtree functionals]\label{prop:exch_subtree_funcs}
    Fix $r\geq 1$. Consider a probability space $(\Omega,\cF,\mu)$, and random variables $X_1,\dots,X_k$ defined on this space with marginal law $\mu$ and such that the joint law of the random vector $(X_1,\dots,X_r)$ is \emph{exchangeable}. Let $(\kM,\cF_{\kM},m)$ be a measure space, and let $f:\Omega \times \cT^* \to 
    \kM$ be measurable under the product sigma algebra on $\cF\times \cF(\cT^*)$, where $\cF(\cT^*)$ is the discrete sigma algebra on $\cT^*$. Then, the $\kM^r$-valued random vector 
    \begin{align*}
    (f(X_1,(\cT_n(1;E_r),1)),\dots,f(X_k,(\cT_n(r;E_r),r)))    
    \end{align*}
    is an exchangeable vector.
\end{proposition}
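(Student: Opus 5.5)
We prove the statement by showing that the vector of rooted subtrees $\bigl((\cT_n(1;E_r),1),\dots,(\cT_n(r;E_r),r)\bigr)$, viewed as a $(\cT^*)^r$-valued random vector, is itself exchangeable. We also assume, as the statement implicitly requires, that $(X_1,\dots,X_r)$ is independent of $\cT_n$. Exchangeability then passes to the functionals. Let $T_i:=(\cT_n(i;E_r),i)$ and fix a permutation $\sigma$ of $[r]$. Given exchangeability of $(T_1,\dots,T_r)$, independence yields that the pairs $((X_1,T_1),\dots,(X_r,T_r))$ and $((X_{\sigma(1)},T_{\sigma(1)}),\dots,(X_{\sigma(r)},T_{\sigma(r)}))$ have the same joint law. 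Indeed, the joint law is the product of the law of the $X$-vector and the law of the $T$-vector, and each factor is $\sigma$-invariant. Applying the measurable map $(x_i,t_i)_{i\le r}\mapsto (f(x_i,t_i))_{i\le r}$ then gives the claim. Measurability holds because $\cT^*$ is countable with the discrete sigma algebra.

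For the exchangeability of $(T_1,\dots,T_r)$, we use the structure of the \textsc{urrt} after time $r$. For $j>r$, let $A(j)\in[r]$ denote the unique vertex of $[r]$ on the path from $j$ to $1$, namely the ancestor of $j$ at level $\le r$. Then $\cT_n(i;E_r)$ has vertex set $\{i\}\cup\{j>r:A(j)=i\}$, and its edges are the tree edges among these vertices. The process $(A(j))_{j>r}$ evolves as a P\'olya urn: vertex $j$ chooses a uniform parent among $[j-1]$, so $A(j)=i$ with probability $|\{u<j: A(u)=i\}|/(j-1)$, where we set $A(i)=i$ for $i\le r$. Conditionally on the assignment, within each class the new vertex attaches uniformly to the members of its class. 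This dynamic treats the $r$ labels symmetrically: all start with one ball, and the rule is invariant under relabeling.

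Concretely, we apply the relabeling of vertices $\pi_\sigma$ that acts as $\sigma$ on $[r]$ and fixes every $j>r$. We then check, via the step-by-step attachment probabilities, that the law of the sequence of parent choices $(p(j))_{j>r}$ is invariant after replacing each choice $p(j)\in[r]$ by $\sigma(p(j))$. The parent of $j$ is uniform on $[j-1]$, and $\pi_\sigma$ is a bijection of $[j-1]$ for each $j>r$. Hence the transformed sequence of parent choices for $j>r$ is again i.i.d.\ uniform in the same sense, and it is independent of the edges inside $[r]$, which are discarded. The forest obtained by removing $E_r$ is a function only of $(p(j))_{j>r}$, and the transformed forest coincides with the forest in which components are permuted by $\sigma$. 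This gives $(T_{\sigma(1)},\dots,T_{\sigma(r)})\stackrel{d}{=}(T_1,\dots,T_r)$ as rooted isomorphism classes.

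The main point needing care is that the labels of vertices $j>r$ are kept fixed and not permuted. The isomorphism classes forget those labels, and the map $\pi_\sigma$ preserves the uniform law of each $p(j)$ because it is a bijection of $[j-1]$. There are no real obstacles beyond this bookkeeping.
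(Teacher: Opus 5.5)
Your argument is correct, but it takes a different route from the paper's.

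\textbf{The paper's route.} The paper never shows directly that the $(\cT^*)^r$-valued vector $(T_1,\dots,T_r)$ is exchangeable. Instead it conditions on the subtree sizes and invokes two lemmas:
\begin{itemize}
\item Lemma~\ref{lem:exch_subtree_sizes}: the sizes form the composition vector of a P\'olya urn, hence are exchangeable;
\item Lemma~\ref{lem:cond_indep_subtrees}: given the sizes, the subtrees are independent \textsc{urrt}s of those sizes (its proof is omitted).
\end{itemize}
It then integrates against the law of $(X_1,\dots,X_r)$, using its exchangeability and its independence from $\cT_n$. This is the same independence you make explicit, which the statement leaves tacit.

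\textbf{Your route.} You note that the forest obtained by deleting $E_r$ is a function of $(p(j))_{j>r}$ alone. Replacing each $p(j)$ by $\pi_\sigma(p(j))$ preserves the law of this sequence, because $\pi_\sigma$ restricts to a bijection of $[j-1]$ for every $j>r$. Since $\pi_\sigma$ fixes every $j>r$, the new forest is exactly the image of the old one under $\pi_\sigma$. Its component rooted at $i$ is therefore rooted-isomorphic to the old component rooted at $\sigma^{-1}(i)$. This gives $(T_{\sigma^{-1}(1)},\dots,T_{\sigma^{-1}(r)})\stackrel{d}{=}(T_1,\dots,T_r)$, which suffices.

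\textbf{Comparison.} Your argument is shorter and self-contained. It needs neither the urn description of the sizes nor the unproved conditional-independence lemma. It also gives directly the exchangeability of the tree vector, which the paper only extracts later, in the proof of Proposition~\ref{prop:sample_exch}, by specializing $f$ to the projection. The paper's route, in turn, is built from structural facts it needs elsewhere anyway: the Dirichlet limits of the urn, and the recursive bounds in Proposition~\ref{prop:later_small} and Lemma~\ref{lem:sec_mom_Yv}.

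\textbf{Two harmless inaccuracies.}
\begin{itemize}
\item $A(j)$ should be the \emph{first} vertex of $[r]$ on the path from $j$ to $1$, not the unique one. The urn heuristic is not used in your proof anyway.
\item $\cT^*$ is not countable, since it contains infinite trees. Measurability of $f(X_i,T_i)$ follows instead from the joint measurability of $f$ and the fact that each $T_i$ takes only finitely many values.
\end{itemize}
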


To prove the proposition, we need a couple of intermediate lemmas. First, for any $n\geq 1$, let $\rho_n$ denote the distribution of the rooted random tree $(\cT_n,1)$ (seen as an element in $\cT^*$) where $\cT_n$ is our usual \textsc{urrt} with root $1$. The following lemma is a straightforward consequence of the construction of the \textsc{urrt} $\cT_n$, whose proof we omit.
\begin{lemma}[Conditionally independent subtrees]\label{lem:cond_indep_subtrees}
    Conditionally on the event \begin{align*}
\cG_r:=\bigcap_{i=1}^r\{|\cT_n(i;E_r)|=x_i\},        
    \end{align*}
    the conditional distribution of the random vector $((\cT_n(1;E_r),1),\dots,(\cT_n(r;E_r),r))$ taking values in $(\cT^*)^r$ follows law $\rho_{x_1}\times \dots \times \rho_{x_r}$. In other words, given $\cG_r$, the subtrees $(\cT_n(i; E_r), i)$ are conditionally independent \textsc{urrt}s of the respective correct sizes rooted at their respective roots.
\end{lemma}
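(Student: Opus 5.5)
The plan is to prove the lemma by an explicit count over recursive labelings. Condition first on the whole restricted tree $\cT_r$, which fixes the edge set $E_r$. Then use a Pólya-urn / Chinese-restaurant description of how the vertices $r+1,\dots,n$ are distributed among the $r$ subtrees. For $j>r$, vertex $j$ chooses its parent uniformly in $[j-1]$. Given the current forest $(\cT_{j-1}(i;E_r))_{i\le r}$, it therefore joins the $i$-th subtree with probability $|\cT_{j-1}(i;E_r)|/(j-1)$. Conditionally on joining that subtree, its parent is uniform among the vertices currently in it. Which subtree a vertex joins depends only on the current sizes, while its attachment point within the chosen subtree is a uniform choice independent of everything else.

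The central step is to write down the joint law of the full trajectory. Fix a target configuration: for each $i$, the set $A_i\subseteq\{r+1,\dots,n\}$ of vertices eventually in the $i$-th subtree (with $|A_i|=x_i-1$), and for each $i$ a recursive tree structure on $\{i\}\cup A_i$ in which labels increase away from $i$. Its probability is the product over $j>r$ of $1/(j-1)$, because $j$ picks a specific parent. This equals $(r-1)!/(n-1)!$, independent of the configuration. Hence, conditionally on $\cT_r$ and on the sets $A_1,\dots,A_r$, the internal structures are uniform over all increasing labelings. Relabeling the vertices of $\{i\}\cup A_i$ in increasing order by $1,\dots,x_i$ maps the tree on $\{i\}\cup A_i$ bijectively onto the set of recursive trees on $[x_i]$. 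Under this map the uniform measure on recursive trees on $[x_i]$ is exactly $\rho_{x_i}$, since each recursive tree on $[x_i]$ has probability $1/(x_i-1)!$ under the \textsc{urrt} law. The conditional law of the rooted unlabeled tree $(\cT_n(i;E_r),i)$ is the pushforward of the uniform law on recursive trees of size $x_i$, which is $\rho_{x_i}$. Independence across $i$ follows because the uniform measure on a product of sets is the product of the uniform measures.

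To finish, integrate out the conditioning on $\cT_r$ and on $(A_i)$. Since the conditional law $\rho_{x_1}\times\dots\times\rho_{x_r}$ depends only on the sizes $(x_i)$, and $\cG_r$ is measurable with respect to $(\cT_r,(A_i))$, averaging over the conditioning variables restricted to $\cG_r$ gives the claim.

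The only real obstacle is bookkeeping. One must check that the product-of-$1/(j-1)$ computation is exact: vertex $j$ has $j-1$ possible parents, all in $[j-1]$, and the parent determines both the subtree and the attachment point. One must also check that the relabeling bijection is measure-preserving between the uniform law on increasing trees on $\{i\}\cup A_i$ and $\rho_{x_i}$. Both facts are elementary, so the argument is essentially a counting identity.
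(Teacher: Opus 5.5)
Your proof is correct. Each admissible parent sequence $(p(r+1),\dots,p(n))$ corresponds bijectively to a configuration (the sets $A_i$ together with an increasing tree on $\{i\}\cup A_i$ for each $i\le r$) and carries the same weight $(r-1)!/(n-1)!$; relabelling in increasing order then gives $\rho_{x_1}\times\dots\times\rho_{x_r}$ on $\cG_r$. The paper omits the proof as a straightforward consequence of the recursive construction of $\cT_n$, and your counting argument is a natural rigorous way to fill it in.
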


We need one further result on subtree sizes.
\begin{lemma}[Exchangeability of subtree sizes]\label{lem:exch_subtree_sizes}
    The random vector $(|\cT_n(1;E_r)|,\dots,|\cT_n(r;E_r)|)$ is exchangeable.
\end{lemma}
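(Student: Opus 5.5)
The idea is to identify the law of the subtree-size vector with a classical exchangeable urn, namely the Pólya urn. At time $r$ the vertices $1,\dots,r$ each sit in their own component of $\cT_r$ with $E_r$ removed, so $|\cT_r(i;E_r)|=1$ for every $i\le r$. This starting configuration is symmetric. For each later step $j=r+1,\dots,n$, vertex $j$ picks a uniform parent in $[j-1]$. The vertices $[j-1]$ are partitioned into the $r$ components $\cT_{j-1}(i;E_r)$, $i\in[r]$. Hence, given the past, $j$ joins component $i$ with probability $|\cT_{j-1}(i;E_r)|/(j-1)$. The edges of $\cT_n$ among $[r]$ belong to $\cT_r$, and every later edge joins $j$ to its parent, so later edges never lie in $E_r$. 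It follows that the components evolve exactly as follows: $\cT_j(i;E_r)$ equals $\cT_{j-1}(i;E_r)$, with $j$ added when $j$'s parent lies in it. So the size vector is precisely a Pólya urn with $r$ colours, one initial ball of each colour, and one added ball of the drawn colour at each step.

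Next I would compute the law of the colour sequence explicitly. Let the sequence of component labels of $r+1,\dots,n$ be $(c_{r+1},\dots,c_n)\in[r]^{n-r}$, with final counts $m_i=\#\{j:c_j=i\}$. Multiplying the conditional probabilities gives
\begin{align*}
\Prob{(c_{r+1},\dots,c_n)=(a_{r+1},\dots,a_n)}=\frac{\prod_{i=1}^r m_i!}{r(r+1)\cdots(n-1)}~.
\end{align*}
This holds because each colour $i$ contributes numerators $1,2,\dots,m_i$ in some order, and the denominators are $r,\dots,n-1$ regardless of the order of draws. The probability depends on the sequence only through the multiset $\{m_1,\dots,m_r\}$, and that multiset is invariant under relabelling the colours by a permutation $\sigma$ of $[r]$. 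So for any $\sigma$, the sequence $(\sigma(c_j))_j$ has the same law as $(c_j)_j$. Finally, $|\cT_n(i;E_r)|=1+m_i$, and permuting the colours permutes the vector $(m_1,\dots,m_r)$ accordingly. Therefore $(|\cT_n(1;E_r)|,\dots,|\cT_n(r;E_r)|)\stackrel{d}{=}(|\cT_n(\sigma(1);E_r)|,\dots,|\cT_n(\sigma(r);E_r)|)$ for every $\sigma$, which is exchangeability.

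There is no real obstacle. The only point needing care is the first step: checking that the components after deleting $E_r$ grow exactly by parent attachment, with no merging and no edges from $E_r$ appearing after time $r$. Once that is in place, the rest is the standard Pólya urn exchangeability computation. Alternatively, one could use the spacings coupling of Lemma~\ref{lem:subtrees_unifs}: component $i$ corresponds to a union of spacings whose total length behaves like an exchangeable Dirichlet$(1,\dots,1)$ vector, and given those lengths the counts are multinomial. The urn computation above is cleaner, so I would use it.
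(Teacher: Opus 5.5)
Your proof is correct and follows the paper's argument. The paper also identifies $(|\cT_n(1;E_r)|,\dots,|\cT_n(r;E_r)|)$ with the composition of a Pólya urn started from $r$ balls of distinct colours after $n-r$ draws, and concludes immediately. You additionally verify that identification (components grow only by parent attachment, and no edge of $E_r$ appears after time $r$) and derive the exchangeability from the explicit formula $\prod_{i} m_i!/(r(r+1)\cdots(n-1))$, both of which the paper leaves implicit.
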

\begin{proof}
    Note that the vector $(|\cT_n(1; E_r)|,\dots,|\cT_n(r; E_r)|)$ is distributed as the composition vector of a standard Pólya urn after $n-r$ steps, where initially it has $r$ balls of $r$ different colors. The result follows. 
\end{proof}
%We can now prove Proposition \ref{prop:exch_subtree_funcs}.

\begin{proof}[Proof of Proposition \ref{prop:exch_subtree_funcs}] For any $\underline{x}=(x_1,\dots,x_r)\in \Omega^r$ and any permutation $\tau$ of $[r]$, we denote $\underline{x}_\tau=(x_{\tau(1)},\dots,x_{\tau(r)})$. In particular, letting ${\rm id}:[r] \to [r]$ be the identity permutation, we use $\underline{x}$ and $\underline{x}_{\rm id}$ interchangeably to mean $(x_1,\dots,x_r)\in \Omega^r$.  

For permutations $\sigma,\tau$ of $[r]$, $\underline{x}=(x_1,\dots,x_r)\in\Omega^r$, and measurable sets $B_1,\dots, B_r\subseteq \kM$, denote 
\begin{align*}
\underline{f}_\sigma(\underline{x}_\tau)&=(f(x_{\tau(1)},(\cT_n(\sigma(1);E_r),\sigma(1))),\dots,f(x_{\tau(r)},(\cT_n(\sigma(r);E_r),\sigma(r)))), \\ \textrm{ and }
    \underline{B}_\sigma&=(B_{\sigma(1)},\dots,B_{\sigma(r)})~.
\end{align*}
We need to show that for a permutation $\sigma$ of $[r]$ and measurable sets $B_1,\dots,B_r \subseteq \kM$, 
\begin{align*}
    \Prob{\underline{f}_{\rm id}(\underline{X}_{\rm id})\in \underline{B}_\sigma}=\Prob{\underline{f}_{\sigma}(\underline{X}_{\sigma})\in \underline{B}_\sigma}.\numberthis \label{eq:exch_toshow}
\end{align*}
Using the independence of $\underline{X}_{\rm id}=(X_1,\dots,X_r)$ from $\cT_n$, letting $\mu_{\tau}$ be the law of $\underline{X}_{\tau}=(X_{\tau(1)},\dots,X_{\tau(r)})$, we have
\begin{align*}
    \Prob{\underline{f}_{\rm id}(\underline{X}_{\rm id})\in \underline{B}_\sigma}=\int_{\Omega^r}\Prob{\underline{f}_{\rm id}(\underline{x})\in \underline{B}_\sigma}\mu_{\rm id}(d\underline{x})~.
\end{align*}
Defining
    $$
    A_{\sigma(i)}:=\{t\geq 0:\exists\;(\mathbf{t},o)\in \cT^*\textrm{ with }|\mathbf{t}|=t\textrm{ and } f(x_{i},(\mathbf{t},o))\in B_{\sigma(i)}\}, 1 \le i \le r,
    $$
and $\underline{A}_\sigma=A_{\sigma(1)}\times \dots \times A_{\sigma(r)}$, 
we may write
    \begin{align*}
        \Prob{\underline{f}_{\rm id}(\underline{x})\in \underline{B}_\sigma}&=\sum_{\underline{t}_{\rm id}\in \underline{A}_\sigma}\CProb{\underline{f}_{\rm id}(\underline{x})\in \underline{B}_\sigma}{\underline{|\cdot|}_{\rm id}=\underline{t}_{\rm id}}\Prob{\underline{|\cdot|}_{\rm id}=\underline{t}_{\rm id}},
    \end{align*}
    where $t_1,\dots,t_r\geq 0$, and for a permutation $\sigma$,
    \begin{align*}
        &\underline{t}_\sigma=(t_{\sigma(1)},\dots,t_{\sigma(r)}), \textrm{ and } \underline{|\cdot |}_\sigma=(|\cT_n(\sigma(1);E_r)|,\dots,|\cT_n(\sigma(r);E_r)|).
    \end{align*}
    By Lemma \ref{lem:exch_subtree_sizes} and Lemma \ref{lem:cond_indep_subtrees}, we have, respectively, 
\begin{align*}
    \Prob{\underline{|\cdot|}_{\rm id}=\underline{t}_{\rm id}}=\Prob{\underline{|\cdot|}_{\sigma}=\underline{t}_{\rm id}}
\end{align*}
and
\begin{align*}
    \CProb{\underline{f}_{\tau}(\underline{x}_{\rm id})\in \underline{B}_\sigma}{\underline{|\cdot|}_{\tau}=\underline{t}_{\rm id}}=\prod_{i=1}^r\Prob{f(x_{i},\mathbf{T}(t_i))\in B_{\sigma(i)}},
\end{align*}
    where $\mathbf{T}(t_i)$ is distributed as $\rho_{t_i}$ the law of $(\cT_{t_i},1)$. We conclude that
    \begin{align*}
       \Prob{\underline{f}_{\rm id}(\underline{x})\in \underline{B}_\sigma}&=\sum_{\underline{t}_{\rm id}\in \underline{A}_\sigma}\CProb{\underline{f}_{\rm id}(\underline{x})\in \underline{B}_\sigma}{\underline{|\cdot|}_{\rm id}=\underline{t}_{\rm id}}\Prob{\underline{|\cdot|}_{\rm id}=\underline{t}_{\rm id}}\\&=\sum_{\underline{t}_{\rm id}\in \underline{A}_\sigma} \left(\prod_{i=1}^r\Prob{f(x_i,\mathbf{T}(t_i))\in B_{\sigma(i)}} \right)\Prob{\underline{|\cdot|}_{\sigma}=\underline{t}_{\rm id}}\\&=\sum_{\underline{t}_{\rm id}\in \underline{A}_\sigma}\CProb{\underline{f}_{\sigma }(\underline{x})\in \underline{B}_\sigma}{\underline{|\cdot|}_{\sigma}=\underline{t}_{\rm id}}\Prob{\underline{|\cdot|}_{\sigma}=\underline{t}_{\rm id}}\\&=\Prob{\underline{f}_{\sigma}(\underline{x})\in \underline{B}_\sigma}.
    \end{align*}
Note that
\begin{align*}
    \Prob{\underline{f}_{\rm id}(\underline{X}_{\rm id})\in \underline{B}_{\sigma}}&=\int_{\Omega^r}\Prob{\underline{f}_{\rm id}(\underline{x})\in \underline{B}_\sigma}\mu_{\rm id}(d\underline{x})\\&=\int_{\Omega^r}\Prob{\underline{f}_{\sigma}(\underline{x})\in \underline{B}_\sigma}\mu_{\rm id}(d\underline{x})\\&=\int_{\Omega^r}\Prob{\underline{f}_{\sigma}(\underline{x})\in \underline{B}_\sigma}\mu_{\sigma}(d\underline{x})=\Prob{\underline{f}_{\sigma}(\underline{X}_{\sigma})\in \underline{B}_{\sigma}},
\end{align*}
finishing the proof, where we used the fact that $\mu_{\rm id}=\mu_{\sigma}$ by the exchangeability of $\underline{X}$.
\end{proof}

\begin{corollary}[Negative correlation of real functionals]\label{cor:neg_corr}
%Under the setting of Proposition \ref{prop:exch_subtree_funcs}, take the co-domain $\kM$ of $f$ to be $\R$, endowed with its natural Borel sigma-algebra, 
Fix $r\geq 2$, and consider the random variables
    \begin{align*}
        Y_i:=f(X_i,(\cT_n(i;E_r),i)),\quad Z:=\sum_{i=1}^r Y_i~.
    \end{align*}
    Then for all $i \neq j$,
    \begin{align*}
        \CExp{(Y_i-\CExp{Y_i}{Z})(Y_j-\CExp{Y_j}{Z})}{Z}\leq 0~.
    \end{align*}
\end{corollary}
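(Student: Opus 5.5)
The argument rests on the exchangeability in Proposition \ref{prop:exch_subtree_funcs}, applied with $\kM=\R$ with its Borel sigma-algebra. The vector $(Y_1,\dots,Y_r)$ is then exchangeable, so $Z$ is a symmetric function of it. I will assume enough integrability, say $\Exp{Y_i^2}<\infty$, so that the conditional covariances make sense; in the applications this is supplied by Proposition \ref{prop:subtree_mom_UB}.

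\textbf{Step 1.} Conditional exchangeability given $Z$. For any permutation $\sigma$ of $[r]$, the pair $\bigl((Y_{\sigma(1)},\dots,Y_{\sigma(r)}),Z\bigr)$ has the same law as $\bigl((Y_1,\dots,Y_r),Z\bigr)$. This holds because $Z$ is a permutation-invariant measurable function of the vector. Hence the regular conditional law of $(Y_1,\dots,Y_r)$ given $Z$ is almost surely exchangeable. Two consequences follow:
\begin{align*}
\CExp{Y_i}{Z}=\CExp{Y_1}{Z}=\frac{1}{r}\CExp{Z}{Z}=\frac{Z}{r},
\end{align*}
and the conditional covariance $c(Z):=\CExp{(Y_i-Z/r)(Y_j-Z/r)}{Z}$ does not depend on the pair $i\neq j$. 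Likewise the conditional variance $v(Z):=\CExp{(Y_i-Z/r)^2}{Z}$ does not depend on $i$.

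\textbf{Step 2.} The variance identity. The centered variables sum to zero, since $\sum_{i=1}^r (Y_i - Z/r) = Z - Z = 0$. Therefore
\begin{align*}
0=\CExp{\Bigl(\sum_{i=1}^r (Y_i-Z/r)\Bigr)^2}{Z}=r\,v(Z)+r(r-1)\,c(Z).
\end{align*}
This gives $c(Z)=-v(Z)/(r-1)\leq 0$, which is the claim.

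\textbf{Main obstacle.} The only delicate point is Step 1, namely justifying that conditioning on $Z$ preserves the symmetry. I would handle it with a test-function argument. For bounded measurable $h$, exchangeability of $(Y_1,\dots,Y_r)$ together with the symmetry of $Z$ gives $\Exp{Y_i h(Z)}=\Exp{Y_j h(Z)}$. The analogous identity for products $Y_iY_jh(Z)$ holds over all pairs $i\neq j$. These identities determine the conditional expectations almost surely, with truncation used if only integrability is available. Everything else is algebra.
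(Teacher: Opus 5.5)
Your proposal is correct and follows essentially the same route as the paper: exchangeability of $(Y_1,\dots,Y_r)$ from Proposition \ref{prop:exch_subtree_funcs}, then conditional exchangeability given the permutation-invariant $Z$, then expanding $\CExp{\bigl(\sum_{i=1}^r (Y_i-\CExp{Y_i}{Z})\bigr)^2}{Z}=0$ to get the conditional covariance equal to $-v(Z)/(r-1)\leq 0$. Your additional care (the integrability assumption, the test-function justification of Step 1, and the identification $\CExp{Y_i}{Z}=Z/r$, which the paper only uses later in the proof of Proposition \ref{prop:conc_cond_mean}) makes explicit what the paper's argument leaves implicit.
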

\begin{proof}
    This follows from a similar argument as in Aldous \cite[Eq.\ (1.7)]{aldous2006exchangeability}. Note that by Proposition \ref{prop:exch_subtree_funcs}, $(Y_1,\dots,Y_r)$ is exchangeable, and thus so is $(Y_1,\dots,Y_r)$ given $Z$, as the total sum $Z$ is permutation-invariant. Further, given $Z$, the random variable $\sum_{i=1}^r (Y_i-\CExp{Y_i}{Z})=0$ a.s., so that
    \begin{align*}
        0&=\CExp{\left(\sum_{j=1}^r(Y_i-\CExp{Y_i}{Z})\right)^2}{Z}\\&=r\CExp{(Y_1-\CExp{Y_1}{Z})^2}{Z}+r(r-1)\CExp{(Y_i-\CExp{Y_i}{Z})(Y_j-\CExp{Y_j}{Z})}{Z}.
    \end{align*}
    Thus, $$\CExp{(Y_
i-\CExp{Y_i}{Z})(Y_j-\CExp{Y_j}{Z})}{Z}
=
\frac{-\CExp{(Y_1-\CExp{Y_1}{Z})^2}{Z}}{r-1}
\leq 0~.$$
\end{proof}
We need one more consequence for later use. For any measurable space $(S,\cF_S)$, let $\cP(S)$ denote the space of all probability measures on $S$. 
%We generally equip this space with the Borel sigma algebra coming from the topology of weak convergence. 

\begin{proposition}[Sample exchangeability from exchangeable measures]\label{prop:sample_exch}
    Let $\mu:\cT^* \to \cP(\cT^*)$ be measurable. Given $(\bt,o)\in\cT^*$, we denote the image of it under $\mu$ as $\mu_{(\bt,o)}$. Fix $r\geq 1$. Conditionally on $((\cT_n(1;E_r),1),\dots,(\cT_n(r;E_r),r))$, let $(\sT_1,\dots,\sT_r)$ be a vector of random trees with law
    \begin{align*}
        \mu_{(\cT_n(1;E_r),1)}\times \dots \times \mu_{(\cT_n(r;E_r),r)}, 
    \end{align*}
    i.e., the trees $\sT_i$ form a conditionally independent collection, given $((\cT_n(1;E_r),1),\dots,(\cT_n(r;E_r),r))$, where $\sT_i$ has marginal law $\mu_{(\cT_n(i;E_r),i)}$ for each $i\in [r]$. Then, the vector $(\sT_1,\dots,\sT_r)$ is exchangeable.
\end{proposition}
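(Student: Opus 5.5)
Write $\bT_i:=(\cT_n(i;E_r),i)$ for $i\in[r]$. The plan is to reduce the claim to the exchangeability of subtree sizes (Lemma \ref{lem:exch_subtree_sizes}) together with the conditional independence of the subtrees given their sizes (Lemma \ref{lem:cond_indep_subtrees}), following the proof of Proposition \ref{prop:exch_subtree_funcs}. Fix a permutation $\sigma$ of $[r]$ and sets $B_1,\dots,B_r\subseteq\cT^*$; all sets are measurable because $\cT^*$ is countable with the discrete sigma algebra. We must show
$$\Prob{\sT_1\in B_1,\dots,\sT_r\in B_r}=\Prob{\sT_{\sigma(1)}\in B_1,\dots,\sT_{\sigma(r)}\in B_r}.$$

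First we condition on $(\bT_1,\dots,\bT_r)$. The conditional independence in the construction of $(\sT_1,\dots,\sT_r)$ gives
$$\Prob{\sT_1\in B_1,\dots,\sT_r\in B_r}=\Exp{\prod_{i=1}^r \mu_{\bT_i}(B_i)}.$$
Define $h_i(\bt,o):=\mu_{(\bt,o)}(B_i)$, which is a bounded measurable function on $\cT^*$. The right-hand side is then $\Exp{\prod_i h_i(\bT_i)}$. Similarly, the permuted probability equals $\Exp{\prod_i h_i(\bT_{\sigma(i)})}$, because $(\sT_{\sigma(1)},\dots,\sT_{\sigma(r)})$ is conditionally distributed as $\mu_{\bT_{\sigma(1)}}\times\dots\times\mu_{\bT_{\sigma(r)}}$.

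The problem therefore reduces to showing that $(\bT_1,\dots,\bT_r)$ is an exchangeable vector in $(\cT^*)^r$. This is the special case of Proposition \ref{prop:exch_subtree_funcs} in which the $X_i$ are trivial (constant) and $f(x,(\bt,o))=(\bt,o)$, the identity map into $\kM=\cT^*$. If one prefers not to rely on the measure-space formalism there, the direct argument runs as follows. Condition on the size vector $(|\bT_1|,\dots,|\bT_r|)=\underline t$. By Lemma \ref{lem:cond_indep_subtrees},
$$\Exp{\prod_i h_i(\bT_i)}=\sum_{\underline t}\Prob{\underline{|\cdot|}=\underline t}\prod_i \Exp{h_i(\mathbf T(t_i))},$$
where $\mathbf T(t)\sim\rho_t$. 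The same decomposition holds for the permuted vector, with the size vector permuted. Lemma \ref{lem:exch_subtree_sizes} lets us reindex the sum, so the two expressions coincide.

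The only delicate point is the bookkeeping of the permutation. Reindexing by $\sigma$ has to simultaneously permute the size vector and the factors $\Exp{h_i(\mathbf T(t_{\sigma(i)}))}$. This matches exactly because the conditional law in Lemma \ref{lem:cond_indep_subtrees} is a product whose $i$th factor depends only on $t_i$ and not on the index $i$. It is also why $\mu$ is required to be a single fixed kernel applied to every subtree. With that settled, the conclusion follows by integrating over the sizes.
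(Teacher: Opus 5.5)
Your proposal is correct and follows essentially the paper's own proof. You condition on the subtree vector, which writes both the plain and the permuted probabilities as integrals of $\prod_i\mu_{\bT_i}(B_i)$ against the law of the (permuted) subtree vector. You then invoke Proposition~\ref{prop:exch_subtree_funcs} with the projection $f(x,(\bt,o))=(\bt,o)$, which makes $(\bT_1,\dots,\bT_r)$ exchangeable. Your size-conditioning alternative is just the proof of that proposition written out.

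One small correction: $\cT^*$ is not countable, since it contains infinite trees. Measurability still holds, because the paper equips $\cT^*$ with the discrete $\sigma$-algebra and each $\bT_i$ takes values in the countable set of finite rooted trees.
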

\begin{proof}
For any permutation $\tau$ of $[r]$, denote $\sT_\tau=(\sT_{\tau(1)},\dots,\sT_{\tau(r)})$.
    Note that for any $B\subseteq (\cT^*)^r$,
    \begin{align*}
       &\Prob{\sT_{\tau}\in B}= \int_{B}\ind{((\bt_1,o_1),\dots,(\bt_2,o_2))\in B} \prod_{i=1}^rd\mu_{(T_i,u_i)}(\bt_i,o_i) \cdot d\pi_\tau((T_1,u_1),\dots,(T_r,u_r)),\numberthis\label{eq:sample_exch_1}
    \end{align*}
    where $\pi_\tau$ denotes the law of the vector
    \begin{align}
        ((\cT_n(\tau(1);E_r),\tau(1)),\dots,(\cT_n(\tau(r);E_r),\tau(r))).
    \end{align}
    In Proposition \ref{prop:exch_subtree_funcs}, setting $\kM=\cT^*$ and $f:\Omega\times \cT^* \to \cT^*$,   that is, $f(x,(\bt,o))=(\bt,o)$
    (the projection map on the second coordinate),
    we obtain that $\mu_\tau=\mu_\sigma$ for any  permutation $\sigma$ of $[r]$. Using this in \eqref{eq:sample_exch_1}, we observe
    \begin{align*}
        \Prob{\sT_{\tau}\in B}= \int_{B}\ind{((\bt_1,o_1),\dots,(\bt_2,o_2))\in B} \prod_{i=1}^rd\mu_{(T_i,u_i)}(\bt_i,o_i) \cdot d\pi_\sigma((T_1,u_1),\dots,(T_r,u_r))=\Prob{\sT_{\sigma}\in B}~.
    \end{align*}
\end{proof}

\section{Root finding in high-degree forests}\label{sec:root_find_highdeg}
In this section, we prove Theorem \ref{thm:forest}.

\subsection{Reduction to offspring cutting}
The first step of the proof of Theorem \ref{thm:forest} is to show that the structure of the $\alpha$-forest does not change too much if we retain vertices with a large number of \textit{offspring}, instead of retaining those with a large degree, as in the definition of the construction of the $\alpha$-forest (recall Definition \ref{def:alpha_high_deg_for}).
Recall that in any rooted tree $(T,o)$, $c_T(u)$ denotes the number of \emph{offspring} (or \emph{children}) of vertex $u$ in $T$.
%i.e., the number of neighbors of $u$ in $T$ that have graph-distance larger (by $1$) than that of $u$ from the root $o$.
%\begin{definition}[$\alpha$-offspring forests]\label{def:alpha_off_forests}
    %For the \textsc{urrt} $\cT_n$ on $[n]$ with root vertex $1$, the subforest of $\cT_n$ spanned by the vertices $v \in [n]$ with $c_{\cT_n}(v)>(1-\alpha)\log n$ is called the `$\alpha$-offspring forest' of $\cT_n$, and is denoted by $\mathrm{F}_n(\alpha)$. 
%\end{definition}

%\red{
%[
\begin{definition}[$\alpha$-offspring forests]
  The subforest of $\cT_n$ constructed by removing the edges with at least one end vertex satisfying $c_{\cT_n}(v)\leq (1-\alpha)\log n$ is called the `$\alpha$-offspring forest' of $\cT_n$, and is denoted by $\mathrm{F}_n(\alpha)$. 
\end{definition}
%]

%}

\begin{remark}
    As in Remark \ref{rem:site_perco_forest}, it is useful to view components that are not isolated vertices in $\rF_n(\alpha)$ as components in the forest $\rF_n^{(\rm V)}(\alpha)$
    spanned by vertices with more than $(1-\alpha)\log n$ offspring.
\end{remark}

Our first step is to claim that $\alpha$-forests are well approximated by $\alpha$-offspring forests. 

\begin{lemma}[Approximating by offspring forests]\label{lem:approx_offspring_forests}
    Fix $\alpha \in (0,1)$ and $\delta >0$. Then a.s., for all large $n$,  $\cF_n(\alpha-\delta)\subseteq \mathrm{F}_n(\alpha)\subseteq \cF_n(\alpha)$.
\end{lemma}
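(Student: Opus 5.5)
Both inclusions should follow deterministically from the elementary relation between degrees and offspring counts in a rooted tree, so no probabilistic estimate is needed. The plan is to compare the two edge-removal rules edge by edge. All three forests $\cF_n(\alpha-\delta)$, $\mathrm{F}_n(\alpha)$ and $\cF_n(\alpha)$ live on the vertex set $[n]$ and are obtained from $\cT_n$ by deleting edges. So it suffices to show that every edge retained in the smaller forest is also retained in the larger one.

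For the inclusion $\mathrm{F}_n(\alpha)\subseteq \cF_n(\alpha)$, I use the identity recorded at the end of the proof of Lemma \ref{lem:degUandLtail}: $d_{\cT_n}(1)=c_{\cT_n}(1)$ and $d_{\cT_n}(v)=c_{\cT_n}(v)+1$ for $v\neq 1$. In particular $c_{\cT_n}(v)\le d_{\cT_n}(v)$ for every $v$. Take an edge $\{u,v\}$ of $\cT_n$ that survives in $\mathrm{F}_n(\alpha)$. Both endpoints satisfy $c_{\cT_n}(\cdot)>(1-\alpha)\log n$, hence $d_{\cT_n}(\cdot)>(1-\alpha)\log n$. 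So the edge is not removed in the construction of $\cF_n(\alpha)$ in Definition \ref{def:alpha_high_deg_for}. This holds for every $n$ and every realization.

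For the inclusion $\cF_n(\alpha-\delta)\subseteq \mathrm{F}_n(\alpha)$, I may assume $\delta<\alpha$ so that $\cF_n(\alpha-\delta)$ is defined. Take an edge $\{u,v\}$ surviving in $\cF_n(\alpha-\delta)$. Both endpoints have $d_{\cT_n}(\cdot)>(1-\alpha+\delta)\log n$. Using $c_{\cT_n}(w)\ge d_{\cT_n}(w)-1$, each endpoint satisfies $c_{\cT_n}(\cdot)>(1-\alpha)\log n+\delta\log n-1$. Once $n> e^{1/\delta}$, we have $\delta\log n\ge 1$, so $c_{\cT_n}(\cdot)>(1-\alpha)\log n$. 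Hence the edge survives in $\mathrm{F}_n(\alpha)$.

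Thus both inclusions hold surely for all $n>e^{1/\delta}$, which is stronger than the almost-sure statement. There is no real obstacle here. The only point needing care is the direction of the $\pm1$ discrepancy between $d$ and $c$, and that is absorbed by the slack $\delta\log n\to\infty$.
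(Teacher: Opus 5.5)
Your proof is correct and takes the same approach as the paper. The paper's one-line proof rests on the same identity, $c_{\cT_n}(v)=d_{\cT_n}(v)-1$ for $v\neq 1$ and $c_{\cT_n}(1)=d_{\cT_n}(1)$; you spell out the edge-by-edge comparison and note that the $-1$ is absorbed once $\delta\log n\geq 1$, so the inclusions hold deterministically for all $n\geq e^{1/\delta}$.
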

\begin{proof}
    The lemma follows by  noting that $c_{\cT_n}(v)=d_{\cT_n}(v)-1$ if $v\neq 1$ and $c_{\cT_n}(1)=d_{\cT_n}(1)$.
\end{proof}
%The last result helps translate the results for $\mathrm{F}_n(\alpha)$ to $\cF_n(\alpha)$. %We take this route because working with the former is less challenging than the latter.

\subsection{Operations outputting the root component}

Recall the notion of an $\alpha$-forest $\cF_n(\alpha)$ from Definition \ref{def:alpha_high_deg_for}. For any vertex $i \in \cT_n$, let us denote by  $\mathbf{C}_\alpha(i)$ and $C_\alpha(i)$ the connected components of $i$ in $\cF_n(\alpha)$ and $\mathrm{F}_n(\alpha)$, respectively. 
Observe that
\begin{align*}
\mathbf{C}_\alpha(i)=\cT_n(i;E^{\rm deg}_\alpha)\textrm{ and }{C}_\alpha(i)=\cT_n(i;E^{\rm off}_\alpha),
\end{align*}
where we define
\begin{align*}
    &E^{\rm deg}_\alpha:=\{\{u,v\}\in E(\cT_n):d_{\cT_n}(u)\wedge d_{\cT_n}(v)\leq (1-\alpha)\log n\},\\&E^{\rm off}_\alpha:=\{\{u,v\}\in E(\cT_n):c_{\cT_n}(u)\wedge c_{\cT_n}(v)\leq (1-\alpha)\log n\}.
\end{align*}

\begin{remark}[Sandwiching of root components]\label{rem:sandwiching_root_comps}
    By Lemma \ref{lem:approx_offspring_forests}, for any $\delta>0$, for all $n$ large enough, $\bC_{\alpha-\delta}(1)\subseteq C_\alpha(1)\subseteq \bC_{\alpha}(1)$ with probability $1$. 
\end{remark}

%where by convention, $\mathbf{C}_\alpha(i)=\varnothing$ if $i \notin \cF_n(\alpha)$ and $C_\alpha(i)=\varnothing$ if $i\notin \rF_n(\alpha)$. Recall that we view $\cT_n$ 
We view both the graphs $\mathbf{C}_\alpha(1)$ and $C_\alpha(1)$ as being rooted at $1$. 
It is useful to view the process of obtaining $\mathbf{C}_\alpha(1)$ and $C_\alpha(1)$ from $\cT_n$ as \emph{general operations}. Recall that $\cT^*$ denotes the space of (rooted isomorphism classes of) all rooted trees $(\bt,o)$. 
%For this, consider $\cT^*$, the set of all rooted trees $(\mathbf{t},o)$ (where we view these objects up to isomorphism classes, i.e., to us $(\mathbf{t}_1,o_1)$ and $(\mathbf{t}_2,o_2)$ are the same if there is a bijection from the vertices of $\mathbf{t}_1$ to those of $\mathbf{t}_2$ with $o_1$ being mapped to $o_2$), where $\mathbf{t}$ is a tree on a countable vertex set, and $o\in \mathbf{t}$ is some distinguished vertex that we call the \emph{root} of $\mathbf{t}$. 
%Let us now define some useful operators on this space.

\begin{definition}[The operator $\psi^r_{\rm deg}$]\label{def:psi_deg_op}
   For any $r\geq 1$, define the operator $\psi^r_{\rm deg}:\cT^*\to \cT^*$ as follows. For any $(\mathbf{t},o)\in \cT^*$, $(\psi^r_{\rm deg}(\mathbf{t},o),o)\in \cT^*$ is a rooted tree with root $o$, where $\psi^r_{\rm deg}(\mathbf{t},o)$ is the connected component of $o$ when all edges in $\mathbf{t}$ that have at least one end-vertex with \emph{degree} at most $r$ has been removed.
\end{definition}

\begin{definition}[The operator $\psi^r_{\rm off}$]\label{def:psi_off_op}
   For any $r\geq 1$, define the operator $\psi^r_{\rm off}:\cT^*\to \cT^*$ as follows. For any $(\mathbf{t},o)\in \cT^*$, $(\psi^r_{\rm deg}(\mathbf{t},o),o)\in \cT^*$ is a rooted tree, where $\psi^r_{\rm deg}(\mathbf{t},o)$ is the connected component of $o$ when all edges in $\mathbf{t}$ that have at least one end-vertex with \emph{number of offspring} at most $r$ are removed.
\end{definition}

    In particular, note that $\psi^{(1-\alpha)\log n}_{\rm deg}(\cT_n,1)=(\bC_\alpha(1),1)$ and $\psi^{(1-\alpha)\log n}_{\rm off}(\cT_n,1)=(C_\alpha(1),1)$.

\begin{remark}
\label{rem:op_monotonicity}
    The following monotonicity properties follow from the definitions.
    \begin{itemize}
        \item [(i)] For $(\mathbf{t},o),(\mathbf{t}',o)\in \cT^*$, if $\mathbf{t}'$ is a subtree of $\mathbf{t}$, then $\psi^r_{\rm deg}(\mathbf{t},o)\subseteq \psi^r_{\rm deg}(\mathbf{t}',o)$ and $\psi^r_{\rm off}(\mathbf{t},o)\subseteq \psi^r_{\rm off}(\mathbf{t}',o)$ as subtrees.
        \item [(ii)] For $(\mathbf{t},o)\in \cT^*$, and any $r\geq 1$, one has $\psi^r_{\rm off}(\mathbf{t},o)\subseteq \psi^r_{\rm deg}(\mathbf{t},o)$ and $\psi^r_{\rm deg}(\bt,o)\subseteq \psi^{r-1}_{\rm off}(\bt,o)$.
        \item [(iii)] For $(\mathbf{t},o)\in \cT^*$ and any $1\leq r\leq M$ $\psi^M_{\rm off}(\mathbf{t},o)\subseteq \psi^r_{\rm off}(\mathbf{t},o)$ and $\psi^M_{\rm deg}(\mathbf{t},o)\subseteq \psi^r_{\rm deg}(\mathbf{t},o)$.
    \end{itemize} 
\end{remark}

%The point of these operators is that, if we view $(\cT_n,1)$ as an element from $\cT^*$, then $\psi^{(1-\alpha)\log n}_{ \rm deg}(\cT_n,1)=(\mathbf{C}_\alpha(1),1)$ and $\psi^{(1-\alpha)\log n}_{ \rm off}(\cT_n,1)=({C}_\alpha(1),1)$. %\red{[ good to make an operator where the thresholding on the offspring of the root is one less than the thresholding for other offspring. (reply: this is just the degree op) ]}%\begin{lemma}[Truncating branching processes]\label{lem:trunc_BP}
    %Consider a BGW tree $T$, where the number of offspring is given by a random variable $M$. Viewing $(T,o)$ as an element of $\cT^*$, where $o$ is the root of $T$, let $\psi^r_{\rm off}(T,o)=(T',o)$. Then $T'$ is a BGW tree with root $o$ and with the number of offspring given by the random variable $M\ind{M> r}$.
%\end{lemma}
%\begin{proof}
%Consider exploring $\psi^r_{\rm off}$ starting from the root in a breadth-first manner. By the definition of $\psi^r_{\rm off}$, there is an edge in this tree between a parent and a child in $(T,o)$ if and only if both of them have at least $r$ children. Thus, the number of children of any vertex in $\psi^r_{\rm off}(T,o)$ is $0$ if it does not any child in $(T,o)$ $c_u$ Thus, for any edge $\{u,v\}$ in $\psi^r_{\rm off}(T,o)$, its end-vertices have number of children $c_u\ind{c_u>r}$ and $c_v\ind{c_v>r}$. In particular, these numbers are independent since $(T,o)$ is a BGW tree. The result follows.
%\end{proof}
%Let us also define another related useful operator. 
\begin{definition}[The operator $\chi_{>r}$]
    The operator $\chi_{>r}:\cT^*\to \cT^*$ is defined as follows. For any rooted tree $(\mathbf{t},o)\in \cT^*$, let $\chi_{>r}(\mathbf{t},o)$ be the connected component of $o$ in the subforest of $\mathbf{t}$ constructed by retaining only those edges that are incident to a vertex with more than $r$ offspring. We view $(\chi_{>r}(\mathbf{t},o),o)$ as a tree rooted at $o$.  
\end{definition}
\begin{figure}[H]
    \centering
    \resizebox{0.6\textwidth}{!}{
        \input{operators_psi_and_chi}}
    \caption{A rooted tree $(\bt,o)$ with the operators $\chi_{>5}$ and $\psi^5_{\rm off}$ applied to it. The red vertices are the vertices other than the root that have at least $5$ offspring. Note that the edges of $\psi^{5}_{\rm off}(\bt,o)$ form a subset of the edges of $\chi_{>5}(\bt,o)$.}
    \label{fig:operators chi and psi}
\end{figure}

\begin{remark}\label{rem:op_inclusion}
    The edge set of $\psi^r_{\rm off}(\mathbf{t},o)$ is contained in the edge set of $\chi_{>r}(\mathbf{t},o)$, see Figure \ref{fig:operators chi and psi}.
\end{remark}
The next simple observation describes the effect of $\chi_{>r}$ on a Bienaym\' e--Galton--Watson tree.
\begin{lemma}\label{lem:trunc_BP}
    Let $(T,o)$ be a Bienaym\' e--Galton--Watson tree with the number of offspring given by a random variable $X$. Then $(\chi_{>r}(T,o),o)$ is a Bienaym\' e--Galton--Watson subtree of $T$, with number of offspring given by the random variable $X\ind{X>r}$.  
\end{lemma}
%\begin{proof}
%    First note that for any two vertices $u,v\in \chi_{>r}(T,o)$, their number of offspring in it are $c_u\ind{c_u>r}$ and $c_v\ind{c_v>r}$, where $c_u$ and $c_v$ are repsectively their number of offspring in $T$. Thus, the numbers of offspring of different vertices in $\chi_{>r}(T,o)$ are independent. In particular, $\chi_{>r}(T,o)$ is a tree, since it is by definition connected, with root $o$, and where each vertex has an independent number of offspring which is a copy of $X\ind{X>r}$. The result follows.
%\end{proof} 

\subsection{Upper bound of the size of the root component}
\label{sec:c1_UB}

In this section, we prove an upper bound on the size of the root component $|\mathbf{C}_\alpha(1)|$ in $\cF_n(\alpha)$.

\begin{proposition}
\label{prop:root_comp_UB}
For any $\alpha'>\alpha$, $\lim_{n \to \infty}\Prob{|\mathbf{C}_\alpha(1)|>n^{\alpha'}}=0$.
\end{proposition}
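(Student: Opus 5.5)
The strategy is to compare the root component with a subcritical Bienaymé--Galton--Watson tree whose offspring law is $X\ind{X>r}$, with $r=(1-\alpha)\log n$, and then apply Lemma~\ref{lem:trunc_BP}. Fix $\beta$ with $\alpha<\beta<\alpha'$. First use Remark~\ref{rem:sandwiching_root_comps} and Remark~\ref{rem:op_monotonicity}(ii): $\bC_\alpha(1)$ is contained in $\psi^{r-1}_{\rm off}(\cT_n,1)$, and by Remark~\ref{rem:op_inclusion} its edges lie in $\chi_{>r-1}(\cT_n,1)$. So it suffices to bound the size of the component of $1$ made of edges incident to vertices with at least $r$ offspring.

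\textbf{Step 1 (early part).} Every vertex $v\le n^{\beta}$ could belong to the component, so I split the tree at level $m=\lfloor n^\beta\rfloor$. Removing the edges $E_m$, the subtrees $\cT_n(i;E_m)$ for $i\le m$ are, by Lemma~\ref{lem:cond_indep_subtrees}, conditionally independent \textsc{urrt}s. The root component is contained in $[m]$ together with $\bigcup_{i\le m}\chi_{>r-1}(\cT_n(i;E_m),i)$. We have $|[m]|=n^\beta=o(n^{\alpha'})$, so it remains to control the parts hanging below level $m$.

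\textbf{Step 2 (late vertices rarely have many children).} Let $v>m$, so $x(v)>\beta>\alpha$. Lemma~\ref{lem:degUandLtail} gives $\Prob{c_{\cT_n}(v)\ge r-1}\le n^{f_{\alpha}(x(v))+o(1)}$, and this is at most $n^{f_\alpha(\beta)+o(1)}$ with $f_\alpha(\beta)<0$. Inside the subtree of $i$, the component of $i$ in $\chi_{>r-1}$ can only extend beyond the children of $i$ through descendants $v$ having at least $r-1$ offspring. Its size is therefore at most
\[
c(i)+\sum_{v}\ind{c(v)\ge r-1}\,c(v)\ind{v\text{ reached}},
\]
where the sum is over descendants $v$ of $i$. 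I bound the expectation by a first-moment, BGW-type exploration. A vertex $v>m$ is reached beyond its parent only if its parent $u>m$ has $\ge r-1$ children. Each such "bad" vertex has conditional mean offspring $O(\log n)$ given the event $c\ge r-1$, and expected number of bad children at most $O(\log n)\,n^{f_\alpha(\beta)+o(1)}=o(1)$. The exploration is thus dominated by a subcritical branching process, and the expected number of vertices of the whole structure below level $m$ is at most
\[
O\!\left(\sum_{i\le m}\Exp{c(i)}\right)\cdot O(\log n)=O(n^\beta\log^2 n).
\]
Here $c(i)\le\log n$ in expectation, and the $O(\log n)$ factor comes from children of bad vertices.

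\textbf{Step 3.} Markov's inequality then gives $\Prob{|\bC_\alpha(1)|>n^{\alpha'}}\le O(n^{\beta-\alpha'}\log^2n)\to0$.

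The main obstacle is Step 2: the offspring counts of different vertices in a \textsc{urrt} are not independent, so the exploration is not literally a BGW tree. I would handle this by revealing the tree in increasing label order. Conditionally on the past, the number of children of $v$ is a sum of independent Bernoulli$(1/j)$ variables over $j>v$ whose parents are not yet revealed. This lets me bound, for each pair $u<v$, the probability that $u$ is the parent of $v$ and both have at least $r-1$ children by roughly $\frac1v\,n^{2f_\alpha(\beta)+o(1)}$, using near-independence via the independent Bernoulli representation of $c(u)$ with the single indicator $\{p(v)=u\}$ removed. Summing such path probabilities over increasing paths of bad vertices then gives the subcritical geometric bound.
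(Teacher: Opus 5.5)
Your argument is sound in outline and shares the paper's skeleton. You cut at $m=\floor{n^{\beta}}$ with $\alpha<\beta<\alpha'$, use that the expected number of edges from $[m]$ to $[n]\setminus[m]$ is $O(n^{\beta}\log n)$, show that the heavy structure hanging below these edges has small mean, and finish with Markov.

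You differ from the paper in the key step. The paper builds an explicit coupling (with ``pseudo-children'') embedding each descendant subtree $\cT_n(v;E_v)$ into a Bienaym\'e--Galton--Watson tree with offspring law $X_v$. Hence $|\rF_n(\alpha_1)(v;E_v)|$ is stochastically dominated by the total progeny of a subcritical tree with truncated offspring $\xi_v$, and the paper combines this with the independence of $\ind{p(v)\le m}$ from $\cT_n(v;E_v)$. You replace this with a direct first-moment count over increasing paths of heavy vertices. Your route is more elementary and avoids the coupling. The paper's coupling, however, gives a stochastic domination and hence control of all moments of the hanging pieces. That is reused in Proposition~\ref{prop:higher_moment_UB} and, through it, in Proposition~\ref{prop:later_small} and Lemma~\ref{lem:sec_mom_Yv}; with your method those would need counts over tuples of paths.

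Two points need tightening.

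First, drop $\chi_{>r-1}$. Your Step~2 bound does not control the $\chi_{>r-1}$-component, because there a light child of $i$ is still joined to its heavy children, so ``$v$ is reached beyond its parent only if the parent is heavy'' fails. What you need, and what Step~2 actually bounds, is the set $S$ of $y>m$ such that every ancestor of $y$ with label in $(m,y)$ has more than $r-1$ offspring. Then $\bC_\alpha(1)\setminus[m]\subseteq S$, because every vertex of a nontrivial $\bC_\alpha(1)$ has degree exceeding $r$.

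Second, a pairwise estimate does not give the path sum; you need the $k$-fold bound. Condition on $p(y_0)=y_1,\dots,p(y_{k-2})=y_{k-1}$ and $p(y_{k-1})=i\le m$. This has probability $\prod_{j<k}(y_j-1)^{-1}$, since distinct $\cY$'s are involved. The other parent choices remain independent uniforms. The numbers of remaining children of $y_1,\dots,y_{k-1}$ are negatively associated, since each vertex picks one parent. So the joint upper tail is at most $q^{k-1}$, with $q=O(n^{f_\alpha(\beta)})$ by Lemma~\ref{lem:degUandLtail} and monotonicity in the label. Summing over increasing sequences in $(m,n]$,
\[
\Exp{|S|}\le m\sum_{k\ge 1}\frac{H^k q^{k-1}}{k!}\le mHe^{Hq}=O(n^{\beta}\log n),\qquad H=\sum_{y=m+1}^{n}\frac{1}{y-1}\le \log n,
\]
so your extra $\log n$ factor is unnecessary. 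Negative association is also cleaner than sequential revealing. There the conditional probabilities are $1/(j-1-s)$ rather than $1/j$, and you would need an index shift as in the paper's coupling.
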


We need a few intermediate results. For $i\in [n]$, let $B_i \stackrel{d}{=}\mathrm{Ber}(1/i)$, be independent  random variables, and define
\begin{align*}
    X_i:=\sum_{j=i}^n B_j~. \numberthis \label{eq:Xi}
\end{align*}
$X_i$ is distributed as the number of offspring $c_{\cT_n}(i)$ of the vertex $i$ in $\cT_n$. Define, for any $v \in [n]$, the \emph{truncated} random variable
\begin{align*}
    \xi_v:=X_v\ind{X_v>(1-\alpha)\log n}. \numberthis \label{eq:truncated_degree}
\end{align*}
Recalling the $\alpha$-offspring forest $\rF_n(\alpha)$ and $E_v=\{\{u_1,u_2\}\in E(\cT_n):u_1,u_2\in[v]\}$, we have the following stochastic domination.

\begin{lemma}\label{lem:Bienaym\' e--Galton--Watson_dom_subtreeforest}
For any $v \in [n]$, 
$|\rF_n(\alpha)(v;E_v)|\preceq |\mathscr{T}(v,\alpha)|$,
where $\mathscr{T}(v,\alpha)$ is a Bienaym\' e--Galton--Watson tree with offspring distribution given by $\xi_v$.
\end{lemma}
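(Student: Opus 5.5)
We will couple the exploration of $\rF_n(\alpha)(v;E_v)$, the component of $v$ inside the descendant subtree of $v$ with the edge from $v$ to its parent cut, with a Bienaym\'e--Galton--Watson tree. The offspring variable of each explored vertex will be dominated by an independent copy of $\xi_v$. First note that $\rF_n(\alpha)(v;E_v)$ is the set of descendants $w\ge v$ of $v$ that are joined to $v$ by a decreasing path (labels increasing away from $v$) all of whose vertices have more than $(1-\alpha)\log n$ offspring. The endpoint $w$ itself need not satisfy this, since an edge $\{u,w\}$ is kept only if both endpoints exceed the threshold; if $v$ fails the threshold the component is $\{v\}$. By Remark \ref{rem:op_inclusion}, this component is contained in $\chi_{>(1-\alpha)\log n}$ applied to the descendant subtree $(\cT_n(v;E_v),v)$. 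So it suffices to dominate the size of the latter.

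We explore in breadth-first order. A vertex $u$ that has been reached has children $\{j>u: p(j)=u\}$, and their number is $c_{\cT_n}(u)=\sum_{j=u+1}^n \ind{\cY_j=u}$. We keep these children in the explored tree only if $c_{\cT_n}(u)>(1-\alpha)\log n$, so the contribution of $u$ is $c_{\cT_n}(u)\ind{c_{\cT_n}(u)>(1-\alpha)\log n}$. The key point is independence. Distinct vertices $u\neq u'$ have offspring counts determined by disjoint families of events $\{\cY_j=u\}$, $\{\cY_j=u'\}$. These are not independent in general, but conditionally on the exploration history only variables $\cY_j$ with $j$ not yet revealed remain. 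Concretely, given the history, the number of children of $u$ among the unrevealed $j>u$ is a sum of Bernoulli variables with success probabilities $\CProb{\cY_j=u}{\cY_j\neq u_1,\dots}$, where $u_1,\dots$ are the previously explored vertices whose children have already been revealed.

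The main obstacle is that conditioning on $\cY_j\notin\{\text{earlier explored vertices}\}$ inflates the success probability to $1/(j-1-m)$ for some $m$, which is not dominated by $1/j$. To avoid this, we will instead use the coupling with uniforms from Section \ref{sec:coup_with_unifs}, or more simply the Pólya-urn / independent-Bernoulli representation. The claim to establish is that the offspring counts of all vertices, as a process indexed by the tree, satisfy the following: conditionally on the subtree explored so far, the offspring count of $u$ is stochastically dominated by $X_u=\sum_{j\ge u}B_j$. A cleaner route is to construct the tree directly. Each vertex $j$ picks its parent uniformly in $[j-1]$, and this is equivalent to each $j$ sequentially choosing among $[j-1]$. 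So the offspring indicators $\ind{\cY_j=u}$ for a fixed $u$, over $j>u$, are independent $\mathrm{Ber}(1/(j-1))$, and across different $u$ they are negatively dependent. Negative dependence (the events $\{\cY_j=u\}$ for different $u$ are disjoint for fixed $j$) yields domination by independent copies. We will use that the law of offspring counts of $u$ given that certain $\cY_j$ are not $u$ can be dominated via $\mathrm{Ber}(1/(j-1))\preceq$ an independent Bernoulli once we reveal only the indicators relevant to $u$.

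Having shown that each explored vertex $u\ge v$ contributes a count dominated by an independent copy of $c_{\cT_n}(u)\ind{\cdot>(1-\alpha)\log n}$, we use monotonicity. For $u\ge v$, $c_{\cT_n}(u)\preceq c_{\cT_n}(v)\stackrel{d}{=}X_v$, and the map $x\mapsto x\ind{x>(1-\alpha)\log n}$ is nondecreasing, so each such contribution is dominated by $\xi_v$. (The indexing mismatch between $B_j\sim\mathrm{Ber}(1/j)$ and $1/(j-1)$ is absorbed as in the definition of $X_i$ in \eqref{eq:Xi}.) A standard sequential coupling of breadth-first exploration then gives $|\rF_n(\alpha)(v;E_v)|\preceq|\mathscr T(v,\alpha)|$.
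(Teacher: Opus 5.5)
There is a genuine gap, at exactly the step you flag as ``the main obstacle''.

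\textbf{What the coupling needs.} A sequential coupling of the exploration with a Bienaym\'e--Galton--Watson tree requires that, \emph{conditionally on the exploration history}, the offspring count of the vertex being explored is dominated by a fresh copy of $X_v$ (or of $\xi_v$ after truncation).

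\textbf{Why negative dependence does not give it.} Negative dependence of the indicators $\ind{\cY_j=u}$ across $u$ compares joint laws of a fixed family of counts. Your exploration instead selects the vertices to examine adaptively. Moreover, the history contains decreasing-type information (for every revealed non-child $j$ of an earlier vertex $u'$, the fact $\cY_j\neq u'$), and this pushes later counts \emph{up}.

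\textbf{The intermediate claim is false.} Your last paragraph relies on each explored $u$ contributing a count dominated by an independent copy of $c_{\cT_n}(u)\ind{c_{\cT_n}(u)>(1-\alpha)\log n}$. Take $v=1$ and the history in which $1$ has exactly the children $2,\dots,u$, with $u-1>(1-\alpha)\log n$. Suppose also that $2,\dots,u-1$ are explored before $u$ and have no children. Given this, each $\cY_j$ with $j>u$ is uniform on $\{u,\dots,j-1\}$. Hence $c_{\cT_n}(u)$ is conditionally distributed as $\sum_{d=1}^{n-u}\mathrm{Ber}(1/d)$; in particular $\cY_{u+1}=u$ surely. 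This strictly dominates the unconditional law $\sum_{d=u}^{n-1}\mathrm{Ber}(1/d)$, both before and after truncation. So the chain ``conditional count $\preceq c_{\cT_n}(u)\preceq c_{\cT_n}(v)$'' breaks at its first link. ``Revealing only the indicators relevant to $u$'' is not an option either, since the exploration must reveal the children of earlier vertices to know whom to visit next.

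\textbf{The missing idea.} The inflation is exactly paid for by the gap between $u$ and $v$, so you should compare the conditional law with $X_v$ directly rather than with the law of $c_{\cT_n}(u)$.
\begin{itemize}
\item Explore in increasing label order $v=v_1<v_2<\cdots$. When $v_{k+1}$ is explored, the previously explored $v_1,\dots,v_k$ lie in $\{v,\dots,v_{k+1}-1\}$, so $v_{k+1}\geq v+k$.
\item For every $j>v_{k+1}$ not already a child of $v_1,\dots,v_k$, $\cY_j$ is conditionally uniform on $[j-1]\setminus\{v_1,\dots,v_k\}$, independently over $j$. So it equals $v_{k+1}$ with probability $1/(j-1-k)$.
\item These denominators are distinct integers in $\{v,\dots,n\}$. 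Hence the conditional offspring count of $v_{k+1}$ is a sub-sum of $X_v=\sum_{d=v}^{n}\mathrm{Ber}(1/d)$ from \eqref{eq:Xi}.
\item Adding independent ``pseudo-children'' for the missing indices makes the count exactly $X_v$-distributed and independent of the past.
\end{itemize}
This is the paper's argument: it embeds $\cT_n(v;E_v)$ in a Bienaym\'e--Galton--Watson tree with offspring law $X_v$, and then Remark~\ref{rem:op_inclusion} and Lemma~\ref{lem:trunc_BP} conclude.

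The same counting works in your breadth-first order, with $k$ replaced by the number of explored labels below $j$. Once conditional domination by $X_v$ is in hand, monotonicity of $x\mapsto x\ind{x>(1-\alpha)\log n}$ lets you dominate your $\chi$-exploration by $\xi_v$ directly, as you intended.

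\textbf{A minor slip.} Your description of $\rF_n(\alpha)(v;E_v)$ is self-contradictory. Since an edge survives only if both endpoints exceed the threshold, every vertex of a non-singleton component exceeds it, endpoints included; it is only in $\chi_{>(1-\alpha)\log n}$ that leaves need not. This is harmless, since you only use the inclusion.
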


%\red{[ N: in fact, we construct a coupling, such that $|\rF_n(\alpha)(v;E_v)|\leq  |\mathscr{T}(v,\alpha)|$ is true ]}

\begin{proof}
We show that the subtree $\cT_n(v;E_v)$ in $\cT_n$ rooted at $v$ can be coupled with a Bienaym\' e--Galton--Watson tree $\mathscr{T}(v)$ with offspring distribution $X_v$ as in \eqref{eq:Xi}, such that $\cT_n(v;E_v)$ is contained as a subset in $\mathscr{T}(v)$, with $v$ mapped to the root of $\mathscr{T}(v)$. As a result, observing that $\rF_n(\alpha)(v;E_v)=\psi^{(1 -\alpha) \log n}_{\rm off}(\cT_n(v;E_v),v)$ we have $\rF_n(\alpha)(v;E_v)\subseteq \psi^{(1 -\alpha) \log n}_{\rm off}(\mathscr{T}(v),v)$ by Remark \ref{rem:op_monotonicity}, where we recall $\psi^r_{\rm off}$ from Definition \ref{def:psi_off_op}. By Remark \ref{rem:op_inclusion}, we conclude $|\rF_n(\alpha)(v;E_v)|\leq |\chi_{>(1-\alpha)\log n}(\mathscr{T}(v),v)|$. Finally, by Lemma \ref{lem:trunc_BP}, the latter is the total size of a Bienaym\' e--Galton--Watson tree with number of offspring given by \eqref{eq:truncated_degree}, finishing the argument.  %by our coupling, it is contained in the connected component containing the root in the forest obtained by keeping those vertices of $\mathscr{T}(v)$ with at least $(1-\alpha) \log n$ offspring. However, note that for any Bienaym\' e--Galton--Watson tree $\mathscr{T}$ with offspring distribution $X$ and any $a>0$, the root component of it in the forest obtained by keeping vertices with at least $a$ many offspring, is itself a Bienaym\' e--Galton--Watson tree, with offspring distribution $X\ind{X>a}$. \red{[ issue: not itself, but is contained in ]} The lemma thus follows by taking $\mathscr{T}=\mathscr{T}(v)$, $X=X_v$ and $a=(1-\alpha)\log n$.

%\red{[ N: Tie this paragraph to the lemma \ref{lem:trunc_BP} before ]}

\noindent \textbf{The coupling.} Thus, it suffices to construct a coupling between $\cT_n(v;E_v)$ and $\mathscr{T}(v)$ such that
\begin{align*}
    \cT_n(v;E_v)\subseteq \mathscr{T}(v). \numberthis \label{eq:coup_stoch_incl}
\end{align*}
To do this, starting from $v$, we explore the offspring sets {in $\cT_n(v; E_v)$}, of the vertices in $\cT_n(v; E_v)$, recursively, by the increasing order of their labels, and at each step, we dominate the offspring set by the corresponding offspring set in $\mathscr{T}(v)$. Define, for each $i \in \{2,\dots,n\}$, independent random variables  $\cY_i\stackrel{d}{=} \mathrm{U}[i-1]$. Observe that these variables determine the \textsc{urrt} $\cT_n$ by letting $j>i$ connect to $i$ if $\cY_j=i$ for any $1\leq i<j\leq n$. In particular, the subtree $\cT_n(v;E_v)$ is determined by the variables $\cY_{v+1},\dots,\cY_n$.

To determine the offspring set of $v \in \cT_n(v;E_v)$, we reveal the random variables $\ind{\cY_p=v}$ for $p\geq v+1$. Let $\varnothing$ denote the root of $\mathscr{T}(v)$. Declare $\mathscr{C}(v):=\{u\geq v+1:\ind{\cY_u=v}=1\}$ to be the set of offspring of $\varnothing$ in $\mathscr{T}(v)$. Thus, the set of offspring of $v$ in $\cT_n(v;E_v)$ and $\varnothing$ in $\mathscr{T}(v)$ are exactly the same. Note also that the number of offspring of $v$ both in $\cT_n(v;E_v)$ and $\mathscr{T}(v)$ is distributed as $X_v$, as defined in \eqref{eq:Xi}.

Define $j_1:=\inf\{u\geq v+1: \cY_u=v \}$. We reveal the offspring of $j_1$ and couple it to its corresponding offspring set in $\mathscr{T}(v)$. The offspring set of $j_1$ in $\cT_n$ is determined by the variables $\{\cY_q:q\geq j_1+1,q\notin \mathscr{C}(v)\}$. Conditionally on the offspring set $\mathscr{C}(v):=\{u \geq v+1:\ind{\cY_u=v}=1\}$, the variables $\{\cY_q:q\geq j_1+1,q\notin \mathscr{C}(v)\}$ are independent, with $\cY_q\stackrel{d}{=} \mathrm{U}([q]\setminus \{v\})$. In particular, the offspring set of $j_1$ in $\cT_n$ is $\mathscr{C}(j_1):=\{q\geq j_1+1,q \notin \mathscr{C}(v):\ind{\cY_q=j_1}=1\}$. Observe that conditionally on $\mathscr{C}(v)$, for any $j_1+1\leq q \notin \mathscr{C}(v)$, $\Prob{\cY_q=j_1}=1/(q-1)$. Thus, the number of offspring of $j_1$ in $\cT_n$, conditionally on $\mathscr{C}(v)$, is a sum
\begin{align*}
    \sum_{j_1+1\leq q \notin \mathscr{C}(v)}\mathrm{Ber}\left(\frac{1}{q-1}\right), \numberthis \label{eq:offspring_j1}
\end{align*}
where the summands are independent. To construct the offspring set of $j_1$ in $\mathscr{T}(v)$, we first let it have all its \textsc{urrt} offspring, i.e., $\{q\geq j_1+1,q \notin \mathscr{C}(v):\ind{\cY_q=j_1}=1\}$. To make sure $j_1$ has offspring distribution $X_v$ in $\mathscr{T}(v)$, we let it have some `pseudo children' in $\mathscr{T}(v)$, which we explain next.

Observe that for any $j_1+1\leq q \notin \mathscr{C}(v)$, since $j_1\geq v+1$, we have $q-1\geq v+1$. For $A\subseteq [n]$ and $y \in [n]$, define $A-y:=\{a-y:a\in A\}$. For any $$r \in \Phi(j_1):=\{v+1,\dots,n\}\setminus \left((\{j_1+1,\dots,n\}\cap \mathscr{C}(v)^c)-1\right),$$ 
independently of everything else, with probability $1/r$, let $j_1$ give birth to a pseudo-child $z(j_1,r)$ in $\mathscr{T}(v)$, and these births are independent for different choices of $r \in \Phi(j_1)$. We call $\Phi(j_1)$ the set of \emph{potential pseudo-children} of $j_1$. For intuition on this set, note that by \eqref{eq:offspring_j1}, corresponding to any $x\in (\{j_1+1,\dots,n\}\cap \mathscr{C}(v)^c)-1$, $j_1$ already has a child with probability $1/x$ in $\cT_n$. However, we want it to have $X_v$ children in $\mathscr{T}(v)$. In the sum \eqref{eq:Xi} that represents $X_i$, the terms corresponding to $\{x\in (\{j_1+1,\dots,n\}\cap \mathscr{C}(v)^c)-1\}$ have already been accounted for by \eqref{eq:offspring_j1}. Letting independent $B^{j_1}_x\stackrel{d}{=} {\rm Ber}(1/x)$ for $x\in [n]$, also independent of everything else, observe that $\sum_{x\in \Phi(j_1)}B^{j_1}_x$ is the rest of the sum, so that from \eqref{eq:Xi},
\begin{align*}
\sum_{j_1+1\leq q \notin \mathscr{C}(v)}\mathrm{Ber}\left(\frac{1}{q-1}\right)
+\sum_{x\in \Phi(j_1)}B^{j_1}_x\stackrel{d}{=}X_v.
\end{align*}
In particular, by the above observation, combining the \textsc{urrt} offspring $\mathscr{C}(j_1)$ with the set of its pseudo-children, the total number of children of $j_1$ in $\mathscr{T}(v)$ is $X_v$-distributed and independent of the number of offspring of $v$, by construction. Furthermore, denoting the set of offspring of $j_1$ in $\mathscr{T}(v)$ by $\mathscr{C}(\mathscr{T}(v),j_1)$ we have the inclusion $\mathscr{C}(j_1)\subseteq \mathscr{C}(\mathscr{T}(v),j_1)$.

The general step follows recursively. Formally, let the increasing ordering of the labels of the vertices in $\cT_n(v; E_v)$ be $v=v_1<v_2<\dots<v_\ell$, where $\ell=|\cT_n(v; E_v)|$, and say we have constructed the offspring sets $\mathscr{C}(v_1),\dots,\mathscr{C}(v_k)$ and $\mathscr{C}(\mathscr{T}(v),v_1),\dots,\mathscr{C}(\mathscr{T}(v),v_k)$ with $\mathscr{C}(v_i)\subseteq \mathscr{C}(\mathscr{T}(v),v_i)$ for all $1\leq i \leq k$. Next, we construct the corresponding offspring sets $\mathscr{C}(v_{k+1})$ and $\mathscr{C}(\mathscr{T}(v),v_{k+1})$ of $v_{k+1}$, such that $\mathscr{C}(v_{k+1})\subseteq \mathscr{C}(\mathscr{T}(v),v_{k+1})$.

We first construct the offspring set of $v_{k+1}$ in $\cT_n(v;E_v)$, $$\mathscr{C}(v_{k+1}):=\{u\geq v_{k+1}+1,u \notin (\mathscr{C}(v_1)\cup \dots \cup \mathscr{C}(v_k)): Y_u=v_{k+1} \},$$ 
and observe that conditionally on the set $\mathscr{C}(v_1)\cup \dots \cup \mathscr{C}(v_k)$, for any $v_{k+1}\leq u \notin \mathscr{C}(v_1)\cup \dots \cup \mathscr{C}(v_k)$, $\Prob{Y_u=v_{k+1}}=1/(v_{k+1}-k)$. At this point, as before, we consider a set of potential pseudo-children of $v_{k+1}$, $$\Phi(v_{k+1}):=\{v+1,\dots,n\}\setminus \left((\{v_{k+1}+1,\dots,n\}\cap (\mathscr{C}(v_1)\cup \dots \cup \mathscr{C}(v_k))^c)-k\right).$$

As before, $\mathscr{C}(v_{k+1})$ already accounts for the set of offspring that $v_{k+1}$ begets in $\mathscr{T}(v)$ corresponding to each $x\in (\{v_{k+1}+1,\dots,n\}\cap (\mathscr{C}(v_1)\cup \dots \cup \mathscr{C}(v_k))^c)-k$ with probability $1/x$. The set $\Phi(v_{k+1})$ thus accounts for the remainder of the values $y$, such that when $v_{k+1}$ receives a child corresponding to each $y \in \Phi(v_{k+1})$ with probability $1/y$, the total number of children of $v_{k+1}$ in $\mathscr{T}(v)$ becomes equal in distribution to $X_v$ (as in \eqref{eq:Xi} with $i=v$).

%\textcolor{red}{LUC: I changed the notation PC by $\Phi$, because readers may think that PC is P times C.}
Thus, for any $r\in \Phi(v_{k+1})$, independently of everything else, we let $v_{k+1}$ give birth to a pseudo-child in $\mathscr{T}(v)$ with probability $1/r$, where the different births are independent, and note that $\mathscr{C}(\mathscr{T}(v),v_{k+1})$, the set of offspring (including the pseudo ones) of $v_{k+1}$ in $\mathscr{T}(v)$ contain the set of offspring $\mathscr{C}(v_{k+1})$ of $v_{k+1}$ in $\cT_n(v; E_v)$. 

We continue this procedure until we have constructed the sets $\mathscr{C}(v_i), \mathscr{C}(\mathscr{T}(v),v_i)$ for all $1\leq i \leq \ell$, where  $\ell=|\cT_n(v;E_v)|$. The union $\bigcup_{i=1}^\ell \mathscr{C}(\mathscr{T}(v),v_i)$ forms a tree, which is not the entire tree $\mathscr{T}(v)$, but only part of it, and it contains $\cT_n(v;E_v)=\bigcup_{i=1}^\ell\mathscr{C}(v_i)$ as a subtree. To obtain the entire tree $\mathscr{T}(v)$, we  drop independent Bienaym\' e--Galton--Watson trees $\xi_s$, each with offspring distribution $X_v$, for each leaf $s$ in the tree $\bigcup_{i=1}^\ell \mathscr{C}(\mathscr{T}(v),v_i)$. The resulting object is distributed as a Bienaym\' e--Galton--Watson tree $\mathscr{T}(v)$, and contains $\cT_n(v; E_v)$ as a subtree. This concludes the proof.  
\end{proof}

\begin{remark}[Stochastic ordering of Bienaym\' e--Galton--Watson sizes]\label{rem:stoch_dom_Bienaym\' e--Galton--Watson}
    Observe that $|\mathscr{T}(v_1,\alpha)|\preceq |\mathscr{T}(v_2,\alpha)|$ if $v_2\leq v_1$.
\end{remark}
Next, we prove a few properties of the offspring distribution $\xi_v$ as in \eqref{eq:truncated_degree} and the size of the tree $\mathscr{T}(v,\alpha)$. We begin with a general lemma on sums of independent Bernoulli random variables.

\begin{lemma}\label{lem:offspring_moments}
Let $X=\sum_{i=1}^\infty \mathrm{Ber}(p_i)$ where the summands are independent, $\mu:=\Exp{X}=\sum_{i=1}^\infty ip_i<\infty$, $k \geq 1$ and $x>\mu+k$. Then %there exists $C=C(k)>0$ with $C(1)=C(2)=1$ such that
\begin{align*}
    \Exp{X^k\ind{X\geq x}}\leq %C(k)\left(\sum_{j=0}^{k-1}\mu^{-j}\right) 
    (x-k)^k \exp\left(x-k-\mu-x\log\left(\frac{x-k}{\mu}\right)\right).
\end{align*}
\end{lemma}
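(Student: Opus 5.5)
The plan is to combine an exponential Chernoff-type tilt with the factorial-moment structure of Bernoulli sums, using the probability generating function $G(s)=\Exp{s^X}=\prod_{i}(1-p_i+p_is)$. (I read $\mu$ as $\sum_i p_i=\Exp{X}$; the ``$ip_i$'' in the statement looks like a typo.) I expect the main obstacle to be the last step, where the bound for the falling factorial $(X)_k:=X(X-1)\cdots(X-k+1)$ has to be turned into one for $X^k$. As shown below, the direct route only gives the Touchard polynomial $\sum_j S(k,j)(x-k)^j$, not the stated $(x-k)^k$, so closing that gap needs an additional argument.

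The first step handles the falling factorial. Differentiating $G$ $k$ times gives
\begin{align*}
G^{(k)}(s)=\Exp{(X)_k s^{X-k}}=\sum_{i_1,\dots,i_k\ \text{distinct}} p_{i_1}\cdots p_{i_k}\prod_{i\notin\{i_1,\dots,i_k\}}(1-p_i+p_is).
\end{align*}
For $s\geq 1$ each remaining factor satisfies $1-p_i+p_is\leq e^{p_i(s-1)}$. Bounding the sum over distinct tuples by $(\sum_i p_i)^k$ then yields $G^{(k)}(s)\leq \mu^k e^{\mu(s-1)}$.

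Next I apply the tilt. Set $s:=(x-k)/\mu$, which is larger than $1$ because $x>\mu+k$. On the event $\{X\geq x\}$ we have $s^{X-k}\geq s^{x-k}$, so
\begin{align*}
\Exp{(X)_k\ind{X\geq x}}\leq s^{-(x-k)}\mu^k e^{\mu(s-1)}
=\mu^k s^{k}\exp\Big(x-k-\mu-x\log\tfrac{x-k}{\mu}\Big).
\end{align*}
Since $\mu^k s^k=(x-k)^k$, this is exactly the claimed right-hand side. So the inequality holds as stated with $X^k$ replaced by $(X)_k$.

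The final step is to pass from $(X)_k$ to $X^k$. I would expand $X^k=\sum_{j=1}^k S(k,j)(X)_j$ using Stirling numbers of the second kind. Running the same argument for each $j$ with the same $s$ bounds the $j$-th term by $(x-k)^j$ times the common exponential factor. Summing over $j$ gives
\begin{align*}
\Exp{X^k\ind{X\geq x}}\leq \Big(\sum_{j=1}^k S(k,j)(x-k)^j\Big)\exp\Big(x-k-\mu-x\log\tfrac{x-k}{\mu}\Big).
\end{align*}
This agrees with the statement for $k=1$. For $k\geq 2$ it carries an extra factor of the form $1+O_k(1/(x-k))$, which is harmless in the intended application where $x\asymp\log n$. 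To obtain exactly $(x-k)^k$, I would first try a $j$-dependent tilt $s_j=(x-j)/\mu$ for each Stirling term. If that still leaves lower-order terms, I would use the size-biasing identity $\Exp{Xg(X)}=\sum_i p_i\Exp{g(X^{(i)}+1)}$ with $X^{(i)}\preceq X$, applied $k$ times, to peel off one factor of $X$ at a time against a tilt. I expect this bookkeeping to be the only delicate point.
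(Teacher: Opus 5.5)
Your bound $\Exp{(X)_k s^{X-k}}\le \mu^k e^{\mu(s-1)}$ and the tilt at $s=(x-k)/\mu$ are correct. Together they give exactly the stated right-hand side for $\Exp{(X)_k\ind{X\geq x}}$. The Stirling expansion then correctly yields the bound with $\sum_{j=1}^k S(k,j)(x-k)^j$, which is the statement when $k=1$. The step you leave open, however, cannot be closed by $j$-dependent tilts, size-biasing, or any other device, because the inequality with the bare factor $(x-k)^k$ is false. Take $k=4$, $x=5$ and $X\sim\mathrm{Bin}(20,1/2000)$, so that $\mu=1/100$ and $x>\mu+k$. Then
\[
\Exp{X^4\ind{X\geq 5}}\geq 5^4\,\Prob{X=5}=625\binom{20}{5}2000^{-5}\Bigl(1-\tfrac{1}{2000}\Bigr)^{15}\approx 3.0\cdot 10^{-10},
\]
while the right-hand side equals $e^{0.99}\cdot 10^{-10}\approx 2.69\cdot 10^{-10}$. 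The mechanism is general. For $X$ close to $\mathrm{Poisson}(\mu)$ and $x=k+1$, the atom at $k+1$ alone contributes about $\frac{(k+1)^{k-1}}{k!}\mu^{k+1}e^{-\mu}$. The right-hand side is $e\,\mu^{k+1}e^{-\mu}$, and $(k+1)^{k-1}/k!>e$ for all $k\geq 4$. The inequality fails already for $k=3$ at moderate $\mu$: for $X\sim\mathrm{Bin}(100,1/200)$ and $x=4$, $64\,\Prob{X=4}+125\,\Prob{X=5}\approx 0.115$ exceeds the right-hand side $e^{1/2}/16\approx 0.103$.

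The paper's proof reaches $(x-k)^k$ only through a slip. It expands $(\sum_i X_i)^k e^{\lambda X}$ over all $k$-tuples $(i_1,\dots,i_k)$ and evaluates the expectation of every term as $\prod_\ell (p_{i_\ell}e^{\lambda})\prod_{m\notin\{i_1,\dots,i_k\}}(1-p_m+p_me^{\lambda})$, including tuples with repeated indices. Since $X_i^2=X_i$, a tuple whose set of distinct indices is $I$ really contributes $e^{|I|\lambda}\prod_{i\in I}p_i\prod_{m\notin I}(1-p_m+p_me^{\lambda})$. These diagonal contributions are exactly your lower-order Stirling terms. Grouping tuples by the set partition they induce gives $\Exp{X^k e^{\lambda X}}\le \sum_{j=1}^k S(k,j)(\mu e^{\lambda})^j e^{\mu(e^{\lambda}-1)}$, which at $e^{\lambda}=(x-k)/\mu$ is precisely your bound. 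So what you proved is the correct form of the lemma, and it is all the paper needs. In the remark on offspring moments $x-k\asymp\log n\geq 1$, hence $\sum_j S(k,j)(x-k)^j\le B_k(x-k)^k$, with $B_k$ the $k$-th Bell number. This constant is absorbed into the $n^{o_\eps(1)}$ factor there. Stop at that bound rather than trying to remove the extra factor.
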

\begin{proof} 
For any $k\geq 1$ and $\lambda,x>0$,
\begin{align*}
    e^{-\lambda x}\Exp{X^k e^{\lambda X}}
    &=e^{-\lambda x}\Exp{\left(\sum_{i \geq 1}X_i\right)^k e^{\lambda X}}\\&=e^{-\lambda x}\Exp{\sum_{i_1,\dots,i_k}\prod_{\ell\in \{1,\dots,k\}}(X_{i_\ell}e^{\lambda X_{i_\ell}})\prod_{m\notin\{i_1,\dots,i_k\}}e^{\lambda X_m}}\\
    & =e^{-\lambda x}\sum_{i_1,\dots,i_k}\prod_{\ell \in \{1,\dots,k\}}(p_{i_\ell}e^{\lambda })\prod_{m\notin\{i_1,\dots,i_j\}}(1-p_m+p_me^{\lambda })\\
    &\leq e^{-\lambda x}e^{k\lambda}\prod_{m}(1+p_m(e^\lambda-1))\sum_{i_1,\dots,i_k} \prod_{1\leq \ell \leq k}p_{i_\ell}\\
    & \leq \mu^k e^{-\lambda x} e^{k\lambda}e^{\mu(e^\lambda-1)}.
\end{align*}
%for $C=C(k)$ satisfying $C(1)=C(2)=1$. 
The exponent is minimized for $e^{\lambda}=\frac{x-k}{\mu}$. Thus, 
\begin{align*}
    \Exp{X^k\ind{X\geq x}}\leq e^{-\lambda x}\Exp{X^k e^{\lambda X}}\leq %\left(\sum_{j=0}^{k-1}\mu^{-j}\right) 
    (x-k)^k \exp\left(x-k-\mu-x\log\left(\frac{x-k}{\mu}\right)\right).
\end{align*} \end{proof}

\begin{remark}[Bienaym\' e--Galton--Watson offspring moments]\label{rem:Bienaym\' e--Galton--Watson offspring moments}
Recall the Bienaym\' e--Galton--Watson tree $\mathscr{T}(v,\alpha)$ from the statement of Lemma \ref{lem:Bienaym\' e--Galton--Watson_dom_subtreeforest}. For any $\alpha_1\in(\alpha,1)$ 
%, $\alpha_2\in (\alpha,\alpha_1)$, 
and a vertex $v\geq n^{\alpha_1}>n^{\alpha}$, 
%recalling the distribution $\xi_v$ from \eqref{eq:truncated_degree}, 
note that 
%\footnote{Here we are viewing both $n^{\alpha_1}$ and $n^{\alpha_2}$ as vertices of $\cT_n$.}
\begin{align*}
    \xi_{\floor{n^{\alpha_1}}}\succeq \xi_v \implies \Exp{\xi_{\floor{n^{\alpha_1}}}^k}\geq \Exp{\xi_v^k}.
\end{align*}
By a standard Riemann approximation, for any $\eps>0$, if $n$ is large enough,
    \begin{align*}
\Exp{X_{\floor{n^{\alpha_1}}}}=\sum_{j=\floor{n^{\alpha_1}}+1}^n\frac{1}{j-1}\in [(1-\alpha_1-\eps)\log n,(1-\alpha_1+\eps)\log n]
    \end{align*}
    
    Thus, applying Lemma \ref{lem:offspring_moments}, for any vertex $v\geq n^{\alpha_1}$, by choosing $\eps>0$ sufficiently small, since $\alpha_1>\alpha$, for all sufficiently large $n$,
    \begin{align*}
        \Exp{\xi_v^k}\leq \Exp{\xi_{\floor{n^{\alpha_1}}}^k}\leq (1-\alpha)^k(\log n)^kn^{f_\alpha(\alpha_1)+o_\eps(1)}%<(1-\alpha)^k(\log n)^kn^{f_\alpha(\alpha_1)}=o(1)
        \numberthis \label{eq:off_mom_UB}
    \end{align*}
where $o_\eps(1)$ denotes a term that vanishes as $\eps\to 0$. %For the second inequality above, we use that $f_\alpha(t)$ is decreasing on $(\alpha,1)$. 
   In particular, for any $\alpha \in (0,1), \alpha_1\in (\alpha,1)$ and $v\geq n^{\alpha_1}$, since $f_\alpha<0$ on $(\alpha,1)$, and the exponent of $n$ in the last display can be made negative by choosing $\eps>0$ small, a Bienaym\' e--Galton--Watson tree with offspring distribution $\xi_v$ dies out with probability one.
\end{remark}

Next, we control the moments of the total size $|\mathscr{T}(v,\alpha)|$ of the Bienaymé--Galton--Watson tree $\mathscr{T}(v,\alpha)$ for $v \geq n^{\alpha_1}$ for $\alpha_1>\alpha$.

\begin{lemma}[Bienaym\' e--Galton--Watson size moments]\label{lem:Bienaym\' e--Galton--Watson_size_moments}
Let $\mathscr{T}(v,\alpha)$ denote the total size of a Bienaym\' e--Galton--Watson tree with offspring distribution $\xi_{v}$, where we assume $v\geq n^{\alpha_1}$ with $\alpha_1 > \alpha$. Then for any $k \geq 1$, for all large $n$, $$\Exp{(\mathscr{T}(v,\alpha)-1)^k}\leq (\log n)^k n^{f_\alpha(\alpha_1)}.$$\end{lemma}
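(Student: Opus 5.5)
The plan is to prove a sharp moment bound for the truncated offspring variable $\xi_v$ first. Then I would pass to the total progeny through the one-step recursion of the Bienaymé--Galton--Watson tree, using induction on $k$. The target constant is $1$, with no $n^{o(1)}$ loss. The slack will come from the factor $(1-\alpha)^k<1$, which appears in front of the leading term. So the key is to show that every other error factor is $1+o(1)$.

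\textbf{Step 1: sharp offspring moments.} By Remark \ref{rem:stoch_dom_Bienaym\' e--Galton--Watson} and the domination $\xi_{\floor{n^{\alpha_1}}}\succeq\xi_v$, it suffices to treat $v=\floor{n^{\alpha_1}}$. Set $\mu=\Exp{X_v}$. Instead of a Riemann approximation with an $\eps$-loss, I would bound the sum directly:
\[
\mu=\sum_{j=v+1}^n\frac{1}{j-1}\le\log\frac{n-1}{v-1}=\mu_0+c_n,\qquad \mu_0:=(1-\alpha_1)\log n,\quad c_n=O(n^{-\alpha_1})\to0.
\]
Apply Lemma \ref{lem:offspring_moments} with $x=(1-\alpha)\log n>\mu+k$. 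Note that $\mu\mapsto -\mu+x\log\mu$ is increasing for $\mu<x$, so the right-hand side is monotone in $\mu$ and we may replace $\mu$ by $\mu_0+c_n$. The exponent is then
\[
x-\mu_0-x\log\frac{x}{\mu_0}\;+\;\Big(x\log\frac{x}{x-k}-k\Big)\;+\;\Big(x\log\big(1+\tfrac{c_n}{\mu_0}\big)-c_n\Big).
\]
The first group equals $f_\alpha(\alpha_1)\log n$ exactly. The second group is $O(k^2/x)=o(1)$, and the third is $O(c_n)=o(1)$. Using also $(x-k)^k\le x^k$, this gives
\[
\Exp{\xi_v^j}\le(1+o(1))(1-\alpha)^j(\log n)^j n^{f_\alpha(\alpha_1)}\qquad\text{for every fixed } j\ge1.
\]
In particular, $m:=\Exp{\xi_v}\to0$, since $f_\alpha(\alpha_1)<0$.

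\textbf{Step 2: recursion for the total progeny.} Write $S=|\mathscr{T}(v,\alpha)|-1$. Then $S=\xi+\sum_{i=1}^{\xi}(S_i+1)-\xi$, that is, $S=\xi+\sum_{i\le\xi}S_i$, where the $S_i$ are i.i.d.\ copies of $S$ independent of $\xi$. I would expand $\Exp{S^k}$ multinomially and condition on $\xi$. The term with all $k$ factors equal to $\xi$ gives $\Exp{\xi^k}$. The terms in which a single $S_i$ carries all $k$ powers give $\Exp{\xi}\Exp{S^k}=m\,\Exp{S^k}$. Every other term has the form $\Exp{\xi^{a}\,\xi^{(b)}}\prod\Exp{S^{j_\ell}}$, with $a+\sum j_\ell=k$, $j_\ell<k$, and at least two nontrivial factors. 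Here $\xi^{(b)}$ is a falling-factorial count of the chosen indices, which is bounded by $\xi^b$.

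I would use strong induction on $k$ with the a priori bound $\Exp{S^j}\le C_j(\log n)^j n^{f_\alpha(\alpha_1)}$ for $j<k$. The base case $k=1$ is $\Exp{S}=m/(1-m)$. Under this hypothesis, each cross term is at most $C(\log n)^{k+b}\,n^{2f_\alpha(\alpha_1)}$, where $b\le k$ comes from the bound on $\Exp{\xi^{a+b}}$ in Step 1. Each such term is therefore $o\big((\log n)^k n^{f_\alpha(\alpha_1)}\big)$, because $n^{f_\alpha(\alpha_1)}(\log n)^{k}\to0$. Solving the recursion gives
\[
\Exp{S^k}\le\frac{(1+o(1))(1-\alpha)^k(\log n)^k n^{f_\alpha(\alpha_1)}}{1-m}.
\]
This closes the induction. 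Finiteness of $\Exp{S^k}$ is needed to rearrange the recursion. It follows from subcriticality ($m<1$) together with all moments of $\xi$ being finite, which I would note via standard total-progeny moment theory.

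\textbf{Step 3: conclusion.} Since $m\to0$ and $(1-\alpha)^k<1$ is a fixed constant, the right-hand side is at most $(\log n)^k n^{f_\alpha(\alpha_1)}$ for all large $n$, as claimed.

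\textbf{Main obstacle.} The delicate point is Step 1: one must track constants carefully so that the only non-vanishing prefactor is $(1-\alpha)^k$. Two things matter here. First, the sum $\mu$ has to be controlled up to an additive $o(1)$ rather than $\eps\log n$. Second, the correction $x\log\frac{x}{x-k}-k$ has to be seen to be $o(1)$. Bookkeeping the multinomial cross terms in Step 2 is routine once Step 1 is sharp.
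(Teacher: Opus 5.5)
Your proof is correct, and it is a genuinely more careful route than the paper's. The paper argues in two lines. It asserts $\Exp{\mathscr{T}(v,\alpha)^k}=1+\Exp{\xi_v^k}\Exp{\mathscr{T}(v,\alpha)}^k$ ``by Wald's identity and the branching property'', deduces $\Exp{(\mathscr{T}(v,\alpha)-1)^k}\le\Exp{\xi_v^k}/(1-\Exp{\xi_v})^k$, and inserts \eqref{eq:off_mom_UB}. That identity is exact only for $k=1$: with $T=\mathscr{T}(v,\alpha)$ and $m=\Exp{\xi_v}$, already $\Exp{T^2}=1+\Exp{\xi_v^2}\Exp{T}^2+2m\Exp{T}+m\Var{T}$. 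Moreover, \eqref{eq:off_mom_UB} carries an $n^{o_\eps(1)}$ loss in the exponent that the final inequality silently drops. Read literally, the paper's chain therefore gives the bound only with exponent $f_\alpha(\alpha_1)+o(1)$, which is harmless for the later uses of the lemma.

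Your Step 2 is the rigorous version of the paper's heuristic. The recursion $S=\xi+\sum_{i\le\xi}S_i$ with strong induction shows that everything except $\Exp{\xi^k}$ and $m\Exp{S^k}$ is $O((\log n)^{2k}n^{2f_\alpha(\alpha_1)})$, hence of lower order. Your Step 1 controls $\mu$ to within $O(n^{-\alpha_1})$ and the correction $x\log\frac{x}{x-k}-k$ to $O(1/\log n)$. This removes the $\eps$-loss, so the fixed slack $(1-\alpha)^k<1$ absorbs every $1+o(1)$ factor and yields the constant $1$ as stated. The paper's version buys brevity; yours is an actual proof of the inequality as written.

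One point on robustness, in your favour. The paper's proof of Lemma \ref{lem:offspring_moments} mishandles repeated indices in the expansion of $X^k$: it uses $\prod_\ell p_{i_\ell}$ where $\prod_{j\in\{i_1,\dots,i_k\}}p_j$ is correct. A correct version multiplies the bound by $1+O_k(1/\log n)$, coming from the lower-order Stirling-number terms. Your $1+o(1)$ margin absorbs this, so nothing in your argument changes.
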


\begin{proof}
    Note that $\E[\mathscr{T}(v,\alpha)^k]=1+\Exp{\xi_v^k}\Exp{\mathscr{T}(v,\alpha)}^k$, by Wald's identity and the branching property. For $k=1$ we obtain $\Exp{\mathscr{T}(v,\alpha)}=\frac{1}{1-\Exp{\xi_{v}}}$, so that by Lemma \ref{lem:offspring_moments} and \eqref{eq:off_mom_UB}, for all large $n$,
    \begin{align*}
       \Exp{(\mathscr{T}(v,\alpha)-1)^k}\leq \Exp{\mathscr{T}(v,\alpha)^k}-1\leq \frac{\Exp{\xi_{v}^k}}{(1-\Exp{\xi_{v}})^k}\leq (\log n)^kn^{f_\alpha(\alpha_1)}.
    \end{align*}
\end{proof}

%We can finally provide the proof of Proposition \ref{prop:root_comp_UB}. For any vertex $u \in \cT_n$, denote by $p(u)$ its parent in $\cT_n$. 

\begin{proof}[Proof of Proposition \ref{prop:root_comp_UB}]
For fixed $\alpha_1 \in (\alpha,\alpha')$, by  Lemma \ref{lem:approx_offspring_forests} and the fact that $\bC_\alpha(1)\subseteq C_{\alpha_1}(1)$, it suffices to show that $\Prob{|C_{\alpha_1}(1)|>n^{\alpha'}}=o(1)$.
We claim that for any $M \in [n]$, 
    \begin{align*}
        C_{\alpha_1}(1)\subseteq [M]\cup \left(\bigcup_{v >M,p(v)\leq M}\rF_n(\alpha_1)(v;E_v) \right).
    \end{align*}
Indeed, for any vertex $u \in C_{\alpha_1}(1)$, either $u \in [M]$, or otherwise, since $C_{\alpha_1}(1)$ is a tree, we can trace the unique ancestral line of $u$ towards $1$ in $C_{\alpha_1}(1)$, and stop at the first ancestor $v$ which is at least $M$, with a parent $p(v)$ that is at most $M$.  Note that $u \in \rF_n(\alpha)(v; E_v)$.
In particular, for any $\alpha_1<\alpha_2<\alpha'$,
\begin{align*}
    |C_{\alpha_1}(1)|\leq \floor{n^{\alpha_2}}+\sum_{v>\floor{n^{\alpha_2}},p(v)\leq \floor{n^{\alpha_2}}}|\rF_n(\alpha)(v;E_v)|.\numberthis \label{eq:c_alpha_UB_rep}
\end{align*}
Since
\begin{align*}
    \Prob{|C_{\alpha_1}(1)|>n^{\alpha'}}\leq \Prob{\sum_{v>\floor{n^{\alpha_2}},p(v)\leq \floor{n^{\alpha_2}}}|\rF_n(\alpha)(v;E_v)|>\frac{n^{\alpha'}}{2}},
\end{align*}    
it suffices to show that the right-hand side above is small. Note that for any $v > \floor{n^{\alpha_2}}$, the random variables $\ind{p(v)\leq \floor{n^{\alpha_2}}}$ and $|\rF_n(\alpha)(v;E_v)|$ are independent. Thus, using Wald's identity, the stochastic domination from Remark \ref{rem:stoch_dom_Bienaym\' e--Galton--Watson} and Markov's inequality,
\begin{align}
    \Prob{|C_{\alpha_1}(1)|>n^{\alpha'}}\leq \frac{\Exp{|\{v> \floor{n^{\alpha_2}}:p(v)\leq \floor{n^{\alpha_2}}\}|}\Exp{|\mathscr{T}(\floor{n^{\alpha_2}},\alpha)|}}{n^{\alpha'}/2}.
\end{align}
Note that
\begin{align*}
    \Exp{|\{v> \floor{n^{\alpha_2}}:p(v)\leq \floor{n^{\alpha_2}}\}|}\leq  \sum_{j=\floor{n^{\alpha_2}}+1}^n\frac{n^{\alpha_2}}{j-1}\leq n^{\alpha_2}\int_{\floor{n^{\alpha_2}}-1}^n\frac{dx}{x}=O(n^{\alpha_2}\log n), \numberthis \label{eq:Ml_mean_UB} 
\end{align*}
as $n \to \infty$, and by Lemma \ref{lem:Bienaym\' e--Galton--Watson_size_moments}, $\Exp{|\mathscr{T}(\floor{n^{\alpha_2}},\alpha)|}=O(1)$, since $\alpha_2>\alpha$, so that
$\Prob{|C_{\alpha_1}(1)|>n^{\alpha'}}=o(1)$, 
 since $\alpha_2<\alpha'$. \end{proof}

\begin{remark}\label{rem:expec_UB_subtreeM(L)}
    Let us record here the expectation bound proved in the last argument:
    \begin{align*}
\Exp{\sum_{v>\floor{n^{\alpha_2}},p(v)\leq \floor{n^{\alpha_2}}}|\rF_n(\alpha)(v;E_v)|}=O(n^{\alpha_2}\log n)
    \end{align*}
    for any $\alpha_2>\alpha$, which follows from \eqref{eq:Ml_mean_UB}.
\end{remark}

Next, we establish an upper bound for the $k$-th moment on the root component size $|\bC_{\alpha}(1)|$ of $\cF_n(\alpha)$.

%\red{[N: this should be enough for the recursive argument. Is $P=P(l,\alpha'-\alpha)$?]}
\begin{proposition}\label{prop:higher_moment_UB}
    For any $k > 1$ and $\alpha'>\alpha$, there exists a function $P(k,\alpha'-\alpha)$, depending only on $k$ and the difference $\alpha'-\alpha$, such that $P(k,\cdot)$ is monotone decreasing for any $k \geq 1$, and 
    $\lim_{\eps\to 0}P(k,\eps)=\infty$,
    such that for all large $n$,
    \begin{align*}
        \Exp{|\bC_\alpha(1)|^k}
        \leq P(k,\alpha'-\alpha)   n^{k\left(2\alpha'-\alpha\right)}.
    \end{align*}
\end{proposition}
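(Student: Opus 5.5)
We follow the proof of Proposition \ref{prop:root_comp_UB}, replacing Markov's inequality by a $k$-th moment computation. Fix $\alpha<\alpha_1<\alpha_2<\alpha'$ with the gaps chosen as fixed fractions of $\alpha'-\alpha$, say $\alpha_1=\alpha+(\alpha'-\alpha)/3$ and $\alpha_2=\alpha+2(\alpha'-\alpha)/3$. By Lemma \ref{lem:approx_offspring_forests} we have $\bC_\alpha(1)\subseteq C_{\alpha_1}(1)$ for large $n$. The decomposition \eqref{eq:c_alpha_UB_rep}, with $M=\floor{n^{\alpha_2}}$, gives
\begin{align*}
|\bC_\alpha(1)|\leq n^{\alpha_2}+S,\qquad S:=\sum_{v\in\mathcal V}|\rF_n(\alpha_1)(v;E_v)|,\quad \mathcal V:=\{v>M:p(v)\leq M\}.
\end{align*}
Hence $\Exp{|\bC_\alpha(1)|^k}\leq 2^{k-1}\bigl(n^{k\alpha_2}+\Exp{S^k}\bigr)$, and it suffices to bound $\Exp{S^k}$ by a constant (depending on $k,\alpha'-\alpha$) times $n^{k(2\alpha'-\alpha)}$. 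In fact we expect to get the much better bound $n^{k\alpha_2}(\log n)^{O(k)}$. The weak exponent $2\alpha'-\alpha$ leaves room to spare.

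\textbf{Bounding $\Exp{S^k}$.} Write $N:=|\mathcal V|$ and $Z_v:=|\rF_n(\alpha_1)(v;E_v)|$. The set $\mathcal V$ is determined by the parent choices $\{\cY_v\}_{v>M}$ only through the events $\{\cY_v\leq M\}$. Conditionally on these events, each subtree $Z_v$ is generated by the later choices $\cY_u$, $u>v$. We proceed in three steps.

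\emph{Step 1 (bounding $S$).} By Lemma \ref{lem:Bienaym\' e--Galton--Watson_dom_subtreeforest} and Remark \ref{rem:stoch_dom_Bienaym\' e--Galton--Watson}, each $Z_v$ with $v>M$ is dominated by $|\mathscr T(M,\alpha_1)|$. If the $Z_v$ could be taken independent given $\mathcal V$, then expanding $S^k=\sum_{v_1,\dots,v_k}\prod_j Z_{v_j}$ and grouping by the pattern of repeated indices, each group is bounded by $\Exp{N^{m}}\prod_j\Exp{|\mathscr T|^{k_j}}$ with $\sum k_j=k$. Lemma \ref{lem:Bienaym\' e--Galton--Watson_size_moments} gives $\Exp{|\mathscr T(M,\alpha_1)|^{j}}\leq 2^{j}$ for large $n$, since $f_{\alpha_1}(\alpha_2)<0$.

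\emph{Step 2 (bounding $N$).} $N$ is a sum of independent $\mathrm{Ber}(M/(v-1))$ variables with mean $O(n^{\alpha_2}\log n)$. Therefore Corollary \ref{cor:latala_mom_bound}, or a direct Chernoff bound, gives $\Exp{N^k}\leq C(k)(n^{\alpha_2}\log n)^k$.

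\emph{Step 3 (assembling).} Combining the two steps yields $\Exp{S^k}\leq C'(k)(n^{\alpha_2}\log n)^k$. This is $\leq n^{k(2\alpha'-\alpha)}$ for large $n$, since $\alpha_2<2\alpha'-\alpha$. The function $P$ can then be taken to be an explicit constant such as $2^{k}C'(k)$, adjusted to be decreasing and to blow up as $\eps\to0$. The slack also accommodates the range of $n$ needed before the asymptotic estimates kick in, which depends on $\alpha'-\alpha$.

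\textbf{Main obstacle.} The hard part is justifying the independence or domination of the joint law of the subtrees $(Z_v)_{v\in\mathcal V}$. These subtrees are disjoint but not independent, since they compete for the same later vertices. The plan is to extend the coupling from the proof of Lemma \ref{lem:Bienaym\' e--Galton--Watson_dom_subtreeforest} to a forest. We explore all subtrees rooted at $\mathcal V$ simultaneously, in increasing label order, padding each vertex's offspring with independent pseudo-children exactly as before. This embeds $\bigcup_{v\in\mathcal V}\cT_n(v;E_v)$ into a collection of independent Bienaymé--Galton--Watson trees with offspring law $X_{M}$, one per $v\in\mathcal V$, independent of $\mathcal V$.

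Applying $\chi_{>(1-\alpha_1)\log n}$ and Lemma \ref{lem:trunc_BP} then gives $S\preceq\sum_{i=1}^{N}\mathscr T_i$ with i.i.d.\ $\mathscr T_i\stackrel{d}{=}|\mathscr T(M,\alpha_1)|$ independent of $N$. A Wald-type moment expansion completes the argument.

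If this joint coupling proves too delicate, there is a cruder fallback using Hölder's inequality: $S^k\leq N^{k-1}\sum_v Z_v^k$. Together with the independence of $\ind{v\in\mathcal V}$ from $Z_v$ noted in the proof of Proposition \ref{prop:root_comp_UB}, and Cauchy–Schwarz to separate $N^{k-1}$ from $Z_v^k$, this costs extra powers of $n^{\alpha_2}$. Those extra powers are absorbed precisely by the generous exponent $k(2\alpha'-\alpha)$.
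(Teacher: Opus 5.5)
\textbf{Gap 1: the main coupling cannot exist.} You propose to embed the subtrees $\cT_n(v;E_v)$, $v\in\mathcal V$, into i.i.d.\ Bienaym\'e--Galton--Watson trees that are independent of $\mathcal V$. No such coupling exists. Revealing $\mathcal V$ changes the law of every $u\notin\mathcal V$: its parent becomes uniform on $\{M+1,\dots,u-1\}$. So the attachment probabilities become $1/(u-1-M)$ rather than $1/(u-1)$, and domination by $X_M$ breaks down. Concretely, the event $\{\mathcal V=\{M+1\}\}$ has positive probability. On it, the vertices above $M$ form a single \textsc{urrt} of size $n-M$ rooted at $M+1$. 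Then $S=Z_{M+1}$ is the offspring-thresholded root component of that tree, which has $n^{\alpha_1-o(1)}$ vertices with conditional probability tending to one. A tree with offspring law $\xi_M$ that is independent of this event is still, conditionally on it, a single vertex with probability $1-o(1)$. So the inclusion cannot hold almost surely, even after truncation, and Steps 1--3 rest on an independence that is not available.

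\textbf{Gap 2: the fallback's accounting does not close as written.} With your choice of $\alpha_2$, the margin between the target $n^{k(2\alpha'-\alpha)}$ and $n^{k\alpha_2}$ is only $n^{\frac43 k(\alpha'-\alpha)}$. That absorbs logarithms but no fixed power of $n$, in particular not ``extra powers of $n^{\alpha_2}$''. For instance, the naive Cauchy--Schwarz bound $\Exp{N^{k-1}\ind{v\in\mathcal V}Z_v^k}\le\Exp{N^{2k-2}}^{1/2}\Prob{v\in\mathcal V}^{1/2}\Exp{Z_v^{2k}}^{1/2}$ leads to $\sum_{v>M}\sqrt{M/(v-1)}\asymp\sqrt{Mn}$. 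That is a loss of $n^{(1-\alpha_2)/2}$, which is not absorbed when $k(\alpha'-\alpha)$ is small. Independence of $\ind{v\in\mathcal V}$ from $Z_v$ alone is not enough either, because $N$ contains indicators $\ind{u\in\mathcal V}$, $u>v$, that are correlated with $Z_v$.

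\textbf{How to repair the fallback.} The missing observation is that $\ind{v\in\mathcal V}$ depends only on $\cY_v$, while $N-\ind{v\in\mathcal V}$ and $Z_v$ are both functions of $(\cY_u)_{u\neq v}$. Hence $\Exp{N^{k-1}\ind{v\in\mathcal V}Z_v^k}\le\Prob{v\in\mathcal V}\,\Exp{(N+1)^{2k-2}}^{1/2}\Exp{Z_v^{2k}}^{1/2}$. Summing over $v$ gives $\Exp{S^k}\le C(k)(n^{\alpha_2}\log n)^k$ with no polynomial loss, which is sharper than the statement.

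\textbf{Comparison with the paper.} The paper takes a different route. It expands the $\ell$-th moment over $\ell$-tuples $(v_1,\dots,v_\ell)$ and separates $\prod_i\ind{p(v_i)\le M}$ from $\prod_i Z_{v_i}$ using H\"older's inequality with exponents $1+\eps$ and $(1+\eps)/\eps$. This costs a factor $n^{\eps/(1+\eps)}$ per index and a constant of order $((1+\eps)/\eps)^\ell$. With $\eps=\alpha'-\alpha$, that is exactly where the exponent $2\alpha'-\alpha$ and the blow-up of $P(k,\cdot)$ come from.
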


\begin{proof}
We choose $\alpha_1,\alpha_2$ with
$\alpha<\alpha_1<\alpha_2< \min(\alpha',
    2\alpha'-\alpha)$,
which is possible since $2\alpha'-\alpha>\alpha$. As in the proof of Proposition \ref{prop:root_comp_UB}, we have $\bC_\alpha(1)\subseteq C_{\alpha_1}(1)$, and thus, it suffices to prove the bound for $\Exp{|C_{\alpha_1}(1)|^k}$. Recall the representation \eqref{eq:c_alpha_UB_rep},
    \begin{align*}
        |C_{\alpha_1}(1)|\leq \floor{n^{\alpha_2}}+\sum_{v>\floor{n^{\alpha_2}},p(v)\leq \floor{n^{\alpha_2}}}|\rF_n(\alpha)(v;E_v)|.
    \end{align*}
    This implies,  
    \begin{align*}
        \Exp{|C_{\alpha_1}(1)|^k}&\leq \sum_{m=0}^k \binom{k}{m}n^{m\left(2\alpha'-\alpha \right)}\Exp{\left(\sum_{v>\floor{n^{\alpha_2}},p(v)\leq \floor{n^{\alpha_2}}}|\rF_n(\alpha)(v;E_v)| \right)^{k-m}},
    \end{align*}
    so that it suffices to show that for any $\ell\geq 1$,
    \begin{align*}
    \Exp{\left(\sum_{v>\floor{n^{\alpha_2}},p(v)\leq \floor{n^{\alpha_2}}}|\rF_n(\alpha)(v;E_v)| \right)^{\ell}}\leq Q(\ell,\alpha'-\alpha)n^{\ell\left(2\alpha'-\alpha \right)}, \numberthis \label{eq:RTP_UB_mom_bound}
    \end{align*}
    where the function $Q(\ell,\cdot)$ is monotone decreasing and satisfies $Q(\ell,\eps)\to \infty$ as $\eps\to 0$. The case $\ell=0$ is trivial, and the case $\ell=1$ follows from Remark \ref{rem:expec_UB_subtreeM(L)}. For the general case, note that
    \begin{align*}
        &\Exp{\left(\sum_{v>\floor{n^{\alpha_2}},p(v)\leq \floor{n^{\alpha_2}}}|\rF_n(\alpha)(v;E_v)| \right)^{\ell}}\\
        &=\Exp{\left(\sum_{v>\floor{n^{\alpha_2}}}\ind{p(v)\leq \floor{n^{\alpha_2}}}|\rF_n(\alpha)(v;E_v)| \right)^{\ell}}\\
        %&=\Exp{\sum_{v_1,\dots,v_\ell>\floor{n^{\alpha_2}}}\prod_{i=1}^\ell\left(\ind{p(v_i)\leq \floor{n^{\alpha_2}}}|\rF_n(\alpha)(v_i;E_{v_i})| \right)}\\
        &=\sum_{v_1,\dots,v_\ell>\floor{n^{\alpha_2}}} \Exp{\prod_{i=1}^\ell\left(\ind{p(v_i)\leq \floor{n^{\alpha_2}}}|\rF_n(\alpha)(v_i;E_{v_i})| \right)}. \numberthis \label{eq:l_mom_1}
    \end{align*}
By the generalized H\" older inequality, for any $\varepsilon>0$ we have
\begin{align*}
    &\Exp{\prod_{i=1}^\ell\left(\ind{p(v_i)\leq \floor{n^{\alpha_2}}}|\rF_n(\alpha)(v_i;E_{v_i})| \right) }\\
    & \leq \Prob{\max_{i: i\in [\ell]} p(v_i)\leq \floor{n^{\alpha_2}}}^{\frac{1}{1+\varepsilon}}
    \Exp{\left(\prod_{i=1}^\ell|\rF_n(\alpha)(v_i;E_{v_i})|\right)^{g(\varepsilon)}}^{\frac{1}{g(\varepsilon)}}\\
    &\leq n^{\frac{\ell\alpha_2}{1+\varepsilon}}
    \prod_{i=1}^\ell \frac{1}{(v_i-1)^{\frac{1}{1+\varepsilon}}}
    \prod_{i=1}^\ell \Exp{|\rF_n(\alpha)(v_i;E_{v_i})|^{\ell g(\varepsilon)}}^{\frac{1}{\ell g(\varepsilon)}},
\end{align*}
where $g(\varepsilon)=(1+\varepsilon)/\varepsilon$, and for the last line we use the independence of the events $\{p(v_i)\leq n^{\alpha_2}\}$ for $i \in [\ell]$. Using Lemma \ref{lem:Bienaym\' e--Galton--Watson_dom_subtreeforest}, Remark \ref{rem:stoch_dom_Bienaym\' e--Galton--Watson} and Lemma \ref{lem:Bienaym\' e--Galton--Watson_size_moments}, since $v_i>n^{\alpha_2}$ with $\alpha_2>\alpha$, we have
\begin{align*}
    \prod_{i=1}^\ell \Exp{|\rF_n(\alpha)(v_i;E_{v_i})|^{\ell g(\varepsilon)}}^{\frac{1}{\ell g(\varepsilon)}}\leq 1
\end{align*}
for all large $n$, for any $\varepsilon>0$. Thus, by the Riemann sum bound $$\sum_{v>\floor{n^{\alpha_2}}}\frac{1}{(v-1)^{\frac{1}{1+\varepsilon}}}\leq D\int_{\floor{n^{\alpha_2}}}^n\frac{1}{x^{\frac{1}{1+\varepsilon}}}dx$$ for a universal constant $D>0$, we conclude that
\begin{align*}
\Exp{\left(\sum_{v>\floor{n^{\alpha_2}},p(v)\leq \floor{n^{\alpha_2}}}|\rF_n(\alpha)(v;E_v)| \right)^{\ell}}
    &\leq n^{\frac{\ell\alpha_2}{1+\varepsilon}}\prod_{i=1}^\ell \left(\sum_{v>\floor{n^{\alpha_2}}}\frac{1}{(v-1)^{\frac{1}{1+\varepsilon}}} \right)\\
    &\leq D^\ell\left(\frac{1+\varepsilon}{\varepsilon} \right)^\ell n^{\frac{\ell\alpha_2}{1+\varepsilon}}n^{\frac{\ell\varepsilon}{1+\varepsilon}}.
\end{align*}
for all large $n$. Taking $\varepsilon=\alpha'-\alpha$, defining
\begin{align*}
    Q(\ell,\alpha'-\alpha):=D^\ell\left(\frac{1+\alpha'-\alpha}{\alpha'-\alpha} \right)^\ell,
\end{align*}
and bounding $\alpha_2<2\alpha'-\alpha$ proves \eqref{eq:RTP_UB_mom_bound}.
\end{proof}

\begin{remark}[Error bound]\label{rem:error_UB}
    The following bound follows from Proposition \ref{prop:higher_moment_UB}. For any $\beta>\alpha' > \alpha$ and $k\geq 1$,
\begin{align*}
\Prob{|\bC_{\alpha}(1)|>n^{\beta}}\leq P(k,\alpha'-\alpha) n^{k(2\alpha'-\alpha-\beta)}, \numberthis \label{eq:high_moment_Markov}
    \end{align*}
where $P(\cdot,\cdot)$ is the function appearing in Proposition \ref{prop:higher_moment_UB}.
\end{remark}

%Let us record the following corollary of Proposition \ref{prop:higher_moment_UB} for later use.

\begin{corollary}[Higher moment bounds for $C_\alpha(1)$]\label{cor:high_mom_off_forest}
    For any $k \geq 1$ and $\beta>\alpha' > \alpha$, $\Exp{|C_\alpha(1)|^k}\leq P(k,\alpha'-\alpha) n^{k(2\alpha'-\alpha)}$. Consequently, the corresponding tail bound of the previous remark is also true,
    \begin{align*}
        \Prob{|C_{\alpha}(1)|>n^{\beta}}\leq P(k,\alpha'-\alpha) \,n^{k(2\alpha'-\alpha-\beta)}.
    \end{align*}
\end{corollary}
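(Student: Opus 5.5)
The corollary should follow at once from the inclusion $C_\alpha(1)\subseteq \bC_\alpha(1)$ and Proposition \ref{prop:higher_moment_UB}. Recall $E^{\rm deg}_\alpha$ and $E^{\rm off}_\alpha$. For every $u\neq 1$ we have $c_{\cT_n}(u)=d_{\cT_n}(u)-1$, and $c_{\cT_n}(1)=d_{\cT_n}(1)$. Hence $c_{\cT_n}(u)\le d_{\cT_n}(u)$ for every $u$. So if an edge has an endpoint of degree at most $(1-\alpha)\log n$, that endpoint also has at most $(1-\alpha)\log n$ offspring, which gives $E^{\rm deg}_\alpha\subseteq E^{\rm off}_\alpha$. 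Removing more edges can only shrink the component of $1$, so
\begin{align*}
C_\alpha(1)=\cT_n(1;E^{\rm off}_\alpha)\subseteq \cT_n(1;E^{\rm deg}_\alpha)=\bC_\alpha(1)
\end{align*}
deterministically, for every $n$. This is the same fact as Remark \ref{rem:op_monotonicity}(ii), $\psi^r_{\rm off}\subseteq\psi^r_{\rm deg}$, applied with $r=(1-\alpha)\log n$ to $(\cT_n,1)$. It is also the right-hand inclusion in Remark \ref{rem:sandwiching_root_comps}, but here no ``$n$ large'' qualifier is needed.

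From the inclusion, $|C_\alpha(1)|^k\le|\bC_\alpha(1)|^k$ pointwise. Taking expectations and applying Proposition \ref{prop:higher_moment_UB} with the same $k$ and $\alpha'$ gives, for all large $n$,
\begin{align*}
\Exp{|C_\alpha(1)|^k}\le P(k,\alpha'-\alpha)\,n^{k(2\alpha'-\alpha)}.
\end{align*}
The hypothesis $\beta>\alpha'$ plays no role in this moment bound.

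For the tail bound, apply Markov's inequality to $|C_\alpha(1)|^k$:
\begin{align*}
\Prob{|C_\alpha(1)|>n^\beta}\le n^{-k\beta}\,\Exp{|C_\alpha(1)|^k}\le P(k,\alpha'-\alpha)\,n^{k(2\alpha'-\alpha-\beta)}.
\end{align*}
This is the same computation as Remark \ref{rem:error_UB}. Alternatively, one can note directly that $\{|C_\alpha(1)|>n^\beta\}\subseteq\{|\bC_\alpha(1)|>n^\beta\}$ and quote \eqref{eq:high_moment_Markov}.

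There is no real obstacle. The only point needing care is the direction of the inclusion, and in particular the root's convention $c_{\cT_n}(1)=d_{\cT_n}(1)$, which makes $c\le d$ hold for every vertex. Proposition \ref{prop:higher_moment_UB} is stated for $k>1$, so for $k=1$ I would either use Jensen on the $k=2$ bound, with an adjusted constant, or rerun its proof, which covers $\ell=1$ via Remark \ref{rem:expec_UB_subtreeM(L)}.
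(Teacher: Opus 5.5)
Your proof is correct and is the paper's argument: the inclusion $C_\alpha(1)\subseteq\bC_\alpha(1)$ (the right half of Remark \ref{rem:sandwiching_root_comps}), then Proposition \ref{prop:higher_moment_UB}, then Markov's inequality for the tail bound. You also make two points the paper's one-line proof leaves implicit: the inclusion holds deterministically for every $n$, and the case $k=1$ must be handled separately, either by Jensen from $k=2$ or by rerunning the proof with $\ell=1$.
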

\begin{proof}
    The result follows from the sandwiching of Remark \ref{rem:sandwiching_root_comps} and Proposition \ref{prop:higher_moment_UB}. 
\end{proof}
%\red{[ this error bound is not good enough for proving other components are small; need higher moment error bounds ]}

\subsection{Lower bound for the size of the root component}\label{sec:c1_LB}

In this section, we prove the following lower bound on the size of the root component $\bC_\alpha(1)$ of $\cF_n(\alpha)$. 

\begin{proposition}
\label{prop:lower_bound}
    For any $\alpha'<\alpha$, $\lim_{n \to \infty}\Prob{|\bC_\alpha(1)|<n^{\alpha'}}=0$.
\end{proposition}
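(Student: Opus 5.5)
Fix $\alpha'<\alpha$. By Lemma \ref{lem:chernoff_deg_tail} (with $\gamma'<\gamma(\alpha)$), with high probability every vertex $v\le n^{\gamma'}$ has degree at least $(1-\alpha)\log n$. The connected set $[\lfloor n^{\gamma'}\rfloor]$ in $\cT_n$ (initial segments of a recursive tree are subtrees) therefore survives in $\cF_n(\alpha)$, so $\bC_\alpha(1)\supseteq[\lfloor n^{\gamma'}\rfloor]$. Since $\gamma(\alpha)<\alpha/2$, this alone does not reach $n^{\alpha'}$. The plan is a bootstrap that grows the root component stage by stage, using fresh vertices whose degrees exceed the threshold and whose parents already lie in the current component.

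\textbf{Growth step.} Fix a grid $x_0=\gamma'<x_1<\dots<x_m$ with $x_m\ge\alpha'$ and small mesh $\eta$. Suppose that, with high probability, $\bC_\alpha(1)\cap[n^{x_j}]$ has size at least $n^{x_j-\delta_j}$ for a small accumulated loss $\delta_j$. Take a vertex $v\in(n^{x_j},n^{x_{j+1}}]$. Its parent is uniform on $[v-1]$. By Lemma \ref{lem:degUandLtail}, its degree exceeds $(1-\alpha)\log n$ except with probability at most $n^{f_\alpha(x(v))}$, which is $n^{-c}$ for $x(v)\le\alpha'<\alpha$ (here $c$ depends on $\alpha-\alpha'$). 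So each such $v$ joins $\bC_\alpha(1)$ with probability roughly
\[
\frac{|\bC_\alpha(1)\cap[v-1]|}{v-1}\,(1-n^{-c}).
\]
The attachment choices of different $v$ are independent. Degrees, however, depend on later arrivals; to handle this, use the offspring count of $v$, which depends only on $\cY_{v+1},\dots,\cY_n$ and is independent of the parent of $v$. Via Lemma \ref{lem:approx_offspring_forests}, it suffices to treat $\rF_n(\alpha)$ with a slightly shifted $\alpha$.

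\textbf{Handling dependence.} The events ``$c_{\cT_n}(v)>(1-\alpha)\log n$'' for different $v$ are dependent, but each fails with probability at most $n^{-c}$. By Markov's inequality, with high probability all but a fraction $n^{-c/2}$ of the vertices in $(n^{x_j},n^{x_{j+1}}]$ are good. Conditioning on the parents sequentially, the number of vertices attaching to the current component dominates a sum of Bernoullis with mean about $n^{x_{j+1}-\delta_j}\cdot$(const). A Chernoff bound shows this count is at least $n^{x_{j+1}-\delta_j}/2$, minus the at most $n^{x_{j+1}-c/2}$ bad vertices. The density of the component is in fact nondecreasing in a P\'olya-urn sense: the fraction $|\bC\cap[v]|/v$ behaves like a supermartingale-type urn that loses only an $n^{-c}$ fraction per step. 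Consequently $\delta_j$ grows by $O(\eta)$ at most per stage, and with $m\approx\alpha/\eta$ stages the total loss is small. Alternatively one can note that the density never drops below $n^{-\gamma'}$-ish, giving the cruder bound $|\bC_\alpha(1)|\ge n^{\alpha'-\gamma'}$ from the initial chunk. That bound is insufficient, so the urn argument is needed.

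\textbf{Main obstacle.} The delicate point is the dependence between a vertex's goodness (its offspring count) and the later attachment events of other vertices. A child of $v$ counted in $c(v)$ is itself an arrival whose parent is $v$. I would decouple this as follows. First reveal all parent choices only as indicators of attaching to the current component, then argue via the coupling of Section \ref{sec:coup_with_unifs}: conditionally on the set of vertices attaching into $[n^{x_j}]$, the offspring counts of vertices in $(n^{x_j},n^{x_{j+1}}]$ still dominate sums of $\mathrm{Ber}(1/i)$ over $i\ge n^{x_{j+1}}$ with the conditioned indices removed. The removed set has size $O(n^{x_{j+1}}\log n)$ and changes the mean by $o(\log n)$ only. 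Lemma \ref{lem:degUandLtail} then still gives the $n^{-c}$ bound. A union bound over the finitely many stages completes the proof.
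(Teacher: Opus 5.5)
There is a genuine gap in the growth step. It sits exactly where you flag the main obstacle, and your proposed decoupling does not close it.

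You treat the indicator that $p(v)\in\bC_\alpha(1)\cap[v-1]$, given the parents of $2,\dots,v-1$, as a Bernoulli variable with mean $|\bC_\alpha(1)\cap[v-1]|/(v-1)$, and then invoke Chernoff. But $\bC_\alpha(1)\cap[v-1]$ is not a function of $\cY_2,\dots,\cY_{v-1}$. A vertex $u<v$ lies in it only if all its ancestors have more than $(1-\alpha)\log n$ offspring, and these counts depend on $\cY_v,\dots,\cY_n$, including $\cY_v$ itself: the choice $p(v)=u$ raises $c_{\cT_n}(u)$. For the same reason, your induction hypothesis on $|\bC_\alpha(1)\cap[n^{x_j}]|$ is an event that depends on the stage-$(j+1)$ arrivals. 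You cannot condition on it and then treat those arrivals as fresh.

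Conditioning on which vertices attach into the fixed set $[n^{x_j}]$ does not help. In expectation only about $\eta\,n^{x_j}\log n$ vertices of $(n^{x_j},n^{x_{j+1}}]$ attach there directly. So growth to order $n^{x_{j+1}}$ must come from attachments to vertices that joined during the stage, and for those the dependence between ``$w$ is good'' and ``later stage vertices attach to $w$'' is untouched. Your loss bookkeeping is also inconsistent: an $O(\eta)$ loss in the exponent per stage, over $m\approx\alpha/\eta$ stages, totals order $\alpha$, which is not small.

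The paper needs no dynamic argument. Take $L=\floor{n^{\alpha_1}}$ with $\alpha'<\alpha_1<\alpha$. Every vertex of $[L]$ outside $\bC_\alpha(1)$ lies in the $\cT_L$-descendant subtree of some $v\le L$ with $d_{\cT_n}(v)\le(1-\alpha)\log n$, so $|\bC_\alpha(1)|$ is at least $L$ minus the total size of these subtrees. Lemma~\ref{lem:lem_negass}, Proposition~\ref{prop:subtree_mom_UB} and Lemma~\ref{lem:degUandLtail} bound the expected total by $\sum_{v\le L}(c_1+c_2L/v)\,n^{f_\alpha(x(v))}=O(n^{\alpha_1+f_\alpha(\alpha_1)}\log n)=o(L)$, and Markov's inequality finishes. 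In particular the root component has density $1-o(1)$ in $[L]$, so neither the seed $[n^{\gamma'}]$ nor stages are needed.

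Your idea of measuring goodness by late offspring does give a clean proof if you apply it once rather than stage by stage. Call $v\le L$ good if $|\{i>L:\cY_i=v\}|>(1-\alpha)\log n$.
\begin{itemize}
\item These indicators are functions of $\cY_{L+1},\dots,\cY_n$, hence independent of $\cT_L$.
\item Each fails with probability at most $n^{f_\alpha(\alpha_1)}$, by the Chernoff bound of Lemma~\ref{lem:degUandLtail}, since $\sum_{j=L}^{n-1}1/j\ge(1-\alpha_1)\log n$.
\item Goodness implies $d_{\cT_n}(v)>(1-\alpha)\log n$, so a vertex of $[L]$ whose whole ancestral path is good lies in $\bC_\alpha(1)$.
\end{itemize}
The same subtraction bound then gives expected loss $O(L\log n\cdot n^{f_\alpha(\alpha_1)})=o(L)$, with no association inequality needed.
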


Before proving this result, we state and prove a negative association property.
\begin{lemma}[Negative association]\label{lem:lem_negass}
    For any vertex $v$ in the \textsc{urrt} $\cT_n$, the degree
    $-d_{\cT_n}(v)$ of $v$ and the size $|\cT_n(v;E_v)|$ of the the descendant subtree of $v$ are negatively associated. In particular, for any $x>0$, $\Exp{|\cT_n(v;E_v)|\ind{d_{\cT_n}(v)<x}}\leq \Prob{d_{\cT_n}(v)<x}\Exp{|\cT_n(v;E_v)|}$.
\end{lemma}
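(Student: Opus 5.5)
The plan is to show that $d_{\cT_n}(v)$ and $|\cT_n(v;E_v)|$ are positively associated; this is the same as $-d_{\cT_n}(v)$ and $|\cT_n(v;E_v)|$ being negatively associated. The displayed inequality then follows at once, by applying the association inequality to the increasing function $|\cT_n(v;E_v)|$ and the decreasing function $\ind{d_{\cT_n}(v)<x}$ of the degree.

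We first express both quantities through independent inputs. Use the coupling of Section \ref{sec:coup_with_unifs}, or more directly the parent variables $\cY_j\stackrel{d}{=}\mathrm{U}[j-1]$ for $j=v+1,\dots,n$. The degree is $d_{\cT_n}(v)=\ind{v\neq 1}+\sum_{j>v}\ind{\cY_j=v}$. The subtree $\cT_n(v;E_v)$ is built recursively: $j>v$ belongs to it if and only if $\cY_j$ lies in the subtree already constructed on $\{v,\dots,j-1\}$. The edge to $v$'s parent is determined by $\cY_v$, which is independent of everything else, so it plays no role.

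The key step is to condition on the indicator vector $B=(B_j)_{j>v}$, where $B_j=\ind{\cY_j=v}$, and to show that $|\cT_n(v;E_v)|$ is coordinatewise nondecreasing in $B$ in a suitable coupled sense. Given $B$, the variables $\cY_j$ with $B_j=0$ are independent and uniform on $[j-1]\setminus\{v\}$, and their conditional law does not depend on the other coordinates of $B$. Fix these realizations, and let $j$ with $B_j=0$ join the subtree when $\cY_j$ lands in the current subtree minus $v$. Changing some $B_j$ from $0$ to $1$ forces $j$ into the subtree. By induction on labels, the subtree can then only grow: each later vertex's parent is unchanged, and the set it must hit to join has only gained vertices. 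So $|\cT_n(v;E_v)|=h(B,W)$, where $W$ is independent of $B$ and $h$ is nondecreasing in $B$. The coordinates of $B$ are independent Bernoullis, so Harris's inequality conditional on $W$, followed by integration over $W$, gives $\mathrm{Cov}(g(d), h)\ge 0$ for every nondecreasing $g$ of $d=\ind{v\neq1}+\sum_j B_j$.

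The main obstacle is making the monotone coupling rigorous. When $B_j=1$, the variable $\cY_j$ equals $v$, so it cannot be reused for vertex $j$. I would therefore define $W$ as an independent family $\cY'_j\sim\mathrm{U}([j-1]\setminus\{v\})$ and set $\cY_j=v$ if $B_j=1$ and $\cY_j=\cY'_j$ otherwise. One checks that $\Prob{B_j=1}=1/(j-1)$, which reproduces the correct law of $\cY_j$. With this construction the monotonicity argument above is clean.
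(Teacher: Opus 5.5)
Your proof is correct and uses the same idea as the paper, which argues that switching some $\cY_k$ away from $v$ raises $-d_{\cT_n}(v)$ while weakly shrinking $|\cT_n(v;E_v)|$. Your splitting of $\cY_j$ into independent $B_j$ and $\cY'_j$, together with Harris's inequality conditional on $W$, makes that sketch rigorous. Integrating over $W$ is clean because $\Exp{g(d)\mid W}$ is constant ($d$ depends only on $B$), and applying the same argument to any nondecreasing function of $h$ rather than $h$ itself gives the full association claim.
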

\begin{proof}
    Recall the variables $\cY_v\stackrel{d}{=} \mathrm{U}[v-1]$ that determine the \textsc{urrt} $\cT_n$, by  $p(v)=\cY_v$ where $p(v)$ is the parent of $v$. Consider some $k \geq v+1$ such that $\cY_k=v$. Observe that changing $\cY_k=u\neq v$ makes $-d_{\cT_n}(v)$ to increase, while $|\cT_n(v;E_v)|$ either stays the same (if $j$ was a descendant of $v$ before the change) or decreases (if $j$ was not a descendant of $v$ before the change). This shows that $-d_{\cT_n}(v)$ and $|\cT_n(v;E_v)|$ are negatively associated, and proves the result.
\end{proof}

\begin{proof}[Proof of Proposition \ref{prop:lower_bound}]
    Consider exploring the component $\bC_\alpha(1)$ in $\cT_n$ starting from the root $1$ in a breadth-first manner. By the construction of $\rF_n(\alpha)$, all the vertices $u$ that we encounter in this exploration satisfy $c_{\cT_n}(u)>(1-\alpha)\log n$. This implies that whenever we encounter a vertex $v$ with degree at most $(1-\alpha)\log n$, we do not explore the subtree of $v$ in $\cT_n$, as it is not part of the root component $\bC_\alpha(1)$. We proceed in increasing order of the vertex labels, always exploring the yet unexplored vertex with the smallest label. Fix  $\alpha'<\alpha_1<\alpha$. 
    The following equality is a straightforward consequence of this exploration, when we explore only vertices in $[\lfloor n^{\alpha_1}\rfloor ]$: 
    \begin{align*}
        |\bC_\alpha(1)\cap [\lfloor n^{\alpha_1}\rfloor]|=\lfloor n^{\alpha_1}\rfloor -\sum_{v\in \mathrm{Bad}} |\cT_{\lfloor n^{\alpha_1}\rfloor}(v;E_v)|,
    \end{align*}
where the set $\mathrm{Bad}$ is defined by
\begin{align*} 
    \mathrm{Bad}:=\{v \in [\lfloor n^{\alpha_1}\rfloor]:c_{\cT_n}(v)<(1-\alpha)\log n\;\;\text{but all ancestors}\;\;u\;\;\text{of}\;\;v\;\;\text{satisfy}\;\;c_{\cT_n}(u)\geq (1-\alpha)\log n\}.
\end{align*}

\begin{figure}[H]
    \centering
    \resizebox{\textwidth}{!}{
        \input{LB_bad_vertex}}
    \caption{$\cT_n$ is the graph on the left and $\bC_\alpha(1)\cap[\floor{n^{\alpha_1}}]$ is on the right. The orange vertices have several offspring $>(1-\alpha)\log n$ ($=5$, say, for this illustrative example) in $\cT_n$. The green vertices have label $>\floor{n^{\alpha_1}}$, so are not part of $\bC_\alpha(1)\cap [\floor{n^{\alpha_1}}]$. The red vertices are all ``Bad'' vertices. The bad vertex that is the parent of the two green vertices \emph{breaks} the ancestral line from $1$, depicted in blue.}
    \label{fig:LB Bad v}
\end{figure}

Indeed, to count the size of the set $\bC_\alpha(1)\cap[\lfloor n^{\alpha_1}\rfloor]$ we may count all of the set $[\lfloor n^{\alpha_1}\rfloor]$, and then subtract the sizes of the descendant subtrees $|\cT_{\lfloor n^{\alpha_1}\rfloor}(v; E_v)|$ in $\cT_{\floor{ n^{\alpha_1}}}$ of any $v$ which \emph{breaks} an ancestral line coming down from $1$, in $\bC_\alpha(1)$, see Figure \ref{fig:LB Bad v}. The ``$\rm Bad$'' set comprises all such $v$. 
In particular, we obtain the lower bound
\begin{align*}
    |\bC_\alpha(1)|\geq |\bC_\alpha(1)\cap [\floor{n^{\alpha_1}}]|
    &=\floor{n^{\alpha_1}}-\sum_{v\in \mathrm{Bad}} |\cT_{\floor{n^{\alpha_1}}}(v;E_v)|\\&\geq \floor{n^{\alpha_1}}-\sum_{v \in [\floor{n^{\alpha_1}}]:c_{\cT_n}(v)<(1-\alpha)\log n} |\cT_{\floor{n^{\alpha_1}}}(v;E_v)|.
\end{align*}
Thus, 
\begin{align*}
    \Prob{|\bC_\alpha(1)|<n^{\alpha'}}\leq \Prob{\sum_{v \in [\floor{n^{\alpha_1}}]:c_{\cT_n}(v)<(1-\alpha)\log n} |\cT_{\floor{n^{\alpha_1}}}(v;E_v)|\geq n^{\alpha_1}/2}. \numberthis \label{eq:lb_conclusion}
\end{align*}

However, note that 
\begin{align*}
    &\Exp{\sum_{v \in [\floor{n^{\alpha_1}}]:c_{\cT_n}(v)<(1-\alpha)\log n}|\cT_{\floor{n^{\alpha_1}}}(v;E_v)|}\\
    &=\sum_{v \in [\floor{n^{\alpha_1}}]}\Exp{|\cT_{\floor{n^{\alpha_1}}}(v;E_v)|\ind{c_{\cT_n}(v)<(1-\alpha)\log n}}\\&\leq \sum_{v \in [\floor{n^{\alpha_1}}]}\Exp{|\cT_{\floor{n^{\alpha_1}}}(v;E_v)|}\Prob{c_{\cT_n}(v)<(1-\alpha)\log n}, 
\end{align*}
using Lemma \ref{lem:lem_negass}. Recalling $x(v)$ from \eqref{def:x(v)}, using Proposition \ref{prop:subtree_mom_UB} and Lemma \ref{lem:degUandLtail}, by a union bound, since for any $v\leq [\floor{n^{\alpha_1}}]$ we have $x(v)<\alpha$, the right-hand side above is at most 
\begin{align*}
    &\sum_{v\in [\floor{n^{\alpha_1}}]}\left(c_1+c_2(n^{\alpha_1-x(v)}-1)\right)n^{f_{\alpha}(x(v))}. 
\end{align*}
for some positive constants $c_1, c_2$.
By a Riemann integral bound, the last sum is at most
\begin{align*}
\int_1^{n^{\alpha_1}}&\left(c_1+c_2\left(n^{\alpha_1-\left(\frac{\log u}{\log n}\right)}-1\right)\right)n^{f_\alpha\left(\frac{\log u}{\log n}\right)}du\\
&=\log n\int_0^{\alpha_1}\left(c_1+c_2\left(n^{\alpha_1-y}-1\right)\right)n^{f_\alpha\left(y\right)}n^ydy\\
&=O(n^{\alpha_1+f_\alpha(\alpha_1)}\log n)=o(n^{\alpha_1}),
\end{align*}
where for the first equality above we change variables $u=n^y$, for the second one we use that $f_\alpha(\cdot)$ is increasing on $(0,\alpha)$ and for the third one that $f_\alpha(\alpha_1)<0$ since $\alpha_1<\alpha$. %Changing variables $v \mapsto n^u$, the above sum is bounded from above by the integral
%\begin{align*}
    %\int_{0}^{\alpha_1}\left(C_1+C_2(n^{\alpha_1-u}-1) \right)n^{f_\alpha(u)}n^u\log n du=O(n^{\alpha_1+f_\alpha(\alpha_1)}\log n)=o(n^{\alpha_1}),
%\end{align*}
%since $f_\alpha(\alpha_1)<0$ as $\alpha_1<\alpha$. 
Markov's inequality applied to the probability on the right-hand side of \eqref{eq:lb_conclusion} finishes the proof. \end{proof}

\subsection{Root component is a giant}\label{sec:root_comp_giant}
Thanks to Propositions \ref{prop:root_comp_UB} and \ref{prop:lower_bound}, with high probability 
\begin{align*}
    n^{\alpha_1}\leq |\bC_\alpha(1)| \leq n^{\alpha_2} \quad \text{and} \quad n^{\alpha_1}\leq |C_\alpha(1)| \leq n^{\alpha_2}, 
\end{align*}
 for any $\alpha_1<\alpha<\alpha_2$, where the second set of inequalities above holds due to Remark \ref{rem:sandwiching_root_comps}.

%\red{[ Sharpen bounds later: $n^{\alpha}$ with log corrections. ]}

In this section, we prove that all the other components in $\cF_n(\alpha)$ have significantly smaller size, so that the component of $\bC_\alpha(1)$ is clearly distinguishable when the forest $\cF_n(\alpha)$ is observed without vertex labels.

For this, we need an upper bound on the size of the component $\bC_\alpha(u)$ of any vertex $u$ in the forest $\cF_n(\alpha)$. %Recall by convention $\bC_\alpha(u)=\varnothing$ if $u \notin \cF_n(\alpha)$. 
We begin with an observation that all vertices up to $n^{\gamma}$ are part of the root component $\bC_\alpha(1)$, up to some $\gamma>0$. This is a simple corollary of Lemma \ref{lem:chernoff_deg_tail}.

\begin{corollary}\label{lem:all upto gamma}
    Recall $\gamma$ from \eqref{eq:def_gamma}. For any $\eps\in(0,\gamma)$, with high probability, $\bC_\alpha(1)\supseteq \cT_{\floor{n^{\gamma-\eps}}}$, where $\cT_{\floor{n^{\gamma-\eps}}}$ is the subtree of $\cT_n$ spanned by the first $\floor{n^{\gamma-\eps}}$ vertices.
\end{corollary}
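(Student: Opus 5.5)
The plan is to apply Lemma \ref{lem:chernoff_deg_tail} with $\gamma'=\gamma-\eps$ and then use connectivity of the initial subtree. Since $0<\gamma-\eps<\gamma(\alpha)$, that lemma gives
\begin{align*}
\Prob{\min_{v\leq n^{\gamma-\eps}} d_{\cT_n}(v)<(1-\alpha)\log n}=o(1).
\end{align*}
We need the strict threshold of Definition \ref{def:alpha_high_deg_for}, namely $d_{\cT_n}(v)>(1-\alpha)\log n$. To get it, we apply the lemma with a slightly smaller parameter $\alpha''<\alpha$ chosen so that $\gamma-\eps<\gamma(\alpha'')$; this is possible by continuity of $\alpha\mapsto\gamma(\alpha)$ from Lemma \ref{lem:gamma_cont}. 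For large $n$ we have $(1-\alpha'')\log n>(1-\alpha)\log n$, so the strict inequality follows. Alternatively one can simply note that the Chernoff bound in Lemma \ref{lem:degUandLtail} also covers the event $\{\leq\}$. Either way, with high probability every $v\in[\floor{n^{\gamma-\eps}}]$ satisfies $d_{\cT_n}(v)>(1-\alpha)\log n$.

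On this event, take any edge of $\cT_{\floor{n^{\gamma-\eps}}}$, that is, an edge $\{u,v\}$ of $\cT_n$ with $u,v\leq \floor{n^{\gamma-\eps}}$. Both endpoints have degree exceeding $(1-\alpha)\log n$, so the edge does not lie in $E^{\rm deg}_\alpha$ and is retained in $\cF_n(\alpha)$.

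By the recursive construction, $\cT_{\floor{n^{\gamma-\eps}}}$ is a tree: each vertex $i\leq \floor{n^{\gamma-\eps}}$ attaches to a parent with a smaller label. This tree is therefore connected and contains $1$. Since all of its edges survive in $\cF_n(\alpha)$, it lies entirely in the component of $1$, which gives $\bC_\alpha(1)=\cT_n(1;E^{\rm deg}_\alpha)\supseteq\cT_{\floor{n^{\gamma-\eps}}}$.

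The only delicate point is the strict-versus-weak inequality in the threshold, which the perturbation of $\alpha$ above handles. Everything else follows directly from the earlier results.
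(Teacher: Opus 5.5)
Your proposal is correct and follows the paper's route: apply Lemma~\ref{lem:chernoff_deg_tail} with $\gamma'=\gamma-\eps$, so that every edge among the first $\floor{n^{\gamma-\eps}}$ vertices survives in $\cF_n(\alpha)$. Your connectivity argument for $\cT_{\floor{n^{\gamma-\eps}}}$ and your perturbation of $\alpha$ to handle the strict threshold are both valid; the paper's one-line proof leaves these two points implicit.
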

\begin{proof}
    It suffices to show that $\Prob{\min_{i: i\in [\floor{n^{\gamma-\eps}}]} d_{\cT_n}(i)<(1-\alpha)\log n}=o(1)$, and this is a direct consequence of Lemma \ref{lem:chernoff_deg_tail}.\end{proof}

\begin{proposition}[Later components are small]\label{prop:later_small}
From \eqref{eq:def_gamma} we recall that $\gamma<\alpha$. For any constants $\beta$ and $\gamma'$ with
   $\beta\in (\alpha-\gamma,\alpha),\textrm{ and }\gamma'>\gamma,$
   %\red{[ the inequalities above don't make sense, we know $\gamma<\alpha$ ]}
   we have
\begin{align*}
        \lim_{n \to \infty}\Prob{\max_{u: u\geq n^{\gamma'},u \notin \bC_\alpha(1)} |\bC_\alpha(u)|\geq n^{\beta}}=0.
    \end{align*}
\end{proposition}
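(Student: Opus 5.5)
The plan is to bound every non-root component through its minimal-label vertex, which I will call its \emph{top}. Since every component is a subtree of $\cT_n$, the component $\bC_\alpha(u)$ has a unique vertex $w$ of smallest label. Every other vertex of $\bC_\alpha(u)$ is a descendant of $w$, so $\bC_\alpha(u)\subseteq \cT_n(w;E_w)$.

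Fix small $\eps,\delta,\eta>0$, to be chosen depending on $\beta-(\alpha-\gamma)>0$. By Corollary \ref{lem:all upto gamma}, with high probability $\cT_{\floor{n^{\gamma-\eps}}}\subseteq\bC_\alpha(1)$. On this event, any component other than $\bC_\alpha(1)$ has top $w>n^{\gamma-\eps}$. The hypothesis $u\geq n^{\gamma'}$ is only used to guarantee we are away from the initial chunk; the argument below bounds all non-root components. So it suffices to show that, with high probability, simultaneously for all $w>n^{\gamma-\eps}$, the component with top $w$ has size less than $n^\beta$. I also pass to the offspring forest $\rF_n(\alpha_1)$ for some $\alpha_1\in(\alpha,\alpha+\delta)$, using Lemma \ref{lem:approx_offspring_forests}. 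All vertices of a nontrivial component then have more than $(1-\alpha_1)\log n$ offspring.

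Let $N=\floor{n^{\alpha+\delta}}$ and split the component into the parts with labels $\leq N$ and $>N$.

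For the part with labels $\leq N$: it is contained in the descendant subtree of $w$ in $\cT_N$, i.e.\ the subtree $\cT_N(w;E_w)$ of the \textsc{urrt} $\cT_N$. Applying Proposition \ref{prop:unif_subtree_tail} to $\cT_N$, with high probability
\[
|\cT_N(w;E_w)|\leq 6\log N\cdot N/w\leq 6\log n\cdot n^{\alpha+\delta-\gamma+\eps}
\]
for all $w>n^{\gamma-\eps}$.

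For the part with labels $>N$: every such vertex lies in $\rF_n(\alpha)(v;E_v)$ for some $v>N$ in the component whose parent $p(v)\leq N$ is also in the component. The number of such $v$ is at most $|\text{component}\cap[N]|\cdot\max_{x} c_{\cT_n}(x)$. The maximum number of offspring in $\cT_n$ is $O(\log n)$ with high probability, which follows from Lemma \ref{lem:degUandLtail}-type Chernoff bounds plus a union bound. Each such $\rF_n(\alpha)(v;E_v)$ is stochastically dominated by the Bienaym\'e--Galton--Watson tree $\mathscr{T}(v,\alpha)$ of Lemma \ref{lem:Bienaym\' e--Galton--Watson_dom_subtreeforest}, and by Remark \ref{rem:stoch_dom_Bienaym\' e--Galton--Watson} also by $\mathscr T(N,\alpha)$. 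Since $N\geq n^{\alpha+\delta}$, Lemma \ref{lem:Bienaym\' e--Galton--Watson_size_moments} with a large $k$ and Markov's inequality give
\[
\Prob{|\mathscr T(N,\alpha)|-1\geq n^{\eta}}\leq(\log n)^k n^{f_\alpha(\alpha+\delta)-k\eta}.
\]
Choosing $k\eta>1$ and taking a union bound over the at most $n$ vertices $v>N$ shows that, with high probability, all these subtrees have size at most $1+n^\eta$.

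Combining the two parts, with high probability every non-root component has size at most
\[
6\log n\cdot n^{\alpha-\gamma+\delta+\eps}\bigl(1+O(\log n)(1+n^{\eta})\bigr)=n^{\alpha-\gamma+\delta+\eps+\eta+o(1)}.
\]
This is below $n^\beta$ once $\delta+\eps+\eta<\beta-(\alpha-\gamma)$.

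The main obstacle I expect is the part with labels $>N$. A naive union bound showing that no vertex beyond $n^{\alpha+\delta}$ has many offspring fails: $x+f_\alpha(x)>0$ on a range of $x>\alpha$, so such vertices genuinely exist. This is why I control the part beyond $N$ per attachment point, via the uniform high-moment Bienaym\'e--Galton--Watson bound, rather than by excluding those vertices. A secondary care point is that Proposition \ref{prop:unif_subtree_tail} must be applied to $\cT_N$, which is legitimate because $\cT_N$ is itself a \textsc{urrt} on $[N]$, and its error term $O(\sqrt{\log N}/N^{0.9})$ still vanishes.
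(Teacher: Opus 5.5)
Your proof is correct, and after the common first step it follows a genuinely different route from the paper's. Both arguments start from $\cT_{\floor{n^{\gamma-\eps}}}\subseteq\bC_\alpha(1)$ with high probability, and so reduce to bounding every component whose minimal vertex (your ``top'') exceeds $n^{\gamma-\eps}$.

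The paper then argues recursively. It writes the component with top $v$ as $\psi^{(1-\alpha)\log n-1}_{\rm off}(\cT_n(v;E_v),v)$. By Lemma~\ref{lem:cond_indep_subtrees}, given $|\cT_n(v;E_v)|=T$, this is the root component of a thresholded \textsc{urrt} of size $T$ with rescaled parameter $\alpha_*(T)=1-(1-\alpha)\log n/\log T$. It applies the high-moment bound of Corollary~\ref{cor:high_mom_off_forest} on the event of Proposition~\ref{prop:unif_subtree_tail}, where $T^{\alpha_*(T)}\approx n^{\alpha-x(v)}$. It closes with a union bound using a moment $k>\gamma/(\gamma-(\alpha-\beta))$.

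You instead cut every component at the fixed label $N=\floor{n^{\alpha+\delta}}$. The early part is bounded crudely by the whole unthresholded subtree $\cT_N(w;E_w)$. The late part is bounded by the number of attachment points, at most $|\text{comp}\cap[N]|\cdot O(\log n)$, times a bound that holds with high probability uniformly over $v>N$ for the Bienaym\'e--Galton--Watson-dominated thresholded subtrees. This is essentially the decomposition from the proof of Proposition~\ref{prop:root_comp_UB}, localized at each top and made uniform.

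What each route buys:
\begin{itemize}
\item Yours is more elementary. It bypasses Proposition~\ref{prop:higher_moment_UB} (with its H\"older step and $2\alpha'-\alpha$ loss), the conditional \textsc{urrt} structure of subtrees, and the parameter $\alpha_*(T)$. The only extra input is the standard fact that the maximal offspring number is $O(\log n)$.
\item The paper's recursive scheme is exactly what it reuses in Lemma~\ref{lem:sec_mom_Yv} to obtain moment bounds on $Y_\alpha(v)$. These are needed for the later concentration argument, and a high-probability argument like yours does not immediately give them.
\end{itemize}

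Two small fixes:
\begin{itemize}
\item In the late part, the objects should be $\rF_n(\alpha_1)(v;E_v)$ and $\mathscr T(v,\alpha_1)$, with exponent $f_{\alpha_1}(\alpha+\delta)$. Vertices of $\cF_n(\alpha)$-components are only guaranteed more than $(1-\alpha)\log n-1$ offspring.
\item Components whose top $w$ exceeds $N$ are not covered by your ``$p(v)\le N$'' description. They do lie in $\rF_n(\alpha_1)(w;E_w)$, so your union bound over all $v>N$ already gives them size at most $1+n^\eta$.
\end{itemize}
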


Before proving this result, let us first use it to show that one can distinguish the root component when the graph $\cF_n(\alpha)$ is observed without the vertex labels. 

\begin{lemma}[Distinguishing the root component]\label{lem:disting_root_comp}
    Observing the graph $\cF_n(\alpha)$ without its vertex labels, we can distinguish the root component as the unique connected component of $\cF_n(\alpha)$ with at least $n^{\alpha-\gamma/4}$ vertices, where $\gamma$ is defined in \eqref{eq:def_gamma}.
\end{lemma}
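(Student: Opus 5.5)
We need to show that, with high probability, $|\bC_\alpha(1)|\ge n^{\alpha-\gamma/4}$ while every other component of $\cF_n(\alpha)$ has fewer than $n^{\alpha-\gamma/4}$ vertices. Since the observer can compute component sizes without labels, the root component is then identified as the unique component exceeding this threshold.

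\emph{The root component is large.} Apply Proposition \ref{prop:lower_bound} with $\alpha'=\alpha-\gamma/4<\alpha$. It gives $\Prob{|\bC_\alpha(1)|<n^{\alpha-\gamma/4}}\to 0$.

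\emph{Every other component is small.} Take a component $\bC\neq\bC_\alpha(1)$ and let $u$ be its vertex of smallest label. We split according to the size of $u$.

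First suppose $u\ge n^{\gamma'}$ for some $\gamma'>\gamma$. Choose $\beta=\alpha-\gamma/4$. This lies in $(\alpha-\gamma,\alpha)$, so Proposition \ref{prop:later_small} applies. Since $u\notin\bC_\alpha(1)$, it gives $|\bC|<n^{\alpha-\gamma/4}$ with high probability, simultaneously for all such components.

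Now suppose $u<n^{\gamma'}$. We must exclude this case, and this is the main obstacle: Corollary \ref{lem:all upto gamma} only guarantees that vertices up to $n^{\gamma-\eps}$ lie in the root component, which leaves a window $[n^{\gamma-\eps},n^{\gamma'}]$ uncovered. The natural fix is to use the freedom in the two parameters. Pick $\eps>0$ small and set $\gamma'=\gamma+\eps$. Corollary \ref{lem:all upto gamma} then puts all of $[\floor{n^{\gamma-\eps}}]$ in $\bC_\alpha(1)$, so $u\ge n^{\gamma-\eps}$.

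It remains to treat $u\in[n^{\gamma-\eps},n^{\gamma+\eps}]$. Because $\bC$ is a component not containing the root, it lies inside the descendant subtree $\cT_n(u;E_u)$ of its minimal vertex $u$. By Proposition \ref{prop:unif_subtree_tail}, with high probability every such subtree satisfies
\[
|\cT_n(u;E_u)|\le 6\log n\cdot n/u\le 6\,n^{1-\gamma+\eps}\log n .
\]
This is not below $n^{\alpha-\gamma/4}$ in general, so the crude subtree bound fails and a sharper estimate is needed.

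For the sharper estimate, bound $|\bC|\le|\rF_n(\alpha-\delta)(u;E_u)|$ using Lemma \ref{lem:approx_offspring_forests}. Then apply the decomposition from the proof of Proposition \ref{prop:root_comp_UB} to the subtree rooted at $u$:
\[
|\bC|\le \bigl|\cT_n(u;E_u)\cap[n^{\alpha_2}]\bigr|+\sum_{v>n^{\alpha_2},\;p(v)\le n^{\alpha_2},\;v\in\cT_n(u;E_u)}|\rF_n(\alpha)(v;E_v)| ,
\]
with $\alpha<\alpha_2$ close to $\alpha$.

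The first term is a subtree count within $\cT_{n^{\alpha_2}}$. By Proposition \ref{prop:unif_subtree_tail} applied at size $n^{\alpha_2}$, it is at most $6\,n^{\alpha_2-\gamma+\eps}\log n$. For the second term, the expected number of $v$ attaching into $u$'s subtree is comparable to $n^{\alpha_2-\gamma+\eps}\log n$. Combined with the $O(1)$ expected sizes from Lemma \ref{lem:Bienaym\' e--Galton--Watson_size_moments} and Markov's inequality, this second term is $n^{\alpha_2-\gamma+2\eps}$ with high probability. A union bound over the at most $n^{\gamma+\eps}$ choices of $u$ costs only a factor $n^{\gamma+\eps}$ in Markov; if that is too lossy, higher moments as in Proposition \ref{prop:higher_moment_UB} would absorb it.

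Taking $\eps,\alpha_2-\alpha$ small gives $|\bC|\le n^{\alpha-\gamma+3\eps}<n^{\alpha-\gamma/4}$. Combining the three cases yields the lemma.
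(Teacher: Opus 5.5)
\textbf{The window case has a genuine gap, and it is one you do not need to close.} Your first two steps are sound and match the paper: Proposition~\ref{prop:lower_bound} with $\alpha'=\alpha-\gamma/4$ makes $\bC_\alpha(1)$ large, and Proposition~\ref{prop:later_small} controls components through vertices $\ge n^{\gamma'}$.

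The gap is in your treatment of minimal vertices $u\in[n^{\gamma-\eps},n^{\gamma+\eps}]$. For one $u$, your Markov bound fails with probability about $n^{-\eps}\log n$. You need it simultaneously for roughly $n^{\gamma+\eps}$ values of $u$, so the union bound gives about $n^{\gamma}\log n$, which diverges. Raising the threshold to just below $n^{\alpha-\gamma/4}$ does not help: the union bound is still of order $n^{\gamma/4+O(\eps)}$.

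The remark that higher moments ``would absorb it'' is an unproved claim. Making it rigorous would need a subtree-restricted analogue of Proposition~\ref{prop:higher_moment_UB}. It would have to bound $k$-th moments, for some $k>(\gamma+\eps)/\eps$, of $\sum_{v>n^{\alpha_2}}\ind{p(v)\in\cT_{n^{\alpha_2}}(u;E_u)}\,|\rF_n(\cdot)(v;E_v)|$, where the indicators depend on the random subtree; this would take H\"older combined with Proposition~\ref{prop:subtree_mom_UB}.

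There is also a sign slip. Lemma~\ref{lem:approx_offspring_forests} gives $\cF_n(\alpha)\subseteq\rF_n(\alpha+\delta)$, so the comparison forest should be $\rF_n(\alpha+\delta)$ with $\alpha_2>\alpha+\delta$, not $\rF_n(\alpha-\delta)$.

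\textbf{The window needs no estimate.} The obstacle comes from classifying a component by its \emph{minimal} vertex. Instead, classify a non-root component $\bC$ by whether it contains \emph{some} vertex $w\ge n^{\gamma'}$.
\begin{itemize}
\item If it does, Proposition~\ref{prop:later_small} applies at $w$, so $|\bC|<n^{\beta}$. That proposition only requires $w\ge n^{\gamma'}$ and $w\notin\bC_\alpha(1)$; it does not require $w$ to be minimal in its component.
\item If it does not, then $\bC\subseteq[\floor{n^{\gamma'}}]$, so trivially $|\bC|\le n^{\gamma'}$. With $\gamma'=\gamma+\eps$ this is below $n^{\alpha-\gamma/4}$ for small $\eps$, because $\gamma<\alpha/2$ (Lemma~\ref{lem:gamma_cont}) gives $\tfrac54\gamma<\alpha$.
\end{itemize}
This dichotomy is exactly the paper's proof; the paper takes $\beta=\alpha-\gamma/2$ and $\gamma<\gamma'<\alpha-\gamma/2$. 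It makes your subtree decomposition, the Galton--Watson domination and the union bound over $u$ unnecessary for this lemma.
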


\begin{proof}
We apply Proposition \ref{prop:later_small} with $\beta=\alpha-\gamma/2$.
The statements we make below are to be interpreted as holding with high probability. 
Let $\gamma'$ be such that $\gamma<\gamma'<\alpha-\gamma/2$. (Recall from Lemma \ref{lem:gamma_cont} part (iv) that such a choice of $\gamma'$ is indeed possible.)
By Proposition \ref{prop:later_small}, the components of all vertices $u>n^{\gamma'}$ that are not in $\bC_\alpha(1)$,  have to have size at most $n^\beta=n^{\alpha-\gamma/2}\ll n^{\alpha-\gamma/4}$. On the other hand, by Corollary \ref{lem:all upto gamma}, all vertices $u \in [\floor{n^{\gamma-\eps}}]$ are part of $\bC_\alpha(1)$, which, by Proposition \ref{prop:lower_bound}, has size at least $n^{\alpha-\eps}\gg n^{\alpha-\gamma/4}$ by choosing $\eps>0$ small, since $\gamma/4<\alpha$. Thus, we need only consider the components of the vertices $u$ satisfying $n^{\gamma-\eps}<u<n^{\gamma'}$. If these vertices are either part of $\bC_\alpha(1)$ or of a component containing a vertex $u>n^{\gamma'}$, then $\bC_\alpha(1)$ is the unique component with size at least $n^{\alpha-\gamma/4}$. If these vertices form their own component in $\cF_n(\alpha)$, the size of such a component is at most of order $n^{\gamma'}\ll n^{\alpha-\gamma/4}$. In any case, $\bC_\alpha(1)$ is distinguishable as the unique component in $\cF_n(\alpha)$ with size at least $n^{\alpha-\gamma/4}$.
\end{proof}

\begin{comment}
To prove Proposition \ref{prop:later_small}, we first need an intermediate result on subtree sizes of later vertices in the \textsc{urrt} $\cT_n$.

\red{[ add (the modified form of) this result in preliminary results ]}

\begin{lemma}[Concentration of subtree sizes]\label{lem:conc_subtree_sizes}
    For any fixed $y\in(0,1)$ and $\eps>0$, there exists $M=M(\eps)>0$ large enough such that for all large $n$,
    \begin{align*}
        \Prob{\cE(n^y,M)^c}<\eps,\;\;\text{where}\;\;\cE(n^y,M):=\{\forall\;\;u\geq n^{y},|\cT_n(u;[u])|/(n/u)\in[1/M,M]\}. \numberthis \label{eq:conc_subtree_sizes}
    \end{align*}
\end{lemma}
\begin{proof}
    \red{[ to be added later, seems standard. 
    
    NOTE: we use this later with $y=\gamma-\eps$ for small $\eps>0$. ]}
\end{proof}
\end{comment}

\begin{proof}[Proof of Proposition \ref{prop:later_small}]
Consider the subtree $\cT_n(u;E_u)$ of $u$ in $\cT_n$ for any $u \in [n]$. %Define the subgraph $\cF_n(u;E_u)$, obtained by keeping those vertices of $\cT_n(u;[u])$ that have degree at least $(1-\alpha)\log n$. Note that $\cF_n(u;[u])$ can be empty graph. 
Denote $C(u,n)=\psi^{(1-\alpha)\log n-1}_{\rm off}(\cT_n(u;E_u),u)$, where $\psi^r_{\rm off}$ is as defined in Definition \ref{def:psi_off_op}.  %and with the convention that $C(u,n)$ is empty if $u \notin \cF_n(u;[u])$. 
Recall $\beta \in (\alpha-\gamma,\alpha)$. Thanks to Corollary \ref{lem:all upto gamma}, it suffices to show that for some $\eps>0$ sufficiently small such that $\gamma-\eps>0$ and $\beta\in (\alpha-\gamma+\eps,\alpha)$,
\begin{align*}
    \Prob{\left\{\max_{u: u\geq n^{\gamma'},u \notin \bC_\alpha(1)} |\bC_\alpha(u)|\geq n^{\beta}\right\}\cap\left\{\bC_\alpha(1)\supseteq \cT_{\floor{n^{\gamma-\eps}}}\right\}}=o(1).
\end{align*}
We have the following inclusion of events:
\begin{align*}
    \left\{\max_{u: u\geq n^{\gamma'},u \notin \bC_\alpha(1)} |\bC_\alpha(u)|\geq n^{\beta}\right\}
    \cap \left\{\bC_\alpha(1)\supseteq \cT_{\floor{n^{\gamma-\eps}}}\right\}
    \subseteq 
    \left\{\max_{v:  v \geq n^{\gamma-\eps}} |C(v,n)|\geq n^{\beta}
    \right\}. \numberthis \label{eq:later_inclusion}
\end{align*}
Indeed, assume that the event on the left-hand side holds. For any $u\geq n^{\gamma'}$ that is not in $\bC_\alpha(1)$, we take the smallest vertex $v \in \bC_\alpha(u)$. Since $\bC_\alpha(1)\supseteq \cT_{\floor{n^{\gamma-\eps}}}$, we must have $v\geq \floor{n^{\gamma-\eps}}+1$ as $v \in \bC_\alpha(u)\neq \bC_\alpha(1)$. Observe that $|C(v,n)|=|\bC_\alpha(v)|$. Indeed, since $v$ is the smallest vertex in $\bC_\alpha(v)$, the latter is a subtree of $\cT_n(v;E_v)$, and by definition of the construction of the forest $\cF_n(\alpha)$, since $v\neq 1$, this subtree is equivalent to the component of $v$ when one retains those edges in $(\cT_n(v;E_v),v)$ whose both end-vertices have at least $(1-\alpha)\log n-1$ children.
%precisely how the operator $\psi^{(1-\alpha)\log n-1}_{\rm off}$ acts on $(\cT_n(v;E_v),v)$. 
In particular, $|C(v,n)|=|\bC_\alpha(v)|=|\bC_\alpha(u)|\geq n^{\beta}$, so the right-hand side event in the last display above holds. 
%\red{[ fix later: small issue about this equality with degree vs offspring cutting: in the original degree cutting, we are actually cutting by offspring, but only for the root. ]} 
Thus, it suffices to show that for any $\delta>0$, for all large $n$
\begin{align*}
    \Prob{\max_{v: v \geq n^{\gamma-\eps}} |C(v,n)|\geq n^{\beta}}< \delta. \numberthis \label{eq:to_show_rootgiant}
\end{align*}
To this end, we exploit the recursive nature of the distribution of the tree $\cT_n$. From Lemma \ref{lem:cond_indep_subtrees}, conditionally on the event $\{|\cT_n(v; E_v)|=T\}$, the tree $\cT_n(v; E_v)$ has the law $\rho_T$, that is, it is distributed as a random recursive tree of size $T$ with root vertex $v$. Thus, applying the explicit error bound of Corollary \ref{cor:high_mom_off_forest} on this smaller recursive tree, we obtain that for any $T=T_n(v)$ with $T_n(v)\gg 1$, and for any fixed $k \geq 1$
\begin{align*}
    \CProb{|C(v,n)|>T^{\alpha'}}{|\cT_n(v;E_v)|=T}
    \leq P(k,\alpha_1-\alpha_*(T)) T^{k(2\alpha_1-\alpha_*-\alpha')}, \numberthis \label{eq:error_recursive}
\end{align*}
for any $\alpha',\alpha_1$ satisfying $\alpha'>\alpha_1>\alpha_*(T)$, where $\alpha_*(T)$ solves
\begin{align*}
    (1-\alpha_*(T))\log T=(1-\alpha) \log n \textrm{, i.e., }  \alpha_*(T) = 1-(1-\alpha)\frac{\log n}{\log T}. \numberthis \label{eq:def_alpha_recursive}
\end{align*}
From the last equation, observe  that
$\alpha_*(T)$ is monotone increasing in $T$.
%    \numberthis \label{eq:alpha_*_inc}
By Proposition \ref{prop:unif_subtree_tail}, 
\begin{align*}
&    \Prob{\max_{v: v \geq \floor{n^{\gamma-\eps}}} |C(v,n)|\geq n^{\beta}} 
\leq 
\Prob{\left\{\max_{v: v \geq \floor{n^{\gamma-\eps}}} |C(v,n)|\geq n^{\beta}\right\}\cap \cE(\gamma-\eps,n)}+\frac {\delta}{2}\\ \numberthis \label{eq:to_show_smallcomp_decomp}
\end{align*}
for all large $n$, where 
\begin{align*}
\cE({\gamma-\eps},n):=
\bigcap_{v: v\geq \floor{n^{\gamma-\eps}}}
    \left\{|\cT_n(v;E_v)|< \frac{6n \log n}{v}\right\}.
\end{align*}
Recalling $x(v)=\log v /\log n$ from \eqref{def:x(v)}, for $v \geq \floor{n^{\gamma-\eps}}$ we have $x(v) \in [\gamma-\eps,1]$. Thus, writing $u=x(v)$, for any $u\geq \gamma-\eps$,
\begin{align*}
    \Prob{\{|C(n^{u},n)|\geq n^{\beta}\}\cap\cE(\gamma-\eps,n)}\leq\CProb{|C(n^u,n)|\geq n^{\beta}}{\cE(\gamma-\eps,n)},
\end{align*}
and since on $\cE(\gamma-\eps,n)$ we have $n>(|\cT_n(n^u;E_{n^u})|/6\log n)^{1/(1-u)}$, we can further bound 
\begin{align*}
    &\CProb{|C(n^u,n)|\geq n^{\beta}}{\cE(\gamma-\eps,n)}\\
    &\leq \CProb{|C(n^u,n)|\geq (6 \log n)^{\frac{\beta}{u-1}}|\cT_n(n^u;E_{n^u})|^{\frac{\beta}{1-u}}}{\cE(\gamma-\eps,n)}\\&\leq \CProb{|C(n^u,n)|\geq |\cT_n(n^u;E_{n^u})|^{\frac{\beta_1}{1-u}}}{\cE(\gamma-\eps,n)}\\&=\CExp{\CProb{|C(n^u,n)|\geq |\cT_n(n^u;E_{n^u})|^{\frac{\beta_1}{1-u}}}{|\cT_n(n^u;E_{n^u})|}}{\cE(\gamma-\eps,n)},
\end{align*}
for any $\beta_1\in(\alpha-\gamma+\eps,\beta)$.
Denote
$\alpha_*(u)=\alpha_*(|\cT_n(n^u;E_{n^u})|)$.
By \eqref{eq:error_recursive}, for any $k \geq 1$, the probability inside the conditional expectation above is at most 
\begin{align*}
P\left(k,\delta'\right)|\cT_n(n^u;E_{n^u})|^{k\left(2\delta'+\alpha_*(u)-\frac{\beta_1}{1-u}\right)},\numberthis \label{eq:recursive_tail_bd}
\end{align*}
for any $\delta'=\delta'(u)>0$. This bound is non-trivial if we can choose such a $\delta'\in \left(0,\frac{\beta_1}{1-u}-\alpha_*(u) \right)$, which is possible if $\beta_1/(1-u)>\alpha_*(u)$, as we show next.
To check this, by definition of $\alpha_*$, note that on the event $\cE(\gamma-\eps,n)\supseteq \{|\cT_n(n^u;E_{n^u})|/n^{(1-u)}< 6 \log n\}$, 
\begin{align*}
    \alpha_*(u)\leq 1-\frac{1-\alpha}{1-u+\delta_1}\numberthis \label{eq:alpha*_UB}
\end{align*}
for $\delta_1>0$ arbitrary small but independent of $u \in [\gamma-\eps,1]$. 
 Thus $\alpha_*(u)<\beta_1/(1-u)$ holds since
\begin{align*}
    -u+\delta_1+\alpha\leq \delta_1+\eps+\alpha-\gamma<\frac{1-u+\delta_1}{1-u}\beta_1.
\end{align*}
Indeed, the first inequality above is true since $u\geq \gamma-\eps$, the second is true by letting $\delta_1,\eps>0$ sufficiently small and letting $\beta_1$ sufficiently close to $\beta$, and recalling that $\beta+\gamma>\alpha$, by definition of $\beta$. In particular, we can choose $\delta'=\delta'(u)$ such that
\begin{align*}
    \delta' \in \left(0, \frac{\beta_1}{1-u}-1+\frac{1-\alpha}{1-u+\delta_1} \right).
\end{align*}
 From now on, we work with $\delta'$ satisfying
\begin{align*}
    \delta'\in \left(0, \frac{\beta_1}{1-\gamma-\eps}-1+\frac{1-\alpha}{1-\gamma+\eps+\delta_1} \right),\numberthis \label{eq:delta'_interval}
\end{align*}
so that the $\delta'$ appearing in the upper bound expression \eqref{eq:recursive_tail_bd} is \emph{uniform} over $u\in [\gamma-\eps,1]$.

\begin{comment}
Additionally, note that since $P(\cdot,\cdot)$ is decreasing in the second argument, for any $u \geq n^{\gamma-\eps}$, using \eqref{eq:alpha*_UB} and the fact that $\alpha_*(\cdot)$ is monotone increasing \eqref{eq:alpha_*_inc},
\begin{align*}
    P\left(k,\frac{\beta_1}{1-u}-\alpha_*(u)\right)<Q(k,\beta_1,\gamma,\eps,\alpha,\delta_1):=P\left(k,\frac{\beta_1}{1-\gamma+\eps}-\left(1-\frac{1-\alpha}{1-(\gamma-\eps)+\delta_1}\right)\right).
\end{align*}
\end{comment}

Writing $Q=P(k,\delta')$, we have established that for any $k \geq 1$ and any vertex $v$ such that $x(v)=u\geq \gamma-\eps$, 
\begin{align*}
&\Prob{\{|C(n^u,n)|\geq n^{\beta}\}\cap\cE(\gamma-\eps,n)}\\
&\leq Q\,\Exp{|\cT_n(n^u;E_{n^u})|^{k\left(2\delta'+\alpha_*(u)-\frac{\beta_1}{1-u}\right)}\ind{\cE(\gamma-\eps,n)}}\\
&\leq Q \, \left(\frac{n^{(1-u)}}{6 \log n}\right)^{k\left(2\delta'+\left(1-\frac{1-\alpha}{1-u-\delta_1}\right)-\frac{\beta_1}{1-u}\right)}\\
&\leq Q\, \left(n^{(1-u-\delta_2)}\right)^{k\left(2\delta'+\left(1-\frac{1-\alpha}{1-u-\delta_1}\right)-\frac{\beta_1}{1-u}\right)},
\end{align*}
%\red{[ issue in second inequality with monotonicity of $\alpha_*$ ]}
for any $\delta_1,\delta_2,\delta'>0$, independent of $u \in [\gamma-\eps,1]$. Thus, by a union bound,
\begin{align*}
&\Prob{\left\{\max_{v: v \geq n^{\gamma-\eps}}|C(v,n)|\geq n^{\beta}\right\}\cap \cE(\gamma-\eps,n)}\\
&\leq \sum_{v \geq \floor{n^{\gamma-\eps}}}
Q\, \left(n^{(1-x(v)-\delta_2)}\right)^{k\left(2\delta'+\left(1-\frac{1-\alpha}{1-x(v)-\delta_1}\right)-\frac{\beta_1}{1-x(v)}\right)}\\
&\leq Q\log n\int_{\gamma-\eps}^1 n^y\cdot\left(n^{(1-y-\delta_2)}\right)^{k\left(2\delta'+\left(1-\frac{1-\alpha}{1-y-\delta_1}\right)-\frac{\beta_1}{1-y}\right)}dy, \numberthis \label{eq:recursive_integralUB}
\end{align*}
where for the last upper bound we applied a Riemann integral bound for the sum $\sum_{v \in \floor{n^{\gamma-\eps}}}$ and then changed variables $v=n^y$, or, equivalently, $x(v)=y$.

By choosing $\delta_1,\delta_2, \delta'$ sufficiently small, the exponent of $n$ in the integrand above can be made at most
$y+k(\alpha-y-\beta_1+\delta_3)$,
for $\delta_3>0$ arbitrary, so   bounding $y\geq \gamma-\eps$, the exponent can be made at most 
$(1-k)(\gamma-\eps)+k(\alpha-\beta_1+\delta_3)$
uniformly over $y\geq \gamma-\eps$. In particular, recalling $\beta>\alpha-\gamma$, by choosing $\beta_1$ sufficiently close to $\beta$, choosing $\eps,\delta_3>0$ sufficiently small, and $k$ sufficiently large ($k>\frac{\gamma}{\gamma-(\alpha-\beta)}$ suffices), this exponent can be made negative. For all these choices, if we let $n \to \infty$, it follows that the right-hand side of \eqref{eq:recursive_integralUB} is $o(1)$,  which proves \eqref{eq:to_show_rootgiant}.\end{proof}

\subsection{Finding the root inside the root component}\label{sec:root_finding_c1}
  In this section, we prove that just observing the graph structure of the  root component $\bC_\alpha(1)$, 
one can construct a confidence set for the root by taking the vertices with highest Jordan centrality in $\bC_\alpha(1)$.

\begin{theorem}\label{thm:root_finding_c1}[Root finding in $\bC_\alpha(1)$.]
Consider the root component $\bC_\alpha(1)$ in $\cF_n(\alpha)$ and fix $\delta>0$. Observing $\bC_\alpha(1)$ without the vertex labels, it is possible to construct a confidence set $K=K(\delta)$ whose size does not depend on $n$ such that
\begin{align*}
    \liminf_{n \to \infty}\Prob{1\in K}\geq 1-\delta.
\end{align*}
\end{theorem}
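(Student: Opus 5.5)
We take $K(\delta)$ to be the $K$ vertices of $\bC_\alpha(1)$ with the highest Jordan centrality, i.e.\ the smallest value of $\psi(u):=\max_{w}|\bC_\alpha(1)_{u\downarrow w}|$. Here the maximum runs over neighbours $w$ of $u$, and $\bC_\alpha(1)_{u\downarrow w}$ denotes the component of $w$ after $u$ is deleted. Following \cite{findingadam}, the root fails to be in $K$ only if at least $K$ vertices $u$ satisfy $\psi(u)\le\psi(1)$. We first fix $r$ large, depending on $\delta$. On the high-probability event of Corollary \ref{lem:all upto gamma}, the vertices $1,\dots,r$ all lie in $\bC_\alpha(1)$ together with the tree $\cT_r$ they span. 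Deleting the edges $E_r$ splits $\bC_\alpha(1)$ into the pieces
\[
S_i:=\psi^{(1-\alpha)\log n-1}_{\rm off}(\cT_n(i;E_r),i),\qquad i\in[r].
\]
For $i=1$ the operator uses the offspring threshold $(1-\alpha)\log n$ instead, since for the root degree equals offspring. This difference is harmless: with high probability it changes nothing, and it can also be absorbed by sandwiching via Remark \ref{rem:op_monotonicity}.

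The key step is to check that the sizes $(|S_1|,\dots,|S_r|)$ behave like Pólya-urn proportions, in the sense needed by the Jordan centrality argument. We apply Proposition \ref{prop:exch_subtree_funcs} with $f$ being the map $(\bt,o)\mapsto|\psi(\bt,o)|$, which does not depend on $X_i$. This gives that $(|S_1|,\dots,|S_r|)$ is exchangeable, up to the root-threshold issue, which we handle by comparing with the pure offspring operator for all $i$. By exchangeability and Corollary \ref{cor:neg_corr}, conditionally on $Z=|\bC_\alpha(1)|$, each $|S_i|$ has mean $Z/r$ and the pieces are negatively correlated.

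We need two inputs. The first is the \emph{root bound}: $\psi(1)\le\max_{w\sim 1}|\text{subtree}|$, and we want $\Prob{\psi(1)\ge(1-\eta)Z}\le\delta/3$. The root branches consist of the pieces $S_i$ with $i$ running over the subtrees of $\cT_r$ hanging off $1$, enlarged by the $S_j$ of their descendants in $[r]$. Exchangeability of the $S_i$ lets us compare this with the corresponding bound for a \textsc{urrt}. The second is the \emph{competitor bound}: any vertex $u$ with $\psi(u)\le\psi(1)$ must be such that the branch of $u$ away from $1$ has size at least $Z-\psi(1)$. Vertices in $[r]$ number at most $r$, and we count them. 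For a vertex $u\notin[r]$ lying in some $S_i$, we need the subtree of $u$ within $S_i$ to have size at least $\eta Z$.

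The competitor count is the main obstacle. In \cite{findingadam} it is handled through the explicit sizes of \textsc{urrt} subtrees, as in Lemma \ref{lem:subtrees_unifs}. Here the pieces $S_i$ are not \textsc{urrt}s. Instead, we apply the same decomposition recursively inside $\bC_\alpha(1)$ using Proposition \ref{prop:sample_exch} and Lemma \ref{lem:cond_indep_subtrees}. Conditionally on $|\cT_n(i;E_r)|$, the subtree is a \textsc{urrt}, so $S_i$ is a thinned \textsc{urrt} to which the results of Sections \ref{sec:c1_UB}--\ref{sec:c1_LB} apply. These give polynomial bounds $|S_i|\approx|\cT_n(i;E_r)|^{\alpha_*}$. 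Combined with the exchangeable Dirichlet-like splitting, they show that the number of vertices outside $[r]$ whose $S$-subtree exceeds $\eta Z$ is $O_p(1)$, with tail bounded in terms of $\eta$ and $r$.

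Finally we optimize. Choose $\eta$ so that the root bound holds with error $\delta/3$. Then choose $r$ and $K=r+M$ so that the competitor count exceeds $M$ with probability at most $\delta/3$. The remaining $\delta/3$ absorbs the failure of Corollary \ref{lem:all upto gamma}. This yields $\liminf_n\Prob{1\in K}\ge1-\delta$, with $K$ independent of $n$. Since the construction only uses Jordan centrality, it needs no vertex labels.
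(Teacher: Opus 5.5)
Your overall architecture is right: take the most Jordan-central vertices, cut $\bC_\alpha(1)$ along the edges of an initial tree, and run the root and competitor bounds of \cite{findingadam}. But the one probabilistic input that argument needs is never established. What \cite{findingadam} really uses is that the \emph{proportions} $|S_1|/Z,\dots,|S_r|/Z$ are asymptotically Dirichlet$(1,\dots,1)$. This is what makes $\Prob{\psi(1)\ge(1-\eta)Z}$ small and gives $\max_{i\le r}|S_i|\le\eta Z$ with high probability. The latter already rules out every competitor outside $[r]$, since $u\in S_i\setminus\{i\}$ has $\psi(u)\ge Z-|S_i|$, so no separate counting is needed.

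With $r$ fixed, Proposition \ref{prop:exch_subtree_funcs} and Corollary \ref{cor:neg_corr} only give $\CExp{|S_i|}{Z}=Z/r$ and nonpositive conditional covariances. They say nothing about how spread out the proportions are. For example, a uniformly random permutation of $(Z,0,\dots,0)$ is exchangeable and negatively correlated, yet it makes Jordan centrality fail.

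The bounds of Sections \ref{sec:c1_UB}--\ref{sec:c1_LB} cannot fill this in either. Formally $T^{\alpha_*(T)}=n^{\alpha}\,T/n$, so the heuristic $|S_i|\approx n^\alpha|\cT_n(i;E_r)|/n$ has exactly the right proportionality. However, those results only determine $|S_i|$ up to factors $n^{\pm\epsilon}$, which swamp the constant-order Dirichlet weights. So your ``comparison with the corresponding bound for a \textsc{urrt}'' and your ``exchangeable Dirichlet-like splitting'' assume exactly what has to be proved.

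The missing idea is a second, growing scale combined with a law of large numbers. By Corollary \ref{lem:all upto gamma}, all of $[\floor{n^{\gamma'}}]$ (for $\gamma'<\gamma$) lies in $\bC_\alpha(1)$. So each of the $M$ coarse pieces splits, as in \eqref{eq:key decomp}, into a sum of order $n^{\gamma'}$ fine pieces $Y_\alpha(v)$. These fine pieces are exchangeable and negatively correlated given $Z$ (Lemma \ref{lem:neg_cor}). Each has $L^2$-size only $n^{\alpha-\gamma'+o(1)}$ (Lemma \ref{lem:sec_mom_Yv}), a vanishing fraction of $Z\ge n^{\alpha-o(1)}$.

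A conditional Chebyshev bound then gives $C^1_{j;M}=(1+o_p(1))\,C^{\gamma'}_{j;M}\,Z/\floor{n^{\gamma'}}$ for every $j\in[M]$ (Proposition \ref{prop:conc_cond_mean}). Hence the coarse proportions of $\bC_\alpha(1)$ inherit the Dirichlet limit of the subtree proportions of the genuine \textsc{urrt} $\cT_{\floor{n^{\gamma'}}}$. Only after this step does the classical Jordan-centrality argument apply. Your recursive use of Lemma \ref{lem:cond_indep_subtrees} and Proposition \ref{prop:sample_exch} at fixed $r$ never produces such a concentration statement.
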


%To prove this theorem, we need a few preliminary results. We begin with a lemma that shows the existence of a $\gamma>0$ such that the root component $C(1)$ contains the random recursive tree up to time $n^{\gamma}$.

 We begin the proof by defining Jordan centrality of a vertex. %following \cite{findingadam}.

\begin{definition}[Jordan centrality.]
\label{def:JC}
    Given any tree $T$, a vertex $v \in T$ and a neighbor $u$ of $v$ in $T$, denote by $(T,v)_{u,\downarrow}$ the connected component containing $u$, when
    the vertex $v$ together with all edges incident to it is removed from $T$.
    Then the \emph{Jordan centrality} of $v$ in $T$ defined as 
    \begin{align*}
        \mathscr{J}_T(v):=\max_{u \in N_T(v)}|(T,v)_{u,\downarrow}|,
    \end{align*}
    where $N_T(v)$ is the \emph{neighborhood} of $v \in T$, that is, the set of all vertices incident to $v$ in $T$.
\end{definition}

\begin{remark}
For any tree $T$, a set of its edges $S$, and for any $v \in S$, we denote by $T(v; S)$ the subtree of $T$ containing $v$ when all the edges from $S$ have been removed from it. Note that
\begin{align*}
(T,v)_{u,\downarrow}=T(u;N_T(v)),\textrm{  where  }    N_T(v):=\{\{v,u\}\in E(T):u\in V(T)\}.
\end{align*}
\end{remark}

%\red{[ N: maybe different notation? Used $\psi$ for operators as well ]}
One expects the vertex $1$ to have a low value of $\mathscr{J}_T$ in the tree $\bC_\alpha(1)$. In particular, we construct the set $K=K(\delta)$ as claimed by Theorem \ref{thm:root_finding_c1} by taking the set of $M$ vertices with the smallest Jordan centrality in the sense of Definition \ref{def:JC}, where $M$ is a constant independent of $n$, and argue that with probability at least $1-\delta$ this set contains $1$. 

For a tree $T$, let the decreasing order (with ties broken arbitrarily) of the Jordan centrality measures of the vertices in $T$ be
$\mathscr{J}_T(u_1)\leq \mathscr{J}_T(u_2)\leq \dots \leq \mathscr{J}_T(u_{|T|})$.
For any $M\in \N$ and a tree $T$, define 
\begin{align*}
    H(M,T):=\{u_1,u_2,\dots,u_M\},\numberthis \label{eq:set H}
\end{align*}
that is, $H_{M}$ is the set of the $M$ most central vertices. 
%\red{[ $T(u;N_T(v))$ above? works because we are in a tree and there are no edges among neighbours of $v$. In general, I think better to make the definition $G(v;S)$ where $S$ is an edge-subset ]}
By  Lemma \ref{lem:all upto gamma}, with high probability, the vertices $1,2,\dots,\floor{n^{\gamma'}}$ are all in $\bC_\alpha(1)$ for any $\gamma'<\gamma$. Fix such a $\gamma'$. For any $i \in [\floor{n^{\gamma'}}]$, let 
\begin{align*}
    Y_\alpha(i)=|\bC_\alpha(1)(i;E_{\floor{n^{\gamma'}}})|,\,\textrm{ and }
    Z=\sum_{i=1}^{\floor{n^{\gamma'}}}Y_\alpha(i),
    %\text{ and }
%Y'_\alpha(i)=Y_\alpha(i)\big|_{\Xi,\mathscr{F}_{\floor{n^{\gamma'}}}}, 
\numberthis\label{eq:Y'}
\end{align*}
%\red{[ $Y(j,n^{\gamma'})=Y_j$ above ]}
%\red{[ $Y_\alpha(v)$ rather (for later) ]}
and for any $t\geq 0$, let $\mathscr{F}_t$ denote the $\sigma-$algebra generated by all the information up to step $t$ in the construction of $\cT_n$. 

\begin{lemma}[Exchangeability and negative correlation.]\label{lem:neg_cor}
%\textcolor{red}{
%\st{The collection of random variables  $\{Y'_\alpha(j):j \in [\floor{n^{\gamma'}}]\}$} 

%}
Conditionally on $Z,\mathscr{F}_{\floor{n^{\gamma'}}}$, 
$\{Y_\alpha(j):j \in [\floor{n^{\gamma'}}]\}$ is an exchangeable collection, and they are pairwise negatively correlated.
\end{lemma}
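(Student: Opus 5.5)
The plan is to write each $Y_\alpha(i)$ in the form $f(X_i,(\cT_n(i;E_r),i))$ with $r=\floor{n^{\gamma'}}$, so that Proposition \ref{prop:exch_subtree_funcs} and Corollary \ref{cor:neg_corr} apply directly, with the conditioning on $\mathscr{F}_r$ handled through the conditional independence structure of Lemma \ref{lem:cond_indep_subtrees}.

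Fix $r=\floor{n^{\gamma'}}$ and work on the high-probability event of Corollary \ref{lem:all upto gamma}, where $[r]\subseteq \bC_\alpha(1)$. On this event, $\bC_\alpha(1)(i;E_r)$ is obtained from the descendant subtree $(\cT_n(i;E_r),i)$ by deleting every edge with an endpoint of degree at most $(1-\alpha)\log n$ and keeping the component of $i$. For a non-root vertex $w$ of this subtree, its degree in $\cT_n$ equals its degree in $\cT_n(i;E_r)$, since $w>r$ and all its $\cT_n$-neighbours lie in the subtree. For the root $i$, however, the degree in $\cT_n$ equals its number of offspring in the subtree plus $D_i$, where $D_i$ is the number of $\cT_n$-neighbours of $i$ inside $[r]$. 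Because $[r]\subseteq\bC_\alpha(1)$, the root $i$ is retained, so only the degrees of the other vertices matter. Hence
\[
Y_\alpha(i)=\bigl|\psi^{(1-\alpha)\log n}_{\rm deg}(\cT_n(i;E_r),i)\bigr|,
\]
with the convention that the root itself is never cut. This is a deterministic measurable functional $f$ of the rooted tree $(\cT_n(i;E_r),i)$ alone; the variables $X_i$ can be taken to be trivial constants.

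Next I handle the conditioning on $\mathscr{F}_r$. The $\sigma$-algebra $\mathscr{F}_r$ determines $\cT_r$, and the attachments of vertices $j>r$ are independent of it. Given $\mathscr{F}_r$, the law of the forest $((\cT_n(i;E_r),i))_{i\le r}$ is still the Pólya urn mixture of independent \textsc{urrt}s described in Lemma \ref{lem:exch_subtree_sizes} and Lemma \ref{lem:cond_indep_subtrees}. These two lemmas depend only on the steps after time $r$, where each new vertex picks a uniform earlier vertex. Consequently, Proposition \ref{prop:exch_subtree_funcs} applies conditionally on $\mathscr{F}_r$ and yields exchangeability of $(Y_\alpha(1),\dots,Y_\alpha(r))$ given $\mathscr{F}_r$. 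Since $Z$ is a symmetric function of this vector, exchangeability persists after further conditioning on $Z$.

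Negative correlation then follows exactly as in Corollary \ref{cor:neg_corr}. Conditionally on $(Z,\mathscr{F}_r)$, the centered variables sum to zero, and expanding the conditional second moment of that sum gives
\[
\mathrm{Cov}(Y_\alpha(i),Y_\alpha(j)\mid Z,\mathscr{F}_r)=-\frac{\mathrm{Var}(Y_\alpha(1)\mid Z,\mathscr{F}_r)}{r-1}\le 0 .
\]

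The main obstacle is the first step, namely checking that $Y_\alpha(i)$ really does not depend on $\mathscr{F}_r$ or on the internal edges among $[r]$. This holds only on the event $[r]\subseteq\bC_\alpha(1)$, because otherwise the root degree correction $D_i$, which is $\mathscr{F}_r$-measurable and not exchangeable in $i$, could matter. I would handle this by defining $Y_\alpha(i)$ through the modified operator, which never cuts at the root, so that the identity is exact. I would then note that this variable coincides with the original $Y_\alpha(i)$ with high probability, which is all that is needed downstream.
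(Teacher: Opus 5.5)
Your proposal is correct and follows the same route as the paper: write $Y_\alpha(i)$ as a deterministic functional of the rooted descendant subtree $(\cT_n(i;E_r),i)$, $r=\floor{n^{\gamma'}}$, and then invoke Proposition \ref{prop:exch_subtree_funcs} and Corollary \ref{cor:neg_corr}. Your version is slightly more careful than the paper's in two places. The paper uses $\psi^{(1-\alpha)\log n-1}_{\rm off}$, which also thresholds the root $i$ on its offspring above $r$, so its identity additionally needs the high-probability event that every $i\le r$ has enough children above $r$; your root-exempt degree operator is exact on $\{[r]\subseteq\bC_\alpha(1)\}$. You also handle the conditioning on $\mathscr{F}_r$ explicitly, via independence of the post-$r$ attachments, a step the paper leaves implicit.
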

\begin{proof}
   Note that if $i\in [\floor{n^{\gamma'}}]$ has no child in $\cT_n$ with degree at least $(1-\alpha)\log n$, then it is a leaf in $\bC_\alpha(1)$ and $\bC_\alpha(1)(i;E_{\floor{n^{\gamma'}}})$ is thus the singleton graph on $\{i\}$. Otherwise, any edge in the graph $\bC_\alpha(1)(i;E_{\floor{n^{\gamma'}}})$ either connects $i$ to a vertex with label at least $\floor{n^{\gamma'}}+1$ which has degree at least $(1-\alpha)\log n$ in $\cT_n$, or connects two such vertices. Since $\bC_\alpha(1)(i; E_{\floor{n^{\gamma'}}})$ is connected, any such vertex has a path to $i$ in $\bC_\alpha(1)(i; E_{\floor{n^{\gamma'}}})$, and must be also in $\cT_n(i; E_{\floor{n^{\gamma'}}})$. 
   
   In any case, $\bC_\alpha(1)(i; E_{\floor{n^{\gamma'}}})$ is the connected component of $i$, in the subforest of $\cT_n(i; E_{\floor{n^{\gamma'}}})$ spanned by the edges in it that have both end-vertices with degree $>(1-\alpha)\log n$ in $\cT_n$. Equivalently, we can  retain edges in $\cT_n(i;E_{\floor{n^{\gamma'}}})$ with both end-vertices having number of offspring greater than $(1-\alpha)\log n-1$ in $\cT_n(i;E_{\floor{n^{\gamma'}}})$, and take the connected component of $i$ thus formed. We conclude,
   \begin{align*}
       \psi^{(1-\alpha)\log n-1}_{\rm off}(\cT_n(i;E_{\floor{n^{\gamma'}}}),i)=\bC_\alpha(1)(i;E_{\floor{n^{\gamma'}}}),\numberthis \label{eq:subtrees_in_comp_op_exp}
   \end{align*}
where the latter is seen as a rooted tree with root $i$. Since $Y_\alpha(i)=|\bC_\alpha(1)(i;E_{\floor{n^{\gamma'}}})|$, the result follows from Corollary \ref{cor:neg_corr}.\end{proof}

Fix a positive integer $M$. For any $i \in [M]$ consider the subtree $\bC_\alpha(1)(i;[M])$ of $\bC_\alpha(1)$. Let $C_{(\gamma')}$ be the subgraph of $\bC_\alpha(1)$ spanned by the set of vertices $\{v \in \bC_\alpha(1):v\in [\floor{n^{\gamma'}}]\}$. Note that by Lemma \ref{lem:all upto gamma}, with high probability, $C_{(\gamma')}=\cT_{\floor{n^{\gamma'}}}$. On the event $C_{(\gamma')}=\cT_{\floor{n^{\gamma'}}}$, consider the decomposition 
\begin{equation}\label{eq:key decomp}
|\bC_\alpha(1)(i;E_M)|=\sum_{v \in C_{(\gamma')}(i;E_M)}|\bC_\alpha(1)(v;E_{\floor{n^{\gamma'}}})|.    
\end{equation}
Indeed, we may view $(\bC_\alpha(1)(i;E_M),i)$ as a tree rooted at $i$ and consider any subtree $\bt$ of it, also rooted at $i$. Note that for any such subtree, if we remove all the edges from it, and sum over the sizes of the connected components of all the vertices in $(\bC_\alpha(1)(i; E_M), i)$ thus formed, we get back the total size of the tree $\bC_\alpha(1)(i; E_M)$. Taking $\bt=C_{(\gamma')}(i; E_M)$ and noting that on the event $C_{(\gamma')}=\cT_{\floor{n^{\gamma'}}}$ removing all the edges of $\cT_{\floor{n^{\gamma'}}}$ from $\bC_\alpha(1)(i; E_M)$ is the same as removing all edges from it that are also in $E_{\floor{n^{\gamma'}}}$, recovers the decomposition \eqref{eq:key decomp}.

%One way to decompose its size, is to consider a subtree $\bt$ of $\bC_\alpha(1)(i; E_M)$, also rooted at $i$, and to sum the number of \emph{descendants} (i.e., vertices that can be reached by a \emph{downward} path going away from the root $i$) that are in $\bC_\alpha(1)(i; E_M)$ but not in $\bt$, over all the vertices of $\bt$, see Figure \red{[ add ]}. 

%Taking this subtree $\bt$ to be $C_{(\gamma')}(i;E_M)$, we note that $|\bC_\alpha(1)(v;E_{n^{\gamma'}})|-1$ is precisely the number of all such descendants of $v\in C_{(\gamma')}(i;E_M)$ (the $-1$ coming from the fact that we count \emph{strict} descendants and thus leave out $v$ itself from $\bC_\alpha(1)(v;E_{n^{\gamma'}})$).

Lemma \ref{lem:neg_cor} is useful for the following reason.  To understand the $M$ most central vertices in $\bC_\alpha(1)$, we need to compare the sizes of 
$$|\bC_\alpha(1)(1;E_M)|,\dots,|\bC_\alpha(1)(M;E_M)|.
$$ 
Thanks to Lemma \ref{lem:neg_cor} and the decomposition \eqref{eq:key decomp}, we see that this boils down to comparing $C_{(\gamma')}(1; E_M),\dots, C_{(\gamma')}(M; E_M)$, which is the same as comparing $\cT_{\floor{n^{\gamma'}}}(1; E_M),\dots,\cT_{\floor{n^{\gamma'}}}(M, E_M)$ due to Lemma \ref{lem:all upto gamma}, and the latter comparison can be done by classical Pólya urn arguments.

With this goal in mind, let state a concentration inequality for the variables $|\bC_\alpha(1)(i;[M])|$. We simplify the notation slightly and write
\begin{align*}
    C^1_{i;M}=|\bC_\alpha(1)(i;E_M)|
    \quad \text{and} \quad C^{\gamma'}_{j;M}=|C_{(\gamma')}(j;E_M)|
\end{align*}
for any $M \in \N$, $i \in [M]$, $\gamma'>0$ and $j \in [\floor{n^{\gamma'}}]$.

\begin{proposition}[Concentration about conditional mean]\label{prop:conc_cond_mean}
    For any fixed $M\in \N$ and $\eps>0$, as $n \to \infty$,
    \begin{align*}
        \Prob{\bigcup_{j\in [M]}\left\{\left|C^1_{j;M}-C^{\gamma'}_{j;M}\CExp{Y_\alpha(1)}{Z}\right|>\eps\left( C^{\gamma'}_{j;M}\CExp{Y_\alpha(1)}{Z}\right)\right\}}=o(1).
    \end{align*}
\end{proposition}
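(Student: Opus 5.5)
Work on the high-probability event $C_{(\gamma')}=\cT_{\floor{n^{\gamma'}}}$ from Corollary \ref{lem:all upto gamma}. By \eqref{eq:key decomp},
\begin{align*}
C^1_{j;M}=\sum_{v\in C_{(\gamma')}(j;E_M)} Y_\alpha(v),
\end{align*}
which is a sum of $C^{\gamma'}_{j;M}$ of the variables $Y_\alpha(v)$, $v\in[\floor{n^{\gamma'}}]$. The index set $C_{(\gamma')}(j;E_M)$ is $\mathscr{F}_{\floor{n^{\gamma'}}}$-measurable. By Lemma \ref{lem:neg_cor}, conditionally on $(Z,\mathscr{F}_{\floor{n^{\gamma'}}})$ the $Y_\alpha(v)$ are exchangeable with common conditional mean $\CExp{Y_\alpha(1)}{Z}$ (up to also conditioning on $\mathscr{F}_{\floor{n^{\gamma'}}}$, which I will carry along). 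They are also pairwise negatively correlated. Hence the conditional mean of $C^1_{j;M}$ is exactly $C^{\gamma'}_{j;M}\CExp{Y_\alpha(1)}{Z}$. Its conditional variance is at most $C^{\gamma'}_{j;M}\,\mathrm{Var}(Y_\alpha(1)\mid Z,\mathscr{F}_{\floor{n^{\gamma'}}})$, because the cross covariances are $\le 0$. Conditional Chebyshev then gives, for each $j\in[M]$,
\begin{align*}
\Prob{\left|C^1_{j;M}-C^{\gamma'}_{j;M}\mu\right|>\eps C^{\gamma'}_{j;M}\mu}\le \Exp{\min\left\{1,\frac{\sigma^2}{\eps^2 C^{\gamma'}_{j;M}\mu^2}\right\}},
\end{align*}
with $\mu,\sigma^2$ the conditional mean and variance of $Y_\alpha(1)$. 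A union bound over the fixed $M$ indices finishes the argument once the right side is $o(1)$.

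Two inputs are needed. First, $C^{\gamma'}_{j;M}$ is of order $n^{\gamma'}$ with high probability. By Lemma \ref{lem:exch_subtree_sizes} and the Pólya urn description, $C^{\gamma'}_{j;M}/\floor{n^{\gamma'}}$ converges to a Dirichlet$(1,\dots,1)$ coordinate, which is a.s.\ positive. So for any $\eta>0$ there is $c>0$ with $\Prob{C^{\gamma'}_{j;M}<c n^{\gamma'}}<\eta$. Second, the ratio $\sigma^2/\mu^2$ must be $o(n^{\gamma'})$ in probability.

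For the second input, use the identity $\sum_v Y_\alpha(v)=Z$, which gives $\mu=Z/\floor{n^{\gamma'}}$. Since $Z=|\bC_\alpha(1)|$ on our event, Proposition \ref{prop:lower_bound} yields $\mu\ge n^{\alpha-\gamma'-\delta}$ with high probability. For the variance, $\sigma^2\le \CExp{Y_\alpha(1)^2}{Z,\mathscr F}$. By exchangeability this is at most $\floor{n^{\gamma'}}^{-1}\sum_v Y_\alpha(v)^2$, and that in turn is at most $\floor{n^{\gamma'}}^{-1}\bigl(\max_v Y_\alpha(v)\bigr) Z$. Each $Y_\alpha(v)$ is bounded by $|\cT_n(v;E_{\floor{n^{\gamma'}}})|$. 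By the Pólya urn representation, and since the urn proportions are $O(\log n/n^{\gamma'})$ uniformly, this is at most $n^{1-\gamma'}$ times polylog factors. The bound $n^{1-\gamma'}$ is too crude, however, because the tree is far larger than the component. I will instead use the component structure: $Y_\alpha(v)=|\psi^{(1-\alpha)\log n-1}_{\rm off}(\cT_n(v;E_{\floor{n^{\gamma'}}}),v)|$ by \eqref{eq:subtrees_in_comp_op_exp}. Conditioning on the subtree size $T\approx n^{1-\gamma'}$ and applying Corollary \ref{cor:high_mom_off_forest} recursively, with the shifted exponent $\alpha_*(T)$ from \eqref{eq:def_alpha_recursive}, as in the proof of Proposition \ref{prop:later_small}, gives $\max_v Y_\alpha(v)\le n^{\alpha-\gamma'+\delta}$ with high probability. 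Then
\begin{align*}
\frac{\sigma^2}{C^{\gamma'}_{j;M}\mu^2}\lesssim \frac{n^{\alpha-\gamma'+\delta}\cdot Z/n^{\gamma'}}{n^{\gamma'}\,Z^2/n^{2\gamma'}}=\frac{n^{\alpha-\gamma'+\delta}}{Z}\le n^{-\gamma'+2\delta}\to 0,
\end{align*}
using $Z\ge n^{\alpha-\delta}$.

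The main obstacle is this uniform upper bound on $\max_v Y_\alpha(v)$ over all $v\le n^{\gamma'}$ (or a second-moment version of it). It needs the recursive application of the root-component upper bound with a union bound over $n^{\gamma'}$ vertices. The polynomial tail from Remark \ref{rem:error_UB} with $k$ large should suffice. One must also check that conditioning on $Z$ and $\mathscr{F}_{\floor{n^{\gamma'}}}$ does not spoil these estimates. To handle this, I will work with unconditional expectations of $\min\{1,\cdot\}$, restricted to the good events.
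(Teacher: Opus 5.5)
Your skeleton matches the paper's, but there is a gap in the variance step. The shared parts are: conditional Chebyshev given $(Z,\mathscr{F}_{\floor{n^{\gamma'}}})$ applied to \eqref{eq:key decomp}, negative correlation (Lemma \ref{lem:neg_cor}) to drop cross terms, $\CExp{Y_\alpha(1)}{Z}=Z/\floor{n^{\gamma'}}$, the Dirichlet limit for $C^{\gamma'}_{j;M}$, and Proposition \ref{prop:lower_bound} for $Z$.

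The gap is your claim that, by exchangeability, $\CExp{Y_\alpha(1)^2}{Z,\mathscr{F}_{\floor{n^{\gamma'}}}}$ is at most $\floor{n^{\gamma'}}^{-1}\sum_v Y_\alpha(v)^2$. This is false pointwise. Exchangeability only gives
\begin{align*}
\CExp{Y_\alpha(1)^2}{Z,\mathscr{F}_{\floor{n^{\gamma'}}}}=\frac{1}{\floor{n^{\gamma'}}}\CExp{\sum_v Y_\alpha(v)^2}{Z,\mathscr{F}_{\floor{n^{\gamma'}}}}\le \frac{Z}{\floor{n^{\gamma'}}}\,\CExp{W}{Z,\mathscr{F}_{\floor{n^{\gamma'}}}},\qquad W:=\max_{v}Y_\alpha(v).
\end{align*}
A with-high-probability bound on $W$ does not make $\CExp{W}{Z,\mathscr{F}_{\floor{n^{\gamma'}}}}$ small. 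Also, the event $\{W\le n^{\alpha-\gamma'+\delta}\}$ is not $\sigma(Z,\mathscr{F}_{\floor{n^{\gamma'}}})$-measurable, so it cannot be one of the good events on which you restrict $\Exp{\min\{1,\cdot\}}$. Your route therefore really requires an expectation bound such as $\Exp{W}=o(n^{\alpha-\delta})$. With only $W\le n$ available, that means $\Prob{W>n^{\alpha-\gamma'+\delta}}=o(n^{\alpha-1-\delta})$, which is a quantitative polynomial tail, not an $o(1)$ statement.

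Either of two repairs closes this.

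(i) Condition additionally on $W$. It is a symmetric function of $(Y_\alpha(v))_v$, so conditional exchangeability survives. The argument of Corollary \ref{cor:neg_corr} still gives nonpositive conditional covariances, and the centering is still $Z/\floor{n^{\gamma'}}$. Now $\CExp{Y_\alpha(1)^2}{Z,W,\mathscr{F}_{\floor{n^{\gamma'}}}}\le WZ/\floor{n^{\gamma'}}$ holds pointwise, $\{W\le n^{\alpha-\gamma'+\delta}\}$ becomes a legitimate good event, and your computation $W/Z\le n^{-\gamma'+2\delta}$ is valid. You still have to prove the bound on $W$. The union bound over $n^{\gamma'}$ vertices needs per-vertex tails $o(n^{-\gamma'})$. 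These come from Corollary \ref{cor:high_mom_off_forest} with $k$ large after conditioning on $|\cT_n(v;E_{\floor{n^{\gamma'}}})|$, together with Proposition \ref{prop:subtree_mom_UB} (applicable via Lemma \ref{lem:exch_subtree_sizes}) and Markov for large subtrees.

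(ii) Avoid the maximum altogether, as the paper does. Restrict only to $\{Z\ge n^{\alpha-\delta}\}$ and $\{C^{\gamma'}_{j;M}\ge n^{\gamma'-\delta}\}$. The tower property then turns the Chebyshev term into $\Exp{Y_\alpha(v)^2}\,n^{2\gamma'}/(\eps^2 n^{\gamma'-\delta}n^{2(\alpha-\delta)})$. Lemma \ref{lem:sec_mom_Yv} bounds $\Exp{Y_\alpha(v)^2}$ by running exactly your recursive argument, but for the single vertex $v=\floor{n^{\gamma'}}$ (by exchangeability) and with no union bound.

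Both routes end at the rate $n^{-\gamma'+O(\delta)}$; route (ii) is the shorter one.
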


\begin{lemma}[Moment bound]\label{lem:sec_mom_Yv}
    For any $\alpha\in (0,1)$, $k > 0$ a fixed integer, $v \in [\floor{n^{\gamma'}}]$ where $\gamma'<\gamma=\gamma(\alpha)$, and any $\varepsilon>0$, as $n \to \infty$, $$\Exp{Y_\alpha(v)^k}=O\left(n^{k(\alpha-\gamma'+\varepsilon)} \right).$$
\end{lemma}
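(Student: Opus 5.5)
We want $\Exp{Y_\alpha(v)^k}=O(n^{k(\alpha-\gamma'+\varepsilon)})$ for $v\le \floor{n^{\gamma'}}$, where $Y_\alpha(v)=|\bC_\alpha(1)(v;E_{\floor{n^{\gamma'}}})|$. The plan is to condition on the subtree size and apply the moment bound of Proposition \ref{prop:higher_moment_UB}, recursively, to the smaller recursive tree hanging from $v$ once the edges of $E_{\floor{n^{\gamma'}}}$ are cut.

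First, set $r=\floor{n^{\gamma'}}$. By \eqref{eq:subtrees_in_comp_op_exp}, $Y_\alpha(v)=|\psi^{(1-\alpha)\log n-1}_{\rm off}(\cT_n(v;E_r),v)|$. By Lemma \ref{lem:cond_indep_subtrees}, conditionally on $\{|\cT_n(v;E_r)|=T\}$ the rooted tree $(\cT_n(v;E_r),v)$ is a \textsc{urrt} of size $T$. Thus $Y_\alpha(v)$ is, up to the harmless shift of the threshold by $1$ (absorbed by replacing $\alpha$ with a slightly larger value, as in Lemma \ref{lem:approx_offspring_forests} and Remark \ref{rem:sandwiching_root_comps}), the root offspring-component of a \textsc{urrt} of size $T$ cut at level $(1-\alpha_*(T))\log T=(1-\alpha)\log n$, with $\alpha_*(T)$ as in \eqref{eq:def_alpha_recursive}. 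Corollary \ref{cor:high_mom_off_forest}, applied to this size-$T$ tree, then gives for any $\alpha_1>\alpha_*(T)$
\begin{align*}
\CExp{Y_\alpha(v)^k}{|\cT_n(v;E_r)|=T}\le P(k,\alpha_1-\alpha_*(T))\,T^{k(2\alpha_1-\alpha_*(T))}.
\end{align*}
Since $T^{\alpha_*(T)}=T\cdot n^{-(1-\alpha)}$ when $T$ is large, choosing $\alpha_1=\alpha_*(T)+\eta$ yields a bound of the form $P(k,\eta)\,T^{2k\eta}\,(T n^{\alpha-1})^k$. When $\alpha_*(T)\le 0$, i.e.\ $T\le n^{1-\alpha}$, the component is trivially at most $T$, and $T^k\le n^{k(1-\alpha)}$ is acceptable only if it is dominated by the target; otherwise one uses the bound with $\alpha_1=\eta$ to get $T^{2k\eta}$.

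Second, integrate over $T$. The sizes $|\cT_n(v;E_r)|$ for $v\le r$ are Pólya urn components, each of order $n/r=n^{1-\gamma'}$. Combining Proposition \ref{prop:unif_subtree_tail} (or directly the moment bound of Proposition \ref{prop:subtree_mom_UB}, applied to the urn with $r$ colours, whose marginal is $1+\mathrm{Bin}(n-r,\mathrm{Beta}$-type$)$) gives $\Exp{T^{m}}=O(n^{m(1-\gamma')})$ for every fixed $m$. Plugging in, $\Exp{Y_\alpha(v)^k}\lesssim n^{k(\alpha-1)}\Exp{T^{k(1+2\eta)}}+n^{2k\eta(1-\gamma')}=O(n^{k(\alpha-\gamma')+2k\eta})$. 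Taking $\eta$ small relative to $\varepsilon$ gives the claim, since $\alpha-\gamma'>0$ makes the first term dominate.

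The main obstacle is the uniformity of the constant $P(k,\alpha_1-\alpha_*(T))$ and the requirement in Corollary \ref{cor:high_mom_off_forest} that $n$ (here $T$) be large. Here $\alpha_*(T)$ varies with $T$ and may be close to $0$ or negative for small $T$. I would split on the event $\{T\le n^{1-\alpha+\eta}\}$, where the trivial bound $Y\le T$ suffices, versus its complement, where $\alpha_*(T)\in(\eta',1)$ and fixing the gap $\eta$ makes $P(k,\eta)$ a constant. The "for all large $n$" in Corollary \ref{cor:high_mom_off_forest} must be checked to hold uniformly in the effective parameter $\alpha_*$ on a compact range. If it does not, I would re-run the proof of Proposition \ref{prop:higher_moment_UB} directly, since its constants depend only on $k$ and the gap.
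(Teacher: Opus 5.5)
Your outline is the paper's argument. You condition on $T=|\cT_n(v;E_r)|$, use Lemma~\ref{lem:cond_indep_subtrees} to view $(\cT_n(v;E_r),v)$ as a \textsc{urrt} of size $T$ cut at level $(1-\alpha_*(T))\log T$, apply Corollary~\ref{cor:high_mom_off_forest} with gap $\eta$, and use $T^{\alpha_*(T)}=Tn^{\alpha-1}$. Integrating through moments of $T$ (Proposition~\ref{prop:subtree_mom_UB} with Lemma~\ref{lem:exch_subtree_sizes}, and Jensen for non-integer powers) is a harmless variant of the paper's last step. The paper instead bounds the conditional moment on $\{T<n^{1-\gamma'+\delta_1}\}$ by its value at the top of that range, since the bound increases in $T$. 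It handles the complement with $Y_\alpha(v)\le n$ and a high-moment Markov bound on $T$.

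The genuine problem is your resolution of the ``main obstacle'': on $\{T\le n^{1-\alpha+\eta}\}$ the trivial bound $Y_\alpha(v)\le T$ does not suffice. $T$ is a P\'olya-urn marginal, and $T/(n/r)$ has a limit law with bounded positive density at $0$. Hence $\Prob{T\le s}$ is of order $s\,n^{\gamma'-1}$ for $1\ll s\ll n^{1-\gamma'}$, and therefore
\[
\Exp{T^k\ind{T\le n^{1-\alpha+\eta}}}\ \text{is of order}\ n^{k(1-\alpha+\eta)-(\alpha-\gamma'-\eta)} .
\]
This exceeds $n^{k(\alpha-\gamma'+\varepsilon)}$ whenever $k(1-2\alpha+\gamma')>\alpha-\gamma'$, up to $O(\eta+\varepsilon)$ corrections. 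In particular it does so for every $k\ge 1$ once $\alpha<1/3$, which includes the regime of Section~\ref{sec:root_find_ER}. The trivial bound is affordable only on $\{T\le n^{\eta}\}$.

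On $\{n^{\eta}<T\le n^{1-\alpha}\}$ you still need the recursive bound. Your fallback ``take $\alpha_1=\eta$ to get $T^{2k\eta}$'' is not justified as written. Corollary~\ref{cor:high_mom_off_forest} is stated for a parameter in $(0,1)$, and formally inserting $\alpha_1=\eta$ with $\alpha_*(T)<0$ yields $T^{k(2\eta-\alpha_*(T))}$, not $T^{2k\eta}$.

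The missing ingredient is threshold monotonicity, Remark~\ref{rem:op_monotonicity}(iii). Suppose $\alpha_*(T)<\eta$ and $T>n^{\eta}$, with $n$ large. Then the cut level $(1-\alpha)\log n-1$ is at least $(1-2\eta)\log T$. So $Y_\alpha(v)$ is dominated by the root component of a size-$T$ \textsc{urrt} cut at $(1-2\eta)\log T$. The corollary with parameters $(2\eta,3\eta)$ then gives $\CExp{Y_\alpha(v)^k}{T}\le P(k,\eta)\,T^{4k\eta}$.

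Combine this with your bound for $\alpha_*(T)\ge\eta$; the shift by $1$ in the threshold costs only a factor $e^k$ there. You get $\CExp{Y_\alpha(v)^k}{T}\le C\,T^{4k\eta}\max\{1,Tn^{\alpha-1}\}^k$ on $\{T>n^{\eta}\}$, and your moment integration then gives $O(n^{k(\alpha-\gamma')+4k\eta})$.

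The paper passes over the same point: it applies $P(k,\delta)\,t^{k(2\delta+\alpha_*(t))}$ for all $t<n^{1-\gamma'+\delta_1}$, including $\alpha_*(t)\le 0$. Monotonicity is what legitimizes that step as well.
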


\begin{proof}
We employ a recursive argument similar to the proof of Proposition \ref{prop:later_small}. Recall $Y_\alpha(v)=|\psi^{(1-\alpha)\log n-1}_{\rm off}(\cT_n(i;E_{\floor{n^{\gamma'}}}),i)|$ for any $v\in [\floor{n^{\gamma'}}]$ from \eqref{eq:subtrees_in_comp_op_exp}, and thus by Corollary \ref{cor:neg_corr}, taking $f\equiv \psi^{(1-\alpha)\log n-1}_{\rm off} $, we see that $(Y_\alpha(1),\dots,Y_\alpha(\floor{n^{\gamma'}}))$ are exchangeable. In particular, it suffices to prove the result for $$v=\floor{n^{\gamma'}}, $$ and we assume this for the rest of the proof.
Recall from Lemma \ref{lem:cond_indep_subtrees} the fact that conditionally on the event $\{|\cT_n(v;E_{\floor{n^{\gamma'}}})|=t_n\}$, the distribution of $(\cT_n(v;E_{\floor{n^{\gamma'}}}),v)$ seen as a random element of $\cT^*$ follows the law $\rho_{t_n}$ of a \textsc{urrt} of size $t_n$.
Thus, letting $\alpha_*(t_n)$ satisfy
\begin{align*}
    (1-\alpha_*(t_n))\log t_n=(1-\alpha)\log n
    \text{ i.e., }\alpha_*(t_n)=1-(1-\alpha)\frac{\log n}{\log t_n},
\end{align*}
and recalling the definition $\psi^{(1-\alpha)\log n}_{\rm off}(\cT_n,1)=C_\alpha(1)$, for any $k\geq 1$ and any $\alpha'>\alpha_*(t_n)$, we obtain by Corollary \ref{cor:high_mom_off_forest},
\begin{align*}
\CExp{Y_\alpha(v)^k}{|\cT_n(v;E_{\floor{n^{\gamma'}}})|=t_n}\leq P(k,\alpha'-\alpha_*(t_n)) \,
t_n^{k(2\alpha'-\alpha_*(t_n))}.
\end{align*}
  We choose
\begin{align*}
\alpha'=\alpha'(\delta,t_n)=1+\delta-(1-\alpha)\frac{\log n}{\log t_n}=\delta+\alpha_*(t_n),
\end{align*}
where $\delta>0$ is chosen later. Observe that $\alpha'(\delta,t_n)$ is increasing in $t_n$. Thus, we can write 
    \begin{align*}
        &\CExp{Y_\alpha(v)^k}{|\cT_n(v;E_{\floor{n^{\gamma'}}})|=t_n}\\&\leq P(k,\alpha'(\delta,t_n)-\alpha_*(t_n))n^{k(1-\gamma'+\delta_1)\left[2\delta+\alpha_*(t_n)\right]}\ind{t_n<n^{1-\gamma'+\delta_1}}+n^k\ind{t_n\geq n^{1-\gamma'+\delta_1}}
    \end{align*}
for any $\delta_1>0$, where for the second term above we use $Y_\alpha(v) \le n$. Since $\alpha_*(\cdot)$ is monotone increasing and $P(\cdot,\cdot)$ is monotone decreasing in the second argument, the first term above is at most
\begin{align*}
    P(k,\delta)n^{k(1-\gamma'+\delta_1)\left[2\delta+1-\frac{1-\alpha}{1-\gamma'+\delta_1}\right]}=O(n^{k(\alpha-\gamma'+\varepsilon)})
\end{align*}
as $n \to \infty$ by choosing $\delta,\delta_1$ sufficiently small depending on $\varepsilon$.
Further, since the bound in the last display is independent of $t_n$, multiplying by $\Prob{|\cT_n(v; E_{\floor{n^{\gamma'}}})|=t_n}$ and taking a sum over $t_n$, 
\begin{align*}
    \Exp{Y_\alpha(v)^k}\leq O(n^{k(\alpha-\gamma'+\varepsilon)})+n^k\Prob{\cT_n(v;E_{\floor{n^{\gamma'}}})|\geq n^{1-\gamma'+\delta_1}}.
\end{align*}
Let $M \ge \max (1, k/\delta_1)$. Then there exists a constant $Q(M)$ such that the second term above is at most
$Q(M)n^{k}n^{-M\delta_1}=o(1)$
   as $n \to \infty$.
   This can be seen by 
using Proposition \ref{prop:subtree_mom_UB} together with Markov's inequality. The result follows.
\end{proof}

\begin{proof}[Proof of Proposition \ref{prop:conc_cond_mean}]
    Since $M\in \N$ is finite, by a union bound, it suffices to show that
    \begin{align*}
        \Prob{\left|C^1_{j;M}-C^{\gamma'}_{j;M}\CExp{Y_\alpha(1)}{Z}\right|>\varepsilon\left( C^{1,\gamma'}_{j;M}\CExp{Y_\alpha(1)}{Z}\right)}=o(1).\numberthis \label{eq:conc_TS}
    \end{align*}
Note that by Chebyshev's inequality and \eqref{eq:key decomp},
\begin{align*}
    &\CProb{\left|C^1_{j;M}-C^{\gamma'}_{j;M}\CExp{Y_\alpha(1)}{Z}\right|>\varepsilon\left( C^{\gamma'}_{j;M}\CExp{Y_\alpha(1)}{Z}\right)}{Z,\mathscr{F}_{\floor{n^{\gamma'}}}}\\&\leq \frac{\CExp{\left|C^1_{j;M}-|C^{\gamma'}_{j;M}|\CExp{Y_\alpha(1)}{Z}\right|^2}{Z,\mathscr{F}_{\floor{n^{\gamma'}}}}}{\varepsilon^2(C^{\gamma'}_{j;M})^2\CExp{Y_\alpha(1)}{Z}^2}\\&=\frac{\CExp{\left(\sum_{v \in C_{(\gamma')}(j;E_M)}\left(Y_\alpha(v)-\CExp{Y_\alpha(v)}{Z}\right) \right)^2}{Z,\mathscr{F}_{\floor{n^{\gamma'}}}}}{\varepsilon^2(C^{\gamma'}_{j;M})^2\CExp{Y_\alpha(v)}{Z}^2},\numberthis \label{eq:cond_cheby_1}
\end{align*}
where we use in the last display $\CExp{Y_\alpha(i)}{Z}=\CExp{Y_\alpha(j)}{Z}$ for any $i,j\in [\floor{n^{\gamma'}}]$ by the exchangeability of Lemma \ref{lem:neg_cor}. Furthermore, note that again by exchangeability, $\CExp{Y_\alpha(1)}{Z}=\frac{Z}{\floor{n^{\gamma'}}}$. Next, note that
\begin{align*}
    &\CExp{\left(\sum_{v \in C_{(\gamma')}(j;E_M)}\left(Y_\alpha(v)-\CExp{Y_\alpha(v)}{Z}\right) \right)^2}{\mathscr{F}_{\floor{n^{\gamma'}}},Z}\\&=C^{\gamma'}_{j;M}\CExp{\left(Y_\alpha(v)-\CExp{Y_\alpha(v)}{Z}\right)^2}{\mathscr{F}_{\floor{n^{\gamma'}}},Z}\\&\hspace{10 pt}+\binom{C^{\gamma'}_{j;M}}{2}\CExp{\left(Y_\alpha(x)-\CExp{Y_\alpha(x)}{Z}\right)\left(Y_\alpha(y)-\CExp{Y_\alpha(y)}{Z}\right)}{\mathscr{F}_{\floor{n^{\gamma'}}},Z},
\end{align*}
where $x,y,v$ are arbitrary elements of $C_{(\gamma')}(j;E_M)$. Thanks to the negative correlation as claimed by Lemma \ref{lem:neg_cor}, the second term on the right-hand side above is negative, and we get 
\begin{align*}
    &\CProb{\left|C^1_{j;M}-C^{\gamma'}_{j;M}\CExp{Y_\alpha(1)}{Z}\right|>\varepsilon \left(C^{\gamma'}_{j;M}\CExp{Y_\alpha(1)}{Z}\right)}{Z,\mathscr{F}_{\floor{n^{\gamma'}}}} \\
    &\qquad\leq \frac{\CExp{(Y_\alpha(v))^2}{\mathscr{F}_{\floor{n^{\gamma'}}},Z}}{\varepsilon^2 C^{\gamma'}_{j;M}(Z/\floor{n^{\gamma'}})^2}.\numberthis \label{eq:cond_Cheby_2}
\end{align*}
Unconditioning, we obtain 
\begin{align*}
&\Prob{\left|C^1_{j;M}-C^{\gamma'}_{j;M}\CExp{Y_\alpha(1)}{Z}\right|>\varepsilon \left(C^{\gamma'}_{j;M}\CExp{Y_\alpha(1)}{Z}\right)}\\&\leq \Exp{\CProb{\left|C^1_{j;M}-C^{\gamma'}_{j;M}\CExp{Y_\alpha(1)}{Z}\right|>\varepsilon\left( C^{\gamma'}_{j;M}\CExp{Y_\alpha(1)}{Z}\right)}{\mathscr{F}_{\floor{n^{\gamma'}}},Z}\ind{Z\geq n^{\alpha-\delta}, C^{\gamma'}_{j;M}\geq n^{\gamma'-\delta}}}\\&\hspace{10 pt}+\Prob{Z< n^{\alpha-\delta}}+\Prob{ C^{\gamma'}_{j;M}< n^{\gamma'-\delta}}\\& \leq \frac{\Exp{(Y_\alpha(v))^2}n^{2\gamma'}}{\varepsilon^2 n^{\gamma'-\delta}n^{2(\alpha-\delta)}}+\Prob{Z< n^{\alpha-\delta}}+\Prob{ C^{\gamma'}_{j;M}< n^{\gamma'-\delta}},\numberthis \label{eq:cond_conc_ineq_UB}
\end{align*}
for any $0<\delta<\gamma'$. We continue by showing that the second and third terms above are $o(1)$. Note that since $j$ is fixed, $(\cT_{\floor{n^{\gamma'}}}(1;[M])/\floor{n^{\gamma'}},\dots,\cT_{\floor{n^{\gamma'}}}(M;[M])/\floor{n^{\gamma'}})$ converge in law to a Dirichlet random vector with parameters $(1,1,\dots,1)$. Using this  with Lemma \ref{lem:all upto gamma} gives that $C^{\gamma'}_{j; M}/\floor{n^{\gamma'}}$ converges in distribution to the $j$-th component of a Dirichlet. Thus, the third term above is $o(1)$. Further, by the lower bound of Proposition \ref{prop:lower_bound}, note that with high probability $|\bC_\alpha(1)|=\floor{n^{\gamma'}}+\sum_{i=1}^{\floor{n^{\gamma'}}}Y_\alpha(i)\geq n^{\alpha'}$ for any $\alpha'<\alpha$, so that $Z=\sum_{i=1}^{\floor{n^{\gamma'}}}Y_\alpha(i)\geq n^{\alpha-\delta}$, by choosing $\alpha'$ sufficiently close to $\alpha$ and since $\gamma'<\gamma<\alpha$. Thus, the second term in \eqref{eq:cond_conc_ineq_UB} is also $o(1)$.

Taking $k=2$ in Lemma \ref{lem:sec_mom_Yv} gives $\Exp{(Y_\alpha(v))^2}\leq n^{2(\alpha-\gamma'+2\delta)}$ for all large $n$. Thus, the first term in \eqref{eq:cond_conc_ineq_UB} is at most 
$n^{7\delta-\gamma'}/\varepsilon^2=o(1)$
whenever $\delta<\gamma'/7$. So, the right-hand side of \eqref{eq:cond_conc_ineq_UB} is $o(1)$ as $n \to \infty$, proving \eqref{eq:conc_TS}, which concludes the proof.     
\end{proof}

\medskip
\begin{proof}[Proof of Theorem \ref{thm:root_finding_c1}]
  Let $T=\bC_\alpha(1)$ and recall the set $H(M,T)$ from \eqref{eq:set H}. We show that we can choose $M$ sufficiently large such that $\Prob{1 \notin H(M, T)}\leq \delta$ for all large $n$. To this end, observe that 
    \begin{align*}
        \Prob{1 \notin H(M,T)}\leq \Prob{\min_{i: i>M}\psi_{T}(i)<\psi_{T}(1)}.
    \end{align*}
Let us write $\psi_{T}(i)=\psi(i)$ with $T=\bC_\alpha(1)$ for the rest of this proof, for any vertex $i$. It is easy to see that $\psi(i)\geq \min_{1\leq k \leq M}\sum_{j\in [M]\setminus\{k\}}C^1_{j;M}$, and that $C^1_{1;M}=\psi(1)$. Thus, the last probability is at most
\begin{align*}
    \Prob{\min_{1\leq k\leq M}\sum_{j\in [M]\setminus\{k\}}C^1_{j;M}\leq C^1_{1;M}}\leq \Prob{C^1_{2;M}+\dots+C^1_{M;M}\leq C^1_{1;M}}.\numberthis \label{eq:root_finding_1}
\end{align*}
    For any $\varepsilon>0$, defining the \emph{good event} 
    \begin{align*}
        \cE(\varepsilon)=\bigcap_{j\in [M]}\left\{\left|C^1_{j;M}-C^{\gamma'}_{j;M}\CExp{Y_\alpha(1)}{Z}\right|\leq \varepsilon\left( C^{\gamma'}_{j;M}\CExp{Y_\alpha(1)}{Z}\right)\right\},
    \end{align*}
    by Proposition \ref{prop:conc_cond_mean},
    \begin{align*}
        \Prob{1\notin H(M,\bC_\alpha(1))}\leq \Prob{C^1_{2;M}+\dots+C^1_{M;M}\leq C^1_{1;M},\cE(\varepsilon)}+o(1).
    \end{align*}
    On $\cE(\varepsilon)$, we have the upper bound
    \begin{align*}
    \Prob{C^1_{2;M}+\dots+C^1_{M;M}\leq C^1_{1;M},\cE(\varepsilon)}\leq \Prob{C^{\gamma'}_{2;M}+\dots+C^{\gamma'}_{M;M}\leq \frac{1+\varepsilon}{1-\varepsilon}C^{\gamma'}_{1;M}}.
    \end{align*}
However, due to Lemma \ref{lem:all upto gamma}, the random vector $\floor{n^{\gamma'}}^{-1}(C^{\gamma'}_{1;M},\dots,C^{\gamma'}_{M;M})$ is asymptotically close to the vector $\floor{n^{\gamma'}}^{-1}(|\cT_{n^{\gamma'}}(1;E_M)|,\dots,|\cT_{n^{\gamma'}}(M;E_M)|)$, and the latter as $n \to \infty$ approaches a Dirichlet random vector $(Z_1,\dots,Z_M)$ with parameters $(1,1,\dots,1)$. In particular, one can choose $M=M(\eps,\delta)$ large enough such that for all large $n$
\begin{align*} \Prob{C^{\gamma'}_{2;M}+\dots+C^{\gamma'}_{M;M}\leq \frac{1+\varepsilon}{1-\varepsilon}C^{\gamma'}_{1;M}}\leq\Prob{Z_2+\dots+Z_M<\frac{1+\varepsilon}{1-\varepsilon}Z_1}+\delta/2\leq \delta,
\end{align*}
proving the theorem. \end{proof}

\subsection{Proof of root finding in $\alpha$-forests}
Combining all the results from the previous subsections, the proof of Theorem \ref{thm:forest} is now straightforward.
\begin{proof}[Proof of Theorem \ref{thm:forest}]
    Observing the forest $\cF_n(\alpha)$, we look at its tree components and distinguish the component $\bC_\alpha(1)$ of $1$ in it by querying which components have size at least $n^{\alpha-\gamma/4}$. This procedure distinguishes $\bC_\alpha(1)$ from the other components thanks to Lemma \ref{lem:disting_root_comp}. Having the component $\bC_\alpha(1)$ at hand, given $\varepsilon>0$, we apply Theorem \ref{thm:root_finding_c1} to construct a finite confidence set $K(\eps)$ of vertices in $\bC_\alpha(1)$ that contain $1$ with probability at least $1-\varepsilon$.
\end{proof}

\section{Root finding under Erd\H{o}s-Rényi noise}\label{sec:root_find_ER}

In this section, relying on Theorem \ref{thm:forest}, we prove Theorem \ref{thm:ER_noise}. 

\subsection{Filtering}

The first step in the proof is to show that, with high probability, the subgraph of $\cT_n(G)=\cT_n\cup G$ spanned by vertices of sufficiently high degree does dot contain edges coming from the Erd\H{o}s-Rényi graph $G$, and it is a forest.

\begin{proposition}[No noise edges between high degree vertices]\label{prop:no_edge_high_deg}
 If $\alpha<1-\frac{1}{2\log 2}$, the subgraph of $G=\G(n,\lambda_n/n)$ spanned by the set $\{v\in [n]:d_{\cT_n(G)}>(1-\alpha)\log n\}$ is, with high probability, an independent set, whenever $\lambda_n=o(\log n)$.
\end{proposition}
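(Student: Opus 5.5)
We will show that the expected number of noise edges $\{u,v\}\in E(G)$ with both endpoints of high degree in $\cT_n(G)$ tends to zero; Markov's inequality then gives the claim. Write $\theta=(1-\alpha)\log n$ and $p=\lambda_n/n$. For a pair $u<v$, the event we must control is
\[
\{\{u,v\}\in E(G)\}\cap\{d_{\cT_n(G)}(u)>\theta\}\cap\{d_{\cT_n(G)}(v)>\theta\}.
\]
Since $G$ is independent of $\cT_n$, conditioning on $\{u,v\}\in E(G)$ does not affect $\cT_n$. It only adds one to the degree of $u$ and of $v$, and leaves the other noise edges distributed as $\mathrm{Bin}(n-2,p)$ at each endpoint. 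So $d_{\cT_n(G)}(w)\le d_{\cT_n}(w)+1+N_w$ with $N_w\preceq \mathrm{Bin}(n,p)$. The two variables $N_u,N_v$ are independent up to the single shared edge, which is already accounted for.

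The key step is a tail bound for a single vertex $w$ with $x(w)=x$. The quantity $d_{\cT_n}(w)+N_w$ is a sum of independent Bernoullis with mean at most $(1-x)\log n+O(1)+\lambda_n=(1-x+o(1))\log n$, because $\lambda_n=o(\log n)$. The Chernoff computation of Lemma \ref{lem:degUandLtail} carries over with this perturbed mean (up to the $+1$ and additive constants) and gives
\[
\Prob{d_{\cT_n(G)}(w)>\theta}\le n^{f_\alpha(x)+o(1)} \quad\text{for } x>\alpha.
\]
For $x\le\alpha$ we use the trivial bound $1$. Independence of $\cT_n$-degrees across distinct vertices does not hold exactly: the offspring counts $c(u),c(v)$ are sums over disjoint indicator sets of the form $\ind{\cY_i=u}$, $\ind{\cY_i=v}$. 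These are negatively associated, so the joint tail is at most the product of the marginal tails. Alternatively, one can apply a joint Chernoff bound to the multinomial-type variables directly. Either way, the expected count is bounded by
\[
\frac{\lambda_n}{n}\sum_{u,v} n^{\tilde f(x(u))+\tilde f(x(v))+o(1)},\qquad \tilde f(x)=\min\{0,f_\alpha(x)\}.
\]

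Replacing each sum over vertices by $\log n\int_0^1 n^{y+\tilde f(y)}\,dy$, as in the earlier Riemann arguments, gives
\[
\frac{\lambda_n}{n}\cdot n^{2\max_y (y+\tilde f(y))+o(1)}.
\]
On $[0,\alpha]$ the maximum of $y+\tilde f(y)$ is $\alpha$. On $[\alpha,1)$ it is $\max g_\alpha=1-(1-\alpha)\log 2$, attained at $(1+\alpha)/2$ by Lemma \ref{lem:gamma_cont}. So the bound is $n^{-1+2(1-(1-\alpha)\log 2)+o(1)}\lambda_n=n^{1-2(1-\alpha)\log 2+o(1)}$. This tends to $0$ exactly when $1-\alpha>1/(2\log 2)$, i.e.\ $\alpha<1-\frac{1}{2\log 2}$. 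The $n^{o(1)}$ factors and the $\log n$, $\lambda_n$ factors are absorbed because the exponent inequality is strict.

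The main obstacle is making the $o(1)$ in the exponent uniform in $x\in(\alpha,1]$, given the extra $\lambda_n$ noise mass. Near $x=\alpha$, the shift of the mean by $o(\log n)$ destroys the bound $f_\alpha<0$. I would handle this by splitting at $x=\alpha+\eta$ with small fixed $\eta$ and using the trivial bound on $[\alpha,\alpha+\eta]$. This costs only $n^{\alpha+\eta}$, which is below $n^{1-(1-\alpha)\log 2}$ for small $\eta$, since $g_\alpha(\alpha)=\alpha<\max g_\alpha$. The contribution from vertices below $n^\alpha$ is harmless for the same reason. The dependence between the two degrees is the second point that needs care, via the negative association described above.
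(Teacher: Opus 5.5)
Your proof is correct, but it takes a different route from the paper. The paper never controls two degrees jointly. It bounds only the expected number of vertices with $d_{\cT_n}(v)>(1-\alpha)\log n$, by $n^{\alpha+\eps/4}+\log n\int n^{g_\alpha(y)}\,dy\le n^{1/2-2\eps}$. This is where $\max g_\alpha=1-(1-\alpha)\log 2<1/2$, i.e.\ $\alpha<1-\frac{1}{2\log 2}$, enters. By Markov this set has at most $n^{1/2-\eps}$ vertices with high probability, and Lemma~\ref{lem:no_ER_edges} then applies: conditionally on $\cT_n$, such a set spans a $G$-edge with probability at most $|V|^2\lambda_n/n=o(1)$.

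You instead make a single first-moment count of offending noise edges. This needs the product bound for the joint tail of $d(u)$ and $d(v)$. Your negative-association argument supplies it, as does the joint MGF bound from $1+a+b\le(1+a)(1+b)$ for the one-trial indicators of $\{\cY_i=u\}$ and $\{\cY_i=v\}$. The resulting $\frac{\lambda_n}{n}\,n^{2\max g_\alpha+o(1)}\to 0$ is exactly the same threshold condition $2\max g_\alpha<1$.

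What your route buys is that it handles head-on the fact that the vertex set in the statement is defined through degrees in $\cT_n(G)$, which depend on $G$. Conditioning on $\{u,v\}\in E(G)$ and absorbing the remaining noise degree into the Chernoff bound takes care of this. The paper's reduction to a set determined by $\cT_n$ alone tacitly needs the sandwich of Lemma~\ref{lem:sandwiching_forests}. Since the maximum $G$-degree is $o(\log n)$, the set lies in $\{v:d_{\cT_n}(v)>(1-\alpha-\delta)\log n\}$, with $\alpha+\delta$ still below the threshold. In exchange, the paper's route needs only a first moment and no joint tail bound.

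Two minor fixes.

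First, since $f_\alpha\le 0$ everywhere, $\min\{0,f_\alpha\}=f_\alpha$. You mean $\tilde f=0$ on $[0,\alpha]$ and $\tilde f=f_\alpha$ on $(\alpha,1)$.

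Second, state the single-vertex bound as $n^{f_\alpha(x-\eta_n)}$ with $\eta_n=(\lambda_n+O(1))/\log n$, valid for $x-\eta_n>\alpha$. This holds because the Chernoff exponent $t-\mu-t\log(t/\mu)$ is increasing in $\mu$ for $\mu<t$. Then $y+f_\alpha(y-\eta_n)\le\max g_\alpha+\eta_n$ gives the uniformity you need in one line. That includes the range near $y=1$, where an additive $o(1)$ in the exponent is not literally uniform.
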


The proof is based on the following lemma, whose straightforward proof is omitted.
%\textcolor{red}{G: Removed the proof of the lemma as it is trivial.} \textcolor{blue}{[N: fine by me]}

\begin{lemma}\label{lem:no_ER_edges}
    Let $V\subset [n]$ satisfy $|V|\leq n^{1/2-\eps}$ for some $\eps>0$. Then, with high probability, the subgraph of $G=\G(n,\lambda/n)$ spanned by $V$ is an independent set, whenever $\lambda_n=o(\log n)$. \end{lemma}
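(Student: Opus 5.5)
The key point is that the set $V$ is deterministic, or at least independent of $G$: the lemma is stated for a fixed $V\subset[n]$. So the plan is a plain first-moment (union) bound over the pairs of vertices in $V$. The conclusion fails exactly when some pair $\{u,v\}\subseteq V$ with $u\neq v$ is an edge of $G=\G(n,\lambda_n/n)$. Each such pair is present independently with probability $\lambda_n/n$, and there are at most $\binom{|V|}{2}\le |V|^2/2$ pairs. Hence
\begin{align*}
\Prob{G[V]\text{ contains an edge}}\le \frac{|V|^2}{2}\cdot\frac{\lambda_n}{n}\le \frac{n^{1-2\eps}\lambda_n}{2n}=\frac{\lambda_n}{2n^{2\eps}}.
\end{align*}

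Since $\lambda_n=o(\log n)$ and $\log n=o(n^{2\eps})$ for every fixed $\eps>0$, the right-hand side tends to $0$. This is the entire argument. The size requirement $|V|\le n^{1/2-\eps}$ is exactly what makes the expected number of noise edges inside $V$ equal to $|V|^2\lambda_n/n=o(1)$; the polynomial slack $n^{-2\eps}$ easily absorbs the growth of $\lambda_n$.

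The only subtlety lies not in the lemma itself but in how it is applied afterwards in Proposition \ref{prop:no_edge_high_deg}. There the relevant set $\{v:d_{\cT_n(G)}(v)>(1-\alpha)\log n\}$ depends on $G$, so the lemma cannot be used verbatim. I expect this to be handled in the proposition rather than here. One option is to condition on $\cT_n$ and take the deterministic superset $\{v:d_{\cT_n}(v)>(1-\alpha)\log n-c\}$, which has size at most $n^{1/2-\eps}$ with high probability when $\alpha<1-\frac{1}{2\log 2}$, by a degree-count estimate in the spirit of Lemma \ref{lem:degUandLtail}. The vertices that reach the threshold only because of many noise edges are then controlled separately. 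Alternatively, one can bound directly the expected number of $G$-edges whose two endpoints both have large degree. For the lemma as stated, with $V$ fixed or independent of $G$, the union bound above already completes the proof.
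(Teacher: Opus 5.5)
Your union bound $\binom{|V|}{2}\lambda_n/n\le \lambda_n n^{-2\eps}/2\to 0$ is correct. It is exactly the straightforward argument the paper omits; the same computation $|A|^2\lambda/n=o(1)$ appears in its proof of Lemma \ref{lem:ER_nbhds_large}. Your side remark on the application is also apt, though the cleaner $G$-independent superset there is $\{v:d_{\cT_n}(v)>(1-\alpha-\delta)\log n\}$, using $\max_v d_G(v)\le\delta\log n$ as in Lemma \ref{lem:sandwiching_forests}, rather than a constant shift $c$.
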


\begin{proof}[Proof of Proposition \ref{prop:no_edge_high_deg}]
    By Lemma \ref{lem:no_ER_edges}, since $\cT_n$ and $G=\G(n,\lambda/n)$ are independent, it suffices to check that, for $\alpha<1-\frac{1}{2\log 2}$, for some $\eps>0$,
    \begin{align*}
    \Prob{|\{v\in [n]:d_{\cT_n}(v)>(1-\alpha)\log n\}|>n^{1/2-\eps}}=o(1). \numberthis \label{eq:at_most_sqrtn_highdeg}
    \end{align*}
To check this, note that by a union bound and Lemma \ref{lem:degUandLtail}, for any $\eps>0$,
\begin{align*}
&\Exp{|\{v\in [n]:d_{\cT_n}(v)>(1-\alpha)\log n\}|}\\
&\leq n^{\alpha+\eps/4}+\sum_{v\geq n^{\alpha+\eps/4}}\Prob{d_{\cT_n}(v)>(1-\alpha)\log n}\\&\leq n^{\alpha+\eps/4}+\sum_{v \geq n^{\alpha+\eps/4}}n^{f_\alpha(x(v))},
%\leq n^{\alpha+\eps/4}+\log n \int_{\alpha+\eps/4}^1 n^{v +f_\alpha(v)}dv\\
         %&\leq n^{\alpha+\eps/4}+\log n \max_{v \in [0,1]}n^{v+f_\alpha(v)}.
     \end{align*}
     where, recalling $x(v)=\log v/ \log n$ from \eqref{def:x(v)}, in the last display we use that $v\geq n^{\alpha+\eps/4}$ implies $x(v)>\alpha$. By a Riemann integral bound,
     \begin{align*}
         \sum_{v \geq n^{\alpha+\eps/4}}n^{f_\alpha(\log v/\log n)}\leq \int_{n^{\alpha+\eps/4}}^n n^{f_\alpha(\log u/\log n)}du=\log n\int_{\alpha+\eps/4}^1 n^{y+f_\alpha(y)}dx,
     \end{align*}
     where in the last step above we change variables $u=n^y$.
Recall from Lemma~\ref{lem:gamma_cont} that the maximal value of $x+f_\alpha (x)$ on $[0,1)$ is $1-(1-\alpha)\log 2$.
This is strictly less than $1/2-2\eps$ when $\alpha<1-\frac{1}{2\log 2}$ by choosing $\eps>0$ small. On the other hand, since $\log 2<1$, $\alpha<1-\frac{1}{2\log 2}$ ensures $n^{\alpha+\eps/4}<n^{1/2-2\eps}$ by choosing $\eps>0$ small. Thus, for any $\alpha<1-\frac{1}{2\log 2}$ and small enough $\eps > 0$, we have
\begin{align}
\label{rem:bound_expec_high_deg}
\Exp{|\{v\in [n]:d_{\cT_n}(u)>(1-\alpha)\log n\}|}\leq n^{1/2-2\eps}    \end{align}
so that \eqref{eq:at_most_sqrtn_highdeg} follows from Markov's inequality.
\end{proof}

\begin{definition}[The forest $H_n(\alpha)$]\label{def:Hnalpha}
 Denote $H=\cT_n(G)$  with $G\stackrel{d}{=} \G(n,\lambda/n)$. Construct the graph $H_n(\alpha)$ from $H$ by only keeping edges $e=\{u,v\}\in E(H)$ satisfying
$d_{H}(u)\wedge d_H(v)>(1-\alpha)\log n$.
%Thus, if $G$ was the empty graph with no edges, $H_n(\alpha)$ would then coincide with $\cF_n(\alpha)$ from Definition \ref{def:alpha_high_deg_for}. 
\end{definition}

\begin{lemma}\label{lem:sandwiching_forests}
    For any $\delta>0$, we have,
    \begin{align*}
       \lim_{n \to \infty} \Prob{E(\cF_n(\alpha))\subseteq E(H_n(\alpha))\subseteq E(\cF_n(\alpha+\delta))}=1.
    \end{align*}
\end{lemma}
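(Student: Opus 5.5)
The first inclusion holds deterministically, without any probability. For the second, I would show that noise degrees are uniformly small and then invoke Proposition \ref{prop:no_edge_high_deg}. Throughout I work in the regime $\alpha<1-\frac{1}{2\log 2}$ of Proposition \ref{prop:no_edge_high_deg}. The statement of the lemma does not repeat this hypothesis, but it is the setting of the filtering step, and I do not see how to exclude noise edges between high-degree vertices without it.

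\emph{First inclusion.} Let $e=\{u,v\}\in E(\cF_n(\alpha))$. Then $e\in E(\cT_n)\subseteq E(H)$, and $d_{\cT_n}(u)\wedge d_{\cT_n}(v)>(1-\alpha)\log n$. Since $H=\cT_n\cup G$ contains $\cT_n$, we have $d_H(w)\ge d_{\cT_n}(w)$ for every $w$. Hence $d_H(u)\wedge d_H(v)>(1-\alpha)\log n$, so $e\in E(H_n(\alpha))$ by Definition \ref{def:Hnalpha}.

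\emph{Noise degrees are small.} For each $w\in[n]$, $d_G(w)\stackrel{d}{=}\mathrm{Bin}(n-1,\lambda_n/n)$. The Chernoff bound $\Prob{\mathrm{Bin}(m,p)\ge t}\le (emp/t)^t$ with $t=\delta\log n$ gives
\begin{align*}
\Prob{d_G(w)\ge \delta\log n}\le \left(\frac{e\lambda_n}{\delta\log n}\right)^{\delta\log n}=n^{-\delta\log\left(\delta\log n/(e\lambda_n)\right)}.
\end{align*}
Because $\lambda_n=o(\log n)$, the exponent $\delta\log(\delta\log n/(e\lambda_n))$ tends to infinity, so this probability is $n^{-\omega(1)}$. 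A union bound over the $n$ vertices then shows that whp $\max_w d_G(w)<\delta\log n$. On this event, $d_H(w)\le d_{\cT_n}(w)+d_G(w)<d_{\cT_n}(w)+\delta\log n$ for all $w$.

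\emph{Second inclusion.} Let $e=\{u,v\}\in E(H_n(\alpha))$, so that $d_H(u),d_H(v)>(1-\alpha)\log n$. By Proposition \ref{prop:no_edge_high_deg}, whp no edge of $G$ joins two vertices of $H$-degree exceeding $(1-\alpha)\log n$. Hence $e\in E(\cT_n)$. By the previous step, whp $d_{\cT_n}(u),d_{\cT_n}(v)>(1-\alpha-\delta)\log n$, which means $e\in E(\cF_n(\alpha+\delta))$. Intersecting these two high-probability events proves the lemma.

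The only step that is not immediate is excluding noise edges among high-$H$-degree vertices. This is exactly the content of Proposition \ref{prop:no_edge_high_deg}, and it is where the restriction on $\alpha$ enters. The remaining work is the routine Chernoff estimate above, and it is precisely here that the hypothesis $\lambda_n=o(\log n)$ is used.
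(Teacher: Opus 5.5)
Your proof is correct and follows the paper's route. The paper's one-line argument is exactly that $\max_v d_G(v)\le\delta\log n$ whp when $\lambda_n=o(\log n)$ (cited from Bollob\'as rather than derived by a Chernoff bound as you do), combined with $d_H(v)\le d_{\cT_n}(v)+d_G(v)$, and it treats the first inclusion as trivial. The paper's proof leaves implicit the step that every edge of $H_n(\alpha)$ is a tree edge; you supply it explicitly via Proposition~\ref{prop:no_edge_high_deg}. You are also right that this step is where $\alpha<1-\frac{1}{2\log 2}$ is genuinely needed: for larger $\alpha$ and, say, constant $\lambda_n$, the high-degree set has size $n^{1/2+\Omega(1)}$, so $G$ places non-tree edges inside it and the second inclusion fails.
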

\begin{proof}
    The lemma follows if we show that any vertex $v\in [n]$ satisfying $d_H(v)>(1-\alpha)\log n$ must also satisfy $d_{\cT_n}(v)>(1-\alpha-\delta)\log n$ for any $\delta>0$. This follows from the fact that $\max_{v\in [n]}d_G(v)\leq \delta\log n$ for any $\delta>0$ whenever $\lambda_n=o(\log n)$ (e.g., see \cite[Theorem 3.2]{bollobas2011random}) and the observation that $d_H(v)\leq d_{\cT_n}(v)+d_G(v)$ for any $v\in [n]$.
\end{proof}
%\red{[ N: For the above we just need max degree $o(\log n)$ ]}

As a consequence, since $\cF_n(\alpha+\delta)$ is a forest, so is $H_n(\alpha)$. In fact, the sandwiching above lets us translate many properties of the forest $\cF_n(\alpha)$ to $H_n(\alpha)$,  which we gather in the next corollary.

\begin{corollary}[Properties of $H_n(\alpha)$]\label{cor:prop_Hforest}
    Consider the forest $H_n(\alpha)$ and let $\bC_{H,\alpha}(1)$ be the component containing $1$. The following properties hold with probability tending to $1$.
    \begin{itemize}
    \item[(i)] For any $\delta>0$, $\bC_\alpha(1)\subseteq \bC_{H,\alpha}(1)\subseteq \bC_{\alpha+\delta}(1)$.
    \item[(ii)] For any $\alpha_1<\alpha<\alpha_2$,
    \begin{align*}
        n^{\alpha_1}\leq |\bC_{H,\alpha}(1)|\leq n^{\alpha_2}.
    \end{align*}
    \item[(iii)] For any $\eps \in (0,\gamma(\alpha))$, $\cT_{\lfloor n^{ \gamma(\alpha)-\eps}\rfloor}\subseteq \bC_{H,\alpha}(1)$.
        \item[(iv)] $\bC_{H,\alpha}(1)$ is the largest component of $H_n(\alpha)$, and the second largest component $C_2(H,\alpha)$ of $H_n(\alpha)$ satisfies $|C_2(H,\alpha)|\ll n^{\alpha-\gamma(\alpha)/4}$. Consequently, the root component $\bC_{H,\alpha}(1)$ 
        is the unique connected component of $H_n(\alpha)$ of size at least $n^{\alpha-\gamma(\alpha)/4}$.
    \end{itemize}
\end{corollary}

%\begin{proof}
%Part (i) follows from Lemma \ref{lem:sandwiching_forests}. Part (ii) follows from Propositions \ref{prop:lower_bound} and \ref{prop:root_comp_UB} together with Lemma \ref{lem:sandwiching_forests} by choosing $\delta$ sufficiently small. Part (iii) follows from part (i) and Lemma \ref{lem:all upto gamma}. Part (iv) follows from Lemma \ref{lem:disting_root_comp} and part (i), by choosing $\delta>0$ sufficiently small, using the continuity of $\gamma(\alpha)$ as a function of $\alpha$ (recall Lemma \ref{lem:gamma_cont}). 
%\end{proof}

\begin{remark}[Using the continuity of $\gamma$]
    Throughout this section, when we write $\gamma$, we mean $\gamma(\alpha)$ for a fixed choice of $\alpha<1-\frac{1}{2\log 2}$. Note that because of Corollary \ref{cor:prop_Hforest} part (i), sometimes in our bounds $\gamma$ appears with the argument $\alpha+\delta$ instead of $\alpha$. By the continuity of $\gamma(\cdot)$ due to Lemma \ref{lem:gamma_cont}, we can always replace $\gamma(\alpha+\delta)$ by $\gamma(\alpha)$ up to a small additive constant, which does not create any problems for our arguments. We carry this out throughout this section, without mentioning it at every instance. 
\end{remark}

Armed with Corollary \ref{cor:prop_Hforest}(iv), observing $H$ without vertex labels, the statistician can first form the graph $H_n(\alpha)$, whose largest component is $\bC_{H,\alpha}(1)$. The remaining task is to show that the root vertex in among the most central vertices in $\bC_{H,\alpha}(1)$, according to Jordan centrality.

\paragraph{Key ideas.}
Because of the complicated structure of $H_n(\alpha)$, direct analysis of the Jordan centralities of the different vertices in $\bC_{H,\alpha}(1)$ is quite difficult. To this end, via coupling and sampling techniques, we argue that with high probability, $H_n(\alpha)$ equals another random graph $\kH_n^{(3)}(\alpha,\delta,\varepsilon)$, and centralities in the root component of the latter are easier to tackle. To arrive at $\kH_n^{(3)}(\alpha,\delta,\varepsilon)$ from $H_n(\alpha)$, we proceed via a sequence of intermediate steps.

\textbf{Step 1.} First, define $\kH_n(\alpha)$ to be the subgraph of $H$ spanned by those vertices $v\in [n]$ that satisfy $d_H(v)>(1-\alpha)\log n$. This is precisely the analogue of $\cF_n^{(\rm V)}(\alpha)$ from Remark \ref{rem:site_perco_forest}. Note that if we denote by $\bC_{\kH,\alpha}(1)$ the component containing $1$ in the graph $\kH_n(\alpha)$ (with the convention that $\bC_{\kH,\alpha}(1)$ is an empty graph if $d_H(1)\leq (1-\alpha)\log n$), then, with high probability, $\bC_{\kH,\alpha}(1)=\bC_{H,\alpha}(1)$.
Indeed, equality can only fail when $d_H(1)\leq (1-\alpha)\log n$, an event of vanishing probability by Lemma \ref{lem:degUandLtail}. Note also that (analogous to the discussion in Remark \ref{rem:site_perco_forest}) $H_n(\alpha)$ can be obtained from $\kH_n(\alpha)$ by including all the vertices $v\in [n]\setminus V(\kH_n(\alpha))$ as \emph{isolated nodes}. Thus, we have reduced our task to analyzing Jordan centrality on $\bC_{\kH,\alpha}(1)$. 

\textbf{Step 2.} The argument used in the proof of Lemma \ref{lem:sandwiching_forests} shows that, with high probability, every vertex of $\kH_n(\alpha)$
satisfies $d_{\cT_n}(v)>(1-\alpha-\delta)\log n$.
This forms the basis of the second reduction step. In particular, it implies that with high probability, the relevant vertices that ever participate in constructing $\kH_n(\alpha)$ are the ones in $D_{\cT_n}(\alpha+\delta)$ for any choice of $\delta>0$. Denoting by $\kH'_n(\alpha,\delta)$ the subgraph of $H$ spanned by the vertices $v\in D_{\cT_n}(\alpha+\delta)$ satisfying $d_H(v)>(1-\alpha)\log n$, we have the following corollary, whose proof we omit.
\begin{corollary}\label{cor:reduc_h'}
    For any $\delta>0$ we have that, with high probability,
$\kH'_n(\alpha,\delta)=\kH_n(\alpha)$
and 
$\bC_{\alpha,\delta}(1)=\bC_{H,\alpha}(1)$,
    where $\bC_{\alpha,\delta}(1)$  denotes the component of $1$ in $\kH'_n(\alpha,\delta)$.
\end{corollary}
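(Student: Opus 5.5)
The statement says two things: the graphs $\kH'_n(\alpha,\delta)$ and $\kH_n(\alpha)$ coincide with high probability, and so do the root components. I will reduce the first claim to an inclusion of vertex sets, and then get the second from the first together with Step 1.

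Here $D_{\cT_n}(\alpha+\delta)$ denotes the set of vertices $v$ with $d_{\cT_n}(v)>(1-\alpha-\delta)\log n$. Both graphs are induced subgraphs of the same graph $H$:
\begin{itemize}
\item $\kH_n(\alpha)$ is spanned by $V:=\{v: d_H(v)>(1-\alpha)\log n\}$;
\item $\kH'_n(\alpha,\delta)$ is spanned by $V':=V\cap D_{\cT_n}(\alpha+\delta)$.
\end{itemize}
So it suffices to show that $V\subseteq D_{\cT_n}(\alpha+\delta)$ with high probability; then $V'=V$ and the two induced subgraphs are literally equal.

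The inclusion comes from the argument of Lemma \ref{lem:sandwiching_forests}. For every $v$ we have $d_H(v)\le d_{\cT_n}(v)+d_G(v)$. Since $\lambda_n=o(\log n)$, the maximum degree of $G=\G(n,\lambda_n/n)$ satisfies $\max_v d_G(v)\le \delta\log n$ with high probability, as cited there from \cite{bollobas2011random}. On this event, any $v$ with $d_H(v)>(1-\alpha)\log n$ satisfies
\begin{align*}
d_{\cT_n}(v)\ge d_H(v)-d_G(v)>(1-\alpha-\delta)\log n,
\end{align*}
so $v\in D_{\cT_n}(\alpha+\delta)$.

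For the root components, on the event $\kH'_n(\alpha,\delta)=\kH_n(\alpha)$ the component of $1$ in $\kH'_n(\alpha,\delta)$ equals $\bC_{\kH,\alpha}(1)$. By Step 1, $\bC_{\kH,\alpha}(1)=\bC_{H,\alpha}(1)$ except on the event $\{d_H(1)\le(1-\alpha)\log n\}$. That event is contained in $\{d_{\cT_n}(1)\le (1-\alpha)\log n\}$, since $d_H(1)\ge d_{\cT_n}(1)$. Its probability is at most $n^{f_\alpha(0)}=o(1)$ by Lemma \ref{lem:degUandLtail}, because $x(1)=0<\alpha$ and $f_\alpha(0)<0$. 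Intersecting these two high-probability events gives both equalities. The only step with any real content is the uniform bound on $d_G(v)$, which is a standard citation, so I expect no real obstacle.
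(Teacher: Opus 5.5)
Your proposal is correct and matches the argument the paper intends. The paper omits this proof, but its Step 2 sketch does exactly what you do: it uses $d_H(v)\le d_{\cT_n}(v)+d_G(v)$ together with $\max_v d_G(v)\le\delta\log n$ with high probability (the argument of Lemma \ref{lem:sandwiching_forests}) to place every vertex of $\kH_n(\alpha)$ in $D_{\cT_n}(\alpha+\delta)$. The root-component equality then follows from Step 1, as you argue.

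One cosmetic point: Lemma \ref{lem:degUandLtail} is stated with a strict inequality, but you need it for $\{d_{\cT_n}(1)\le(1-\alpha)\log n\}$. Either note that the Chernoff bound covers the non-strict case, or apply the lemma with some $\alpha'<\alpha$.
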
 

\textbf{Step 3.} Thanks to the last corollary, we now have reduced our task to analyzing Jordan centralities in $\bC_{\alpha,\delta}(1)$. To this end, we need to understand the structural aspects of the graph $\kH_n'(\alpha,\delta)$. This is the crucial step (Proposition \ref{prop:coup} and its proof below), where we sequentially reduce the graph $\kH_n'(\alpha,\delta)$ to the graph $\kH_n^{(3)}(\alpha,\delta,\varepsilon)$, via the following steps.
\begin{itemize}
    \item[(a.)] First, Lemma \ref{lem:ER_nbhds_large} below shows that the collection of neighborhoods $N_G(v)$ of the vertices $v$ in $D_{\cT_n}(\alpha+\delta)$ form an independent collection of random binomial subsets of $[n]$, where each element is retained with probability $\lambda/n$. Thus, if we assign to each vertex $v\in D_{\cT_n}(\alpha+\delta)$ such a random binomial subset $S_v$, and form the subgraph $\kH_n^{(1)}(\alpha,\delta)$ of $H$ by retaining vertices from $D_{\cT_n}(\alpha+\delta)$ satisfying $|S_v\cup N_{\cT_n}(v)|>(1-\alpha)\log n$, then $\kH'_n(\alpha,\delta)$ equals $\kH_n^{(1)}(\alpha,\delta)$ with high probability, and our task boils down to understanding Jordan centrality in the root component $\bC^{(1)}_{\alpha,\delta}(1)$ of $\kH^{(1)}_n(\alpha,\delta)$.
    \item[(b.)] Second, for any $\varepsilon \in (0,\gamma)$, with high probability, for all $v\in [\floor{n^{\gamma-\eps}}]$, by Lemma \ref{lem:chernoff_deg_tail} we have $d_{\cT_n}(v)=|N_{\cT_n}(v)|>(1-\alpha)\log n$, so that $S_v$ does not play any role in including these vertices in $\kH^{(1)}_n(\alpha,\delta)$. In other words, if we form the subgraph $\kH^{(2)}_n(\alpha,\delta,\varepsilon)$ of $H$ spanned by the vertices in $[\floor{n^{\gamma-\eps}}]$, together with those in $D_{\cT_n}(\alpha+\delta)\cap\{\floor{n^{\gamma-\eps}},\floor{n^{\gamma-\eps}}+1,\dots,n\}$ satisfying $|N_{\cT_n}(v)\cup S_v|>(1-\alpha)\log n$, then by (a.), $\kH^{(2)}_n(\alpha,\delta,\varepsilon)$ equals $\kH'_n(\alpha,\delta)$ with high probability, for any $\varepsilon\in (0,\gamma)$. Thus, we have reduced our task to applying Jordan centrality on $\bC^{(2)}_{\alpha,\delta,\varepsilon}(1)$, the root component of $\kH^{(2)}_n(\alpha,\delta,\varepsilon)$.
    \item[(c.)] Finally, recall that, by Lemma \ref{lem:sandwiching_forests}, with high probability, $H_n(\alpha)$ is a sub-forest of $\cT_n$. Hence, so is $\kH^{(2)}_n(\alpha,\delta,\varepsilon)$. Additionally, for any $v\in D_{\cT_n}(\alpha+\delta)$ with $v\geq \floor{n^{\gamma-\varepsilon}}+1$, one can write $|N_G(v)\cup S_v|=d_{\cT_n}(v)+Z_v$, where $Z_v\sim \textrm{Bin}\left(n-d_{\cT_n}(v),\lambda/n \right)$, and these latter variables are independent across such $v$'s. Thus, if we form the sub-forest $\kH^{(3)}_n(\alpha,\delta,\eps)$ of $\cT_n$ spanned by $[\floor{n^{\gamma-\eps}}]$ together with those $v\in D_{\cT_n}(\alpha+\delta)$ satisfying $d_{\cT_n}(v)+Z_v>(1-\alpha)\log n$, we have $\kH^{(3)}_n(\alpha,\delta,\eps)=\kH'_n(\alpha,\delta)$ with high probability. Thus, it suffices to understand Jordan centrality on $\bC^{(3)}_{\alpha,\delta,\varepsilon}(1)$, the root component of $\kH^{(3)}_n(\alpha,\delta,\eps)=\kH'_n(\alpha,\delta)$.   
\end{itemize}
The analysis of Jordan centrality in $\bC^{(3)}_{\alpha,\delta,\varepsilon}(1)$ is carried out in Section \ref{sec:root_find_C3}. The main idea is similar to the proof of Theorem \ref{thm:root_finding_c1}, albeit slightly complicated, because of the presence of the noise edges. Roughly speaking, we can write down a decomposition as in \eqref{eq:key decomp} with $\bC_\alpha(1)$ replaced now by $\bC_{\alpha,\delta,\eps}(1)$. To make an argument similar to the proof of Theorem \ref{thm:root_finding_c1} go through, one needs to argue that the summands appearing in the decomposition are exchangeable. This is not so straightforward as in the proof of Theorem \ref{thm:root_finding_c1}; however, it can be established using the description of the graph $\kH^{(3)}_{\alpha,\delta,\eps}(1)$. Informally, we can view the collection of these summands as a sample from a measure constructed from the variables $Z_v$ and the exchangeable collection of subtrees $\left(\cT_n(1;E_{\lfloor n^{\gamma-\eps}\rfloor }),\dots,\cT_n(\lfloor n^{\gamma-\eps}\rfloor;E_{\lfloor n^{\gamma-\eps}\rfloor })\right)$ so that the required exchangeability can be established using Proposition \ref{prop:sample_exch} (see Proposition \ref{prop:sampling_noisy_subtrees} and its proof below). The rest of the argument is similar to the proof of Theorem \ref{thm:root_finding_c1}.

\medskip

Next, we formally prove the steps we discussed above. We begin with Lemma \ref{lem:ER_nbhds_large}, that states that the neighborhoods in $G$ of the vertices in $D_{\cT_n}(\alpha+\delta)$ are i.i.d.\ binomial random subsets, used in \textbf{Step 3} (a.) above.

\begin{lemma}\label{lem:ER_nbhds_large}
Let $\alpha+\delta<1-\frac{1}{2\log 2}$, where $\alpha\in (0,1)$ and $\delta>0$. Then, there is a coupling between $\cT_n, G$ and i.i.d.\ random binomial subsets $S_1,S_2,\dots$  such that
\begin{align*}
    \Prob{N_{G}(v)=S_v\textrm{ for all }v\in D_{\cT_n}(\alpha+\delta)}\to 1.
\end{align*}
\end{lemma}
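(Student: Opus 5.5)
The idea is to build the binomial subsets $S_v$ out of $G$ itself. The only obstruction to independence of the neighbourhoods $N_G(v)$, $v\in D_{\cT_n}(\alpha+\delta)$, is that a noise edge joining two vertices of $D:=D_{\cT_n}(\alpha+\delta)$ is shared by both neighbourhoods. We will show that, with high probability, no such edge exists.

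\textbf{Step 1: $D$ is small.} Here $D$ is the set of vertices $v$ with $d_{\cT_n}(v)>(1-\alpha-\delta)\log n$, and it is a function of $\cT_n$ alone. Since $\alpha+\delta<1-\frac{1}{2\log 2}$, the bound \eqref{rem:bound_expec_high_deg} applied with $\alpha+\delta$ in place of $\alpha$ gives $\Exp{|D|}\le n^{1/2-2\eps}$ for some $\eps>0$. By Markov's inequality, $|D|\le n^{1/2-\eps}$ with high probability.

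\textbf{Step 2: the coupling.} Sample $\cT_n$ first, and then $G$ independently of it. Given $\cT_n$, and independently of everything else, let $(R_v)_{v\in D}$ be independent random subsets of $D\setminus\{v\}$, each element retained with probability $\lambda/n$. For $v\in D$ set
\begin{align*}
S_v:=\big(N_G(v)\setminus D\big)\cup R_v .
\end{align*}
For $v\notin D$, let $S_v$ be a fresh independent binomial subset of $[n]\setminus\{v\}$. (If the convention is that $S_v$ is a binomial subset of all of $[n]$, add an independent $\mathrm{Ber}(\lambda/n)$ coin for the element $v$; this changes nothing below.)

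Condition on $\cT_n$. For distinct $v,w\in D$, the sets $N_G(v)\setminus D$ and $N_G(w)\setminus D$ are determined by the edge indicators of disjoint families of pairs, namely $\{v,x\}$ and $\{w,x\}$ with $x\notin D$. These indicators are i.i.d.\ $\mathrm{Ber}(\lambda/n)$. Hence the $S_v$, $v\in D$, are conditionally independent, and each includes every element of $[n]\setminus\{v\}$ independently with probability $\lambda/n$. The conditional law of $(S_v)_{v\in[n]}$ given $\cT_n$ is therefore the i.i.d.\ binomial law, which does not depend on $\cT_n$. It follows that $(S_v)$ is i.i.d.\ and independent of $\cT_n$, so this is a valid coupling.

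\textbf{Step 3: equality with high probability.} On the event that $G$ has no edge inside $D$ and every $R_v$ is empty, we have $N_G(v)=N_G(v)\setminus D=S_v$ for every $v\in D$.
\begin{itemize}
\item Because $G$ is independent of $\cT_n$, Lemma \ref{lem:no_ER_edges} applied conditionally on $\{|D|\le n^{1/2-\eps}\}$ shows that $G$ has no edge inside $D$ with high probability. Directly, the conditional expected number of such edges is at most $|D|^2\lambda_n/n\le n^{-2\eps}\lambda_n=o(1)$.
\item The same computation gives $\Exp{\sum_{v\in D}|R_v|\mid \cT_n}\le |D|^2\lambda_n/n=o(1)$ on that event, so all $R_v$ are empty with high probability.
\end{itemize}
Combining these with Step 1 proves the lemma.

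The main obstacle is the bookkeeping in Step 2. The point to verify is that replacing the within-$D$ part of each neighbourhood by independent resampled coins restores exact independence while not changing the neighbourhoods on the high-probability event. The remaining estimates are first-moment bounds, using $\lambda_n=o(\log n)$ only through $|D|^2\lambda_n/n\to0$.
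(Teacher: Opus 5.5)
Your proof is correct and follows essentially the same route as the paper. The shared ingredients are the smallness of $D_{\cT_n}(\alpha+\delta)$ (via \eqref{rem:bound_expec_high_deg} and Markov), the absence of noise edges inside $D$, and the union bound $|D|^2\lambda_n/n=o(1)$ on the discrepancy. Your explicit construction $S_v=(N_G(v)\setminus D)\cup R_v$ is a concrete realization of the paper's coupling of $N_G(v)$ with $S_v$ conditioned on $S_v\cap A=\emptyset$. It also makes visible that the $S_v$ are independent of $\cT_n$, which is what Proposition \ref{prop:coup} later needs.
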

\begin{proof}
\begin{comment}
    Let $S'_v:=S_v\mid_{\{v\notin S_v\}}$. Note that
    \begin{align*}
        \Prob{\exists\;v\in D_{\cT_n}(\alpha+\delta):v\in S_v}\leq \frac{\lambda \Exp{|D_{\cT_n}(\alpha+\delta)|}}{n}=o(1)
    \end{align*}
    by Remark \ref{rem:bound_expec_high_deg}, since $\alpha+\delta<1-\frac{1}{2\log 2}$. Thus, $S'_v=S_v$ for all $v\in D_{\cT_n}(\alpha+\delta)$ with high probability, so that it suffices to couple the neighborhoods $N_G(v)$ to the sets $S'_v$. For each $v\in D_{\cT_n}(\alpha+\delta)$, note that $S'_v$ has the distribution of a random binomial subset of $[n]\setminus\{v\}$.
\end{comment}
Note that for any $A\subseteq [n]$, conditionally on the event that $A$ is an independent set in $G$, the collection of neighborhoods $\{N_G(v):v\in A\}$ has the same law as an i.i.d.\ collection of random binomial subsets of $[n]\setminus A$, where each element is retained with probability $\lambda/n$. In particular, $N_G(v)$ in this case has the same distribution as $S_v\mid_{\{S_v\cap A=\emptyset\}}$ for all $v\in A$.
Thus, conditionally on the event that $A$ is independent in $G$, we can couple such that $N_G(v)=S_v$ for all $v\in A$ with high probability, provided that 
\begin{align*}
    \Prob{\bigcup_{v: v \in A} [ S_v \cap A\neq \emptyset ] }=o(1).\numberthis \label{eq:Sv_doesnotintersect_A}
\end{align*}
To conclude the lemma, note that
 $A=D_{\cT_n}(\alpha+\delta)$ is independent by Lemma \ref{lem:no_ER_edges}.
On the other hand, \eqref{eq:Sv_doesnotintersect_A} holds for $A=D_{\cT_n}(\alpha+\delta)$. To see this, note that since $\cT_n$ is independent of $G$, by Remark \ref{rem:bound_expec_high_deg} it in fact suffices to check that \eqref{eq:Sv_doesnotintersect_A} holds for any $A\subseteq [n]$ with $|A|\leq n^{1/2-\delta}$ for $\delta>0$. For any such $A$, a union bound implies that
\begin{align*}
\Prob{\bigcup_{v: v \in A} [ S'_v \cap A\neq \emptyset ] }
\leq |A| \Prob{{\rm Bin}(|A|,\lambda/n)>0}\leq |A|^2\frac{\lambda}{n}=o(1).
\end{align*}
%since $\delta>0$.
This completes the proof.
\begin{comment}
Thus, the result follows if we can claim that $D_{\cT_n}(\alpha+\delta)$ forms an independent set in $G$, something 

We claim that 
\begin{align*}
    \{D_{\cT_n}(\alpha+\delta)\textrm{ is an independent set in }G \}\cup\{N_G(u)\cap N_G(v)=\emptyset\;\forall\;u,v\in D_{\cT_n}(\alpha+\delta)\}
\end{align*}
holds with high probability.
\end{comment}    
\end{proof}

%To state the key resampling result of this section, we begin with a definition.

Next, we define the graph $\kH_n^{(3)}(\alpha,\delta,\eps)$ and a related graph $H_n^{(3)}(\alpha,\delta,\eps)$.
\begin{definition}[The forests $\kH^{(3)}_n(\alpha,\delta,\eps)$ and $H_n^{(3)}(\alpha,\delta,\eps)$]\label{def:h3}
    Given $\cT_n$, let $(Z_v)_{v\in [n]}$ form a collection of conditionally independent random variables, with
\begin{align*}
    Z_v\stackrel{d}{=} \textrm{Bin}\left(n-d_{\cT_n}(v),\frac{\lambda}{n}\right).
\end{align*}
Recall $\gamma$ from \eqref{eq:def_gamma}. For any $\alpha\in (0,1),\delta>0$ and $\eps\in (0,\gamma)$, define $\kH_n^{(3)}(\alpha,\delta,\eps)$ to be the subforest of $\cT_n$ spanned by the vertices in %\red{[ $\eps$ dependence below? Better to define with a $\gamma'$ parameter where $\gamma'<\gamma$? ]}
\begin{align*}
\{1,2,\dots,\floor{n^{\gamma-\eps}}\}\cup\{v\in \{\floor{n^{\gamma-\eps}},\floor{n^{\gamma-\eps}}+1,\dots,n\}\cap D_{\cT_n}(\alpha+\delta):d_{\cT_n}(v)+Z_v>(1-\alpha)\log n\}.    
\end{align*}
Define further $H_n^{(3)}(\alpha,\delta,\eps)$ to be the subforest obtained from  $\kH_n^{(3)}(\alpha,\delta,\eps)$ by including in it all vertices in $[n]\setminus V(\kH_n^{(3)}(\alpha,\delta))$ as isolated vertices.
\end{definition}
Let us finally prove the proposition that reduces $\kH_n(\alpha)$ to $\kH_n^{(3)}(\alpha,\delta,\eps)$. 
\begin{proposition}\label{prop:coup}
    There exists a coupling of $\cT_n$, $G$ and the variables $Z_v$ as in Definition \ref{def:h3} such that whenever $\alpha+\delta<1-\frac{1}{2 \log 2}$, for any $\eps\in (0,\gamma)$, with high probability,
$\kH^{(3)}_n(\alpha,\delta,\eps)=\kH_n(\alpha)$,
$\bC^{(3)}_{\alpha,\delta,\eps}(1)=\bC_{H,\alpha}(1)$, and $H_n^{(3)}(\alpha,\delta,\eps)=H_n(\alpha)$,
where $\bC^{(3)}_{\alpha,\delta,\eps}(1)$ denotes the component of $1$ in $\kH^{(3)}_n(\alpha,\delta,\eps)$.
\end{proposition}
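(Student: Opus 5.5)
We chain the reductions of \textbf{Step 1}--\textbf{Step 3}, constructing the coupling explicitly and intersecting finitely many high-probability events. Here $D_{\cT_n}(\alpha+\delta)$ denotes the set of vertices $v$ with $d_{\cT_n}(v)>(1-\alpha-\delta)\log n$, the set bounded in \eqref{rem:bound_expec_high_deg}. By Corollary \ref{cor:reduc_h'}, with high probability $\kH_n(\alpha)=\kH'_n(\alpha,\delta)$ and $\bC_{\alpha,\delta}(1)=\bC_{H,\alpha}(1)$. Moreover $H_n(\alpha)$ is obtained from $\kH_n(\alpha)$ by adding the remaining vertices as isolated nodes, and $H_n^{(3)}$ is obtained from $\kH_n^{(3)}$ in the same way. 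So all three claimed identities follow once we show $\kH^{(3)}_n(\alpha,\delta,\eps)=\kH'_n(\alpha,\delta)$ with high probability. The component statement then follows once we also know that $1$ lies in both graphs, which holds w.h.p.\ by Lemma \ref{lem:chernoff_deg_tail}.

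\textbf{Coupling.} We first sample $\cT_n$. Conditionally on $\cT_n$, we use Lemma \ref{lem:ER_nbhds_large} to couple $G$ with i.i.d.\ binomial subsets $(S_v)$ so that $N_G(v)=S_v$ for all $v\in D_{\cT_n}(\alpha+\delta)$ w.h.p. This is legitimate because $G$ is independent of $\cT_n$ and $\alpha+\delta<1-\frac{1}{2\log 2}$. We then define $Z_v:=|S_v\setminus N_{\cT_n}(v)|$ for every $v$. Given $\cT_n$, the variables $Z_v$ are independent with law ${\rm Bin}(n-d_{\cT_n}(v),\lambda/n)$; this is the law required in Definition \ref{def:h3}, up to the negligible point of whether $v\in S_v$, which can be absorbed by excluding $v$ from $S_v$ or noting $\Prob{\exists v\in D_{\cT_n}(\alpha+\delta): v\in S_v}\le \lambda\Exp{|D_{\cT_n}(\alpha+\delta)|}/n=o(1)$. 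With this choice,
\begin{align*}
d_H(v)=|N_{\cT_n}(v)\cup N_G(v)|=d_{\cT_n}(v)+Z_v \quad\text{for all } v\in D_{\cT_n}(\alpha+\delta)
\end{align*}
on the good event of Lemma \ref{lem:ER_nbhds_large}.

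\textbf{Identification of vertex sets and edges.} On this event, the vertex set of $\kH'_n(\alpha,\delta)$, namely $\{v\in D_{\cT_n}(\alpha+\delta): d_H(v)>(1-\alpha)\log n\}$, equals $\{v\in D_{\cT_n}(\alpha+\delta): d_{\cT_n}(v)+Z_v>(1-\alpha)\log n\}$. The vertices $v\le\floor{n^{\gamma-\eps}}$ remain. By Lemma \ref{lem:chernoff_deg_tail}, w.h.p.\ all of them satisfy $d_{\cT_n}(v)>(1-\alpha)\log n$. Hence they lie in $D_{\cT_n}(\alpha+\delta)$ and pass the threshold regardless of $Z_v$, so adding them unconditionally (as in Definition \ref{def:h3}) changes nothing. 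Thus the vertex sets coincide w.h.p.

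It remains to compare induced edges. $\kH'_n$ is the subgraph of $H=\cT_n\cup G$ induced by this set, whereas $\kH^{(3)}_n$ is the subgraph of $\cT_n$ it induces. These agree provided no $G$-edge joins two vertices of $D_{\cT_n}(\alpha+\delta)$. That holds w.h.p.\ by Proposition \ref{prop:no_edge_high_deg}, or directly by \eqref{eq:at_most_sqrtn_highdeg} and Lemma \ref{lem:no_ER_edges}, using the independence of $\cT_n$ and $G$.

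\textbf{Main obstacle.} The delicate point is that the coupling in Lemma \ref{lem:ER_nbhds_large} is conditional on $D_{\cT_n}(\alpha+\delta)$ being an independent set in $G$. We must check that the $Z_v$ built from the $S_v$ have exactly the prescribed conditional law given $\cT_n$ and are not merely close to it. I would handle this by defining the $Z_v$ from the unconditioned i.i.d.\ $S_v$, so their law is exact, and pushing all discrepancies into the $o(1)$ failure event of the coupling, which is of size $O(|A|^2\lambda/n)$ with $|A|\le n^{1/2-\delta}$ w.h.p.
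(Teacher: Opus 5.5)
Your proposal is correct and follows essentially the paper's argument. The paper likewise reduces to $\kH'_n(\alpha,\delta)$ via Corollary~\ref{cor:reduc_h'}, replaces the $G$-neighbourhoods of vertices in $D_{\cT_n}(\alpha+\delta)$ by i.i.d.\ binomial sets $S_v$ using Lemma~\ref{lem:ER_nbhds_large}, absorbs the vertices of $[\floor{n^{\gamma-\eps}}]$ using Lemma~\ref{lem:chernoff_deg_tail}, and defines $Z_v$ through $|N_{\cT_n}(v)\cup S_v|=d_{\cT_n}(v)+Z_v$ as in \eqref{eq:coup_binomial_sets_and_vars}; the only difference is that it packages these steps as three successive graphs $\kH^{(1)}_n,\kH^{(2)}_n,\kH^{(3)}_n$ rather than one coupling. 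Your justification that the induced subgraphs of $H$ and of $\cT_n$ agree, via Proposition~\ref{prop:no_edge_high_deg} (or \eqref{eq:at_most_sqrtn_highdeg} with Lemma~\ref{lem:no_ER_edges}), is if anything more precise than the paper's appeal to Lemma~\ref{lem:sandwiching_forests}, whose proof only compares degrees.
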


%\begin{remark}
    %The superscript $(3)$ appearing in the notations $\kH^{(3)}_n(\alpha,\delta,\eps)$ and $H_n^{(3)}(\alpha,\delta,\eps)$ has a special meaning; referring to the proof of Proposition \ref{prop:coup} below, we are able to reach $\kH^{(3)}_n(\alpha,\delta,\eps)$ from $\kH_n(\alpha)$ via coupling techniques through \emph{two} intermediate steps. To denote them, we use the superscripts $(1)$ and $(2)$, respectively.
%\end{remark}

\begin{proof}[Proof of Proposition \ref{prop:coup}]
As discussed above, motivated by Corollary \ref{cor:reduc_h'} and Lemma \ref{lem:ER_nbhds_large}, we construct the graph $\kH^{(1)}_n(\alpha,\delta)$ as follows: 
\begin{itemize}
    \item [(i)] Take $H$, assign independent random binomial subsets $S_v$ to all $v\in D_{\cT_n}(\alpha+\delta)$.
    \item [(ii)] Define $\kH^{(1)}_n(\alpha,\delta)$ to be the subgraph of $H$ spanned by $v\in D_{\cT_n}(\alpha+\delta)$ with $|N_{\cT_n}(v)\cup S_v|>(1-\alpha)\log n$.
\end{itemize}
Note that for any $v\in [n]$, $d_H(v)=|N_{\cT_n}(v)\cup N_{G}(v)|$. In particular, Corollary \ref{cor:reduc_h'} and Lemma \ref{lem:ER_nbhds_large} implies that for any $\alpha\in (0,1),\delta>0$ with $\alpha+\delta<1-\frac{1}{2\log 2}$ we have
$\kH^{(1)}_n(\alpha,\delta)=\kH_n(\alpha)$ and $\bC^{(1)}_{\alpha,\delta}(1)=\bC_{H,\alpha}(1)$, with high probability,
where $\bC^{(1)}_{\alpha,\delta}(1)$ denotes the component of $1$ in $\kH^{(1)}_n(\alpha,\delta)$.

 Since by Lemma \ref{lem:chernoff_deg_tail}, for any $\eps\in (0,\gamma)$, all  vertices $v\in [\lfloor n^{ \gamma-\eps}\rfloor]$ satisfy $d_{\cT_n}(v)>(1-\alpha)\log n$ with probability tending to $1$,
 their inclusion in the graph $\kH^{(1)}_n(\alpha,\delta)$ is automatic, without even the contribution from the set $S_v$. We conclude that if we assign to vertices $v\in \{\lfloor n^{\gamma-\eps}\rfloor,\lfloor n^{\gamma-\eps}\rfloor+1,\dots,n\}\cap D_{\cT_n}(\alpha+\delta)$ the sets $S_v$, and declare $\kH^{(2)}_n(\alpha,\delta,\eps)$ to be the subgraph of $H$ spanned by the vertices
\begin{align*}
\{1,2,&\dots,\lfloor n^{\gamma-\eps}\rfloor\}\\
&\cup  
\big\{ 
\{\lfloor n^{\gamma-\eps}\rfloor,\lfloor n^{\gamma-\eps}\rfloor+1,\dots,n\}\cap D_{\cT_n}(\alpha+\delta)
\cap\{v: |N_{\cT_n}(v)\cup S_v|>(1-\alpha)\log n\} \big\},    \end{align*}
then by the previous reduction step to $\kH^{(1)}_n(\alpha,\delta)$,
with high probability, 
$\kH^{(2)}_n(\alpha,\delta,\eps)=\kH_n(\alpha)$, and
$\bC^{(2)}_{\alpha,\delta,\eps}(1)=\bC_{H,\alpha}(1)$,
where $\bC^{(2)}_{\alpha,\delta,\eps}(1)$ denotes the component of $1$ in $\kH^{(2)}_n(\alpha,\delta,\eps)$.

Recall that, by Lemma \ref{lem:sandwiching_forests}, with high probability, $H_n(\alpha)$ is a subgraph of $\cT_n$ and it is obtained from $\kH_n(\alpha)$ by including some isolated vertices. Further, 
\begin{align*}
    |N_{\cT_n}(v)\cup S_v|=d_{\cT_n}(v)+Z_v,\numberthis \label{eq:coup_binomial_sets_and_vars}
\end{align*}
where given $\cT_n$, $(Z_v)_{v\in [n]}$ form a collection of conditionally independent random variables, with
$Z_v\stackrel{d}{=} \textrm{Bin}(n-d_{\cT_n}(v),\lambda/n)$.

Indeed, given $\cT_n$, to reveal the elements in $N_{\cT_n}(v)\cup S_v$ that are already not in $N_{\cT_n}(v)$, we only need to take a random binomial subset of $[n]\setminus N_{\cT_n}(v)$ where each element is retained with probability $\lambda/n$.
In particular, by extending the probability space if necessary, we can couple $S_v$ with $Z_v$ as appearing in Definition \ref{def:h3} such that \eqref{eq:coup_binomial_sets_and_vars} holds.
In this extended probability space, with high probability, 
we have $\kH^{(3)}_n(\alpha,\delta,\eps)=\kH^{(2)}_n(\alpha,\delta,\eps)$
and $\bC^{(3)}_{\alpha,\delta,\eps}(1)=\bC^{(2)}_{\alpha,\delta,\eps}(1)$,
where $\bC^{(3)}_{\alpha,\delta,\eps}(1)$ denotes the component of $1$ in $\kH^{(3)}_n(\alpha,\delta,\eps)$. We conclude that, with high probability,
$\kH^{(3)}_n(\alpha,\delta,\eps)=\kH_n(\alpha)$ and $\bC^{(3)}_{\alpha,\delta,\eps}(1)=\bC_{H,\alpha}(1)$.
Furthermore, $\Prob{H_n^{(3)}(\alpha,\delta,\eps)=H_n(\alpha)}\to 1$ since $H_n(\alpha)$ is constructed from $\kH_n(\alpha)$ by including the vertices in $[n]\setminus V(\kH_n(\alpha))$ as isolated nodes.
\end{proof}

 In particular, under the coupling of Proposition \ref{prop:coup}, since $\bC^{(3)}_{\alpha,\delta,\eps}(1)$ is asymptotically the same as $\bC_{H,\alpha}(1)$, the conclusions of Lemma \ref{cor:prop_Hforest} hold for it. Thus, it suffices to study root finding in $\bC^{(3)}_{\alpha,\delta,\eps}(1)$, discussed in the next section.

\subsection{Root finding in $\bC^{(3)}_{\alpha,\delta,\eps}(1)$}\label{sec:root_find_C3}

To analyze Jordan centrality of the root in $\bC^{(3)}_{\alpha,\delta,\eps}(1)$, we use a sampling argument. For any $i\in [\lfloor n^{ \gamma-\eps}\rfloor]$, denote
\begin{align*}
    t_n(i):=\kH^{(3)}_n(\alpha,\delta,\eps)(i;E_{\lfloor n^{\gamma-\eps}\rfloor}),
\end{align*}
where recall that $E_{\ell}:=\{\{u,v\}\in E(\cT_n):u,v\leq \ell\}$. We view $t_n(i)$ as a tree rooted at $i$. Since $\cT_{\lfloor n^{\gamma-\eps}\rfloor}$ is a subtree of $\bC^{(3)}_{\alpha,\delta,\eps}(1)$ with probability tending to $1$ (by Corollary \ref{cor:prop_Hforest}), the latter can be constructed as follows: given $\cT_{\lfloor n^{\gamma-\eps}\rfloor}$, $\bC^{(3)}_{\alpha,\delta,\eps}(1)$ is obtained by \emph{grafting} the tree $(t_n(i), i)$ to the vertex $i$ in $\cT_{\lfloor n^{\gamma-\eps}\rfloor}$ for each $i \in [\lfloor n^{\gamma-\eps}\rfloor ]$, that is, identifying the root of $t_n(i)$ with the vertex $i$ in the tree $\cT_{\lfloor n^{\gamma-\eps}\rfloor }$, see Figure \ref{fig:grafting}.

\begin{figure}[H]
    \centering
    \resizebox{\textwidth}{!}{
        \input{grafting}}
    \caption{ Grafting, from top to bottom: the black subtree is $\cT_{\floor{n^{\gamma-\eps}}}$, and $u$, $v$ and $w$ are the only vertices in it such that $t_n(u), t_n(v)$ and $t_n(w)$ are trees with at least two nodes, which are respectively as the blue, green and the pink trees in the first picture above. Grafting them at their appropriate locations in $\cT_{\lfloor n^{\gamma-\eps}\rfloor }$ constructs $\bC^{(3)}_{\alpha,\delta,\varepsilon}(1)$, as shown in the second picture.}
    \label{fig:grafting}
\end{figure}

\begin{definition}[Thresholding with extra random leaves]\label{def:threshold_meas_def}
    Fix $k,\ell\in \N\cup\{0\}$, and $p\in (0,1)$. Given a \emph{finite} rooted tree $(\bt,o)\in \cT^*$, construct a random tree as follows.
\begin{itemize}
    \item [(i)]To each vertex $v\in \bt$, add $X_v$ extra leaves, where
    \begin{align*}
        X_v \stackrel{d}{=}\begin{cases}
            &{\rm Bin}((k-d_\bt(v))\vee 0,p),\textrm{ if }v\neq o,\\&
            {\rm Bin}((k-d_\bt(v)-1)\vee 0,p),\textrm{ if }v= o.
        \end{cases} 
    \end{align*}
    and these variables form an independent collection over $v$. Call the obtained random tree $\bt'$.
    \item [(ii)] Construct another random tree   $\bT=\psi^\ell_{\rm off}(\bt',o)$.
\end{itemize}
We view the above procedure as a map $\mu^\ell_{k,p}:\cT^* \to \cP(\cT^*)$, where $\mu^\ell_{k,p}(\bt,o)$ denotes the law of the random rooted tree $(\bT,o)$ thus constructed.
\end{definition}

\begin{remark}[Obtaining $\psi_{\rm off}$ as a special case]\label{rem:psi_deg_sp_case}
Note that $\mu^\ell_{0,p}(\bt,o)$ is the Dirac measure on $\psi^\ell_{\rm off}(\bt,o)$, for any $(\bt,o)\in \cT^*$, $p\in (0,1)$ and $\ell \in \N \cup \{0\}$.
In Definition \ref{def:threshold_meas_def}, the random tree $\bt'$ constructed from $\bt$ after adding some extra leaves, almost surely, contains $\bt$, which means,
 $\bT\supseteq \psi^\ell_{\rm off}(\bt,o)$   almost surely, where $\bT \stackrel{d}{=} \mu^{\ell}_{k,p}(\bt,o)$.
In the context of Definition \ref{def:threshold_meas_def}, on the event $\cap_{v\in \bt}
\{X_v<\ell' \}$, we note that any vertex satisfying $c_{\bt'}(v)>\ell$ must also satisfy $c_{\bt}(v)>\ell-\ell'$, which means that, on this event,
$\bT\subseteq \psi^{\ell-\ell'}_{\rm off}(\bt,o)$.
%Let us denote by $\underline{0}=(0,0,\dots)\in \R^{\N}_{\geq 0}$ the constant $0$ sequence. Note that for any $(\bt,o)\in \cT^*_{>M}$, $\phi_K^M((\bt,o),\underline{0})$ simply outputs the component of $o$ in the subforest of $\bt$ spanned by vertices in $\bt$ with degree at least $M$. In other words, for any $(\bt,o)\in \cT^*_{>M}$, recalling the definition of the operator $\psi^M_{\rm deg}$ from Definition \ref{def:psi_deg_op} note that 
%\begin{align*}
%\phi_K^M((\bt,o),\underline{0})=\psi^M_{\rm deg}(\bt,o).
%\end{align*}
\end{remark}

%Recall the subtrees $(t_n(v),v)$ needed to graft to the vertices of $\cT_{n^{\gamma-\eps}}$ to construct $\bC^{(3)}_{\alpha,\delta,\eps}(1)$, for $v\in [\lfloor n^{\gamma-\eps}\rfloor ]$. Consider the subtrees $\cT_n(v;E_{\lfloor n^{\gamma-\eps}\rfloor })$ for $v\in [\lfloor n^{\gamma-\eps}\rfloor ]$. For any $v\in [\lfloor n^{\gamma-\eps}\rfloor ]$, we view $(\cT_n(v;E_{\lfloor n^{\gamma-\eps}\rfloor }),v)$ as an element in $\cT^*$. We remark that grafting the subtrees $(\cT_n(v;E_{\lfloor n^{\gamma-\eps}\rfloor }),v)$ to the respective vertices $v\in \cT_{\lfloor n^{\gamma-\eps}\rfloor }$ in fact simply gives us the \textsc{urrt} $\cT_n$.  
\begin{comment}
Consider also an i.i.d.\ collection of sequences of random variables independent of $\cT_n$ as
\begin{align*}
\{\underline{X^{(v)}}=(X^{(v)}_1,X^{(v)}_2,\dots):v\in [n^{\gamma-\eps}]\},   \end{align*}
where for each $v\in [n^{\gamma-\eps}]$, $X^{(v)}_1,\dots,X^{(v)}_n$ are independent random variables, and $X^{(v)}_m=0$ for all $m\geq n+1$. Additionally, for each such $v$ and $1\leq i \leq n$
\begin{align*}
    X^{(v)}_i\stackrel{d}{=} \textrm{Bin}(n-i,\lambda/n).
\end{align*}
\end{comment}

%The key result with the map $\mu^\ell_{k,p}$ is the following.

\begin{proposition}[Sampling subtrees in $\bC^{(3)}_{\alpha,\delta,\eps}(1)$]\label{prop:sampling_noisy_subtrees}
    Let $\alpha+\delta<1-\frac{1}{2\log 2}$, where $\alpha\in (0,1)$, $\delta>0$ and let $\eps\in (0,\gamma)$. Conditionally on $\cT_n(1;E_{\lfloor n^{\gamma-\eps}\rfloor}),\dots, \cT_n(\lfloor n^{\gamma-\eps}\rfloor;E_{\lfloor n^{\gamma-\eps}\rfloor})$, the law of the random vector
    $((t_n(1),1),\dots,(t_n(\lfloor n^{\gamma-\eps}\rfloor),\lfloor n^{\gamma-\eps}\rfloor ))$
is given by the following measure on $(\cT^*)^{\lfloor n^{\gamma-\eps}\rfloor }$:
    \begin{align*}
        \mu^{(1-\alpha)\log n-1}_{n,\lambda/n}(\cT_n(1;E_{\lfloor n^{\gamma-\eps}\rfloor }),1)\times \dots \times \mu^{(1-\alpha)\log n-1}_{n,\lambda/n}(\cT_n(\lfloor n^{\gamma-\eps}\rfloor ;E_{\lfloor n^{\gamma-\eps}\rfloor }),\lfloor n^{\gamma-\eps}\rfloor ).
    \end{align*}In particular, the real-valued random vector
$(|t_n(1)|,\dots,|t_n(\lfloor n^{\gamma-\eps}\rfloor )|)$
is exchangeable.
%\red{[from here]}
    %Then, the random objects $\cT_n$, $G$, $\{Z_v:v\in\{n^{\gamma-\eps},n^{\gamma-\eps}+1,\dots,n\}\cap D_{\cT_n}(\alpha+\delta)\}$ and $\{\underline{X^{(v)}}=(X^{(v)}_1, X^{(v)}_2,\dots):v\in [n^{\gamma-\eps}]\}$ can all be coupled in the same probability space, such that
    %\begin{align*}
        %\Prob{(t_n(v),v)=\left(\phi^{(1-\alpha)\log n}_{(1-\alpha-\delta)\log n}((\cT_n(v;E_{n^{\gamma-\eps}}),v),\underline{X^{(v)}}),v\right)\;\forall\;1\leq v \leq n^{\gamma-\eps}}\to 1.
    %\end{align*}
\end{proposition}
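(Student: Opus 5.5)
The plan is to read off the conditional law directly from Definition~\ref{def:h3}, by showing that each $t_n(i)$ is a deterministic function of the subtree $(\cT_n(i;E_N),i)$ together with the variables $(Z_v)_{v\in\cT_n(i;E_N)}$. Throughout, write $N=\lfloor n^{\gamma-\eps}\rfloor$ and $\ell=(1-\alpha)\log n-1$. The three steps are: identify which vertices are kept, express the rule as $\psi^\ell_{\rm off}$ applied to an augmented tree, and then use conditional independence of the $Z_v$'s.

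\textbf{Step 1: the structure of $t_n(i)$.} Removing $E_N$ from $\kH^{(3)}_n(\alpha,\delta,\eps)$ detaches the subtrees hanging off $[N]$. Hence $t_n(i)$ is the component of $i$ in the subforest of $\cT_n(i;E_N)$ spanned by $i$ together with those descendants $v>N$ that are retained in Definition~\ref{def:h3}. Every vertex $v\neq i$ of $\cT_n(i;E_N)$ satisfies $v>N$. Moreover, $\cT_n(i;E_N)$ contains $v$'s parent and all of $v$'s descendants, so $d_{\cT_n(i;E_N)}(v)=d_{\cT_n}(v)$. Consequently, the conditional law of $Z_v$ given $\cT_n$ is ${\rm Bin}(n-d_{\bt}(v),\lambda/n)$ with $\bt=\cT_n(i;E_N)$. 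This is exactly the law of $X_v$ in Definition~\ref{def:threshold_meas_def}(i) with $k=n$ and $p=\lambda/n$.

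\textbf{Step 2: matching the inclusion rule.} Couple $X_v:=Z_v$ for $v\neq i$. Attaching $X_v$ pendant leaves to $v$ gives
\[
c_{\bt'}(v)=d_{\cT_n}(v)-1+Z_v.
\]
Therefore $c_{\bt'}(v)>\ell$ holds if and only if $d_{\cT_n}(v)+Z_v>(1-\alpha)\log n$, which is exactly the retention rule of Definition~\ref{def:h3}. The added leaves have no offspring, so they are always cut off by $\psi^\ell_{\rm off}$. Hence $\psi^\ell_{\rm off}(\bt',i)$ and $t_n(i)$ consist of the same vertices of $\bt$ and the same edges, provided two further points hold.

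\emph{(a) The root.} The root $i$ is kept unconditionally in $\kH^{(3)}_n$, while $\psi^\ell_{\rm off}$ also tests the root. I would use an independent $X_i\sim{\rm Bin}(n-d_\bt(i)-1,\lambda/n)$, as in the $v=o$ case of Definition~\ref{def:threshold_meas_def}. I would then check that the test at $i$ is compatible with the graph, using that the vertices of $[N]$ have high degree (Lemma~\ref{lem:chernoff_deg_tail}), which is how they were placed in $\kH^{(3)}_n$ in the first place.

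\emph{(b) The restriction to $D_{\cT_n}(\alpha+\delta)$.} Membership of $v$ in this set is determined by $d_{\cT_n}(v)=d_\bt(v)$, so it is a function of $(\bt,i)$ alone. I would fold it into the map by letting the operator delete the vertices with $d_\bt(v)\le(1-\alpha-\delta)\log n$ before thresholding. This keeps the construction a measurable map $\cT^*\to\cP(\cT^*)$ of the subtree, which is all that the exchangeability argument needs.

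\textbf{Step 3: product structure and exchangeability.} Conditionally on $\cT_n$, the $Z_v$ are independent. The subtrees $\cT_n(i;E_N)$ for $i\in[N]$ have pairwise disjoint vertex sets, so the augmentations of different subtrees use disjoint independent families. Hence, given the vector of subtrees, the vector $((t_n(i),i))_{i\in[N]}$ has the stated product law. No other information in $\cT_n$ enters, because each $t_n(i)$ depends on $\cT_n$ only through its own subtree. Exchangeability of $(|t_n(1)|,\dots,|t_n(N)|)$ then follows from Proposition~\ref{prop:sample_exch} applied with $\mu=\mu^\ell_{n,\lambda/n}$ and $r=N$, followed by taking sizes.

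\textbf{Main obstacle.} The main obstacle is Step~2, namely making the inclusion rules coincide exactly rather than only with high probability. This concerns precisely the root test in (a) and the $D_{\cT_n}(\alpha+\delta)$ cut in (b). I would first try to absorb both into the definition of the per-subtree map, so that the identity is exact by construction. Only if that fails would I settle for an identity on the high-probability event of Lemma~\ref{lem:chernoff_deg_tail}.
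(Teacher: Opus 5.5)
Your proposal is correct and follows the paper's proof. You identify $t_n(i)$ with $\psi^{\ell}_{\rm off}$ applied to $\cT_n(i;E_N)$ augmented by $Z_v$ pendant leaves at each non-root vertex, obtain the product law from conditional independence of the $Z_v$ over the vertex-disjoint subtrees, and conclude with Proposition~\ref{prop:sample_exch}.

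The paper's proof does not address your points (a) and (b) at all, so your plan to build the root exemption and the $D_{\cT_n}(\alpha+\delta)$ cut into the per-subtree map (exempting the root from both) is the more careful version. It makes the identity exact, at the price of replacing $\mu^{\ell}_{n,\lambda/n}$ by a modified map whose output agrees with it outside an event of vanishing probability; this is harmless, since Proposition~\ref{prop:sample_exch} accepts any measurable map. If you take the high-probability fallback for (a) instead, note that the root test involves $d_{\bt}(i)$, the number of children of $i$ with label above $N$, not $d_{\cT_n}(i)$. This count has the law of $c_{\cT_n}(N)$, so it is handled by Lemma~\ref{lem:degUandLtail} plus a union bound giving $n^{g_\alpha(\gamma-\eps)}\to 0$, rather than by Lemma~\ref{lem:chernoff_deg_tail} as stated.
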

\begin{remark}\label{rem:extra_leaves_expl}
    Recall that for any $v\in [\lfloor n^{\gamma-\eps}\rfloor ]$, the random tree with the measure $\mu^{(1-\alpha)\log n-1}_{n,\lambda/n}$ is constructed by first assigning ${\rm Bin}(n-d_{\cT_n}(u),\lambda/n)$ leaves to each vertex $u$ of $\cT_n(v;E_{\lfloor n^{\gamma-\eps}\rfloor })$ before applying $\psi^{(1-\alpha)\log n-1}_{\rm off}(\cdot)$ on the obtained random tree. We may think of these extra leaves incident to each $v$ as the neighbors of $v$ in $G$ that are already not neighbors of $v$ in $\cT_n$.  
\end{remark}

\begin{proof}[Proof of Proposition \ref{prop:sampling_noisy_subtrees}]
    Recall the construction of the graph $\kH^{(3)}_n(\alpha,\delta,\eps)$ from Definition \ref{def:h3}. For any $v\in [\lfloor n^{\gamma-\eps}\rfloor ]$, observe that  any vertex in $u\in t_n(v)$ has $d_{\cT_n}(u)+Z_u>(1-\alpha)\log n$, and further, any vertex $w$ in the ancestral line of $u$ to $v$ in $(\cT_n(v;E_{\lfloor n^{\gamma-\eps}\rfloor }),v)$ satisfies $d_{\cT_n}(w)+Z_w>(1-\alpha)\log n$. These properties are equivalent to  $c_{\cT_n}(u)+Z_u>(1-\alpha)\log n -1$ and $c_{\cT_n}(w)+Z_w>(1-\alpha)\log n-1$. For any vertex $v\in \cT_n(v; E_{\lfloor n^{\gamma-\eps}\rfloor })$, its number of children in this tree is the same as in $\cT_n$. Furthermore, the degree of $v$ in $\cT_n(v;E_{\lfloor n^{\gamma-\eps}\rfloor })$ is one less than its degree in $\cT_n$.

    In particular, recalling $Z_u\stackrel{d}{=} {\rm Bin}(n-d_{\cT_n}(u),\lambda/n),Z_w\stackrel{d}{=} {\rm Bin}(n-d_{\cT_n}(w),\lambda/n)$ and the construction of the law $\mu^{(1-\alpha)\log n-1}_{n,\lambda/n}(\cT_n(v;E_{\lfloor n^{\gamma-\eps}\rfloor }),v)$, we note that $(t_n(v),v)$ has this law, given $\cT_n(v;E_{\lfloor n^{\gamma-\eps}\rfloor })$. Additionally, since for  $i\neq j\in [r]$, $u\in \cT_n(i,E_{\lfloor n^{\gamma-\eps}\rfloor })$, and $v\in \cT_n(j,E_{\lfloor n^{\gamma-\eps}\rfloor })$, the variables $Z_u$ and $Z_v$ are conditionally independent given $\cT_n$, given $\cT_n(1;E_{\lfloor n^{\gamma-\eps}\rfloor }),\dots,\cT_n(\lfloor n^{\gamma-\eps}\rfloor ;E_{\lfloor n^{\gamma-\eps}\rfloor })$, the law of the vector $((t_n(1),1),\dots, (t_n(\lfloor n^{\gamma-\eps}\rfloor ),\lfloor n^{\gamma-\eps}\rfloor ))$ is 
    \begin{align*}
        \mu^{(1-\alpha)\log n-1}_{n,\lambda/n}(\cT_n(1;E_{\lfloor n^{\gamma-\eps}\rfloor }),1)\times \dots \times \mu^{(1-\alpha)\log n-1}_{n,\lambda/n}(\cT_n(\lfloor n^{\gamma-\eps}\rfloor ;E_{\lfloor n^{\gamma-\eps}\rfloor }),\lfloor n^{\gamma-\eps}\rfloor ).
    \end{align*}
    By Proposition \ref{prop:sample_exch}, the vector $((t_n(1),1),\dots,(t_n(\lfloor n^{\gamma-\eps}\rfloor ),\lfloor n^{\gamma-\eps}\rfloor ))$ is  an exchangeable vector in $(\cT^*)^{\lfloor n^{\gamma-\eps}\rfloor }$, and thus so is $(|t_n(1)|,\dots,|t_n(\lfloor n^{\gamma-\eps}\rfloor )|)$ as an $\R^{\lfloor n^{\gamma-\eps}\rfloor }$-valued vector. This finishes the proof.
\end{proof}

The proof of the following corollary is identical to that of Corollary \ref{cor:neg_corr}.  Denoting by $\cF_t$ the natural filtration generated by all the information up to step $t$ in the construction of $\cT_n$, we write
$Y_v=|t_n(v)|$ and $Z=\sum_{v=1}^{\floor{n^{\gamma-\eps}}}Y_v$.

%The following exchangeability corollary of the last result is crucial for root finding in $\bC^{(3)}_{\alpha,\delta,\eps}(1)$.
\begin{corollary}[Exchangeability and negative correlation of noisy subtree sizes]\label{cor:exch_negcorr_noisy}
  Under the setting of Proposition \ref{prop:sampling_noisy_subtrees}, for any $i\neq j$, we have $\CExp{(Y_i-\CExp{Y_i}{Z})(Y_j-\CExp{Y_j}{Z})}{Z,\cF_{\floor{n^{\gamma-\eps}}}}\leq 0.$\end{corollary}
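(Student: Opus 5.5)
The plan is to copy the argument of Corollary \ref{cor:neg_corr}. The one genuinely new ingredient is exchangeability of $(Y_1,\dots,Y_r)$, where $r=\floor{n^{\gamma-\eps}}$, conditionally on $Z$ and $\cF_r$. Once that is in place, the remaining algebra is identical to the noiseless case.

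First, I would establish conditional exchangeability given $\cF_r$. Conditioning on $\cF_r$ reveals the tree $\cT_r$ on the first $r$ vertices, but nothing about how the later vertices attach. The subtree-size vector $(|\cT_n(1;E_r)|,\dots,|\cT_n(r;E_r)|)$ is the Pólya urn composition started from one ball of each of $r$ colours. Its evolution after time $r$ does not depend on $\cF_r$ beyond the labelling, so its law given $\cF_r$ is the same exchangeable law as in Lemma \ref{lem:exch_subtree_sizes}. By Lemma \ref{lem:cond_indep_subtrees}, given these sizes, the subtrees are independent \textsc{urrt}s, and this conditional structure is unchanged by further conditioning on $\cF_r$. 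The one subtlety is that the noise variables $Z_u$ have laws ${\rm Bin}(n-d_{\cT_n}(u),\lambda/n)$, and $d_{\cT_n}(u)$ for $u\le r$ includes edges inside $\cT_r$. Definition \ref{def:threshold_meas_def} already handles this through the special rule for the root $o$, whose degree is shifted by one. For $u\le r$, the only part of $d_{\cT_n}(u)$ that depends on $\cF_r$ is the number of children of $u$ inside $[r]$, and this count is not part of $t_n(u)$. Since $t_n(u)$ is built from $\psi_{\rm off}$ applied to descendants in $\cT_n(u;E_r)$, and only the root's offspring count there matters, I expect this dependence to be harmless, but I would check it explicitly. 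With that check done, I rerun Proposition \ref{prop:sampling_noisy_subtrees} and Proposition \ref{prop:sample_exch} under the conditional law given $\cF_r$. This shows that $((t_n(1),1),\dots,(t_n(r),r))$, and hence $(Y_1,\dots,Y_r)$, is exchangeable given $\cF_r$.

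Second, since $Z=\sum_v Y_v$ is a symmetric function of the vector, exchangeability persists after additionally conditioning on $Z$. Concretely, for every permutation $\sigma$ and bounded test function $h$, I use
\[
\E[h(Y_\sigma)\,g(Z)\mid\cF_r]=\E[h(Y)\,g(Z)\mid\cF_r].
\]
It follows that $\E[Y_i\mid Z,\cF_r]=Z/r$ for all $i$, and that the conditional covariances $\E[(Y_i-Z/r)(Y_j-Z/r)\mid Z,\cF_r]$ take a common value $c$ for every pair $i\neq j$. The conditional variance $s=\E[(Y_1-Z/r)^2\mid Z,\cF_r]$ is likewise common to all $i$.

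Third, expanding $0=\E[(\sum_i(Y_i-Z/r))^2\mid Z,\cF_r]$ gives $rs+r(r-1)c=0$. Hence $c=-s/(r-1)\le0$, which is the claim. Integrability is not an issue because $Y_v\le n$.

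The main obstacle is the first step: verifying that conditioning on $\cF_r$, which fixes the degrees inside $\cT_r$, does not break exchangeability through the noise laws. I would try first to show that $t_n(v)$ is a measurable function of $(\cT_n(v;E_r),v)$ together with the independent $Z_u$'s, and that the map is the same for every $v$.
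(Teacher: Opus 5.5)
Your proposal is correct and follows the paper, which simply declares the proof identical to that of Corollary~\ref{cor:neg_corr}: exchangeability of $(Y_1,\dots,Y_r)$ from Proposition~\ref{prop:sampling_noisy_subtrees} (via Proposition~\ref{prop:sample_exch}), persistence of exchangeability under conditioning on the symmetric $Z$, and the identity $rs+r(r-1)c=0$; you are in fact more careful than the paper in asking why exchangeability survives conditioning on $\cF_{\floor{n^{\gamma-\eps}}}$. The check you leave open does go through, but not because of the $-1$ shift at the root in Definition~\ref{def:threshold_meas_def}, since that shift does not track the root's possibly several neighbours in $\cT_{\floor{n^{\gamma-\eps}}}$. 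The real reason is this: in Definition~\ref{def:h3} every vertex of $[\floor{n^{\gamma-\eps}}]$ is kept unconditionally, so the root's $Z_v$ never enters $t_n(v)$; every other vertex $u$ of $\cT_n(v;E_r)$ has $d_{\cT_n}(u)=1+c_{\cT_n}(u)$, read off from the subtree itself; and the forest $(\cT_n(v;E_r),v)_{v\le r}$ is a function of $\cY_{r+1},\dots,\cY_n$ only, hence independent of $\cF_r$, so the same $\cF_r$-free kernel produces every $t_n(v)$, exactly as your last paragraph proposes.
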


We need a result analogous to the conditional concentration of Proposition \ref{prop:conc_cond_mean}. For any fixed integer $M\geq 1$, denote for $v\in [M], u\in [\floor{n^{\gamma-\eps}}]$,
\begin{align*}
t_M(v):=\bC^{(3)}_{\alpha,\delta,\eps}(1)(v;E_M) \quad \textrm{and} \quad t(u,M,\gamma-\eps)=\cT_{\floor{n^{\gamma-\eps}}}(u;M).\numberthis \label{eq:analogous_Y_notations_noisy}
\end{align*}
As in \eqref{eq:key decomp}, we have the  decomposition 
$|t_M(v)|=\sum_{u \in t(v,M,\gamma-\eps)}Y_u$.
%\red{[ write with a $-1$ before as well ]}

\begin{proposition}[Conditional concentration of noisy subtree sizes]\label{prop:conc_cond_mean_noisy}
    For any $M\geq 1$, and $\varepsilon>0$, as $n \to \infty$,
    \begin{align*}
        \Prob{\bigcup_{j\in [M]}\left\{\left||t_M(j)|-|t(j,M,\gamma-\eps)|\CExp{Y_1}{Z}\right|>\varepsilon |t(j,M,\gamma-\eps)|\CExp{Y_1}{Z}\right\}}=o(1).
    \end{align*}
\end{proposition}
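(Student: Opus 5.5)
We follow the proof of Proposition \ref{prop:conc_cond_mean} verbatim, with $\gamma'$ replaced by $\gamma-\eps$, $Y_\alpha(\cdot)$ by $Y_\cdot=|t_n(\cdot)|$, and Lemma \ref{lem:neg_cor} by Corollary \ref{cor:exch_negcorr_noisy}. Since $M$ is fixed, a union bound reduces the claim to a single $j\in[M]$. We work on the high-probability event where $\cT_{\floor{n^{\gamma-\eps}}}\subseteq \bC^{(3)}_{\alpha,\delta,\eps}(1)$ (Corollary \ref{cor:prop_Hforest}(iii) transferred through Proposition \ref{prop:coup}). On this event the decomposition $|t_M(j)|=\sum_{u\in t(j,M,\gamma-\eps)}Y_u$ holds. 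By the exchangeability in Proposition \ref{prop:sampling_noisy_subtrees}, $\CExp{Y_u}{Z}=Z/\floor{n^{\gamma-\eps}}$ for every $u$.

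Conditionally on $Z$ and $\cF_{\floor{n^{\gamma-\eps}}}$, the set $t(j,M,\gamma-\eps)$ is measurable. We therefore apply conditional Chebyshev and expand the square. The cross terms are nonpositive by Corollary \ref{cor:exch_negcorr_noisy}, which leaves
\[
\frac{\CExp{Y_u^2}{Z,\cF_{\floor{n^{\gamma-\eps}}}}}{\varepsilon^2|t(j,M,\gamma-\eps)|\,(Z/\floor{n^{\gamma-\eps}})^2}.
\]
We split on the events $\{Z\ge n^{\alpha-\delta_0}\}$ and $\{|t(j,M,\gamma-\eps)|\ge n^{\gamma-\eps-\delta_0}\}$ and uncondition, exactly as in \eqref{eq:cond_conc_ineq_UB}. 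The complements of both events are negligible. For the first, $|\bC^{(3)}_{\alpha,\delta,\eps}(1)|=\floor{n^{\gamma-\eps}}+Z\ge n^{\alpha-\delta_0/2}$ with high probability by Corollary \ref{cor:prop_Hforest}(ii), and $\gamma<\alpha$. For the second, $|t(j,M,\gamma-\eps)|/\floor{n^{\gamma-\eps}}$ converges to a Dirichlet coordinate, by the Pólya urn argument used before. It therefore remains to bound the second moment: we need $\Exp{Y_u^2}\le n^{2(\alpha-\gamma+\eps)+O(\delta_0)}$. Then the main term is at most $n^{-(\gamma-\eps)+O(\delta_0)}=o(1)$.

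\textbf{The main obstacle} is this second-moment bound, the analogue of Lemma \ref{lem:sec_mom_Yv}, since $t_n(u)$ now carries noise leaves. We avoid analysing the noise directly by a domination step. By Lemma \ref{lem:sandwiching_forests}, with high probability $\max_v Z_v\le \max_v d_G(v)\le \delta''\log n$. By Remark \ref{rem:psi_deg_sp_case}, on the event $\bigcap_v\{X_v<\delta''\log n\}$,
\[
t_n(u)\subseteq \psi^{(1-\alpha-\delta'')\log n-1}_{\rm off}\bigl(\cT_n(u;E_{\floor{n^{\gamma-\eps}}}),u\bigr),
\]
and the right-hand side is exactly $\bC_{\alpha+\delta''}(1)(u;E_{\floor{n^{\gamma-\eps}}})$. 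Lemma \ref{lem:sec_mom_Yv} applied with $\alpha+\delta''$ in place of $\alpha$ gives the second moment $O(n^{2(\alpha+\delta''-\gamma(\alpha+\delta'')+\eps+\varepsilon')})$, and continuity of $\gamma(\cdot)$ (Lemma \ref{lem:gamma_cont}) makes this $n^{2(\alpha-\gamma+\eps)+o(1)}$. On the exceptional event we bound $Y_u\le n$. Since that event has probability only $o(1)$ rather than polynomially small, we do not bound $\Exp{Y_u^2}$ unconditionally. Instead we carry out the whole Chebyshev computation on the intersection with the good event and add its complement's probability to the error. Concretely, we apply Chebyshev to the truncated variables $Y_u\ind{\text{good}}$ before invoking negative correlation. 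Alternatively, the maximum degree of $G$ exceeds $\delta''\log n$ only with probability $n^{-c}$ for every fixed $c$ when $\lambda=o(\log n)$, since $\Prob{{\rm Bin}(n,\lambda/n)>\delta''\log n}\le (e\lambda/(\delta''\log n))^{\delta''\log n}$ is superpolynomially small. With that tail bound, $n^2\cdot n^{-c}$ is negligible and the unconditional moment bound goes through directly.
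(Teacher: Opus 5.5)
Your proposal is correct and follows the paper's proof essentially step for step. You use a union bound over $j$, conditional Chebyshev given $(Z,\cF_{\floor{n^{\gamma-\eps}}})$ with the cross terms killed by Corollary~\ref{cor:exch_negcorr_noisy}, and the same three-term unconditioning; the second-moment bound then comes from the domination in Remark~\ref{rem:psi_deg_sp_case}, Lemma~\ref{lem:sec_mom_Yv} at level $\alpha+\delta''$, continuity of $\gamma(\cdot)$, and the trivial bound $Y_u\le n$ on a superpolynomially unlikely bad event. For that bad event, your second route (a union bound over vertices using $\Prob{{\rm Bin}(n,\lambda/n)>\delta''\log n}\le (e\lambda/(\delta''\log n))^{\delta''\log n}$) is exactly what the paper's Lemma~\ref{lem:max_deg_conc} supplies, so the truncation variant is unnecessary; it is also the correct way to prove that lemma, since the paper's written argument dominates the maximum by a sum of $n$ independent ${\rm Bin}(n,\lambda/n)$ variables, whose mean $n\lambda$ lies far above $\delta'\log n$.
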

We need a concentration inequality before proving this proposition. %Recall the construction of the graph $\kH_n^{(3)}(\alpha,\delta,\eps)$ from Definition \ref{def:h3}.

\begin{lemma}\label{lem:max_deg_conc}
    For any $\delta'>0$ and  $r\geq 1$, we have
    \begin{align*}
        \Prob{\max_{v: v \in D_{\cT_n}(\alpha+\delta)} Z_v> \delta' \log n}\leq \frac{1}{n^r}.
    \end{align*}
\end{lemma}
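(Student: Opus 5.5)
The plan is a union bound over $v\in[n]$ combined with a Chernoff bound for the binomial variables $Z_v$. I would not try to control the random set $D_{\cT_n}(\alpha+\delta)$ at all. Since $D_{\cT_n}(\alpha+\delta)\subseteq[n]$,
\begin{align*}
\Prob{\max_{v: v\in D_{\cT_n}(\alpha+\delta)} Z_v>\delta'\log n}\leq \sum_{v=1}^n \Prob{Z_v>\delta'\log n}.
\end{align*}
So it suffices to bound each summand uniformly by $n^{-r-1}$ for all large $n$.

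For the individual bound, condition on $\cT_n$. Given $\cT_n$, we have $Z_v\stackrel{d}{=}{\rm Bin}(n-d_{\cT_n}(v),\lambda/n)$, and this is stochastically dominated by ${\rm Bin}(n,\lambda/n)$. The conditional bound therefore does not depend on $\cT_n$, and it suffices to bound $\Prob{{\rm Bin}(n,\lambda_n/n)>\delta'\log n}$.

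I would use the standard estimate $\Prob{{\rm Bin}(n,p)\geq k}\leq \binom{n}{k}p^k\leq (enp/k)^k$ with $k=\lceil\delta'\log n\rceil$ and $np=\lambda_n$. This gives the bound
\begin{align*}
\exp\left(-\delta'\log n\,\log\frac{\delta'\log n}{e\lambda_n}\right).
\end{align*}
The key input is the standing assumption $\lambda_n=o(\log n)$, which gives $\log\big(\delta'\log n/(e\lambda_n)\big)\to\infty$. Hence for any fixed $r$, once $n$ is large, this log factor exceeds $(r+1)/\delta'$. The bound is then at most $n^{-(r+1)}$, and summing over the $n$ vertices yields the required $n^{-r}$.

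The only delicate point is that the statement holds for all large $n$ rather than for every $n$. The threshold depends on $r$, $\delta'$ and the rate at which $\lambda_n/\log n\to0$, and the lemma should be read in that way, consistent with its later use. If $\lambda_n$ is allowed to be very small (even $0$), the bound only improves, since the Chernoff expression is monotone in $\lambda$. I do not expect any genuine obstacle: this is the same fact, that the maximum degree of $G$ is $o(\log n)$, already invoked in Lemma \ref{lem:sandwiching_forests}, made quantitative with a polynomial error.
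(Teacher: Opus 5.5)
Your proof is correct, but it differs from the paper's at the key reduction step, and the difference matters. The paper bounds the maximum by the sum, $\max_{v\in D_{\cT_n}(\alpha+\delta)}Z_v\le\sum_{v\in D_{\cT_n}(\alpha+\delta)}Z_v$. It then dominates this by $A_1+\dots+A_n$ with $A_i$ i.i.d.\ ${\rm Bin}(n,\lambda/n)$, and applies a Chernoff bound using $(1+\frac{\lambda}{n}(e^t-1))^n$ as the moment generating function. That expression is the moment generating function of a single $A_1$. The sum $A_1+\dots+A_n$ is ${\rm Bin}(n^2,\lambda/n)$, with mean $n\lambda_n$. Even $\sum_{v\in D_{\cT_n}(\alpha+\delta)}Z_v$ alone has conditional mean roughly $\lambda_n|D_{\cT_n}(\alpha+\delta)|$, which is polynomially large when $\lambda$ is constant, since $D_{\cT_n}(\alpha+\delta)$ has polynomially many elements with high probability (e.g.\ by Lemma \ref{lem:chernoff_deg_tail}). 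So bounding the maximum by the sum loses too much, and the paper's argument as written does not prove the lemma.

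Your union bound over $v\in[n]$ is the right reduction. It costs only a factor $n$, which you absorb by asking for a per-vertex tail of $n^{-(r+1)}$. In fact, the paper's Chernoff computation, read as a bound for one variable, gives $\Prob{{\rm Bin}(n,\lambda/n)>\delta'\log n}\le n^{1-t\delta'}$. Combined with your union bound (taking $t\delta'>r+2$), this proves the lemma. Your estimate $\binom{n}{k}p^k\le(e\lambda_n/k)^k$ is an equivalent and slightly more direct way to get the same arbitrary polynomial decay from $\lambda_n=o(\log n)$. Your route also uses only the marginal conditional laws of the $Z_v$, not their conditional independence.
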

\begin{proof}
    Since $$\max_{v \in D_{\cT_n}(\alpha+\delta)}Z_v\leq \sum_{v \in D_{\cT_n}(\alpha+\delta)}Z_v,$$ and the latter sum is stochastically dominated from above by $A_1+\dots+A_n$, where the $A_i$ are i.i.d.\ with $A_1\stackrel{d}{=} {\rm Bin}(n,\lambda/n)$, it suffices to show that
$\Prob{\sum_{i=1}^n A_i>\delta' \log n}\leq n^{-r}$.
    Fix a constant $t\geq 1$ sufficiently large so that $t\delta'>r+1$. Note that $\lambda<\frac{1}{(e^t-1)}\log n$ for all large $n$ since $\lambda=o(\log n)$. In particular, by a standard Chernoff bound,
    \begin{align*}
        \Prob{\sum_{i=1}^n A_i>\delta' \log n}
        \leq \exp(-t\delta' \log n)\left(1+\lambda/n(e^t-1) \right)^n
        \le\exp(\log n(1-t\delta'))\leq n^{-r}.
    \end{align*}
\end{proof}

\begin{proof}[Proof of Proposition \ref{prop:conc_cond_mean_noisy}]
    The proof is similar to that of Proposition \ref{prop:conc_cond_mean}.  Without repeating parts of the argument, we discuss the key changes. Firstly, since $M$ is bounded, it suffices to check that the probability of any event appearing in the union above is $o(1)$. Similarly to \eqref{eq:cond_cheby_1}, for any $j\in [M]$, we have
    \begin{align*}
        &\CProb{\left||t_M(j)|-|t(j,M,\gamma-\eps)|\CExp{Y_1}{Z}\right|>\varepsilon |t(j,M,\gamma-\eps)|\CExp{Y_1}{Z}}{Z,\cF_{n^{\lfloor \gamma-\eps\rfloor }}}\\&\leq \frac{\CExp{\left(\sum_{u\in t(v,M,\gamma-\eps)}\left(Y_u-\CExp{Y_u}{Z} \right) \right)^2}{Z,\cF_{\floor{n^{\gamma-\eps}}}}}{\varepsilon^2 |t(j,M,\gamma-\eps)|^2\CExp{Y_1}{Z}^2}.
    \end{align*}
    Next, using the exchangeability and negative correlation  of the variables $Y_v$ thanks to Corollary \ref{cor:exch_negcorr_noisy}, we obtain, akin to \eqref{eq:cond_Cheby_2},
    \begin{align*}
        &\CProb{\left||t_M(j)|-|t(j,M,\gamma-\eps)|\CExp{Y_1}{Z}\right|>\varepsilon |t(j,M,\gamma-\eps)|\CExp{Y_1}{Z}}{Z,\cF_{\floor{n^{\gamma-\eps}}}}\\&\leq\frac{\CExp{Y_u^2}{Z,\cF_{n^{\lfloor \gamma-\eps\rfloor }}}}{\varepsilon^2|t(j,M,\gamma-\eps)|(Z/(\floor{n^{\gamma-\eps}}))^2}.
    \end{align*}
    In particular, just like \eqref{eq:cond_conc_ineq_UB} we have the unconditional upper bound
    \begin{align*}
        &\Prob{\left||t_M(j)|-|t(j,M,\gamma-\eps)|\CExp{Y_1}{Z}\right|>\varepsilon |t(j,M,\gamma-\eps)|\CExp{Y_1}{Z}}\\&\leq \frac{\Exp{Y_u^2}n^{2(\gamma-\eps)}}{\varepsilon^2n^{\gamma-2\eps}n^{2(\alpha-\delta')}}+\Prob{Z<n^{\alpha-\delta'}}+\Prob{|t(j,M,\gamma-\eps)|<n^{\gamma-2\eps}},\numberthis \label{eq:cond_conc_ineq_UB_noisy}
    \end{align*}
    for any $\delta'>0$. The fact that the third term above is $o(1)$ follows from the fact that $|t(j, M,\gamma-\eps)|/\floor{n^{\gamma-\eps}}$ is tight (in particular, close to the $j$-th component of a Dirichlet random vector), as argued below \eqref{eq:cond_conc_ineq_UB}. For the second term, we note that
    \begin{align*}
        |\bC^{(3)}_{\alpha,\delta,\eps}(1)|=\floor{n^{\gamma-\eps}}+\sum_{v\in [\floor{n^{\gamma-\eps}}]}Y_v=\floor{n^{\gamma-\eps}}+Z.
    \end{align*}
    Now, Proposition \ref{prop:coup} and Corollary \ref{cor:prop_Hforest} implies that $|\bC^{(3)}_{\alpha,\delta,\eps}(1)|$ is at least $\alpha-\delta'/2$, with probability tending to $1$, 
    and therefore $Z$ is at least $n^{\alpha-\delta'}$. Thus, the second term on the right-hand side of \eqref{eq:cond_conc_ineq_UB_noisy} is also $o(1)$.

To bound the first term on the right-hand side of \eqref{eq:cond_conc_ineq_UB_noisy}, recall that, given $\cT_n$, recall $t_n(u)$ has the law $\mu^{(1-\alpha)\log n-1}_{n,\lambda/n}(\cT_n(u;E_{\floor{n^{\gamma-\eps}}}),u)$, by Proposition \ref{prop:sampling_noisy_subtrees}. Thus, to bound the second moment of its size, we write
    \begin{align*}
\Exp{Y_u^2}=\Exp{|t_n(u)|^2}=\Exp{|t_n(u)|^2\indE{E}}+\Exp{|t_n(u)|^2\indE{E^c}},
    \end{align*}
where $E=\{\max_{v: v\in D_{\cT_n}(\alpha+\delta)} Z_v<\delta \log n\}$. On the event $E$, by the third item of Remark \ref{rem:psi_deg_sp_case}, we have 
$\Exp{|t_n(u)|^2\indE{E}}\leq \Exp{Y_{\alpha+\delta}(u)^2}$,
where recall $Y_\alpha(u)$ from \eqref{eq:Y'}, and the representation of $Y_\alpha$ through the operator $\psi^{(1-\alpha)\log n-1}_{\rm off}$ from \eqref{eq:subtrees_in_comp_op_exp}. On the other hand, by trivially bounding $|t_n(u)|^2\leq n^2$, by Lemma \ref{lem:max_deg_conc}, we note that
$\Exp{|t_n(u)|^2\indE{E^c}}=o(1)$.

%Now, by the definition of the operator $\phi_K^M$, we note that $\phi^{(1-\alpha-\delta)\log n}_{(1-\alpha-\delta)\log n}((\cT_n(v; E_{n^{\gamma-\eps}}),v),\underline{0})$ is the component of $v$, when we construct the subtree of $\cT_n(v; E_{n^{\gamma-\eps}})$ spanned by those vertices in it that have degree at least $(1-\alpha-\delta)\log n$, equivalently, at least $((1-\alpha-\delta)\log n)-1$ children. 
Thus, by Lemma \ref{lem:sec_mom_Yv} and choosing $\delta>0$ sufficiently small, using the continuity of $\gamma(\alpha)$ as a function of $\alpha$ (recall Lemma \ref{lem:gamma_cont}), we conclude that  
$\Exp{Y_u^2}\leq n^{2(\alpha-\gamma+\eps+3\delta)}$.
As a consequence, by choosing $\eps,\delta,\delta'>0$ sufficiently small, we have
\begin{align*}
    \frac{\Exp{Y_u^2}n^{2(\gamma-\eps)}}{\varepsilon^2n^{\gamma-2\eps}n^{2(\alpha-\delta')}}\leq \frac{n^{-2\eps+6\delta+2\delta'-\gamma}}{\varepsilon^2}=o(1),
\end{align*}
proving the right-hand side of \eqref{eq:cond_conc_ineq_UB_noisy} is $o(1)$, and thus the proposition.\end{proof}

%We first prove that we can find the root vertex $1$ in $\bC^{(3)}_{\alpha,\delta,\eps}(1)$ when observing the graph without vertex labels. 

\begin{proposition}[Root finding in $\bC^{(3)}_{\alpha,\delta,\eps}(1)$]\label{prop:root_finding _in_noisyrootcomp}
    Let $\alpha\in (0,1)$ and $\delta>0$ be such that $\alpha+\delta<1-\frac{1}{2\log 2}$. Fix $\eps\in (0,\gamma)$. Consider observing the graph $\bC^{(3)}_{\alpha,\delta,\eps}(1)$ without vertex labels, and fix $\varepsilon>0$. It is possible to construct a confidence set $K=K(\varepsilon)\subset V\left(\bC^{(3)}_{\alpha,\delta,\eps}(1)\right)$ not depending on $n$ such that 
    \begin{align*}
        \Prob{1\in K(\varepsilon)}>1-\varepsilon.
    \end{align*}
\end{proposition}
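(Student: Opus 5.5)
The plan is to mirror the proof of Theorem \ref{thm:root_finding_c1}, with Proposition \ref{prop:conc_cond_mean_noisy} replacing Proposition \ref{prop:conc_cond_mean}. Let $T=\bC^{(3)}_{\alpha,\delta,\eps}(1)$ and take $K=H(M,T)$, the $M$ vertices of smallest Jordan centrality, with $M=M(\varepsilon)$ chosen below. First we work on the event that $\cT_{\floor{n^{\gamma-\eps}}}\subseteq T$, which holds with probability tending to one by Corollary \ref{cor:prop_Hforest}(iii) and Proposition \ref{prop:coup}. On this event $T$ is a tree rooted at $1$ containing $[M]$. The same elementary comparison as in \eqref{eq:root_finding_1} then gives two facts. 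First, $\mathscr J_T(1)=|t_M(1)|$ once $M\geq 2$, since the subtrees hanging off the root at the level of the first $M$ vertices dominate. Second, any $i>M$ satisfies $\mathscr J_T(i)\geq \min_k\sum_{j\neq k}|t_M(j)|$. Hence
\begin{align*}
\Prob{1\notin K}\leq \Prob{|t_M(2)|+\dots+|t_M(M)|\leq |t_M(1)|}+o(1).
\end{align*}

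Next we intersect with the good event of Proposition \ref{prop:conc_cond_mean_noisy}, with tolerance $\eta>0$. On that event every $|t_M(j)|$ lies within a factor $1\pm\eta$ of $|t(j,M,\gamma-\eps)|\,\CExp{Y_1}{Z}$. The common factor $\CExp{Y_1}{Z}$ is positive, since $Z\geq n^{\alpha-\delta'}$ with high probability, and it cancels. We are therefore reduced to bounding
\begin{align*}
\Prob{\sum_{j=2}^M|t(j,M,\gamma-\eps)|\leq \tfrac{1+\eta}{1-\eta}|t(1,M,\gamma-\eps)|}.
\end{align*}
The quantities $|t(j,M,\gamma-\eps)|$ are the subtree sizes in the noiseless \textsc{urrt} $\cT_{\floor{n^{\gamma-\eps}}}$ after cutting $E_M$. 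By the P\'olya urn description of Lemma \ref{lem:exch_subtree_sizes}, the normalized vector converges to a Dirichlet$(1,\dots,1)$ vector $(Z_1,\dots,Z_M)$. The limiting probability $\Prob{Z_2+\dots+Z_M\leq \tfrac{1+\eta}{1-\eta}Z_1}=\Prob{Z_1\geq (1-\eta)/2}$ tends to $0$ as $M\to\infty$, since $Z_1\sim\mathrm{Beta}(1,M-1)$. It is of order $2^{-M}$, giving even better than the stated size bounds.

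To conclude, fix $\eta=1/2$, choose $M$ so that the Dirichlet probability is below $\varepsilon/2$, and let $n\to\infty$, absorbing the $o(1)$ terms. The main obstacle is already handled upstream: the exchangeability and negative correlation of the noisy subtree sizes $Y_v=|t_n(v)|$ (Proposition \ref{prop:sampling_noisy_subtrees} and Corollary \ref{cor:exch_negcorr_noisy}) feed the concentration in Proposition \ref{prop:conc_cond_mean_noisy}. What remains delicate here is to check carefully that the Jordan-centrality comparison stays valid in $T$ even though vertices of $T$ outside $[\floor{n^{\gamma-\eps}}]$ are attached through the grafted trees $t_n(v)$. This amounts to verifying the decomposition $|t_M(v)|=\sum_{u\in t(v,M,\gamma-\eps)}Y_u$ on the high-probability event above.
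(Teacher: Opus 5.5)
\paragraph{Gap: the identity for the centrality of the root is false.} Your route is the paper's (Jordan centrality, Proposition~\ref{prop:conc_cond_mean_noisy}, Dirichlet limit). However, your first ``fact'', $\mathscr J_T(1)=|t_M(1)|$, is false, and the reduction to $\{|t_M(2)|+\dots+|t_M(M)|\le |t_M(1)|\}$ rests entirely on it.

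By definition, $|t_M(1)|$ counts vertex $1$ together with what hangs off it through children with labels larger than $M$. By contrast, $\mathscr J_T(1)$ is the largest component of $T\setminus\{1\}$. That component is at least the branch through vertex $2$, namely $\bigcup_{j\in D(2)}t_M(j)$, where $D(v)$ denotes the descendants of $v$ in $\cT_M$ (including $v$). After normalizing by $|T|$, this branch is asymptotically uniform on $[0,1]$, whereas $|t_M(1)|/|T|$ is asymptotically $\mathrm{Beta}(1,M-1)$.

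Your remark that the bound decays like $2^{-M}$, ``better than the stated size bounds'', should have been a warning. Already in the noiseless case, $\bC^{(3)}_{\alpha,\delta,\eps}(1)$ coincides with high probability with a component computable from the unlabeled tree. So this would give confidence sets of size $O(\log(1/\varepsilon))$ for the root of a uniform attachment tree, contradicting the lower bound $\exp(c\sqrt{\log(1/\varepsilon)})$ of \cite{findingadam}.

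The paper's own proof takes exactly this step. It invokes \eqref{eq:root_finding_1}, which is justified by the claim ``$C^1_{1;M}=\psi(1)$'' in the proof of Theorem~\ref{thm:root_finding_c1}, and that claim fails for the same reason. The point you flag as delicate, the decomposition $|t_M(v)|=\sum_u Y_u$, is harmless.

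\paragraph{How to repair it.} Keep your ingredients but compare the right quantities, as in \cite{findingadam}.
\begin{itemize}
\item Put $m_M:=\max_{j\in[M]}|t_M(j)|$. Your second fact is correct and reads $\mathscr J_T(i)\ge |T|-m_M$ for every $i>M$ in $T$: if $i\in t_M(k)$, the component of $T\setminus\{i\}$ containing $k$ contains every $t_M(j)$ with $j\ne k$. Hence $1\notin K$ forces $\mathscr J_T(1)\ge |T|-m_M$.
\item Every component of $T\setminus\{1\}$ is either contained in $t_M(1)$, or equals $\bigcup_{j\in D(v)}t_M(j)$ for some child $v\in[M]$ of $1$ in $\cT_M$. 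So failure implies either $|t_M(1)|\ge |T|-m_M$, or $\sum_{j\in[M]\setminus D(v)}|t_M(j)|\le m_M$ for some such $v$.
\item Proposition~\ref{prop:conc_cond_mean_noisy} controls all $|t_M(j)|$, $j\in[M]$, simultaneously within factors $1\pm\eta$ of $|t(j,M,\gamma-\eps)|\,\CExp{Y_1}{Z}$. It therefore also controls these partial sums, and both events transfer to the noiseless sizes at the cost of a factor $\frac{1+\eta}{1-\eta}$.
\item In the Dirichlet limit $(Z_1,\dots,Z_M)$, the first event becomes $Z_1+\max_j Z_j\ge \frac{1-\eta}{1+\eta}$, which is exponentially small in $M$.
\item The second event says that some branch of $1$ has mass at least $1-\frac{1+\eta}{1-\eta}\max_j Z_j$. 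Outside an event of probability $O(M^{-2})$ we have $\max_j Z_j=O(\log M/M)$. The branch masses are pieces of a uniform stick-breaking, and some piece exceeds $1-x$ with probability at most $2x$. So this event has probability $O(\log M/M)$, not exponentially small.
\end{itemize}
Choosing $M\asymp \log(1/\varepsilon)/\varepsilon$ then proves the proposition, consistent with Remark~\ref{rem:size_K}.
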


Before proving Proposition \ref{prop:root_finding _in_noisyrootcomp}, we finish the proof of Theorem \ref{thm:ER_noise}.
\begin{proof}[Proof of Theorem \ref{thm:ER_noise}]
    Observing the graph $H=\cT_n(G)$, we choose  $\alpha<1-\frac{1}{2\log 2}$, and construct the graph $\kH_n(\alpha)$, spanned by the vertices in $H$ that have degree at least $(1-\alpha)\log n$. By Corollary \ref{cor:prop_Hforest}, with high probability,  $\bC_{\kH,\alpha}(1)$ is the unique connected component of size at least $n^{\alpha-\gamma/4}$. By Proposition \ref{prop:coup}, $\bC_{\kH,\alpha}(1)=\bC^{(3)}_{\alpha,\delta,\eps}(1)$ for any $\eps\in (0,\gamma)$ and for $\delta>0$ sufficiently small such that $\alpha+\delta<1-\frac{1}{2\log 2}$, with probability tending to $1$. Thus by Proposition \ref{prop:root_finding _in_noisyrootcomp}, we can choose a $K(\varepsilon)\subset V(\bC_{\kH,\alpha}(1))$ independent of $n$ such that $\Prob{1\in K(\varepsilon)}>1-\varepsilon$. 
\end{proof}

\begin{proof}[Proof of Proposition \ref{prop:root_finding _in_noisyrootcomp}]
    The proof is similar to that of Theorem \ref{thm:root_finding_c1}, so we only describe the key changes from that argument.
For $M=M(\varepsilon)$, we take $K(\varepsilon)=H(M,T)$, that is, the $M$ most central vertices according to Jordan centrality,
where
$T=\bC^{(3)}_{\alpha,\delta,\eps}(1)$.
As in \eqref{eq:root_finding_1}, we  derive
    \begin{align*}
        \Prob{1\notin H(M,T)}\leq \Prob{|t_M(2)|+\dots+|t_M(M)|\leq |t_M(1)|}.
    \end{align*}
Defining the good event 
\begin{align*}
\cE(\varepsilon)=\bigcap_{j\in [M]}\left\{\left||t_M(j)|-|t(j,M,\gamma-\eps)|\CExp{Y_j}{Z}\right|\leq \varepsilon\left( |t(j,M,\gamma-\eps)|\CExp{Y_1}{Z}\right)\right\},
    \end{align*}
    by Proposition \ref{prop:conc_cond_mean_noisy}, it is enough to show that
    \begin{align*}
        \Prob{|t_M(2)|+\dots+|t_M(M)|\leq |t_M(1)|,\cE(\varepsilon)}<\varepsilon~.
    \end{align*}
 On the event $\cE(\varepsilon)$, we have 
    \begin{align*}
    &\Prob{|t_M(2)|+\dots+|t_M(M)|\leq |t_M(1)|,\cE(\varepsilon)}\\&\leq \Prob{|t(2,M,\gamma-\eps)|+\dots+|t(M,M,\gamma-\eps)|\leq \frac{1+\varepsilon}{1-\varepsilon}|t(1,M,\gamma-\eps)|}.\numberthis \label{eq:subtree_ratio_noisy}
    \end{align*}
    Recalling $|t(j,M,\gamma-\eps)|=|\cT_{\floor{n^{\gamma-\eps}}}(j;E_M)|$ from \eqref{eq:analogous_Y_notations_noisy}, note that the random vector
    $$\frac{1}{\floor{n^{\gamma-\eps}}}  (|t(1,M,\gamma-\eps)|,\dots,|t(M,M,\gamma-\eps)|)$$ converges in law to a Dirichlet random vector. So, we can choose $M=M(\varepsilon)$ sufficiently large so that the right-hand side in \eqref{eq:subtree_ratio_noisy} is at most $\varepsilon$.
\end{proof}

\section{Root finding under random matching noise}\label{Sec:matching}

In this section, we prove Theorem \ref{thm:matching_noise}.  
%The standard algorithm to output a uniformly random perfect matching goes as follows. First, put an arbitrary order $v_1<\dots<v_n$ on the vertices of $K_n$.
%\begin{itemize}
%    \item [(i)] Let $M_0=\emptyset$. We start with $M_1=\{\{v_1,p(v_1)\}\}$, where $p(v_1)$ is uniformly distributed on $\{v_2,\dots,v_n\}$. 
%    \item [(ii)] At a given step $t$, given $M_t$, choose the least vertex $u$ not in $V_t:=\{v\in [n]:\exists\;e\in M_t\textrm{ such that }v\in e\}$. Sample $p(u)$ uniformly at random from $V\setminus(V_t\cup\{u\})$, and update $M_{t+1}=M_t\cup\{u,p(u)\}$.
%\end{itemize}
%Then, $\sM_n:=M_{n/2}$ has the law of a uniformly distributed perfect matching on $K_n$. It is easy to verify that for any fixed matching $M$,
%\begin{align*}
%    \Prob{\sM_n=M}=\frac{1}{(n-1)!!}.
%\end{align*}
%We note that the order $v_1<\dots<v_n$ can be \emph{arbitrary}. 
Recall that $\cT_n$ is a \textsc{urrt} on vertex set $[n]$, and $\sM_{n}$ is a uniformly random perfect matching on $K_n$, independent of $\cT_n$. 
In order to find the root upon observing the graph $H_n=\cT_n(\sM_n)$, we take  the subgraph $H_n(\alpha)$ of $H_n$, formed by retaining edges $\{u,v\}$ that satisfy
\begin{align*}
    d_{H_n}(u)\wedge d_{H_n}(v)>(1-\alpha) \log n.
\end{align*}

Th following proposition may be easily proved using standard properties of the degree distribution of a \textsc{urrt}, see, e.g., \citet{devroye1995strong}.

\begin{proposition}\label{prop:matching_props}
    Let $\alpha<1-\frac{1}{2\log 2}$ be fixed. The following hold with high probability:
    \begin{itemize}
        \item [(i)] There is no matching edge $\{u,v\}\in \sM_n$ with  $d_{\cT_n}(u) \wedge d_{\cT_n}(v)>(1-\alpha)\log n$.
        \item [(ii)] For any $u\in [n]$ with $d_{\cT_n}(u)>(1-\alpha)\log n$, $d_{H_n}(u)=d_{\cT_n}(u)+1$.
    \end{itemize}
    \end{proposition}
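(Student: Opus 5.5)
Both parts follow from the bound \eqref{eq:at_most_sqrtn_highdeg} obtained in the proof of Proposition~\ref{prop:no_edge_high_deg}, together with the independence of $\sM_n$ and $\cT_n$. We work conditionally on $\cT_n$ and set $D:=\{v\in[n]: d_{\cT_n}(v)>(1-\alpha)\log n\}$. Since $\alpha<1-\frac{1}{2\log 2}$, \eqref{eq:at_most_sqrtn_highdeg} shows that, for some $\eps>0$, the event $\{|D|\le n^{1/2-\eps}\}$ has probability $1-o(1)$. It therefore suffices to prove (i) and (ii) for an arbitrary deterministic set $D$ with $|D|\le n^{1/2-\eps}$, using only the randomness of $\sM_n$.

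For (i), fix distinct $u,v\in[n]$. By symmetry, the partner of $u$ in a uniform perfect matching is uniform on $[n]\setminus\{u\}$, so $\Prob{\{u,v\}\in\sM_n}=1/(n-1)$. A union bound over pairs in $D$ gives
\begin{align*}
\Prob{\exists\,\{u,v\}\in\sM_n:\ u,v\in D}\le \binom{|D|}{2}\frac{1}{n-1}\le \frac{n^{1-2\eps}}{n-1}=o(1).
\end{align*}
This is exactly statement (i).

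For (ii), take $u\in D$. In $H_n=\cT_n\cup\sM_n$, the vertex $u$ gains exactly one matching edge. Hence $d_{H_n}(u)=d_{\cT_n}(u)+1$ unless the matching partner of $u$ is already a tree neighbour of $u$, in which case $d_{H_n}(u)=d_{\cT_n}(u)$. For a fixed $u$ this bad event has probability $d_{\cT_n}(u)/(n-1)$. A union bound over $u\in D$ gives the bound
\begin{align*}
\frac{|D|\max_v d_{\cT_n}(v)}{n-1}.
\end{align*}
It remains to control the maximum degree. By the classical result of \citet{devroye1995strong} (or by Lemma~\ref{lem:degUandLtail} applied at a level such as $10\log n$ with a union bound), $\max_v d_{\cT_n}(v)=O(\log n)$ with high probability. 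The union bound is then $O(n^{1/2-\eps}\log n/n)=o(1)$, which proves (ii).

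The only step that needs any care is this maximum-degree input, and it is standard. The rest is union bounds together with the estimate $|D|\le n^{1/2-\eps}$, which is precisely where the hypothesis $\alpha<1-\frac{1}{2\log 2}$ is used.
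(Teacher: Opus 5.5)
Your proof is correct and is exactly the routine verification the paper has in mind. The paper gives no details beyond saying the claim follows from standard degree properties of the \textsc{urrt}, and your argument supplies them: the count $|D|\le n^{1/2-\eps}$ from \eqref{eq:at_most_sqrtn_highdeg}, independence of $\sM_n$ and $\cT_n$, union bounds using that the partner of a fixed vertex is uniform on the other $n-1$ vertices, and an $O(\log n)$ maximum degree for (ii). The only quibble is your alternative route to the maximum degree: Lemma~\ref{lem:degUandLtail} is stated only for $\alpha\in(0,1)$, i.e.\ thresholds below $\log n$, so using it at level $10\log n$ means rerunning its Chernoff argument with $\alpha<0$ (which works verbatim); citing \citet{devroye1995strong} already suffices.
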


\begin{proof}[Proof of Theorem \ref{thm:matching_noise}]
We claim that, with high probability,
\[
        \{v\in [n]:d_H(v)>(1-\alpha)\log n+1\}=\{v\in [n]:d_{\cT_n}(v)>(1-\alpha)\log n\}.\numberthis \label{eq:matching_vtx_set_eq}
    \]
To see this, note that  $d_{\cT_n}(v)\leq d_H(v)\leq d_{\cT_n}(v)+1$ implies that
\begin{align*}
        \{v\in [n]:d_H(v)>(1-\alpha)\log n+1\}\subseteq\{v\in [n]:d_{\cT_n}(v)>(1-\alpha)\log n\}.
    \end{align*}
Next consider any $v$ with $d_{\cT_n}(v)>(1-\alpha)\log n$. By Proposition \ref{prop:matching_props}$(ii)$, $d_{H}(v)=d_{\cT_n}(v)+1>(1-\alpha)\log n+1$, so that \eqref{eq:matching_vtx_set_eq} follows.
Also, by Proposition \ref{prop:matching_props}$(i)$, any edge in $H$ between  vertices $u$ and $v$ satisfying $d_{\cT_n}(u) \wedge d_{\cT_n}(v)>(1-\alpha)\log n$ must be an \textsc{urrt} edge. We conclude that $\Prob{H_n(\alpha)=\cF_n(\alpha)}\to 1$.
Thus,  root finding in $H_n(\alpha)$ is equivalent to
root finding in $\cF_n(\alpha)$, and the theorem follows from Theorem \ref{thm:forest}. %\red{[ the vertex perspective is important here ]}
\end{proof}

\section{A general discussion on robust root finding}\label{Sec:disc}
In this paper, we developed a robust method of root finding that works in \textsc{urrt}s with additional noisy edges.  We focused on two specific examples of $\cT_n(G)=\cT_n\cup G$, with $G$ being the random perfect matching (Theorem \ref{thm:matching_noise}) and $G=\G(n,\lambda_n/n)$ (Theorem \ref{thm:ER_noise}).
The technique can be adapted
for more general choices of $G$,  as long as $G$ satisfies certain properties. In order to guarantee that the approach works, it suffices the check the following steps:

\subsection{Sandwiching}\label{sec:discussion}
Consider any $G$, with $\max_{v \in [n]}d_G(v)=o(\log n)$. 
The subgraph $H_n(\alpha)$ spanned by edges with both end-vertices having degree at least $(1-\alpha)\log n$ in $\cT_n(G)$ satisfies the "sandwitching" properties  of Lemma \ref{lem:sandwiching_forests} and Corollary \ref{cor:prop_Hforest}.  This means that $H_n(\alpha)$ is a forest, whose largest component $\bC_{H,\alpha}(1)$ contains the root, with high probability.
Moreover, it contains the \textsc{urrt} $\cT_{\lfloor n^{\gamma-\eps}\rfloor}$ up to the first $\lfloor n^{\gamma-\eps}\rfloor$ steps. 

In the proof of Theorem \ref{thm:ER_noise}, the sandwiching step is where the condition $\lambda_n=o(\log n)$ is used. Indeed, when $\lambda_n=\Omega(\log n)$, the maximum degree of the noise graph starts competing with the maximum degree of the \textsc{urrt}, and the sandwiching step is not valid anymore.  

\subsection{Exchangeability} 
%Once the sandwiching step is true, we are thus tasked to find the root in $\bC_{H,\alpha}(1)$. 
By the sandwiching step, we can construct the component $\bC_{H,\alpha}(1)$ by grafting to each vertex $v\in [\lfloor n^{\gamma-\eps}\rfloor]$ of $\cT_{\lfloor n^{\gamma-\eps}\rfloor}$, the subtree $t_n(v):=\bC_{H,\alpha}(1)(v;E_{\lfloor n^{\gamma-\eps}\rfloor})$. The key question is whether the collection $(t_n(1),\dots,t_n(\lfloor n^{\gamma-\eps}\rfloor))$ is exchangeable. In that case, the arguments of Propositions \ref{prop:conc_cond_mean_noisy} and \ref{prop:root_finding _in_noisyrootcomp} go through, and root finding is possible. 

We have worked with two somewhat closely related methods to verify this exchangeability. In the proof of Theorem \ref{thm:forest}, we established exchangeability by  writing $(t_n(v),v)$ as $\psi^{(1-\alpha)\log n-1}_{\rm off}(\cT_n(v; E_{\lfloor n^{\gamma-\eps}\rfloor}),v)$ and using that such functionals are exchangeable, due to Proposition \ref{prop:exch_subtree_funcs}. For Theorem \ref{thm:ER_noise}, this exchangeability is established using Proposition \ref{prop:sample_exch}, which says that `samples from exchangeable measures are exchangeable'. If exchangeability can be established by a model-dependent argument, then 
our techniques can be used for proving that root-finding is possible.

\subsection{Example: random regular graphs}

Another example where our techniques can be applied is when $G$ is a \emph{random $d_n$-regular graph} with $d_n=o(\log n)$. (The special case with $d_n=1$ reduces to random perfect matchings discussed above.) The condition on $d_n$ ensures that the sandwiching step (Lemma \ref{lem:sandwiching_forests} and Corollary \ref{cor:prop_Hforest}) go through when we construct the high-degree subgraph $H_n(\alpha)$. A calculation analogous to Lemma \ref{lem:ER_nbhds_large} shows that all vertices with degree at least $(1-\alpha)\log n$ in $\cT_n$ have neighborhoods that are independent size-$d_n$ random subsets of $[n]$. 
Consequently, for any $L\in \{0,1,\dots,n\}$, if we define 
$X_L:=|S\setminus [L]|$,
  where $S$ is a size-$d_n$ random subset of $[n]$, we can construct $t_n(v)$ in a similar way as in Proposition \ref{prop:sampling_noisy_subtrees}:
\begin{itemize}
    \item [(i)] To each vertex in $\cT_n(v;E_{\lfloor n^{\gamma-\eps}\rfloor})$, add $X_{d_{\cT_n}(v)}$ extra leaves to construct the random tree $(\T'_v,v)$. The leaves represent the neighbors of $v$ in $G$ that are \emph{not} neighbors in $\cT_n$.
    \item [(ii)] $(t_n(v),v)$ equals $\psi^{(1-\alpha)\log n-1}_{\rm off}(\T'_v,v)$ with high probability.
\end{itemize}
%\textcolor{red}{LUC: below and elsewhere, $n^\gamma$ is not integer-valued...}
In particular, $(t_n(1),\dots,t_n(\lfloor n^{\gamma-\eps}\rfloor))$ is exchangeable, and Propositions \ref{prop:conc_cond_mean_noisy} and \ref{prop:root_finding _in_noisyrootcomp} go through.

%\subsubsection{Inhomogeneous Random Graphs} \red{[N: To write. Exchangeability of edge indicators is required]}

\paragraph{Acknowledgments.} 
Luc Devroye was supported by the Natural Sciences and Engineering Research Council of Canada (\textsc{nserc}) under grant number \textsc{rgpin}-2024-04164.
Gábor Lugosi acknowledges the support of Spanish Ministry of Economy and Competitiveness grant PID2022-138268NB-I00, financed by MCIN/AEI/10.13039/501100011033,
FSE+MTM2015-67304-P, and FEDER, EU). Neeladri Maitra gratefully acknowledges the support of an \textsc{ams} Simons Travel Grant for a visit to McGill University and the hospitality of both the School of Computer Science and the Department of Mathematics and Statistics there.

\bibliographystyle{abbrvnat}
\bibliography{ref}
\end{document}

%% file: operators_psi_and_chi.tex
\begin{tikzpicture}[
  every node/.style={circle, fill, inner sep=2pt},
  root/.style={circle, draw, fill=none, inner sep=3pt},
  rednode/.style={circle, fill=red, inner sep=3pt},
  blacknode/.style={circle, fill=black, inner sep=2.5pt},
  edge/.style={thick},
  leafedge/.style={thick, postaction={decorate, decoration={markings,
    mark=at position 1 with {\filldraw[black] circle (2.5pt);}}}},
  label/.style={draw=none, fill=none, font=\large},
  scale=1.0
]

%% -------------------------------------------------------
%% TOP: The rooted tree (\mathbf{t}, o)
%% -------------------------------------------------------

\node[label] at (-3.5, 0) {$\mathbf{t}$};

% Root o (open circle)
\node[root] (root) at (0, 0) {};
\node[label, font=\normalsize] at (0, 0.35) {$o$};

% Level-1 children
\node[rednode]   (r1)    at (-2.0, -1.2) {};
\node[blacknode] (b1)    at ( 0.4, -1.2) {};
\node[blacknode] (b2)    at ( 2.2, -1.2) {};
% Two extra leaf children of root
\node[blacknode] (leaf1) at ( 3.5, -1.0) {};
\node[blacknode] (leaf2) at ( 4.4, -0.5) {};

\draw[edge] (root) -- (r1);
\draw[edge] (root) -- (b1);
\draw[edge] (root) -- (b2);
\draw[edge] (root) -- (leaf1);
\draw[edge] (root) -- (leaf2);

% Fan of leaves from r1 with dot at tip
\foreach \angle in {203, 218, 233, 248, 263, 278, 293, 308} {
  \draw[leafedge] (r1) -- ++(\angle:1.4);
}

% Middle branch: b1 -> red child r2 -> leaf fan
\node[rednode] (r2) at (0.7, -2.6) {};
\draw[edge] (b1) -- (r2);

\foreach \angle in {210, 224, 238, 252, 266, 280, 294, 308, 322} {
  \draw[leafedge] (r2) -- ++(\angle:1.3);
}

% Rightmost black node b2: only two leaf children with dot at tip
\foreach \angle in {230, 310} {
  \draw[leafedge] (b2) -- ++(\angle:1.3);
}

%% -------------------------------------------------------
%% MIDDLE: \chi_{>5}(\mathbf{t}, o)
%% -------------------------------------------------------

\node[label] at (-3.5, -6.5) {$\chi_{>5}(\mathbf{t},\, o)\ :$};

\node[root]    (root2) at (-0.5, -6.0) {};
\node[label, font=\normalsize] at (-0.5, -5.65) {$o$};

\node[rednode]   (r2b)          at (-1.5, -7.3) {};
\node[blacknode] (chi_mid_leaf) at (-0.2, -7.2) {};
\node[blacknode] (b2b)          at ( 0.9, -7.0) {};
\node[blacknode] (chi_leaf1)    at ( 1.8, -6.7) {};
\node[blacknode] (chi_leaf2)    at ( 2.5, -6.2) {};

\draw[edge] (root2) -- (r2b);
\draw[edge] (root2) -- (chi_mid_leaf);
\draw[edge] (root2) -- (b2b);
\draw[edge] (root2) -- (chi_leaf1);
\draw[edge] (root2) -- (chi_leaf2);

% Fan of leaves from r2b with dot at tip
\foreach \angle in {210, 224, 238, 252, 266, 280, 294, 308} {
  \draw[leafedge] (r2b) -- ++(\angle:1.2);
}

%% -------------------------------------------------------
%% BOTTOM: \psi^{K}_{\mathrm{off}}(\mathbf{t}, o)
%% -------------------------------------------------------

\node[label] at (-3.5, -10.5) {$\psi^{5}_{\mathrm{off}}(\mathbf{t},\, o)\ :$};

\node[root]    (root3) at (-0.5, -10.0) {};
\node[label, font=\normalsize] at (-0.5, -9.65) {$o$};

\node[rednode] (r3) at (-2.1, -11.2) {};

\draw[edge] (root3) -- (r3);

\end{tikzpicture}

%% file: LB_bad_vertex.tex
\begin{tikzpicture}[
  orangenode/.style={circle, fill=orange!90!black, inner sep=3.5pt},
  rednode/.style={circle, fill=red!80!black, inner sep=3.5pt},
  greennode/.style={circle, fill=green!60!black, inner sep=3.5pt},
  blackleaf/.style={circle, fill=black, inner sep=2pt},
  edge/.style={thick},
  blueedge/.style={thick, blue!70!black, line width=2pt},
  lbl/.style={draw=none, fill=none},
  scale=1.0
]

%% ================================================================
%% LEFT TREE:  \mathcal{T}_n
%%
%% Layout strategy: divide horizontal space into sectors
%%   x in [-9, -2]  : subtree of A  (left orange child of root)
%%   x in [-1,  0]  : black leaves directly from root
%%   x in [ 1,  3]  : black leaves of B (centre orange, blue path)
%%   x in [ 4, 10]  : subtrees of C and D (right blue path)
%%
%% Root at (0, 0).  All y coordinates are strictly decreasing
%% level by level so no horizontal band overlaps another.
%% ================================================================

%% -- Root --
\node[orangenode] (root) at (0, 0) {};
\node[lbl, font=\small] at (-0.45, 0.4) {$1$};

%% ----------------------------------------------------------------
%% Subtree of A  (leftmost orange child, BLUE edge from root)
%% A is at (-5, -2).  Its 8 black leaves fan out at y = -4,
%% strictly within x in [-9, -2], evenly spaced.
%% ----------------------------------------------------------------
\node[orangenode] (A) at (-5.0, -2.0) {};

\draw[edge] (root) -- (A);

% 8 black leaves of A, spread across x in [-8.5, -1.5], y = -4.2
\node[rednode] (Al1) at (-8.5, -4.2) {};
\node[rednode] (Al2) at (-7.5, -4.2) {};
\node[rednode] (Al3) at (-6.5, -4.2) {};
\node[rednode] (Al4) at (-5.5, -4.2) {};
\node[rednode] (Al5) at (-4.5, -4.2) {};
\node[rednode] (Al6) at (-3.5, -4.2) {};
\node[rednode] (Al7) at (-2.5, -4.2) {};
\node[rednode] (Al8) at (-1.5, -4.2) {};

\draw[edge] (A) -- (Al1);
\draw[edge] (A) -- (Al2);
\draw[edge] (A) -- (Al3);
\draw[edge] (A) -- (Al4);
\draw[edge] (A) -- (Al5);
\draw[edge] (A) -- (Al6);
\draw[edge] (A) -- (Al7);
\draw[edge] (A) -- (Al8);

%% ----------------------------------------------------------------
%% Black leaf children directly from root (right side, no subtrees)
%% lf1, lf2, lf3 spread to the right of root at y = -2,
%% well to the right so they don't touch B's region.
%% ----------------------------------------------------------------
\node[rednode] (lf1) at ( 4.5, -2.2) {};
\node[rednode] (lf2) at ( 6.0, -1.0) {};
\node[rednode] (lf3) at ( 7.2, -2.0) {};

\draw[edge] (root) -- (lf1);
\draw[edge] (root) -- (lf2);
\draw[edge] (root) -- (lf3);

%% ----------------------------------------------------------------
%% B  (centre orange child of root, BLUE edge)
%% B at (1.5, -2).  6 black leaves fan within x in [0.5, 3.5],
%% y = -4.  C (child of B, blue) is placed further right/down.
%% ----------------------------------------------------------------
\node[orangenode] (B) at (1.5, -2.0) {};

\draw[blueedge] (root) -- (B);

% 6 black leaves of B, spread at y = -4.0, x in [-0.5, 3.0]
% (kept to the LEFT of C's zone which starts at x ~ 5)
\node[rednode] (Bl1) at (-0.5, -4.0) {};
\node[rednode] (Bl2) at ( 0.3, -4.0) {};
\node[rednode] (Bl3) at ( 1.1, -4.0) {};
\node[rednode] (Bl4) at ( 1.9, -4.0) {};
\node[rednode] (Bl5) at ( 2.7, -4.0) {};
\node[rednode] (Bl6) at ( 3.5, -4.0) {};

\draw[edge] (B) -- (Bl1);
\draw[edge] (B) -- (Bl2);
\draw[edge] (B) -- (Bl3);
\draw[edge] (B) -- (Bl4);
\draw[edge] (B) -- (Bl5);
\draw[edge] (B) -- (Bl6);

%% ----------------------------------------------------------------
%% C  (orange grandchild via B, BLUE edge B->C)
%% C at (6.5, -4).  6 black leaves at y = -6, x in [4.5, 9.0].
%% D (red child of C, blue) is directly below C.
%% ----------------------------------------------------------------
\node[orangenode] (C) at (6.5, -4.0) {};

\draw[blueedge] (B) -- (C);

% 6 black leaves of C spread at y = -6.0, x in [4.5, 9.0]
\node[rednode] (Cl1) at (4.5, -6.0) {};
\node[rednode] (Cl2) at (5.3, -6.0) {};
\node[rednode] (Cl3) at (6.1, -6.0) {};
\node[rednode] (Cl4) at (7.0, -6.0) {};
\node[rednode] (Cl5) at (7.9, -6.0) {};
\node[rednode] (Cl6) at (8.8, -6.0) {};

\draw[edge] (C) -- (Cl1);
\draw[edge] (C) -- (Cl2);
\draw[edge] (C) -- (Cl3);
\draw[edge] (C) -- (Cl4);
\draw[edge] (C) -- (Cl5);
\draw[edge] (C) -- (Cl6);

%% ----------------------------------------------------------------
%% D  (red child of C, BLUE edge C->D)
%% D at (6.5, -7.5).  Two green leaf children below D.
%% ----------------------------------------------------------------
\node[rednode] (D) at (6.5, -7.5) {};

\draw[blueedge] (C) -- (D);

\node[greennode] (G1) at (5.7, -9.2) {};
\node[greennode] (G2) at (7.3, -9.2) {};

\draw[edge] (D) -- (G1);
\draw[edge] (D) -- (G2);

%% -- Label for left tree --
\node[lbl, font=\Large] at (-6.5, -5.5) {$\mathcal{T}_n$};

%% ================================================================
%% RIGHT TREE: \mathbf{C}_\alpha(1) \cap [n^{\alpha_1}]
%% Four orange nodes connected only by blue edges.
%% Root -> RA (left) and Root -> RB (right) -> RC (below RB).
%% ================================================================

\node[orangenode] (Rroot) at (14.0, 0.0) {};
\node[lbl, font=\Large] at (13.55, 0.4) {$1$};

\node[orangenode] (RA) at (12.5, -2.0) {};
\node[orangenode] (RB) at (15.5, -2.0) {};
\node[orangenode] (RC) at (15.5, -4.0) {};

\draw[edge] (Rroot) -- (RA);
\draw[blueedge] (Rroot) -- (RB);
\draw[blueedge] (RB)    -- (RC);

\node[lbl, font=\Large] at (15.5, -5.5)
  {$\mathbf{C}_{\alpha}(1)\cap [\floor{n^{\alpha_1}}]$};

%% ================================================================
%% LEGEND
%% ================================================================

\node[greennode]  at (-5.0, -11.5) {};
\node[lbl, font=\Large, anchor=west] at (-4.4, -11.5)
  {: label larger than $\floor{n^{\alpha_1}}$};

\node[orangenode] at (-5.0, -12.7) {};
\node[lbl, font=\Large, anchor=west] at (-4.4, -12.7)
  {: vertices with degree $> (1-\alpha)\log n$
   $(= 5$, say, for this example$)$};

\node[rednode]    at (-5.0, -13.9) {};
\node[lbl, font=\Large, anchor=west] at (-4.4, -13.9)
  {: $\mathrm{Bad}$ vertices};

\node[circle, fill=blue!70!black, inner sep=3.5pt] at (-5.0, -15.1) {};
\node[lbl, font=\Large, anchor=west] at (-4.4, -15.1)
  {: the ancestral line a $\rm Bad$ vertex breaks};

\end{tikzpicture}

%% file: grafting.tex
\begin{tikzpicture}[
  orangenode/.style={circle, fill=orange!90!black, inner sep=3.5pt},
  blacknode/.style ={circle, fill=black,   inner sep=2.8pt},
  bluenode/.style  ={circle, fill=myblue,  inner sep=2.8pt},
  greennode/.style ={circle, fill=mygreen, inner sep=2.8pt},
  pinknode/.style  ={circle, fill=mypink,  inner sep=2.8pt},
  edge/.style     ={thick},
  blueedge/.style  ={thick, myblue,  line width=1.8pt},
  greenedge/.style ={thick, mygreen, line width=1.8pt},
  pinkedge/.style  ={thick, mypink,  line width=1.8pt},
  lbl/.style={draw=none, fill=none},
]

%% Both pictures share IDENTICAL relative coordinates.
%% Left picture origin: (0, 0)
%% Right picture origin: (0, Ry) where Ry = -22  (stacked below)
%% Horizontal centre of both pictures: x = 0

%% ============================================================
%% LEFT PICTURE  (origin 0, 0)
%% ============================================================

\node[orangenode](root) at (0.0, 0.0){};
\node[lbl,font=\Huge] at (0, 0.6){$1$};
\node[lbl,font=\Huge] at (5.8, 0.2){$\mathcal{T}_{\floor{n^{\gamma-\varepsilon}}}$};

\node[blacknode](A)   at (-9.5,-2.5){};
\node[blacknode](lf0) at (-2.0,-2.5){};
\node[blacknode](B)   at ( 2.0,-2.5){};
\node[blacknode](C)   at ( 7.0,-2.5){};
\draw[edge](root)--(A); \draw[edge](root)--(lf0);
\draw[edge](root)--(B); \draw[edge](root)--(C);

\node[blacknode](u) at (-14.0,-5.0){}; \node[lbl,font=\Huge,left=3pt of u]{$u$};
\node[blacknode](v) at ( -5.0,-5.0){}; \node[lbl,font=\Huge,right=3pt of v]{$v$};
\draw[edge](A)--(u); \draw[edge](A)--(v);

\node[blacknode](Bl1) at (2.0,-5.0){}; \draw[edge](B)--(Bl1);

\node[blacknode](Cl1) at ( 5.0,-5.0){};
\node[blacknode](Cl2) at ( 7.0,-5.0){};
\node[blacknode](w)   at ( 9.0,-5.0){}; \node[lbl,font=\Huge,right=3pt of w]{$w$};
\draw[edge](C)--(Cl1); \draw[edge](C)--(Cl2); \draw[edge](C)--(w);

%% Blue subtree (root at -14, -9)
\node[bluenode](bu)   at (-14.0,-9.0){};
\node[bluenode](bu_l) at (-15.8,-11.0){};
\node[bluenode](bu_r) at (-12.2,-11.0){};
\node[bluenode](bll1) at (-17.0,-13.0){};
\node[bluenode](bll2) at (-15.2,-13.0){};
\node[bluenode](brl1) at (-13.4,-13.0){};
\node[bluenode](brl2) at (-11.0,-13.0){};
\draw[blueedge](bu)--(bu_l);   \draw[blueedge](bu)--(bu_r);
\draw[blueedge](bu_l)--(bll1); \draw[blueedge](bu_l)--(bll2);
\draw[blueedge](bu_r)--(brl1); \draw[blueedge](bu_r)--(brl2);
\node[lbl,font=\Huge,text=myblue] at (-14.0,-14.5){$(\mathbf{t}_n(u),\,u)$};

%% Green subtree (root at -5, -9)
\node[greennode](gv)   at (-5.0,-9.0){};
\node[greennode](gv_l) at (-6.8,-11.0){};
\node[greennode](gv_r) at (-3.2,-11.0){};
\node[greennode](gll1) at (-8.0,-13.0){};
\node[greennode](gll2) at (-6.2,-13.0){};
\node[greennode](grl1) at (-4.4,-13.0){};
\node[greennode](grl2) at (-2.0,-13.0){};
\draw[greenedge](gv)--(gv_l);   \draw[greenedge](gv)--(gv_r);
\draw[greenedge](gv_l)--(gll1); \draw[greenedge](gv_l)--(gll2);
\draw[greenedge](gv_r)--(grl1); \draw[greenedge](gv_r)--(grl2);
\node[lbl,font=\Huge,text=mygreen] at (-5.0,-14.5){$(\mathbf{t}_n(v),\,v)$};

%% Pink subtree (root at 9, -9)
\node[pinknode](pw)   at ( 9.0,-9.0){};
\node[pinknode](pw_l) at ( 7.2,-11.0){};
\node[pinknode](pw_r) at (10.8,-11.0){};
\node[pinknode](pll1) at ( 6.0,-13.0){};
\node[pinknode](pll2) at ( 7.8,-13.0){};
\node[pinknode](prl1) at ( 9.6,-13.0){};
\node[pinknode](prl2) at (12.0,-13.0){};
\draw[pinkedge](pw)--(pw_l);   \draw[pinkedge](pw)--(pw_r);
\draw[pinkedge](pw_l)--(pll1); \draw[pinkedge](pw_l)--(pll2);
\draw[pinkedge](pw_r)--(prl1); \draw[pinkedge](pw_r)--(prl2);
\node[lbl,font=\Huge,text=mypink] at (9.0,-14.5){$(\mathbf{t}_n(w),\,w)$};

%% ============================================================
%% RIGHT PICTURE  (shifted down by Ry = -22)
%% Identical relative coordinates for the black tree.
%% Label placed top-right just like the left picture.
%% ============================================================

\def\Ry{-22.0}

\node[orangenode](Rroot) at (0.0, \Ry){};
\node[lbl,font=\Huge] at (0, \Ry+0.6){$1$};
\node[lbl,font=\Huge] at (5.8, \Ry+0.2){$\mathbf{C}^{(3)}_{\alpha,\delta,\epsilon}(1)$};

\node[blacknode](RA)   at (-9.5, \Ry-2.5){};
\node[blacknode](Rlf0) at (-2.0, \Ry-2.5){};
\node[blacknode](RB)   at ( 2.0, \Ry-2.5){};
\node[blacknode](RC)   at ( 7.0, \Ry-2.5){};
\draw[edge](Rroot)--(RA); \draw[edge](Rroot)--(Rlf0);
\draw[edge](Rroot)--(RB); \draw[edge](Rroot)--(RC);

\node[bluenode] (Ru) at (-14.0, \Ry-5.0){}; \node[lbl,font=\Huge,left=3pt of Ru]{$u$};
\node[greennode](Rv) at ( -5.0, \Ry-5.0){}; \node[lbl,font=\Huge,right=3pt of Rv]{$v$};
\draw[edge](RA)--(Ru); \draw[edge](RA)--(Rv);

\node[blacknode](RBl1) at (2.0, \Ry-5.0){}; \draw[edge](RB)--(RBl1);

\node[blacknode](RCl1) at ( 5.0, \Ry-5.0){};
\node[blacknode](RCl2) at ( 7.0, \Ry-5.0){};
\node[pinknode] (Rw)   at ( 9.0, \Ry-5.0){}; \node[lbl,font=\Huge,right=3pt of Rw]{$w$};
\draw[edge](RC)--(RCl1); \draw[edge](RC)--(RCl2); \draw[edge](RC)--(Rw);

%% Blue subtree rooted AT Ru (same offsets)
\node[bluenode](Rbu_l) at (-15.8, \Ry-7.0){};
\node[bluenode](Rbu_r) at (-12.2, \Ry-7.0){};
\node[bluenode](Rbll1) at (-17.0, \Ry-9.0){};
\node[bluenode](Rbll2) at (-15.2, \Ry-9.0){};
\node[bluenode](Rbrl1) at (-13.4, \Ry-9.0){};
\node[bluenode](Rbrl2) at (-11.0, \Ry-9.0){};
\draw[blueedge](Ru)--(Rbu_l);    \draw[blueedge](Ru)--(Rbu_r);
\draw[blueedge](Rbu_l)--(Rbll1); \draw[blueedge](Rbu_l)--(Rbll2);
\draw[blueedge](Rbu_r)--(Rbrl1); \draw[blueedge](Rbu_r)--(Rbrl2);

%% Green subtree rooted AT Rv (same offsets)
\node[greennode](Rgv_l) at (-6.8, \Ry-7.0){};
\node[greennode](Rgv_r) at (-3.2, \Ry-7.0){};
\node[greennode](Rgll1) at (-8.0, \Ry-9.0){};
\node[greennode](Rgll2) at (-6.2, \Ry-9.0){};
\node[greennode](Rgrl1) at (-4.4, \Ry-9.0){};
\node[greennode](Rgrl2) at (-2.0, \Ry-9.0){};
\draw[greenedge](Rv)--(Rgv_l);    \draw[greenedge](Rv)--(Rgv_r);
\draw[greenedge](Rgv_l)--(Rgll1); \draw[greenedge](Rgv_l)--(Rgll2);
\draw[greenedge](Rgv_r)--(Rgrl1); \draw[greenedge](Rgv_r)--(Rgrl2);

%% Pink subtree rooted AT Rw (same offsets)
\node[pinknode](Rpw_l) at ( 7.2, \Ry-7.0){};
\node[pinknode](Rpw_r) at (10.8, \Ry-7.0){};
\node[pinknode](Rpll1) at ( 6.0, \Ry-9.0){};
\node[pinknode](Rpll2) at ( 7.8, \Ry-9.0){};
\node[pinknode](Rprl1) at ( 9.6, \Ry-9.0){};
\node[pinknode](Rprl2) at (12.0, \Ry-9.0){};
\draw[pinkedge](Rw)--(Rpw_l);    \draw[pinkedge](Rw)--(Rpw_r);
\draw[pinkedge](Rpw_l)--(Rpll1); \draw[pinkedge](Rpw_l)--(Rpll2);
\draw[pinkedge](Rpw_r)--(Rprl1); \draw[pinkedge](Rpw_r)--(Rprl2);

\end{tikzpicture}